\newtheorem{theorem}{Theorem}[section]
\newtheorem{definitio}[theorem]{Definition}
\newenvironment{definition}{\begin{definitio} \rm }{\end{definitio}}
\newtheorem{lemma}[theorem]{Lemma}
\newtheorem{proposition}[theorem]{Proposition}
\newtheorem{corollary}[theorem]{Corollary}
\newtheorem{sublemma}[theorem]{Sublemma}
\def\fl#1{\smash{\mathop{\hbox to 11mm{ \rightarrowfill\ }}\limits^{\textstyle #1}}}
\newcommand{\lbp}{\textrm{{\Large (}}}
\newcommand{\rbp}{\textrm{{\Large )}}}
\newcommand{\codim}{\textnormal{codim}}
\newcommand{\proofof}[1]{\noindent{{\em Proof of #1.}}}
\newcommand{\fin}{\hfill$\square$\bigskip}
\newcommand{\N}{\mathbb{N}}
\newcommand{\Z}{\mathbb{Z}}
\newcommand{\Q}{\mathbb{Q}}
\newcommand{\C}{\mathbb{C}}
\newcommand{\Qt}{\mathbb{Q}[t^{\pm1}]}
\newcommand{\Zt}{\mathbb{Z}[t^{\pm1}]}
\newcommand{\Qti}{\mathcal{R}}
\newcommand{\Ct}{\mathbb{C}[t^{\pm1}]}
\newcommand{\F}{\mathcal{F}}
\newcommand{\G}{\mathcal{G}}
\newcommand{\K}{\mathcal{K}}
\newcommand{\I}{\mathcal{I}}
\newcommand{\Sym}{\mathcal{S}}
\newcommand{\Ens}{\mathcal{P}}
\newcommand{\Al}{\mathfrak{A}}
\newcommand{\bl}{\mathfrak{b}}
\newcommand{\h}{\mathfrak{h}}
\newcommand{\hs}{\hslash}
\newcommand{\HA}{\mathcal{H}}
\newcommand{\Ah}{\mathfrak{A}_h}
\newcommand{\perm}{\mathcal{S}_3}
\newcommand{\sg}{\varepsilon(\sigma)}
\newcommand{\mi}{\underline{i}}
\newcommand{\mk}{\underline{k}}
\newcommand{\ml}{\underline{\ell}}
\newcommand{\md}{\underline{\delta_I}}
\newcommand{\NT}{\Xi}
\newcommand{\maps}{\mathbf{\Phi}}
\newcommand{\Rd}{\mathcal{R}}
\title{Equivariant triple intersections}
\author{Delphine Moussard\footnote{The author was supported by the Italian FIRB project 
"Geometry and topology of low-dimensional manifolds", RBFR10GHHH.}}
\date{}
\begin{document}

\maketitle

\begin{abstract}
Given a null-homologous knot $K$ in a rational homology 3-sphere $M$, and the standard infinite cyclic covering $\tilde{X}$ of $(M,K)$, 
we define an invariant of triples of curves in $\tilde{X}$, by means of equivariant triple intersections of surfaces. We prove that 
this invariant provides a map $\phi$ on $\Al^{\otimes 3}$, where $\Al$ is the Alexander module of $(M,K)$, and that the isomorphism 
class of $\phi$ is an invariant of the pair $(M,K)$. For a fixed Blanchfield module $(\Al,\bl)$, we consider pairs $(M,K)$ whose Blanchfield 
modules are isomorphic to $(\Al,\bl)$, equipped with a marking, {\em i.e.} a fixed isomorphism from $(\Al,\bl)$ to the Blanchfield module 
of $(M,K)$. In this setting, we compute the variation of $\phi$ under null borromean surgeries, and we describe the set of all maps $\phi$. 
Finally, we prove that the map $\phi$ is a finite type invariant of degree 1 of marked pairs $(M,K)$ with respect to null 
Lagrangian-preserving surgeries, and we determine the space of all degree 1 invariants of marked pairs $(M,K)$ with rational values.

\ \\
MSC: 57M27 57M25 57N65 57N10 

\ \\
Keywords: Knot, Homology sphere, Equivariant intersection, Alexander module, Blanchfield form, Borromean surgery, Null-move, 
Lagrangian-preserving surgery, Finite type invariant. 
\end{abstract}

\tableofcontents

    \section{Introduction}

In \cite{GR}, Garoufalidis and Rozansky introduced a theory of finite type invariants of knots in integral homology spheres with respect to the null-move -- 
the move which defines the Goussarov-Habiro theory of finite type invariants of 3-manifolds, with a nullity condition with respect to the knot. 
In particular, they proved that the Kricker lift of the Kontsevich integral constructed by Kricker \cite{Kri} (see also \cite{GK}) is a universal finite type invariant 
of knots in integral homology spheres with trivial Alexander polynomial. In \cite{Mt}, we extended this result to finite type invariants of null-homologous 
knots in rational homology spheres, with respect to a move called null Lagrangian-preserving surgery -- which generalizes the null-move to the setting 
of rational homology--, in the case of a trivial Alexander polynomial. We also studied the case of a non-trivial Alexander polynomial. 
The study of these theories of finite type invariants gives tools to understand the Kricker lift of the Kontsevich integral, and to compare it with other 
powerful invariants as the one constructed by Lescop \cite{Les2} by means of equivariant intersections in configuration spaces. 

In this paper, we construct and study an invariant of null-homologous knots in rational homology spheres, which appears to have finiteness properties 
with respect to null Lagrangian-preserving surgeries when a parametrization of the Alexander module -- a marking -- is fixed. Such a marking is preserved 
by null Lagrangian-preserving surgeries, hence the theory of finite type invariants can be defined for null-homologous knots in rational homology spheres 
with a fixed marking, and it provides a richer and more faithful theory. 

The Kricker invariant organizes the Kontsevich integral into a series of terms ordered by their loop degree -- given by the first Betti number of the graphs. 
As proved by Garoufalidis and Rozansky \cite[Corollary 1.5]{GR}, the $n$-loop part of this invariant is a finite type invariant of degree $2n-2$ with respect 
to the null-move. The invariant constructed in this paper takes place in some sense beetween the 1-loop part -- explicitly given by the Alexander polynomial 
\cite[Theorem 1.0.8]{Kri} -- and the 2-loop part -- which coincides with the triple equivariant intersection of Lescop \cite{Les4} at least for knots 
in integral homology spheres with trivial Alexander polynomial -- of the Kricker invariant, but it exists as a finite type invariant only when a marking 
of the Alexander module is fixed.

\paragraph{Description of the paper}
We consider pairs $(M,K)$, where $M$ is a {\em rational homology 3-sphere} ($\Q$HS), 
{\em i.e.} an oriented compact 3-manifold which has the same homology with rational coefficients as the standard 
3-sphere $S^3$, and $K$ is a {\em null-homologous knot} in $M$, {\em i.e.} a knot whose class in $H_1(M;\mathbb{Z})$ is trivial. 
We define an invariant of triples of curves in the associated infinite cyclic covering, by means of equivariant triple intersection 
numbers of surfaces. It provides a map 
$\phi$ on $\displaystyle \Al_h=\frac{\Al^{\otimes 3}}{(\otimes_{1\leq j\leq 3} \beta_j=\otimes_{1\leq j\leq 3} t\beta_j)}$, 
where $\Al$ is the Alexander module of $(M,K)$. The isomorphism class of $(\Al,\phi)$ is an invariant of the homeomorphism 
class of $(M,K)$. 

Then for a fixed Blanchfield module $(\Al,\bl)$, {\em i.e.} an Alexander module endowed with a Blanchfield form, 
we consider marked pairs $(M,K,\xi)$, where $\xi$ is an isomorphism 
from $(\Al,\bl)$ to the Blanchfield module of $(M,K)$. For such marked pairs, the map $\phi$ is well-defined, not only up to isomorphism. 
In this setting, we compute the variation of $\phi$ under the null-move of Garoufalidis and Rozansky \cite{GR}, called here null borromean 
surgery. As a consequence, we see that the equivariant triple intersection map $\phi$ is a finite type invariant of degree one 
of the marked pairs $(M,K,\xi)$ with respect to null borromean surgeries. 

For a fixed Blanchfield module $(\Al,\bl)$, we identify the rational vector space of all equivariant triple intersection 
maps $\phi$ of marked pairs $(M,K,\xi)$ with the space 
$\displaystyle \HA=\frac{\Lambda^3_\Q\Al}{(\beta_1\wedge\beta_2\wedge\beta_3=t\beta_1\wedge t\beta_2\wedge t\beta_3)}$. 
We study the vector space $\HA$, and give bounds for its dimension. 

In the last section, we consider null Lagrangian-preserving surgeries, a move which includes the null-move of Garoufalidis and Rozansky, 
and which is transitive on the set of marked pairs $(M,K,\xi)$ for a fixed Blanchfield module. We show that the map $\phi$ is a finite type 
invariant of degree one of the marked pairs $(M,K,\xi)$ with respect to null Lagrangian-preserving surgeries. We prove that 
the map $\phi$, together with degree one invariants obtained from the cardinality of $H_1(M;\Z)$, provides a universal 
rational valued degree one invariant of the marked pairs $(M,K,\xi)$ with respect to null Lagrangian-preserving surgeries. 
We obtain similar results in the case of pairs $(M,K,\xi)$ where $M$ is an integral homology 3-sphere, and the marking $\xi$ 
is defined on the integral Blanchfield module.

I wish to thank Christine Lescop for useful suggestions and comments.

\paragraph{Convention and notation}\ \\
The boundary of an oriented manifold is oriented with the outward normal first convention. 
We also use this convention to define the co-orientation of an oriented manifold embedded in another oriented manifold. \\
Unless otherwise mentioned, all tensor products and exterior products are defined over~$\Q$.  \\
The homology class of a curve $\gamma$ in a manifold is denoted by $[\gamma]$.  \\
For $n\in\N\setminus\{0\}$, $S^n$ is the standard $n$-dimensional sphere.  \\
If $C_1$, .., $C_k$ are transverse integral chains in a manifold $M$, such that the sum of the codimensions of the $C_i$ equals the dimension 
of $M$, $<C_1,..,C_k>_M$ is the algebraic intersection number of the $C_i$ in $M$. \\
For chains $C_1$ and $C_2$ in a manifold $M$, the transversality condition includes $\partial C_1\cap\partial C_2=\emptyset$.

    \section{Statement of the results}

  \subsection{Equivariant triple intersections} \label{subsecint}

We first recall the definition of the Alexander module. 
Let $(M,K)$ be a {\em $\Q$SK-pair}, {\em i.e.} a pair made of a rational homology 3-sphere $M$ and a null-homologous knot $K$ in $M$. 
Let $T(K)$ be a tubular neighborhood of $K$. The \emph{exterior} of $K$ is 
$X=M\setminus Int(T(K))$. Consider the projection $\pi : \pi_1(X) \to \frac{H_1(X;\mathbb{Z})}{torsion} \cong \mathbb{Z}$,
and the covering map $p : \tilde{X} \to X$ associated with its kernel. The covering $\tilde{X}$ is the \emph{infinite cyclic covering} 
of $X$. The automorphism group of the covering, $Aut(\tilde{X})$, is isomorphic to $\mathbb{Z}$. It acts on 
$H_1(\tilde{X};\mathbb{Q})$. Denoting the action of a generator $\tau$ of $Aut(\tilde{X})$ as the multiplication by $t$, 
we get a structure of $\Qt$-module on $\Al(M,K)=H_1(\tilde{X};\mathbb{Q})$. This $\Qt$-module is called the \emph{Alexander module} 
of $(M,K)$. It is a finitely generated torsion $\Qt$-module. We denote the annihilator of $\Al(M,K)$ by $\delta_{(M,K)}(t)$, 
normalized so that $\delta_{(M,K)}(t)\in\Q[t]$, $\delta_{(M,K)}(0)\neq0$ and $\delta_{(M,K)}(1)=1$. By a slight abuse of notation, for any 
$\Q$SK-pair, we denote by $\tau$ the automorphism of the infinite cyclic covering which induces the multiplication by $t$ 
in the Alexander module, and for a polynomial $P=\sum_{k\in\Z}a_kt^k\in\Qt$ and a chain $C$ in the infinite cyclic covering, 
we denote by $P(\tau)C$ the chain $\sum_{k\in\Z}a_k\tau^k(C)$. 

We aim at defining an equivariant triple intersection map on the rational vector space:
$$\Ah(M,K)=\frac{\Al(M,K)^{\otimes 3}}{(\beta_1\otimes\beta_2\otimes\beta_3=t\beta_1\otimes t\beta_2\otimes t\beta_3)}.$$
Consider integral chains $C_1$, $C_2$, $C_3$, in $\tilde{X}$ such that $\sum_{1\leq j\leq 3} \codim(C_j)=3$. 
Assume $C_1$, $C_2$, $C_3$ are {\em $\tau$-transverse} in $\tilde{X}$, {\em i.e.} $\tau^{k_1}C_1$, 
$\tau^{k_2}C_2$, and $\tau^{k_3}C_3$ are transverse for all integers $k_1$, $k_2$, $k_3$. Define the {\em equivariant triple intersection number} 
$<C_1,C_2,C_3>_e$ by:
 $$<C_1,C_2,C_3>_e=\sum_{k_2\in\Z}\sum_{k_3\in\Z} <C_1,\tau^{-k_2}C_2,\tau^{-k_3}C_3>t_2^{k_2}t_3^{k_3} 
   \quad \in \frac{\Rd}{(t_1t_2t_3-1)},$$
where $\Rd=\Q[t_1^{\pm 1},t_2^{\pm 1},t_3^{\pm 1}]$.
Extend it to rational chains by multilinearity. 
We have the following easy formulae.
\begin{lemma} \label{lemmaformulae}
 The equivariant triple intersection number satisfies:
\begin{itemize}
 \item if $\codim(C_j)=1$ for all $j$, then for any permutation $\sigma\in \mathcal{S}_3$, with signature $\sg$, 
  $<C_{\sigma(1)},C_{\sigma(2)},C_{\sigma(3)}>_e(t_1,t_2,t_3)=\sg <C_1,C_2,C_3>_e(t_{\sigma^{-1}(1)},t_{\sigma^{-1}(2)},t_{\sigma^{-1}(3)})$, 
 \item $<P_1(\tau)C_1,P_2(\tau)C_2,P_3(\tau)C_3>_e=P_1(t_1)P_2(t_2)P_3(t_3)<C_1,C_2,C_3>_e$, for all $P_j\in\Qt$.
\end{itemize}
\end{lemma}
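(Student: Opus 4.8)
\emph{Proof proposal.} The plan is to reduce both identities to two elementary properties of the algebraic intersection number of transverse integral chains in an oriented manifold: it is invariant under orientation-preserving homeomorphisms, and, when all the chains involved have codimension one, it changes by a sign $-1$ whenever two of them are exchanged. The first property applies to the deck transformations $\tau^{k}$, which are orientation-preserving homeomorphisms of the oriented manifold $\tilde X$. I will also use repeatedly that only finitely many terms of each of the sums below are nonzero, so that reindexing them is legitimate; that $\tau$-transversality of a triple is symmetric in the three chains and is preserved when the chains are translated by powers of $\tau$; and that, since $t_1t_2t_3-1$ is symmetric, $\mathcal{S}_3$ acts on $\Rd/(t_1t_2t_3-1)$ by permutation of the variables.

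For the second formula, $\Q$-multilinearity of $<\cdot,\cdot,\cdot>_e$ in the three chains reduces the statement to monomials $P_j=t^{a_j}$, that is, to $P_j(\tau)C_j=\tau^{a_j}C_j$. I would start from the definition, then in each summand of $<\tau^{a_1}C_1,\tau^{a_2}C_2,\tau^{a_3}C_3>_e$ apply the orientation-preserving homeomorphism $\tau^{-a_1}$, which turns it into $<C_1,\tau^{a_2-a_1-k_2}C_2,\tau^{a_3-a_1-k_3}C_3>$; next substitute $\ell_2=k_2-a_2+a_1$ and $\ell_3=k_3-a_3+a_1$, which factors out $t_2^{a_2-a_1}t_3^{a_3-a_1}$ and leaves exactly $<C_1,C_2,C_3>_e$; finally use $t_1t_2t_3=1$ in the quotient to write $t_2^{a_2-a_1}t_3^{a_3-a_1}=t_1^{a_1}t_2^{a_2}t_3^{a_3}(t_1t_2t_3)^{-a_1}=t_1^{a_1}t_2^{a_2}t_3^{a_3}$. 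Summing over the monomials of the $P_j$ then gives the formula for arbitrary $P_j\in\Qt$.

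For the first formula, I would first note that the asserted identity is compatible with composition of permutations: if it holds for $\sigma$ and for $\rho$ then, using $(\rho\sigma)^{-1}=\sigma^{-1}\rho^{-1}$, a short bookkeeping argument shows it holds for $\rho\sigma$. Hence it suffices to verify it for the two transpositions $(2\,3)$ and $(1\,2)$, which generate $\mathcal{S}_3$. For $(2\,3)$: in each summand of $<C_1,C_3,C_2>_e$ exchange the last two (codimension-one) chains, picking up a sign $-1$, then relabel $k_2\leftrightarrow k_3$; this produces $-<C_1,C_2,C_3>_e(t_1,t_3,t_2)$, which is the claim. For $(1\,2)$: in each summand of $<C_2,C_1,C_3>_e$ exchange the first two chains (sign $-1$), apply $\tau^{k_2}$, reindex by $m_2=-k_2$ and $m_3=k_3-k_2$, and use $t_1t_2t_3=1$ to turn $t_2^{-m_2}t_3^{m_3-m_2}$ into $t_1^{m_2}t_3^{m_3}$; this produces $-<C_1,C_2,C_3>_e(t_2,t_1,t_3)$, again as claimed.

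The computations are short and there is no real geometric difficulty here. The only point that needs care is bookkeeping: performing the reindexings in tandem with the substitutions appearing on the right-hand side — in particular keeping straight that the variables are permuted by $\sigma^{-1}$ and not by $\sigma$ — and systematically invoking the relation $t_1t_2t_3=1$ to absorb the $t_1$-dependence that is implicit in the definition of $<\cdot,\cdot,\cdot>_e$.
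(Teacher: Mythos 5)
The paper states Lemma~\ref{lemmaformulae} without proof (``We have the following easy formulae''), so there is no argument in the paper to compare against; your direct verification is exactly the kind of routine computation the author is leaving to the reader, and it is correct. In particular, you correctly reduce the second formula by multilinearity to monomials $P_j=t^{a_j}$ and reindex after applying the orientation-preserving deck transformation $\tau^{-a_1}$, using $t_1t_2t_3=1$ to recover $t_1^{a_1}t_2^{a_2}t_3^{a_3}$, and for the first formula you correctly verify the two generating transpositions of $\mathcal{S}_3$ (the transposition $(1\,2)$ being the one that genuinely needs the relation $t_1t_2t_3=1$, since after applying $\tau^{k_2}$ and reindexing the monomial $t_2^{-m_2}t_3^{m_3-m_2}$ must be rewritten as $t_1^{m_2}t_3^{m_3}$) and show the identity is stable under composition.
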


In Section \ref{seceqint}, we prove:
\begin{lemma} \label{lemmaindsurf}
 Let $(M,K)$ be a $\Q$SK-pair. Let $\tilde{X}$ 
be the infinite cyclic covering associated with $(M,K)$. Let $\beta_1$, $\beta_2$, $\beta_3$ be elements of $\Al(M,K)$ which can be represented 
by knots in $\tilde{X}$. Let $\mu_1$, $\mu_2$, $\mu_3$ be representatives of the $\beta_j$ 
whose images in $M$ are pairwise disjoint. For $j=1,2,3$, let $P_j\in\Qt$ satisfy $P_j(\tau)\mu_j=0$ in $\Al(M,K)$. 
Let $\Sigma_1$, $\Sigma_2$, $\Sigma_3$ be $\tau$-transverse rational 2-chains such that $\partial \Sigma_j=P_j(\tau)\mu_j$. 
Then $$<\Sigma_1,\Sigma_2,\Sigma_3>_e\,\in\frac{\Qti}{(t_1t_2t_3-1,P_1(t_1),P_2(t_2),P_3(t_3))}$$ does not depend 
on the choice of the surfaces $\Sigma_j$, and of the representatives $\mu_j$. 
\end{lemma}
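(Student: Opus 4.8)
The plan is to show independence in two stages: first fix the representatives $\mu_j$ and vary the surfaces $\Sigma_j$, then vary the $\mu_j$. For the first stage, by multilinearity of $<\cdot,\cdot,\cdot>_e$ it suffices to replace a single surface, say $\Sigma_1$, by another $\tau$-transverse rational $2$-chain $\Sigma_1'$ with $\partial\Sigma_1'=P_1(\tau)\mu_1$. Then $\Sigma_1-\Sigma_1'$ is a rational $2$-cycle in $\tilde X$. Since the Alexander module $\Al(M,K)=H_1(\tilde X;\Q)$ is a torsion $\Qt$-module, one has $H_2(\tilde X;\Q)=0$ (the infinite cyclic cover of a $\Q$-homology circle exterior has the rational homology of a point in degree $2$; this is the standard vanishing coming from the exterior being a $\Q$-homology $S^1\times D^2$). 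Hence $\Sigma_1-\Sigma_1'=\partial W$ for some rational $3$-chain $W$, which we may take $\tau$-transverse to $\Sigma_2$ and $\Sigma_3$. Then $<\Sigma_1-\Sigma_1',\Sigma_2,\Sigma_3>_e=<\partial W,\Sigma_2,\Sigma_3>_e$, and I would compute this boundary term coefficientwise: for fixed $k_2,k_3$, $<\partial W,\tau^{-k_2}\Sigma_2,\tau^{-k_3}\Sigma_3>$ equals, up to sign, $<W,\tau^{-k_2}\partial\Sigma_2,\tau^{-k_3}\Sigma_3>\pm<W,\tau^{-k_2}\Sigma_2,\tau^{-k_3}\partial\Sigma_3>$ by the Leibniz rule for algebraic intersection of chains (here the transversality convention $\partial C_1\cap\partial C_2=\emptyset$ guarantees there is no correction term, and dimension count makes all pieces transverse generically). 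Assembling over $k_2,k_3$, this says
$$<\partial W,\Sigma_2,\Sigma_3>_e=\pm<W,P_2(\tau)\mu_2,\Sigma_3>_e\pm<W,\Sigma_2,P_3(\tau)\mu_3>_e,$$
which by the second formula of Lemma \ref{lemmaformulae} is divisible by $P_2(t_2)$ in the first term and by $P_3(t_3)$ in the second. Therefore it vanishes in the quotient $\Qti/(t_1t_2t_3-1,P_1(t_1),P_2(t_2),P_3(t_3))$, as desired. The same argument handles $\Sigma_2$ and $\Sigma_3$ by symmetry.

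For the second stage, suppose $\mu_j$ and $\mu_j'$ both represent $\beta_j$, with the images of $\mu_1,\mu_2,\mu_3$ pairwise disjoint in $M$ and likewise for the $\mu_j'$. Again it suffices to change one representative at a time, say $\mu_1$ to $\mu_1'$. Since $\mu_1$ and $\mu_1'$ are homologous in $\tilde X$ (both represent $\beta_1$), there is a rational $2$-chain $S$ with $\partial S=\mu_1-\mu_1'$; we may arrange $S$ to be $\tau$-transverse to everything in sight, and perturb so that the relevant chains meet transversally (the disjointness of the images in $M$ of the curves is used to keep $\tau$-transversality manageable, but after passing to the cover we are free to homotope $S$ slightly). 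Then $P_1(\tau)S$ has boundary $P_1(\tau)\mu_1-P_1(\tau)\mu_1'=\partial\Sigma_1-\partial\Sigma_1'$, so $\Sigma_1'':=\Sigma_1-P_1(\tau)S$ is a valid surface bounding $P_1(\tau)\mu_1'$. By the first stage applied to the pair $(M,K)$ with representative $\mu_1'$, $<\Sigma_1'',\Sigma_2,\Sigma_3>_e=<\Sigma_1',\Sigma_2,\Sigma_3>_e$ in the quotient, where $\Sigma_1'$ is any surface chosen for $\mu_1'$. So it remains to see that $<\Sigma_1,\Sigma_2,\Sigma_3>_e-<\Sigma_1'',\Sigma_2,\Sigma_3>_e=<P_1(\tau)S,\Sigma_2,\Sigma_3>_e=P_1(t_1)<S,\Sigma_2,\Sigma_3>_e$ vanishes in the quotient — which it does, being divisible by $P_1(t_1)$, provided the expression $<S,\Sigma_2,\Sigma_3>_e$ makes sense as an element of $\Qti/(t_1t_2t_3-1)$, i.e. the double sum defining it is finite. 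Finiteness is the one genuinely substantive point: it follows because $S$ is a chain in the cover lying over a compact chain in $X$, so its intersection with $\tau^{-k_2}\Sigma_2\cap\tau^{-k_3}\Sigma_3$ is empty for all but finitely many $(k_2,k_3)$ — the same finiteness that makes $<C_1,C_2,C_3>_e$ well-defined at all.

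The main obstacle I expect is bookkeeping rather than conceptual: making all the perturbations simultaneously $\tau$-transverse while respecting the transversality convention $\partial C_i\cap\partial C_j=\emptyset$, and verifying that the Leibniz/Stokes identity for equivariant intersection numbers holds coefficientwise with no boundary correction. The key structural inputs are (i) the vanishing $H_2(\tilde X;\Q)=0$, which lets every rational $2$-cycle bound; (ii) the equivariance formula of Lemma \ref{lemmaformulae}, which converts "bounds $P_j(\tau)\mu_j$" into "divisible by $P_j(t_j)$"; and (iii) the compact support of chains downstairs in $X$, which gives the finiteness needed for every equivariant intersection number that appears. Once these are in place, the two-stage argument closes up and the class in $\Qti/(t_1t_2t_3-1,P_1(t_1),P_2(t_2),P_3(t_3))$ is well-defined.
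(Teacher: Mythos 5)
Your proposal is correct and follows essentially the same route as the paper: in stage one, vary the surface using $H_2(\tilde X;\Q)=0$ (the paper's Lemma \ref{subh2}) plus the equivariant Leibniz rule (the paper's Corollary \ref{subinteq}) and the equivariance formula of Lemma \ref{lemmaformulae}; in stage two, bound the difference of representatives by a $2$-chain $S$, multiply by $P_1(\tau)$, and observe the correction is divisible by $P_1(t_1)$. The only small departures are that you explicitly raise the finiteness issue (which the paper leaves implicit) and you cite $H_2(\tilde X;\Q)=0$ as a standard fact rather than proving it via the Mayer--Vietoris decomposition along a Seifert surface as the paper does in Lemma \ref{subh2}.
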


Let $(M,K)$ be a $\Q$SK-pair. 
Set $\displaystyle \Rd_\delta=\frac{\Qti}{(t_1t_2t_3-1,\delta(t_1),\delta(t_2),\delta(t_3))}$, where $\delta(t)=\delta_{(M,K)}(t)$ 
is the annihilator of $\Al(M,K)$. Define a structure of $\Rd_\delta$-module on $\Ah(M,K)$ by:
$$t_1^{k_1}t_2^{k_2}t_3^{k_3}.\beta_1\otimes\beta_2\otimes\beta_3=t^{k_1}\beta_1\otimes t^{k_2}\beta_2\otimes t^{k_3}\beta_3.$$

Lemmas \ref{lemmaformulae} and \ref{lemmaindsurf} imply:
\begin{theorem} \label{thinttriple}
 Let $(M,K)$ be a $\Q$SK-pair. Let $\delta(t)=\delta_{(M,K)}(t)$. Let $\tilde{X}$ 
be the infinite cyclic covering associated with $(M,K)$. Define a $\Q$-linear map $\phi^{(M,K)} : \Ah(M,K) \to \Rd_\delta$ as follows. 
If $\mu_1$, $\mu_2$, $\mu_3$ are knots in $\tilde{X}$ whose images in $M\setminus K$ are pairwise disjoint, 
let $\Sigma_1$, $\Sigma_2$, $\Sigma_3$ be $\tau$-transverse rational 2-chains such that $\partial \Sigma_j=\delta(\tau)\mu_j$, 
and set $$\phi^{(M,K)}([\mu_1]\otimes[\mu_2]\otimes[\mu_3])=<\Sigma_1,\Sigma_2,\Sigma_3>_e.$$
Then the map $\phi^{(M,K)}$ is well-defined, $\Rd_\delta$-linear, and satisfies: 
\begin{equation} \tag{$\star$} \label{relperm} \phi^{(M,K)}(\otimes_{1\leq j\leq 3} \beta_{\sigma(j)})(t_1,t_2,t_3)=
\sg\phi^{(M,K)}(\otimes_{1\leq j\leq 3} \beta_j)(t_{\sigma^{-1}(1)},t_{\sigma^{-1}(2)},t_{\sigma^{-1}(3)}),\end{equation}
for all permutation $\sigma\in\perm$, with signature $\sg$, and all $(\beta_1,\beta_2,\beta_3)\in\Al(M,K)^3$. 
The isomorphism class of $(\Al(M,K),\phi^{(M,K)})$ is an invariant of the homeomorphism class of $(M,K)$. 
\end{theorem}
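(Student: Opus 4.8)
The plan is to deduce everything from Lemmas \ref{lemmaformulae} and \ref{lemmaindsurf}, the only steps that require a little work being a multilinearity check and the verification that a homeomorphism of pairs lifts equivariantly to the infinite cyclic coverings. \emph{Well-definedness.} First I would record that every class of $\Al(M,K)$ is represented by a knot in the connected $3$-manifold $\tilde X$, and that given $\beta_1,\beta_2,\beta_3$ one can choose knot representatives $\mu_1,\mu_2,\mu_3$ whose images in $X$ are pairwise disjoint, since two curves are disjoint in a $3$-manifold for dimension reasons (perturb downstairs in $X$ and lift). Because $\delta(\tau)\mu_j=0$ in $\Al(M,K)$, the cycle $\delta(\tau)\mu_j$ bounds a rational $2$-chain, and these chains may be taken $\tau$-transverse with pairwise disjoint boundaries $\delta(\tau)\mu_j$ (the images of the $\mu_j$ being disjoint). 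Setting $\phi^{(M,K)}([\mu_1]\otimes[\mu_2]\otimes[\mu_3])=<\Sigma_1,\Sigma_2,\Sigma_3>_e$, Lemma \ref{lemmaindsurf} with $P_j=\delta$ tells us the right-hand side lies in $\Rd_\delta=\Qti/(t_1t_2t_3-1,\delta(t_1),\delta(t_2),\delta(t_3))$ and does not depend on the $\Sigma_j$ nor on the $\mu_j$.

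\emph{Linearity, descent, and relation $(\star)$.} Next I would check that $\phi^{(M,K)}$ is $\Q$-multilinear in $(\beta_1,\beta_2,\beta_3)$: compatibility with rational scalars is immediate, and additivity, say in the first slot, follows by representing $\beta_1+\beta_1'$ as a band sum of disjoint knot representatives of $\beta_1$ and of $\beta_1'$, combining the additivity of $<\cdot,\cdot,\cdot>_e$ with the freedom, granted by Lemma \ref{lemmaindsurf}, to choose the auxiliary surfaces so that the joining band carries no triple point. Hence $\phi^{(M,K)}$ yields a $\Q$-linear map on $\Al(M,K)^{\otimes 3}$. For the descent to $\Ah(M,K)$ and the $\Rd_\delta$-linearity I would use the second formula of Lemma \ref{lemmaformulae}: if $\mu_j$ represents $\beta_j$ then $\tau^{k_j}\mu_j$ represents $t^{k_j}\beta_j$ and $\tau^{k_j}\Sigma_j$ is an admissible $2$-chain for it, so $\phi^{(M,K)}(t^{k_1}\beta_1\otimes t^{k_2}\beta_2\otimes t^{k_3}\beta_3)=t_1^{k_1}t_2^{k_2}t_3^{k_3}\,\phi^{(M,K)}(\beta_1\otimes\beta_2\otimes\beta_3)$; taking $(k_1,k_2,k_3)=(1,1,1)$ and using $t_1t_2t_3=1$ in $\Rd_\delta$ shows $\phi^{(M,K)}$ factors through $\Ah(M,K)$, and combined with $\Q$-linearity this gives $\Rd_\delta$-linearity. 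Relation $(\star)$ is then immediate from the first formula of Lemma \ref{lemmaformulae}, since all the $\Sigma_j$ have codimension $1$ in $\tilde X$.

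\emph{Invariance.} Finally, for a homeomorphism $h:(M,K)\to(M',K')$ of pairs preserving orientations, I would use that $h$ restricts to a homeomorphism $X\to X'$ sending the canonical generator of $H_1(X;\Z)/torsion$ — the meridian of $K$, oriented via the orientations of $M$ and $K$ (it generates the free part since a Seifert surface meets it once) — to that of $H_1(X';\Z)/torsion$, hence lifts to a homeomorphism $\tilde h:\tilde X\to\tilde X'$ with $\tilde h\circ\tau=\tau'\circ\tilde h$. Thus $h_*:\Al(M,K)\to\Al(M',K')$ is a $\Qt$-module isomorphism, so $\delta_{(M,K)}=\delta_{(M',K')}$, $\Rd_\delta=\Rd_{\delta'}$, and $h_*$ induces an isomorphism $\Ah(M,K)\to\Ah(M',K')$. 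Pushing knot representatives $\mu_j$ and $2$-chains $\Sigma_j$ forward by $\tilde h$ gives admissible data for the classes $h_*\beta_j$; since $\tilde h$ is an orientation-preserving homeomorphism intertwining $\tau$ and $\tau'$, it preserves the equivariant triple intersection number, so $<\tilde h(\Sigma_1),\tilde h(\Sigma_2),\tilde h(\Sigma_3)>_e=<\Sigma_1,\Sigma_2,\Sigma_3>_e$. Therefore $\phi^{(M',K')}\circ h_*=\phi^{(M,K)}$ under the identification $\Rd_\delta=\Rd_{\delta'}$, i.e.\ $h_*$ is an isomorphism from $(\Al(M,K),\phi^{(M,K)})$ to $(\Al(M',K'),\phi^{(M',K')})$.

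I expect the only genuinely delicate step to be the multilinearity check, and within it the point that the band joining two representatives can be kept free of triple intersections (using the mobility of the surfaces provided by Lemma \ref{lemmaindsurf}); everything else is bookkeeping around Lemmas \ref{lemmaformulae} and \ref{lemmaindsurf} together with the elementary equivariant lifting of a homeomorphism of pairs to the infinite cyclic coverings.
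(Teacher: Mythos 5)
Your argument is correct and follows the same route the paper (implicitly) takes: the theorem is stated as a direct consequence of Lemmas~\ref{lemmaformulae} and~\ref{lemmaindsurf}, and you have filled in the bookkeeping — multilinearity, descent to $\Ah(M,K)$, and the equivariant lift of a homeomorphism of pairs. Two small remarks. First, the opening claim that \emph{every} class of $\Al(M,K)$ is represented by a knot is not quite right (a rational class need not be integral); what is true, and what your proof actually uses, is that integral classes are represented by knots and span $\Al(M,K)$ over $\Q$, which suffices once $\Q$-multilinearity is established. Second, the geometric care you take to push the joining band off the triple intersection locus is unnecessary, though harmless: the band's contribution is $<\delta(\tau)B,\Sigma_2,\Sigma_3>_e=\delta(t_1)<B,\Sigma_2,\Sigma_3>_e$, which already vanishes in $\Rd_\delta$ because $\delta(t_1)=0$ there; the same observation gives an alternative, purely algebraic proof of multilinearity without arranging the band geometrically.
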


\paragraph{Remark}
So far, we do not need the condition that $K$ is null-homologous. Indeed, we do not even need to work in the exterior of a knot. 
Given an oriented 3-manifold equipped with a canonical infinite cyclic covering $\tilde{X}$, one can make the same construction 
on the torsion submodule of $H_1(\tilde{X};\Q)$, provided that $H_2(\tilde{X};\Q)=0$ (necessary in the proof of Lemma \ref{lemmaindsurf}). 
In this case, the variation under null borromean surgeries can also be computed as in Section \ref{secvar}. 

  \subsection{Variation under null borromean surgeries}

In order to define marked $\Q$SK-pairs, we recall the definition of the Blanchfield form. 

On an Alexander module $\Al(M,K)$, one can define the \emph{Blanchfield form}, or \emph{equivariant linking pairing},  
$\bl : \Al(M,K)\times\Al(M,K) \to \frac{\Q(t)}{\Qt}$, as follows. First define the equivariant linking number of two knots.
\begin{definition}
 Let $(M,K)$ be a $\Q$SK-pair. Let $\tilde{X}$ be the associated infinite cyclic covering. 
Let $\mu_1$ and $\mu_2$ be two knots in $\tilde{X}$ such that $\mu_1\cap \tau^k(\mu_2)=\emptyset$ for all $k\in\mathbb{Z}$.
Let $P\in\Qt$ satisfy $P(\tau)\mu_1=\partial S$, where S is an integral 2-chain in $\tilde{X}$. 
The \emph{equivariant linking number} of $\mu_1$ and $\mu_2$ is 
 $$lk_e(\mu_1,\mu_2)=\frac{1}{P(t)}\sum_{k\in\Z}<S,\tau^k(\mu_2)>t^k\quad \in\Q(t).$$
\end{definition}
One can easily see that $lk_e(\mu_1,\mu_2)\in\frac{1}{\delta(t)}\Qt$, $lk_e(\mu_2,\mu_1)(t)=lk_e(\mu_1,\mu_2)(t^{-1})$, and 
$lk_e(P(\tau)\mu_1,Q(\tau)\mu_2)(t)=P(t)Q(t^{-1})lk_e(\mu_1,\mu_2)(t)$.
Now, if $\beta_1$ (resp. $\beta_2$) is the homology class of $\mu_1$ (resp. $\mu_2$) in $\Al(M,K)$, define $\bl(\beta_1,\beta_2)$ by:
$$\bl(\beta_1,\beta_2)=lk_e(\mu_1,\mu_2)\ mod\ \Qt.$$
The Blanchfield form is {\em hermitian}: 
$$\bl(\beta_1,\beta_2)(t)=\bl(\beta_2,\beta_1)(t^{-1})\quad\textrm{and} \quad \bl(P(t)\beta_1,Q(t)\beta_2)(t)=P(t)Q(t^{-1})\,\bl(\beta_1,\beta_2)(t)$$ 
for all $\beta_1,\beta_2\in\Al(M,K)$ and all $P,Q\in\Qt$. 
Moreover, as proved by Blanchfield \cite{Bla}, it is {\em non degenerate}: $\bl(\beta_1,\beta_2)=0$ for all $\beta_2\in\Al(M,K)$ implies $\beta_1=0$.
The {\em Blanchfield module} of a $\Q$SK-pair is the Alexander module of the pair endowed with its Blanchfield form. 

Fix an abstract Blanchfield module $(\Al,\bl)$ (see \cite{M1} for a characterization of these modules). 
If $\xi$ is a fixed isomorphism from $(\Al,\bl)$ to the Blanchfield module of a $\Q$SK-pair $(M,K)$, 
then $(M,K,\xi)$ is an {\em $(\Al,\bl)$-marked $\Q$SK-pair}. Let $\Ens^m(\Al,\bl)$ be the set of all such $(\Al,\bl)$-marked $\Q$SK-pairs 
up to orientation-preserving and marking-preserving homeomorphism. 
When it does not seem to cause confusion, the image of an element $\beta\in\Al$ by a marking $\xi$ is still denoted by $\beta$, 
and an $(\Al,\bl)$-marked $\Q$SK-pair is called a marked $\Q$SK-pair. Note that the infinite cyclic covering $\tilde{X}$ 
associated with a $\Q$SK-pair $(M,K)$ is well-defined only up to the automorphisms of the covering, which are the $\tau^k$. 
Hence a marking $\xi$ of $(M,K)$ is defined up to multiplication by a power of $t$. 

For a marked $\Q$SK-pair $(M,K,\xi)$, the equivariant triple intersection map defined in Theorem \ref{thinttriple} is well-defined 
on $\displaystyle\Ah=\frac{\Al\otimes\Al\otimes\Al}{(\otimes_{1\leq j\leq 3} \beta_j=\otimes_{1\leq j\leq 3} t\beta_j)}$, 
not only up to isomorphism, and we denote it by $\phi^{(M,K,\xi)}$. 
We aim at studying the variation of the map $\phi^{(M,K,\xi)}$ under null borromean surgeries, that we now define.

\begin{figure}[htb] 
\begin{center}
\begin{tikzpicture} [scale=0.15]
\newcommand{\feuille}[1]{
\draw[rotate=#1,thick,color=gray] (0,-11) circle (5);
\draw[rotate=#1,thick,color=gray] (0,-11) circle (1);
\draw[rotate=#1,line width=8pt,color=white] (-2,-6.42) -- (2,-6.42);
\draw[rotate=#1,thick,color=gray] (2,-1.15) -- (2,-6.42);
\draw[rotate=#1,thick,color=gray] (-2,-1.15) -- (-2,-6.42);
\draw[white,rotate=#1,line width=5pt] (0,0) -- (0,-8);
\draw[rotate=#1] (0,0) -- (0,-8);
\draw[rotate=#1] (0,-11) circle (3);
\draw[->,rotate=#1] (-3,-10.9) -- (-3,-11.1);}
\draw (0,0) circle (1.5);
\draw[->] (0.1,1.5) -- (-0.1,1.5);
\feuille{0}
\feuille{120}
\feuille{-120}
\draw (-4,10) node{$\scriptstyle{\textrm{leaf}}$};
\draw[->] (-5,9) -- (-6.3,7.5);
\draw (11.5,-1) node{$\scriptstyle{\textrm{internal vertex}}$};
\draw[<-] (0.5,-0.1) -- (4,-1);
\draw (4.7,-9.2) node{$\Gamma_0$};
\draw[color=gray] (6.5,-16.8) node{$\Sigma(\Gamma_0)$};
\end{tikzpicture}
\end{center}
\caption{The standard Y-graph}\label{figY}
\end{figure}
The {\em standard Y-graph} is the graph $\Gamma_0\subset \mathbb{R}^2$ represented in Figure \ref{figY}. 
The looped edges of $\Gamma_0$ are the \emph{leaves}. 
The vertex incident to three different edges is the {\em internal vertex}. 
With $\Gamma_0$ is associated a regular neighborhood $\Sigma(\Gamma_0)$ of $\Gamma_0$ in the plane. 
The surface $\Sigma(\Gamma_0)$ is oriented with the usual convention. This induces an orientation of the leaves, 
and an {\em orientation of the internal vertex}, {\em i.e.} a cyclic order of the three edges which meet at this vertex. 
Let $M$ be a 3-manifold and let $h:\Sigma(\Gamma_0)\to M$ be an embedding. The image $\Gamma$ of $\Gamma_0$ is a {\em Y-graph}, endowed with  
its {\em associated surface} $\Sigma(\Gamma)=h(\Sigma(\Gamma_0))$. The Y-graph $\Gamma$ is equipped with 
the framing induced by $\Sigma(\Gamma)$. 

\begin{figure}[hbt] 
\begin{center}
\begin{tikzpicture} [scale=0.15]
\begin{scope}
\newcommand{\feuille}[1]{
\draw[rotate=#1] (0,0) -- (0,-8);
\draw[rotate=#1] (0,-11) circle (3);}
\feuille{0}
\feuille{120}
\feuille{-120}
\draw (3,-4) node{$\Gamma$};
\end{scope}
\draw[very thick,->] (21.5,-3) -- (23.5,-3);
\begin{scope}[xshift=1200]
\newcommand{\bras}[1]{
\draw[rotate=#1] (0,-1.5) circle (2.5);
\draw [rotate=#1,white,line width=8pt] (-0.95,-4) -- (0.95,-4);
\draw[rotate=#1] {(0,-11) circle (3) (1,-3.9) -- (1,-7.6)};
\draw[rotate=#1,white,line width=6pt] (-1,-5) -- (-1,-8.7);
\draw[rotate=#1] {(-1,-3.9) -- (-1,-8.7) (-1,-8.7) arc (-180:0:1)};}
\bras{0}
\draw [white,line width=6pt,rotate=120] (0,-1.5) circle (2.5);
\bras{120}
\draw [rotate=-120,white,line width=6pt] (-1.77,0.27) arc (135:190:2.5);
\draw [rotate=-120,white,line width=6pt] (1.77,0.27) arc (45:90:2.5);
\bras{-120}
\draw [white,line width=6pt] (-1.77,0.27) arc (135:190:2.5);
\draw [white,line width=6pt] (1.77,0.27) arc (45:90:2.5);
\draw (-1.77,0.27) arc (135:190:2.5);
\draw (1.77,0.27) arc (45:90:2.5);
\draw (3.5,-4.5) node{$L$};
\end{scope}
\end{tikzpicture}
\end{center}
\caption{Y-graph and associated surgery link}\label{figborro}
\end{figure}

Let $\Gamma$ be a Y-graph in a 3-manifold $M$. Let $\Sigma(\Gamma)$ be its associated surface. In $\Sigma(\Gamma)\times[-1,1]$, 
associate with $\Gamma$ the six-component link $L$ represented in Figure \ref{figborro}, with the blackboard framing. 
The \emph{borromean surgery on $\Gamma$} is the usual surgery along the framed link $L$. The manifold obtained from $M$ 
by surgery on $\Gamma$ is denoted by $M(\Gamma)$. 

Let $(M,K,\xi)\in\Ens^m(\Al,\bl)$. Let $\Gamma$ be a Y-graph in $M\setminus K$. 
If the map $i_*:H_1(\Gamma;\Q)\to H_1(M\setminus K)$ induced by the inclusion has a trivial image, then $\Gamma$ is {\em null in 
$M\setminus K$}, and the surgery on $\Gamma$ is a {\em null borromean surgery} (null-move in \cite{GR}). 
In this case, the pair $(M,K)(\Gamma)$ obtained from $(M,K)$ by surgery on $\Gamma$ is again a $\Q$SK-pair. 
The surgery on $\Gamma$ induces a canonical isomorphism between the Blanchfield modules of $(M,K)$ and $(M,K)(\Gamma)$ (see \cite[Lemma 2.1]{M3}).
Hence we can define the marked $\Q$SK-pair $(M,K,\xi)(\Gamma)$ obtained from $(M,K,\xi)$ by surgery on $\Gamma$.

In Section \ref{secvar}, we prove:
\begin{proposition} \label{propdiff}
Let $(M,K,\xi)\in\Ens^m(\Al,\bl)$. Let $\Gamma$ be a Y-graph, null in $M\setminus K$. Let $\tilde{\Gamma}$ be a lift of $\Gamma$ 
in the infinite cyclic covering $\tilde{X}$ associated with $(M,K)$. Let $\gamma_1$, $\gamma_2$, $\gamma_3$ be the leaves of $\tilde{\Gamma}$ 
in $\Al$, given in an order induced by the orientation of the internal vertex of $\Gamma$. 
For $\beta_1$, $\beta_2$, $\beta_3$ in $\Al$:
$$\phi^{(M,K,\xi)(\Gamma)}(\beta_1\otimes\beta_2\otimes\beta_3)-\phi^{(M,K,\xi)}(\beta_1\otimes\beta_2\otimes\beta_3)=
\sum_{\sigma\in\perm}\sg\prod_{j=1}^3 \delta(t_j) \bl(\beta_j,[\gamma_{\sigma(j)}])(t_j).$$
\end{proposition}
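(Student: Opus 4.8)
The strategy is to compute both sides of the formula using explicit surfaces in the infinite cyclic coverings, and to track precisely how the surgery modifies the equivariant triple intersection of these surfaces. First I would set up the geometry: since $\Gamma$ is null in $M\setminus K$, the whole surgery datum (the Y-graph $\Gamma$, its associated surface $\Sigma(\Gamma)$, the surgery link $L$) lifts to the infinite cyclic covering $\tilde X$, and one fixes such a lift $\tilde\Gamma$ with leaves $\gamma_1,\gamma_2,\gamma_3$. A standard fact about borromean surgery (following Matveev, and as used in \cite{GR}, \cite{M3}) is that the exterior $\tilde X(\Gamma)$ of the surgered pair is obtained from $\tilde X$ by a cut-and-paste along a genus-$3$ handlebody neighborhood of $\Gamma$, in a way that canonically identifies $H_1$ and is the identity outside a neighborhood of $\tilde\Gamma$; moreover the leaves $\gamma_j$ become null-homologous in the surgered covering, bounding certain explicit surfaces built from the surgery torus cores. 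This is the input I would cite from \cite{M3}.

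\medskip

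\noindent\textbf{Key steps.} Given $\beta_1,\beta_2,\beta_3\in\Al$ (after a marking, elements of both Alexander modules via the canonical isomorphism), represent them by knots $\mu_1,\mu_2,\mu_3$ in $\tilde X$ with pairwise disjoint images in $M\setminus K$, chosen moreover to lie \emph{outside} the neighborhood of $\tilde\Gamma$ affected by the surgery. Then $\delta(\tau)\mu_j=\partial\Sigma_j$ can be realized by the \emph{same} rational $2$-chains $\Sigma_j$ before and after surgery, \emph{except} that inside the surgery region one must modify each $\Sigma_j$ to repair the cut: the new surface $\Sigma_j'$ differs from $\Sigma_j$ by a correction chain supported near $\tilde\Gamma$, and this correction is governed by the linking numbers $lk_e(\mu_j,\gamma_i)$ — equivalently, up to $\Qt$, by $\bl(\beta_j,[\gamma_i])$ multiplied by $\delta(t_j)$ to clear denominators (recall $lk_e(\mu_j,\gamma_i)\in\frac1\delta\Qt$). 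Plugging $\Sigma_1',\Sigma_2',\Sigma_3'$ into the definition of $\langle\cdot,\cdot,\cdot\rangle_e$ and expanding trilinearly, the main term reproduces $\phi^{(M,K,\xi)}(\beta_1\otimes\beta_2\otimes\beta_3)$; the cross terms where exactly one $\Sigma_j$ is replaced by its correction chain produce, after using Lemma \ref{lemmaformulae} to handle the permutation symmetry and the $\tau$-equivariance, exactly the sum $\sum_{\sigma\in\perm}\sg\prod_j\delta(t_j)\bl(\beta_j,[\gamma_{\sigma(j)}])(t_j)$; and the terms where two or three surfaces are replaced by correction chains vanish, because those chains are supported in a region where the relevant intersections can be pushed off each other (the leaves $\gamma_i$ are pairwise unlinked in the standard Y-graph picture, so the corresponding correction chains can be made disjoint). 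One must also check the answer is well-defined in $\Rd_\delta$, which is automatic from Theorem \ref{thinttriple} applied to the surgered pair.

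\medskip

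\noindent\textbf{Main obstacle.} The delicate point is the bookkeeping of the correction chains: one needs an honest geometric description of how a Seifert-type surface for $\delta(\tau)\mu_j$ changes under the cut-and-paste, showing that the change is concentrated near $\tilde\Gamma$ and that its ``size'' in homology is precisely the equivariant linking of $\mu_j$ with the leaves. Getting the coefficient right — the factor $\delta(t_j)$, the variable $t_j$ attached to the $j$-th slot, and the signed sum over $\perm$ coming from the three ways of distributing the ``which surface gets corrected'' role against the cyclic order of the leaves at the internal vertex — requires care, and is where sign and normalization conventions (outward-normal-first, the chosen orientation of the internal vertex, the identification $lk_e$ versus $\bl$ modulo $\Qt$) all have to be reconciled. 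I expect this to be the heart of the argument; the vanishing of the higher cross terms and the passage to the quotient $\Rd_\delta$ should be comparatively routine.
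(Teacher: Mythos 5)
Your overall strategy matches the paper's: fix the same $2$-chains $\Sigma_j$ with $\partial\Sigma_j=\delta(\tau)\mu_j$ on both sides, localize the change of the triple intersection near the lift $\tilde\Gamma$ (and its $\tau$-translates), and re-express the resulting intersection counts via $lk_e$ and then $\bl$. The paper packages the local computation in Lemma~\ref{lemmabd}, which is then applied simultaneously to all $\tau^k\tilde\Gamma$ and converted into the Blanchfield sum, exactly as you sketch for the final step.

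However, the mechanism you describe for the local computation is backwards, and this is a genuine gap. You assert that the contribution comes from the ``cross terms where exactly one $\Sigma_j$ is replaced by its correction chain,'' and that the terms where two or three surfaces are replaced vanish because ``the leaves are pairwise unlinked, so the corresponding correction chains can be made disjoint.'' This is false, and in fact it is the exact opposite of what happens. All triple intersection points outside the surgery handlebody are unchanged by the surgery; the entire difference $<\Sigma_1',\Sigma_2',\Sigma_3'>_{N(\Gamma)}-<\Sigma_1,\Sigma_2,\Sigma_3>_N$ is concentrated inside the reglued genus-$3$ handlebody, i.e.\ it is precisely the triple intersection of the three replacement pieces with each other. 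In the paper's Lemma~\ref{lemmabd}, each intersection point of a leaf $\ell_i$ with a surface $\Sigma_j$ forces one to replace a disk of $\Sigma_j$ by the surface $F_i$ of Figure~\ref{figsurface}, and the whole point of Matveev's model of borromean surgery is that $<F_1,F_2,F_3>=1$: the three replacement pieces do link and do meet in a single point (up to sign). Counting these points yields $\sum_{\sigma\in\perm}\sg\prod_j<\Sigma_j,\ell_{\sigma(j)}>$, which is the ``all-three-corrected'' term, not a single-corrected cross term. If the replacement pieces could really be made pairwise disjoint, the borromean surgery would have no effect on any triple intersection, and the proposition would be vacuous.

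There is also a smaller issue: the ``expand trilinearly'' step is not literally available, since $\Sigma_j$ and $\Sigma_j'$ live in different ambient $3$-manifolds ($\tilde X$ versus $\tilde X(\Gamma)$); the correct way to split the computation is to decompose each surface into the part outside the surgery handlebody (common to both manifolds) and the part inside, and to observe that the difference of triple intersection numbers is supported inside. The paper's hypothesis $\Gamma\cap\Sigma_i\cap\Sigma_j=\emptyset$ for $i\neq j$ in Lemma~\ref{lemmabd} is there exactly to guarantee that the ``inside'' contribution before surgery vanishes, so the whole difference reduces to the triple intersection of the replacement pieces after surgery.
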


The following corollary says that the triple intersection map is a degree one invariant of $(\Al,\bl)$-marked $\Q$SK-pairs 
with respect to null borromean surgeries.
\begin{corollary} \label{corinv}
 Let $(M,K,\xi)\in\Ens^m(\Al,\bl)$. Let $\Gamma_1$ and $\Gamma_2$ be disjoint Y-graphs, null in $M\setminus K$. 
Then the map $\phi^{(M,K,\xi)}-\phi^{(M,K,\xi)(\Gamma_1)}-\phi^{(M,K,\xi)(\Gamma_2)}+\phi^{(M,K,\xi)(\Gamma_1)(\Gamma_2)}$ 
vanishes on $\Ah$.
\end{corollary}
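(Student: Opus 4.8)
The plan is to reduce the statement to Proposition \ref{propdiff} by a direct computation, using the fact that the variation formula there depends only on the Blanchfield module of the ambient marked $\Q$SK-pair and on the homology classes of the leaves of the surgery graph, together with the compatibility of the markings under null borromean surgeries. First I would fix $(\beta_1,\beta_2,\beta_3)\in\Al^3$ and abbreviate, for a marked $\Q$SK-pair $(N,J,\eta)$ obtained from $(M,K,\xi)$ by a (possibly empty) sequence of null borromean surgeries, the value $\phi^{(N,J,\eta)}(\beta_1\otimes\beta_2\otimes\beta_3)$. Applying Proposition \ref{propdiff} to the surgery on $\Gamma_1$ inside $(M,K,\xi)$ gives $\phi^{(M,K,\xi)(\Gamma_1)}-\phi^{(M,K,\xi)}$ as an explicit sum over $\perm$ involving $\delta$, $\bl$, and the leaf classes $[\gamma_{1,j}]$ of a lift $\tilde\Gamma_1$. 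Since $\Gamma_2$ is disjoint from $\Gamma_1$ and null in $M\setminus K$, it is still a Y-graph null in $(M\setminus K)(\Gamma_1)$; applying Proposition \ref{propdiff} again — now inside $(M,K,\xi)(\Gamma_1)$ — gives $\phi^{(M,K,\xi)(\Gamma_1)(\Gamma_2)}-\phi^{(M,K,\xi)(\Gamma_1)}$ as the analogous sum, with the same $\delta$ and the leaf classes of a lift of $\Gamma_2$.

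The key point, and the step I expect to carry the actual content, is that the two "second-order" contributions agree: the term $\phi^{(M,K,\xi)(\Gamma_1)(\Gamma_2)}-\phi^{(M,K,\xi)(\Gamma_1)}$ must equal $\phi^{(M,K,\xi)(\Gamma_2)}-\phi^{(M,K,\xi)}$. For this I would argue that the canonical isomorphism of Blanchfield modules induced by the null surgery on $\Gamma_1$ (from \cite[Lemma 2.1]{M3}), which is precisely what is used to define $(M,K,\xi)(\Gamma_1)$, identifies the leaf classes of (a suitable lift of) $\Gamma_2$ in the covering of $(M,K)(\Gamma_1)$ with the leaf classes $[\gamma_{2,j}]$ of a lift of $\Gamma_2$ in $\tilde X$, and identifies the Blanchfield form and the annihilator $\delta$ on the nose. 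Indeed, since $\Gamma_2$ is disjoint from $\Gamma_1$, the surgery on $\Gamma_1$ can be performed away from $\Gamma_2$ and from the curves representing $\beta_1,\beta_2,\beta_3$; the leaves of $\Gamma_2$ therefore survive unchanged, and since the marking isomorphism is natural with respect to such disjoint modifications, $\bl(\beta_j,[\gamma_{2,\sigma(j)}])$ computed in $(M,K)(\Gamma_1)$ coincides with the same quantity computed in $(M,K)$. Hence the right-hand side of the variation formula is literally the same in both cases, and the two differences coincide as elements of $\Rd_\delta$ (note $\delta_{(M,K)(\Gamma_1)}=\delta_{(M,K)}$, so the target rings agree).

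Granting this, the four-term alternating sum telescopes:
\[
\bigl(\phi^{(M,K,\xi)(\Gamma_1)(\Gamma_2)}-\phi^{(M,K,\xi)(\Gamma_1)}\bigr)-\bigl(\phi^{(M,K,\xi)(\Gamma_2)}-\phi^{(M,K,\xi)}\bigr)=0,
\]
which is exactly the vanishing of $\phi^{(M,K,\xi)}-\phi^{(M,K,\xi)(\Gamma_1)}-\phi^{(M,K,\xi)(\Gamma_2)}+\phi^{(M,K,\xi)(\Gamma_1)(\Gamma_2)}$ on the generator $\beta_1\otimes\beta_2\otimes\beta_3$. Since these generators span $\Ah$ and all four maps are $\Q$-linear, the combination vanishes on all of $\Ah$. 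The main obstacle is thus the bookkeeping of the previous paragraph: making precise that "disjoint from $\Gamma_1$" lets one choose the lift of $\Gamma_2$ and the marking so that the leaf classes and the Blanchfield data are transported identically — essentially a naturality statement for the constructions of \cite[Lemma 2.1]{M3} under surgeries supported in disjoint balls — after which the corollary is immediate.
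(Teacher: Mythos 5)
Your proposal is correct and uses exactly the same idea as the paper: apply Proposition \ref{propdiff} to express one of the differences and then observe that the resulting formula — which depends only on $\delta$, the Blanchfield form, and the leaf classes of the fixed Y-graph — is unchanged by the disjoint null borromean surgery on the other graph, after which the four-term sum telescopes. The paper's own proof is a one-liner phrasing the same fact (it notes that $\phi^{(M,K,\xi)}-\phi^{(M,K,\xi)(\Gamma_1)}$ is unchanged by the surgery on $\Gamma_2$), so your version simply spells out the bookkeeping in slightly more detail.
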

\begin{proof}
 Since the Blanchfield form is preserved by null borromean surgeries, it follows from Proposition \ref{propdiff} 
that the difference $\phi^{(M,K,\xi)}-\phi^{(M,K,\xi)(\Gamma_1)}$ is not modified when performing the surgery on $\Gamma_2$.
\end{proof}

Proposition \ref{propdiff} will allow us to give a description of the space of all equivariant triple intersection maps. More precisely, 
let $\maps$ be the rational vector space of all morphisms of $\Rd_\delta$-modules $\phi:\Ah\to \Rd_\delta$ which satisfy the relation 
(\ref{relperm}) of Theorem \ref{thinttriple}. 
In Section \ref{seccar}, we prove:
\begin{theorem} \label{thcar} 
Define $\phi^{\bullet}: \Ens^m(\Al,\bl)\to\maps$ by $\phi^{\bullet}(M,K,\xi)=\phi^{(M,K,\xi)}$. 
Then the rational vector space $\phi^{\bullet}(\Ens^m(\Al,\bl))$ is isomorphic to 
$\displaystyle \HA=\frac{\Lambda^3\Al}{(\beta_1\wedge\beta_2\wedge\beta_3=t\beta_1\wedge t\beta_2\wedge t\beta_3)}$.
\end{theorem}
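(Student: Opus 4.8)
The plan is to establish Theorem~\ref{thcar} by analyzing the map $\phi^{\bullet}$ together with a natural ``difference'' map coming from Proposition~\ref{propdiff}. First I would show that $\phi^{\bullet}(\Ens^m(\Al,\bl))$ is an affine subspace of $\maps$: by Proposition~\ref{propdiff} and Corollary~\ref{corinv}, the variation of $\phi^{(M,K,\xi)}$ under a null borromean surgery on a Y-graph $\Gamma$ with leaves $\gamma_1,\gamma_2,\gamma_3$ depends only on the classes $[\gamma_j]\in\Al$, and is given by the explicit formula $\sum_{\sigma\in\perm}\sg\prod_{j=1}^3\delta(t_j)\bl(\beta_j,[\gamma_{\sigma(j)}])(t_j)$. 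Since null borromean surgeries are known to act transitively on $\Ens^m(\Al,\bl)$ (this is stated in the introduction, and is the reason the marking set is nonempty and connected under the move), once we fix a basepoint $(M_0,K_0,\xi_0)$, every element of $\phi^{\bullet}(\Ens^m(\Al,\bl))$ is obtained from $\phi^{(M_0,K_0,\xi_0)}$ by adding a finite sum of such variation terms. Hence $\phi^{\bullet}(\Ens^m(\Al,\bl))=\phi^{(M_0,K_0,\xi_0)}+V$, where $V$ is the $\Q$-linear span of the variation maps.

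Next I would identify $V$ with $\HA$. Define, for a triple $(\gamma_1,\gamma_2,\gamma_3)\in\Al^3$, the element $\psi_{\gamma_1,\gamma_2,\gamma_3}\in\maps$ by $\psi_{\gamma_1,\gamma_2,\gamma_3}(\beta_1\otimes\beta_2\otimes\beta_3)=\sum_{\sigma\in\perm}\sg\prod_{j=1}^3\delta(t_j)\bl(\beta_j,[\gamma_{\sigma(j)}])(t_j)$. One checks directly that this lies in $\maps$ (it is $\Rd_\delta$-linear because $\bl$ is $\Qt$-sesquilinear and multiplication by $\delta(t_j)$ kills denominators, and it satisfies the permutation relation~(\ref{relperm}) by the antisymmetrization over $\perm$), and that the assignment $(\gamma_1,\gamma_2,\gamma_3)\mapsto\psi_{\gamma_1,\gamma_2,\gamma_3}$ is $\Q$-multilinear and totally antisymmetric, hence factors through $\Lambda^3_\Q\Al$. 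A further computation using the $\Qt$-sesquilinearity of $\bl$ shows $\psi_{t\gamma_1,t\gamma_2,t\gamma_3}=\psi_{\gamma_1,\gamma_2,\gamma_3}$ (each $\bl(\beta_j,t\gamma_{\sigma(j)})(t_j)=t_j^{-1}\bl(\beta_j,\gamma_{\sigma(j)})(t_j)$, and the product of the three $t_j^{-1}$ is $1$ in $\Rd_\delta$), so the assignment descends to a well-defined linear map $\Psi:\HA\to\maps$ whose image is exactly $V$. It then remains to prove $\Psi$ is injective. For this I would use non-degeneracy of the Blanchfield form: if $\Psi(\gamma_1\wedge\gamma_2\wedge\gamma_3)=0$ in the class, an argument pairing against suitably chosen $\beta_j$ — extracting the ``diagonal'' coefficient and invoking that $\bl$ detects nonzero elements — forces the wedge to be a multiple of $t\gamma_1\wedge t\gamma_2\wedge t\gamma_3 - \gamma_1\wedge\gamma_2\wedge\gamma_3$, i.e.\ zero in $\HA$.

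The hardest step is the injectivity of $\Psi$, equivalently showing that the antisymmetrized Blanchfield pairing map $\Lambda^3_\Q\Al\to\maps$ has kernel exactly the subspace defining $\HA$. The subtlety is that $\bl$ takes values in $\Q(t)/\Qt$ rather than a field, and the target $\Rd_\delta$ is a nontrivial quotient of a Laurent polynomial ring in three variables by the ideal $(t_1t_2t_3-1,\delta(t_1),\delta(t_2),\delta(t_3))$, so one cannot argue purely formally. I expect the right tool is to decompose $\Al$ according to the primary decomposition over $\Qt$ (or base-change to $\Q(t)$-type localizations) where $\bl$ becomes a genuine non-degenerate hermitian form, reduce the claim to each primary component, and there use a standard linear-algebra fact: an alternating trilinear form built by antisymmetrizing a non-degenerate sesquilinear form is nonzero on any nonzero decomposable element modulo the natural ``$t$-shift'' relation. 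I would also double-check the transitivity of null borromean surgeries on $\Ens^m(\Al,\bl)$ is available in the cited form (or deduce the weaker statement that the orbit is all of $\Ens^m(\Al,\bl)$), since the description of $\phi^{\bullet}(\Ens^m(\Al,\bl))$ as a single affine coset depends on it; if only null Lagrangian-preserving surgeries are transitive, one argues instead that they too preserve $\bl$ and that their variation terms lie in the same span $V$, which suffices.
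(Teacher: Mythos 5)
Your approach differs from the paper's and contains gaps that prevent it from establishing the theorem.

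The paper's proof (Subsection~\ref{subphi}) runs in the opposite direction from yours. It does not try to show that the variation map $\Psi$ is injective; instead it constructs, using the structure-theoretic decomposition of $(\Al,\bl)$ from Subsection~\ref{subsecH}, an explicitly injective linear map $\h:\hat{\maps}\hookrightarrow\HA$ (where $\hat{\maps}\subset\maps$ is cut out by linear conditions that every $\phi^{(M,K,\xi)}$ satisfies). The injectivity of $\h$ is elementary linear algebra: for each ordered triple $\mi$, the map $\phi$ is determined by $\phi_{\mi}(\eta_{\mi})\in\Rd(\mi)^a$, and the projection $p_{\mi}\vert_{\Rd(\mi)^a}$ is an isomorphism onto $\HA(\mi)$. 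What requires geometry is the surjectivity of $\h\circ\phi^\bullet$ (Lemma~\ref{lemmasurj}), which is proved exactly by the surgery-variation formula of Proposition~\ref{propdiff} together with the realizability result Proposition~\ref{propreal}. This immediately gives both that $\h$ is an isomorphism and that $\phi^\bullet(\Ens^m(\Al,\bl))=\hat{\maps}$, a genuine vector subspace of $\maps$.

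Two concrete gaps in your argument. First, you conclude that $\phi^\bullet(\Ens^m(\Al,\bl))$ is the affine coset $\phi^{(M_0,K_0,\xi_0)}+V$, but the theorem asserts that this \emph{set is a rational vector space} isomorphic to $\HA$. An affine coset of a vector space is not itself a vector space unless the basepoint lies in $V$, i.e.\ unless $\phi^{(M_0,K_0,\xi_0)}\in V$; you never establish this. It is true, but only as a consequence of what the paper proves, namely that the whole image equals the linear subspace $\hat{\maps}$. So this step needs to be filled in. Second, the injectivity of $\Psi$ is the crux of your argument and you leave it as a plan. This is not a routine deduction from non-degeneracy of $\bl$: because $\bl$ takes values in $\Q(t)/\Qt$ and the target is $\Rd_\delta$, one is effectively being asked to verify, purely algebraically, that the antisymmetrized equivariant pairing has kernel exactly the holonomy relations — precisely the content of the dimension count that the paper obtains \emph{geometrically} by hitting every element of $\Rd(\mi)^a$ via Y-graphs. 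Whether this can be done in a self-contained algebraic way over each primary component is plausible, but it is not in the paper and you have not supplied it.

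Finally, a smaller but real point: null borromean surgeries are \emph{not} transitive on $\Ens^m(\Al,\bl)$; it is null LP-surgeries that are (Theorem~\ref{thM3}). You anticipate this, but the fix requires two facts you do not cite: that LP-surgeries decompose into elementary surgeries (Theorem~\ref{thdecsur}), and that $d$-surgeries (the genus-one elementary surgeries) preserve $\phi$ (Lemma~\ref{lemmainv}), so that the only contribution to $V$ comes from borromean surgeries. Without these, the identification of $V$ with the span of the $\psi_{\gamma_1,\gamma_2,\gamma_3}$ is not justified.
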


   \subsection{Structure of $\HA$} \label{subsecH}

Fix an abstract Blanchfield module $(\Al,\bl)$. In Section \ref{secH}, we study the structure of 
$$\Al_h=\frac{\Al^{\otimes 3}}{(\beta_1\otimes\beta_2\otimes\beta_3=t\beta_1\otimes t\beta_2\otimes t\beta_3)}$$
and 
$$\HA=\frac{\Lambda^3\Al}{(\beta_1\wedge\beta_2\wedge\beta_3=t\beta_1\wedge t\beta_2\wedge t\beta_3)}.$$
For this study, we consider a decomposition of $\Al$ as a direct sum of cyclic submodules, and associated decompositions of $\Ah$ and $\HA$. 
In order to characterize the equivariant triple intersection maps in Section \ref{seccar}, we choose a decomposition adapted 
to the Blanchfield form. 

By \cite[Theorem 1.3]{M1}, the $\Qt$-module $\Al$ is a direct sum, orthogonal with respect to $\bl$, of submodules of these two kinds 
($\pi\in\Qt$ is {\em symmetric} if $\pi(t^{-1})=rt^k\pi(t)$ with $r\in\Q^*$ and $k\in\Z$):
\begin{itemize}
 \item $\frac{\Qt}{(\pi^n(t))}\eta$, with $\pi$ prime and symmetric, or $\pi(t)=t+2+t^{-1}$, $n>0$,
  and $\bl(\eta,\eta)=\frac{a}{\pi^n}$, $a$ symmetric and prime to $\pi$,
 \item $\frac{\Qt}{(\pi^{n}(t))}\eta \oplus \frac{\Qt}{(\pi^{n}(t^{-1}))}\eta'$,
  with either $\pi$ prime, non symmetric, $\pi(-1)\neq 0$, $n>0$, or $\pi(t)=1+t$, $n$ odd, and in both cases 
 $\bl(\eta,\eta')=\frac{1}{\pi^n}$, $\bl(\eta,\eta)=\bl(\eta',\eta')=0$.
\end{itemize}
Define ``Blanchfield duals'' for the generators:
\begin{itemize}
 \item in the first case, set $d(\eta)=\eta$,
 \item in the second case, set $d(\eta)=\eta'$, and $d(\eta')=\eta$.
\end{itemize}
Index all these generators to obtain a family $(\eta_i)_{1\leq i\leq q}$ that generates $\Al$ over $\Qt$.
We finally have a family $(\eta_i)_{1\leq i\leq q}$ in $\Al$, an involution $d$ of that family, and polynomials $a_i$, $\delta_i$ in $\Qt$, 
that satisfy:
\begin{itemize}
 \item $\Al=\bigoplus_{i=1}^q \Al_i$, where $\Al_i= \frac{\Qt}{(\delta_i)} \eta_i$,
 \item $\bl(\eta_i,d(\eta_j))=0$ if $i\neq j$,
 \item each $\delta_i$ is a power of a prime polynomial, 
 \item $\bl(\eta_i,d(\eta_i))=\frac{a_i}{\delta_i}$, where $a_i$ is prime to $\delta_i$.
\end{itemize}
For technical simplicity, we denote by $m_i$ the power that appears when we write $\delta_i$ as a power of a prime polynomial, and we ask that 
$m_i\geq m_{i+1}$ for $1\leq i<q$. Note that $m_i$ is the multiplicity of any complex root of $\delta_i$. 
Normalize the $\delta_i$ so that $\delta_i(t)\in\Q[t]$, $\delta_i(0)\neq0$ and $\delta_i(1)=1$.

The well-known result on the structure of finitely generated modules over a principal ideal domain implies that 
the family of the $\delta_i$'s is well-defined up to permutation. Hence if $\Al=\oplus_{1\leq i\leq q'} \Al'_i$ 
is another decomposition of $\Al$ satisfaying the above conditions, then $q'=q$ and there is a permutation $\sigma$ 
of $\{1,..,q\}$ such that $\Al'_i$ is isomorphic to $\Al_{\sigma(i)}$. But the decomposition $\Al=\bigoplus_{i=1}^q \Al_i$ is not unique. 
For instance, if $\Al=\frac{\Qt}{(\delta)}\eta_1\oplus\frac{\Qt}{(\delta)}\eta_2$ with $\bl(\eta_1,\eta_1)=\bl(\eta_2,\eta_2)$ 
and $\bl(\eta_1,\eta_2)=0$, then the decomposition $\Al=\frac{\Qt}{(\delta)}(\eta_1+\eta_2)\oplus\frac{\Qt}{(\delta)}(\eta_1-\eta_2)$ 
also satisfies the above conditions. When the $\Al_i$'s are fixed, it remains infinitely many possible choices for the generators $\eta_i$. 

For $\mi=(i_1,i_2,i_3)\in\{1,..,q\}^3$, set:
$$\Al(\mi)=\frac{\Al_{i_1}\otimes\Al_{i_2}\otimes\Al_{i_3}}{(\otimes_{1\leq j\leq 3} \beta_j =\otimes_{1\leq j\leq 3} t\beta_j)}.$$
We have: 
$$\Ah=\bigoplus_{\mi\in\{1,..,q\}^3} \Al(\mi).$$

For $\mi=(i_1,i_2,i_3)\in\{1,..,q\}^3$, let $\HA(\mi)$ be the rational vector subspace of $\HA$ generated by the 
$t^{k_1}\eta_{i_1}\wedge t^{k_2}\eta_{i_2}\wedge t^{k_3}\eta_{i_3}$ for all integers $k_1$, $k_2$, $k_3$. We have:
$$\HA=\bigoplus_{1\leq i_1\leq i_2\leq i_3\leq q} \HA(\mi).$$

In Section \ref{secH}, we prove the following results, and we further study the structure of the $\Al(\mi)$ and $\HA(\mi)$ 
in order to bound their dimensions. 
\begin{theorem} \label{thstructureAh}
 Let $\mi=(i_1,i_2,i_3)\in\{1,..,q\}^3$. 
The rational vector space $\Al(\mi)$ is non trivial if and only if there are complex roots $z_1$, $z_2$, $z_3$, of $\delta_{i_1}$, 
$\delta_{i_2}$, $\delta_{i_3}$, respectively, such that $z_1z_2z_3=1$.
\end{theorem}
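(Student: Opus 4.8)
The plan is to reduce the question to linear algebra over $\Q$ via the structure of each cyclic piece. Write $\delta_i(t)=\pi_i(t)^{m_i}$ with $\pi_i$ prime, and let $P_i(t)\in\mathbb{C}[t]$ be the splitting of $\pi_i$; each complex root $z$ of $\delta_{i_j}$ has multiplicity $m_{i_j}$. After extending scalars to $\mathbb{C}$, the module $\Al_i\otimes_\Q\mathbb{C}$ decomposes as a direct sum, over the complex roots $z$ of $\delta_i$, of the local pieces $\mathbb{C}[t]/((t-z)^{m_i})$, on which $t$ acts as multiplication by $z$ plus a nilpotent. Tensoring three such pieces and then passing to the quotient by $\beta_1\otimes\beta_2\otimes\beta_3 = t\beta_1\otimes t\beta_2\otimes t\beta_3$, the operator $t\otimes t\otimes t$ on $\Al_{i_1}\otimes\Al_{i_2}\otimes\Al_{i_3}\otimes\mathbb{C}$ splits along triples $(z_1,z_2,z_3)$ of roots, acting on the corresponding summand as $z_1z_2z_3$ times (unipotent). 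The quotient by the relation $u = (t\otimes t\otimes t)u$ is exactly the cokernel of $\mathrm{id} - t\otimes t\otimes t$; on a summand with $z_1z_2z_3 \neq 1$ this map is invertible (eigenvalue $z_1z_2z_3 \neq 1$, hence $\mathrm{id}-t\otimes t\otimes t$ has no kernel on a finite-dimensional space), so that summand contributes nothing, whereas on a summand with $z_1z_2z_3 = 1$ the map $\mathrm{id}-t\otimes t\otimes t$ is $\mathrm{id}$ minus a unipotent, hence nilpotent, hence has nonzero cokernel (the summand itself is nonzero since each local piece is nonzero). Since $\Al(\mi)\otimes_\Q\mathbb{C}$ is the direct sum of these contributions, $\Al(\mi)\neq 0$ iff $\Al(\mi)\otimes_\Q\mathbb{C}\neq 0$ iff at least one triple $(z_1,z_2,z_3)$ of roots of $(\delta_{i_1},\delta_{i_2},\delta_{i_3})$ satisfies $z_1z_2z_3=1$.

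Concretely I would carry this out in the following order. First, record that $\Al(\mi)$ is the cokernel of the $\Q$-linear endomorphism $1 - t\otimes t\otimes t$ of $\Al_{i_1}\otimes\Al_{i_2}\otimes\Al_{i_3}$, which is a finite-dimensional $\Q$-vector space. Second, note $\dim_\Q\mathrm{coker}(f) = \dim_\Q\ker(f)$ for an endomorphism $f$ of a finite-dimensional space, and that both are unchanged under $-\otimes_\Q\mathbb{C}$; so it suffices to compute $\ker(1 - t\otimes t\otimes t)$ over $\mathbb{C}$. Third, diagonalize the semisimple parts: over $\mathbb{C}$, $\Al_{i_j}\otimes\mathbb{C} = \bigoplus_{z : \delta_{i_j}(z)=0} V_{z}^{(j)}$ with $V_z^{(j)}$ the generalized $z$-eigenspace, on which $t = z(1 + N_z^{(j)})$, $N_z^{(j)}$ nilpotent. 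Fourth, observe the triple tensor product decomposes accordingly, and on the $(z_1,z_2,z_3)$-summand the operator $t\otimes t\otimes t$ equals $z_1z_2z_3$ times a unipotent operator $U$; then $1 - z_1z_2z_3 U$ is invertible when $z_1z_2z_3\neq 1$ (its "constant term" $1 - z_1z_2z_3$ is a unit) and is the nilpotent operator $1-U$ when $z_1z_2z_3=1$, hence has nontrivial kernel. Fifth, conclude by summing over all root triples.

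The only genuinely delicate point is the bookkeeping in the unipotent case: one must check that when $z_1z_2z_3 = 1$ the operator $1 - t\otimes t\otimes t = 1 - U$ really does have nonzero kernel on the nonzero summand $V_{z_1}^{(1)}\otimes V_{z_2}^{(2)}\otimes V_{z_3}^{(3)}$. This is clear because $U$ is unipotent (a tensor product of unipotents $1+N_{z_j}^{(j)}$ is unipotent), so $1-U$ is nilpotent on a nonzero finite-dimensional space and therefore not injective. Everything else is formal: base change, the $\dim\ker=\dim\mathrm{coker}$ identity, and the compatibility of the eigenspace decomposition with tensor products. I would also remark that the same argument simultaneously identifies $\dim_\Q\Al(\mi)$ with the total dimension of the kernels of $1-U$ over all root triples with product $1$, which is what the later dimension bounds in Section~\ref{secH} will need.
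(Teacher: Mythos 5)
Your proof is correct, and it travels the same road as the paper at the level of the key structure (complexify, decompose $\Al_{i_j}\otimes\mathbb{C}$ into generalized eigenspaces $V^{(j)}_{z}$, and condition on whether $z_1z_2z_3=1$), but the way you extract the conclusion is genuinely different and cleaner. The paper's proof of Lemma~\ref{lemmaA} proceeds explicitly with generators $[\mk]=\bigotimes_j(t-z_j)^{k_j}\eta_{i_j\ell_j}$ and the rewritten holonomy relations $hol(\mk)$: in the case $z_1z_2z_3\neq 1$ it runs a decreasing induction on $s(\mk)=k_1+k_2+k_3$ to kill all generators (using that modulo higher-weight terms the relation becomes $[\mk]=(z_1z_2z_3)[\mk]$), and in the case $z_1z_2z_3=1$ it observes that $[(0,0,0)]$ never appears in any relation, so the quotient is nonzero. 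You instead note from the outset that $\Al(\mi)$ is the cokernel of the endomorphism $1-t\otimes t\otimes t$ of the finite-dimensional space $\Al_{i_1}\otimes\Al_{i_2}\otimes\Al_{i_3}$, pass to $\mathbb{C}$, and reduce to a spectral fact: on the $(z_1,z_2,z_3)$-summand, $t\otimes t\otimes t=z_1z_2z_3\,U$ with $U$ unipotent, so $1-t\otimes t\otimes t$ has eigenvalue $1-z_1z_2z_3$ everywhere, hence is invertible when that is nonzero and nilpotent (in particular non-injective on a nonzero space) when it vanishes. This replaces the paper's two-case induction by a single uniform argument and also yields, at no extra cost, the identity $\dim_{\Q}\Al(\mi)=\dim\ker(1-t\otimes t\otimes t)$, which is conceptually attractive. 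What the paper's explicit generator-and-relation bookkeeping buys in return is the scaffolding used immediately afterward: the surviving generator $[(0,0,0)]$, the filtration by $s(\mk)$, and the rewritten relations $hol(\mk)$ are re-used in the proof of Theorem~\ref{thstructureh} and in Lemma~\ref{lemmadima} to get the dimension bounds. So your argument is a clean alternative for the non-triviality statement, but if you intend to recover the later quantitative results as well you will need to re-derive the paper's generator/relation picture anyway.
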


\begin{theorem} \label{thstructureh}
 Let $\mi=(i_1,i_2,i_3)\in\{1,..,q\}^3$. 
The rational vector space $\HA(\mi)$ is non trivial if and only if there are complex roots $z_1$, $z_2$, $z_3$, 
of $\delta_{i_1}$, $\delta_{i_2}$, $\delta_{i_3}$ respectively, which satisfy:
\begin{itemize}
 \item $z_1z_2z_3=1$, 
 \item for $1\leq j\leq 3$, the multiplicity $m_{i_j}$ is at least the number of indices $l\in\{1,2,3\}$ such that 
  $i_l=i_j$ and $z_l=z_j$.
\end{itemize}
\end{theorem}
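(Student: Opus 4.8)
The plan is to pass to complex coefficients, recognise $\HA(\mi)$ as a cokernel of $1-t$, and reduce the statement to an eigenvalue count for the diagonal action of $t$ on a piece of a third exterior power. Concretely: on $\Lambda^3\Al$ the automorphism $t$ acts diagonally, $t\cdot(\beta_1\wedge\beta_2\wedge\beta_3)=t\beta_1\wedge t\beta_2\wedge t\beta_3$, and since decomposable elements span $\Lambda^3\Al$ the subspace one quotients by to form $\HA$ equals $(1-t)\Lambda^3\Al$; hence $\HA=\mathrm{coker}(1-t)$. Let $V(\mi)\subseteq\Lambda^3\Al$ be the $t$-stable subspace spanned by the $t^{k_1}\eta_{i_1}\wedge t^{k_2}\eta_{i_2}\wedge t^{k_3}\eta_{i_3}$. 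The decomposition $\Lambda^3\Al=\bigoplus_{1\le i_1\le i_2\le i_3\le q}V(\mi)$ coming from the standard decomposition of a third exterior power of the direct sum $\Al=\bigoplus_i\Al_i$ is preserved by $1-t$, so the image $\HA(\mi)$ of $V(\mi)$ in $\HA$ is canonically $\mathrm{coker}(1-t|_{V(\mi)})$; since $-\otimes_\Q\C$ is exact, $\HA(\mi)\ne 0$ if and only if $1$ is an eigenvalue of the $\C$-linear map $t$ on $V(\mi)\otimes_\Q\C$.

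Next I would build an explicit triangularizing basis. Writing $\delta_i=\pi_i^{m_i}$ with $\pi_i$ irreducible over $\Q$, the polynomial $\pi_i$ has distinct complex roots, so by the Chinese Remainder Theorem $\Al_i\otimes_\Q\C\cong\bigoplus_{z:\delta_i(z)=0}\C[t^{\pm1}]/((t-z)^{m_i})$, each summand carrying a single Jordan block of size $m_i$ and eigenvalue $z$ for $t$. Choosing Jordan bases yields a basis $(e_{i,z,s})$ of $\Al\otimes_\Q\C$ indexed by triples with $\delta_i(z)=0$, $1\le s\le m_i$, in which $t$ is upper triangular with diagonal $z$ on $e_{i,z,s}$; hence $\Lambda^3_\C(\Al\otimes_\Q\C)$ has a basis of wedges of three distinct $e_{i,z,s}$, on which, in a suitable order, $\Lambda^3 t$ is upper triangular with diagonal entry $z_1z_2z_3$ on $e_{i_1,z_1,s_1}\wedge e_{i_2,z_2,s_2}\wedge e_{i_3,z_3,s_3}$. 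As $\{t^k\eta_{i_j}\}_{k\in\Z}$ spans $\Al_{i_j}$ over $\Q$, one has $V(\mi)\otimes_\Q\C=\mathrm{span}_\C\{v_1\wedge v_2\wedge v_3:v_j\in\Al_{i_j}\otimes_\Q\C\}$, which is exactly the span of those basis wedges $e_{i_1,z_1,s_1}\wedge e_{i_2,z_2,s_2}\wedge e_{i_3,z_3,s_3}$ whose three index triples are pairwise distinct. Since $V(\mi)\otimes_\Q\C$ is $t$-stable and spanned by a subfamily of a triangularizing basis, $1$ is an eigenvalue of $t$ on it if and only if one of these spanning wedges satisfies $z_1z_2z_3=1$.

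It then remains to perform the combinatorial translation: such a wedge exists precisely under the conditions of the statement. Given complex roots $z_1,z_2,z_3$ of $\delta_{i_1},\delta_{i_2},\delta_{i_3}$ with $z_1z_2z_3=1$, the three triples $(i_j,z_j,s_j)$ can be chosen pairwise distinct if and only if, for each value $(i,z)$ occurring among the $(i_j,z_j)$, the number of $j$ with $(i_j,z_j)=(i,z)$ — that is, $\#\{l\in\{1,2,3\}:i_l=i_j,\ z_l=z_j\}$ for any such $j$ — is at most $m_i$, since these are exactly the $j$ competing for distinct indices $s_j\in\{1,\dots,m_i\}$; conversely any spanning wedge with $z_1z_2z_3=1$ exhibits such roots and forces these multiplicity inequalities. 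Together with the previous two steps this proves the theorem. The same argument stripped of the exterior algebra reproves the analogous Theorem~\ref{thstructureAh}, which carries no multiplicity constraint for exactly that reason; I expect this last combinatorial translation — handling the cases $i_1,i_2,i_3$ distinct, two equal, or all equal uniformly through the basis $(e_{i,z,s})$ — to be the only genuinely delicate point.
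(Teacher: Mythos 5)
Your proof is correct, and it takes a genuinely different route from the paper's. You recast $\HA$ as $\mathrm{coker}(1-t)$ for the diagonal action of $t$ on $\Lambda^3\Al$, identify $\HA(\mi)$ with $\mathrm{coker}(1-t|_{V(\mi)})$, and reduce the theorem to deciding whether $1$ is an eigenvalue of $t$ on $V(\mi)\otimes_\Q\C$; a Jordan-basis triangularization of $\Lambda^3 t$ then turns that into the stated condition on roots and multiplicities. The paper instead works with the explicit generators $[\mk]_{\HA}=(t-z_1)^{k_1}\eta_1\wedge(t-z_2)^{k_2}\eta_2\wedge(t-z_3)^{k_3}\eta_3$ and the holonomy relations $hol(\mk)$, observes that each $hol(\mk)$ involves only generators with strictly larger degree sum $s(\mk)$, and exhibits a specific surviving generator case by case ($[0,1,0]_{\HA}$ or $[0,1,2]_{\HA}$ when indices repeat). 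The underlying triangularity fact is the same, but the packaging differs: your cokernel/eigenvalue version is uniform over the three cases (distinct, two equal, all equal) and re-derives Theorem~\ref{thstructureAh} for free, whereas the paper's proof invokes the non-vanishing criterion for $\Al(\mi,\ml)$ (Lemma~\ref{lemmaA}) at two places. Conversely, the paper's explicit generators and relations feed directly into the dimension bounds of Theorem~\ref{thdimH}, which your eigenvalue count does not immediately yield. One step in your write-up worth spelling out is the passage ``spanned by a subfamily of a triangularizing basis, hence its eigenvalues are the corresponding diagonal entries'': this is correct because a $t$-stable subspace spanned by a \emph{subset of the basis} is itself put in triangular form by that subset with the inherited diagonal, but the argument uses both the $t$-stability and the fact that the spanning family is an actual sub-basis rather than merely a spanning set, so it deserves a sentence.
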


\paragraph{Example}
If all the roots of the Alexander polynomial are simple, and if the product of three of them is always different from 1, then 
$\HA=0$. It is the case, for instance, of the trefoil knot, and of the figure eight knot, in $S^3$. We will study non trivial examples 
in Section \ref{secH}.

\subsection{Degree one invariants of marked $\Q$SK-pairs} \label{subsecgrad}

In this subsection, we describe the finiteness and universality properties of the equivariant triple intersection map. 
Let us define Lagrangian-preserving surgeries. 
\begin{definition}
 For $g\in \N$, a \emph{genus $g$ rational homology handlebody} ($\Q$HH) 
is a 3-manifold which is compact, oriented, and which has the same homology with rational coefficients 
as the standard genus $g$ handlebody.
\end{definition}
Such a $\Q$HH is connected, and its boundary is necessarily homeomorphic to the standard genus $g$ surface.

\begin{definition} \label{deflag}
The \emph{Lagrangian} $\mathcal{L}_A$ of a $\Q$HH $A$ is the kernel of the map 
$$i_*: H_1(\partial A;\Q)\to H_1(A;\Q)$$
induced by the inclusion. Two $\Q$HH's $A$ and $B$ have \emph{LP-identified} boundaries if $(A,B)$ is equipped with a homeomorphism 
$h:\partial A\to\partial B$ such that $h_*(\mathcal{L}_A)=\mathcal{L}_B$.
\end{definition}
The Lagrangian of a $\Q$HH $A$ is indeed a Lagrangian subspace of $H_1(\partial A;\Q)$ with respect to the intersection form.

Let $M$ be a $\Q$HS, let $A\subset M$ be a $\Q$HH, and let $B$ be a $\Q$HH whose boundary is LP-identified 
with $\partial A$. Set $M(\frac{B}{A})=(M\setminus Int(A))\cup_{\partial A=_h\partial B}B$. We say that the $\Q$HS 
$M(\frac{B}{A})$ is obtained from $M$ by \emph{Lagrangian-preserving surgery}, or \emph{LP-surgery}. 
 
Given a $\Q$SK-pair $(M,K)$, a \emph{$\Q$HH null in $M\setminus K$} is a $\Q$HH $A\subset M\setminus K$ such that 
the map $i_* : H_1(A;\Q)\to H_1(M\setminus K;\Q)$ induced by the inclusion has a trivial image.
A \emph{null LP-surgery} on $(M,K)$ is an LP-surgery $(\frac{B}{A})$ such that $A$ is null in $M\setminus K$. 
The $\Q$SK-pair obtained by surgery is denoted by $(M,K)(\frac{B}{A})$. 
Since a null LP-surgery induces a canonical isomorphism beetween the Blanchfield modules of the involved pairs (see Theorem \ref{thM3} below), 
this move is well-defined on marked $\Q$SK-pairs. The marked $\Q$SK-pair obtained from a marked $\Q$SK-pair $(M,K,\xi)$ 
by a null LP-surgery $(\frac{B}{A})$ is denoted by $(M,K,\xi)(\frac{B}{A})$. 

A borromean surgery along a Y-graph $\Gamma$ in a $3$-manifold $N$ can be realized by cutting a regular neighborhood of $\Gamma$ in $N$ 
(a genus 3 standard handlebody), and gluing another genus 3 handlebody instead, in a Lagrangian-preserving way (see \cite{Mat}). 
Hence borromean surgeries are a specific kind of LP-surgeries. 

Let $\F^m_0$ be the rational vector space generated by all marked $\Q$SK-pairs up to orien\-ta\-tion-preserving homeomorphism. 
Let $\F^m_n$ denote the subspace of $\F^m_0$ generated by the 
$$[(M,K,\xi);(\frac{B_i}{A_i})_{1\leq i \leq n}]=\sum_{I\subset \{ 1,...,n\}} (-1)^{|I|} (M,K,\xi)((\frac{B_i}{A_i})_{i\in I})$$ 
for all marked $\Q$SK-pairs $(M,K,\xi)$ and all families of $\Q$HH's $(A_i,B_i)_{1\leq i \leq n}$, where the $A_i$ are null in $M\setminus K$ 
and disjoint, and each $\partial B_i$ is 
LP-identified with the corresponding $\partial A_i$. Since $\F^m_{n+1}\subset \F^m_n$, this defines a filtration. 
\begin{theorem}[\cite{M3} Theorem 1.13] \label{thM3}
 A null LP-surgery induces a canonical isomorphism between the Blanchfield modules of the involved $\Q$SK-pairs. 
Conversely, any isomorphism between the Blanchfield modules of two $\Q$SK-pairs 
can be realized by a finite sequence of null LP-surgeries, up to multiplication by a power of $t$.
\end{theorem}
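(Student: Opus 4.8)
The two assertions call for different arguments, treated in turn. For the first, one works in the infinite cyclic covering. Let $(\frac{B}{A})$ be a null LP-surgery on a $\Q$SK-pair $(M,K)$. Since the $\Q$HH $A$ is null in $M\setminus K$, it lifts to the infinite cyclic covering $\tilde X$ of the exterior of $K$; fixing a lift $\tilde A$, one has $p^{-1}(A)=\bigsqcup_{k\in\Z}\tau^k\tilde A$, and the infinite cyclic covering $\tilde X'$ of the surgered exterior is obtained from $\tilde X$ by replacing each $\tau^k\tilde A$ by a copy $\tau^k\tilde B$ of $B$, glued along the LP-homeomorphism $h$. Writing $U$ for the part common to $\tilde X$ and $\tilde X'$, excision gives $H_\ast(\tilde X,U;\Q)\cong\bigoplus_k H_\ast(\tilde A,\partial\tilde A;\Q)$ and $H_\ast(\tilde X',U;\Q)\cong\bigoplus_k H_\ast(\tilde B,\partial\tilde B;\Q)$; these vanish in degree $1$ for a $\Q$HH, so the long exact sequences of the pairs present $H_1(\tilde X;\Q)$ and $H_1(\tilde X';\Q)$ as cokernels of maps $H_2(-,U;\Q)\to H_1(U;\Q)$. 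The image of $H_2(\tilde A,\partial\tilde A;\Q)\to H_1(\partial\tilde A;\Q)$ is, by definition, the Lagrangian $\mathcal L_A$, and likewise for $B$; since the gluing identifies $\mathcal L_A$ with $\mathcal L_B$, these two maps into $H_1(U;\Q)$ have the same image, and this holds $\tau$-equivariantly. The resulting canonical $\Qt$-isomorphism $\Al(M,K)\cong\Al\bigl((M,K)(\frac{B}{A})\bigr)$ carries $\bl$ to $\bl'$ by the equivariant-linking-number computation already used for borromean surgeries in \cite[Lemma~2.1]{M3}, the possible defect being supported in the glued-in handlebodies, where it vanishes for the same homological reasons.

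For the converse, I would argue by reduction to a model. By the classification of abstract Blanchfield modules \cite{M1}, each isomorphism class $(\Al,\bl)$ is realized by an explicit model $\Q$SK-pair $(M_\bullet,K_\bullet)$ assembled from elementary pieces, one for each cyclic summand $\frac{\Qt}{(\pi^n)}\eta$ of the orthogonal decomposition recalled in Subsection~\ref{subsecH}. It then suffices to show that every $(\Al,\bl)$-marked $\Q$SK-pair $(M,K,\xi)$ can be joined to $(M_\bullet,K_\bullet,\mathrm{id})$ by a finite sequence of null LP-surgeries inducing, up to a power of $t$, the marking $\xi$: a prescribed isomorphism $\psi$ between the Blanchfield modules of two pairs is then recovered by marking the source by the identity and the target by $\psi^{-1}$. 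To build such a sequence I would first reduce to $M=S^3$, using that any $\Q$HS is related to $S^3$ by LP-surgeries whose reverse moves are supported in $\Q$-homology balls and can be kept disjoint from $K$, hence are null; these preserve the Blanchfield module by the first part. I would then bring $(S^3,K)$ and its marking onto the model by null borromean surgeries (on Y-graphs null in $S^3\setminus K$) and by null LP-surgeries on the elementary $\Q$HH's of the model, relying on a local lemma computing the effect of each such elementary surgery on the Blanchfield module, together with an additivity statement for disjoint null surgeries.

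Finally, one must realize an \emph{arbitrary} automorphism of $(\Al,\bl)$ by a loop of null LP-surgeries based at the model; otherwise one would only reach the isomorphism class and not every isomorphism. Decomposing along the orthogonal cyclic decomposition, an automorphism is a product of permutations of isomorphic summands and, on a single summand $\frac{\Qt}{(\pi^n)}\eta$, of multiplications $\eta\mapsto u(t)\eta$ by units $u$ of $\Qt/(\pi^n)$ preserving $\bl$ (for a summand of the first kind, $u(t)u(t^{-1})a\equiv a$ modulo $\pi^n$, where $\bl(\eta,\eta)=\frac{a}{\pi^n}$); each generator should again be realized by an explicit null LP-surgery through a local model, the indeterminacy by powers of $t$ reflecting the freedom in the lift to $\tilde X$. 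The hard part, and the technical core of the whole argument, is precisely this family of local realization lemmas: for each elementary modification of the Blanchfield module one must exhibit a genuine $\Q$HH $A$, null in the knot exterior, together with a Lagrangian-identified filling $B$ realizing exactly that modification, and keep track of the induced isomorphism on the nose rather than merely up to isomorphism --- the exceptional primes $\pi(t)=1+t$ and $\pi(t)=t+2+t^{-1}$ of \cite{M1} requiring separate treatment. By contrast, the first assertion and the combinatorics assembling the local moves into the global statement are comparatively routine.
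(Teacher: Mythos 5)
The statement you are proving is quoted in the paper from the author's earlier article \cite{M3} (whose title --- \emph{Rational Blanchfield forms, S-equivalence, and null LP-surgeries} --- already signals the method), so there is no in-paper proof to compare against; I assess your sketch on its own and against what the cited source plausibly does.

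Your argument for the forward direction is essentially right and standard: $A$ lifts to $\tilde X$ because it is null, excision gives $H_*(\tilde X,U;\Q)\cong\bigoplus_k H_*(\tilde A,\partial\tilde A;\Q)$, the vanishing $H_1(\tilde A,\partial\tilde A;\Q)=0$ for a $\Q$HH presents $H_1(\tilde X;\Q)$ as a quotient of $H_1(U;\Q)$ by the image of the Lagrangians, and LP-identification makes that subspace agree for $A$ and $B$, $\tau$-equivariantly. The passage from the module isomorphism to the isometry of $\bl$ is only gestured at, but the gesture points in a correct direction.

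For the converse your strategy is genuinely different from the one suggested by the title of \cite{M3} and, more importantly, it has a real gap. You propose a model-based argument: build a model pair realizing $(\Al,\bl)$, reduce any $(M,K,\xi)$ to it, and realize every automorphism of $(\Al,\bl)$ by a loop of null LP-surgeries at the model. The expected published argument instead passes through rational Seifert surfaces and S-equivalence of Seifert forms, in the spirit of Naik--Stanford: an isomorphism of Blanchfield modules is first translated into algebraic S-equivalence of $\Q$-Seifert forms, and then S-equivalence moves are realized geometrically by null LP-surgeries; this avoids ever having to realize an abstract automorphism of $(\Al,\bl)$ as a loop and gives the ``up to multiplication by a power of $t$'' indeterminacy for free from the ambiguity in choosing the covering transformation. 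In your route, two points are underjustified. First, the reduction ``to $M=S^3$ via null LP-surgeries supported in $\Q$-homology balls'' cannot be achieved by genus-$0$ surgeries alone: a ball LP-surgery replaces a ball with a rational homology ball and realizes precisely a connected sum with a $\Q$HS, so by prime decomposition you cannot undo a nontrivial summand this way; you would need higher-genus null LP-surgeries (for instance on the complement $V=M\setminus(\Sigma\times[-1,1])$ of a Seifert surface, which the paper notes is a $\Q$HH null in $M\setminus K$), and that requires an argument that such a surgery can be arranged to produce $S^3$. Second, and more seriously, you explicitly defer ``the family of local realization lemmas'' that would realize each elementary modification of the Blanchfield module --- and in particular every self-isometry of a single cyclic summand, including the delicate primes $\pi(t)=1+t$ and $\pi(t)=t+2+t^{-1}$ --- by an actual null LP-surgery inducing that exact map on the nose. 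Since the whole content of the converse lives there, the proposal as written establishes the statement only modulo its hardest step.
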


This result implies in particular that the fitration $(\F^m_n)_{n\in\N}$ splits in the following 
way. For a given Blanchfield module $(\Al,\bl)$, let $\F^m_0(\Al,\bl)$ be the subspace of $\F^m_0$ generated 
by the $(\Al,\bl)$-marked $\Q$SK-pairs. Let $(\F^m_n(\Al,\bl))_{n\in\N}$ be the filtration defined on $\F^m_0(\Al,\bl)$ 
by null LP-surgeries. Then, for $n\in\N$, $\F^m_n$ is the direct sum over all isomorphism classes of Blanchfield modules of the 
$\F^m_n(\Al,\bl)$. Set $\G^m_n(\Al,\bl)=\F^m_n(\Al,\bl) / \F^m_{n+1}(\Al,\bl)$ 
and $\G^m(\Al,\bl)=\oplus_{n\in\N} \G^m_n(\Al,\bl)$. 

An invariant of $(\Al,\bl)$-marked $\Q$SK-pairs is a map defined on $\Ens^m(\Al,\bl)$. Given such an invariant $\lambda$ valued 
in an abelian torsion free group $Z$, one can extend it into a $\Q$-linear map $\tilde{\lambda}:\F^m_0(\Al,\bl)\to\Q\otimes_\Z Z$. 
The invariant $\lambda$ is a 
\emph{finite type invariant of degree at most $n$ of $(\Al,\bl)$-marked $\Q$SK-pairs with respect to null LP-surgeries} 
if $\tilde{\lambda}(\F^m_{n+1}(\Al,\bl))=0$. 
The dual of the quotient $\G^m_n(\Al,\bl)$ is naturally identified with the 
space of all rational valued finite type invariants of degree $n$ of marked $\Q$SK-pairs with respect to null LP-surgeries, 
hence a description of $\G^m_n(\Al,\bl)$ provides a description of this space of invariants. Theorem~\ref{thM3} implies $\G^m_0(\Al,\bl)\cong\Q$. 

We studied in \cite[Chap. 6]{Mt} the filtration associated to $\Q$SK-pairs (without marking) and defined a graded space 
of diagrams which surjects onto the corresponding graded space $\G(\Al,\bl)$. This work can be adapted to marked $\Q$SK-pairs in order 
to define a graded space of diagrams and a surjective map from this space to $\G^m(\Al,\bl)$. 
We focuse here on the degree one case, and we give a complete description of $\G^m_1(\Al,\bl)$ 
for an arbitrary isomorphism class $(\Al,\bl)$ of Blanchfield modules. 

In Subsection \ref{subphi}, in order to prove Theorem \ref{thcar}, we construct an isomorphism 
$\h:\phi^{\bullet}(\Ens^m(\Al,\bl))\fl{\scriptstyle{\cong}}\HA$. 
Set $\hs=\h\circ\phi^{\bullet} : \Ens^m(\Al,\bl) \to \HA$. The following result 
is a consequence of Theorem \ref{thdecsur}, Corollary \ref{corinv} and Lemma \ref{lemmainv}.
\begin{proposition} \label{propdegree1}
 The map $\hs : \Ens^m(\Al,\bl) \to \HA$ is a degree at most one invariant of $(\Al,\bl)$-marked $\Q$SK-pairs with respect to null LP-surgeries. 
\end{proposition}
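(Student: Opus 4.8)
The plan is to reduce Proposition~\ref{propdegree1} to the already-established borromean case by decomposing null LP-surgeries into null borromean surgeries. First I would observe that $\h$ is a $\Q$-linear isomorphism, so the $\Q$-linear extension of $\hs=\h\circ\phi^{\bullet}$ to $\F^m_0(\Al,\bl)$ is $\widetilde{\hs}=\h\circ\widetilde{\phi^{\bullet}}$, and that $\hs$ is a degree at most one invariant with respect to null LP-surgeries if and only if $\widetilde{\phi^{\bullet}}(\F^m_2(\Al,\bl))=0$. The space $\F^m_2(\Al,\bl)$ is spanned by the brackets $[(M,K,\xi);(\tfrac{B_1}{A_1}),(\tfrac{B_2}{A_2})]$ with $(M,K,\xi)$ an $(\Al,\bl)$-marked $\Q$SK-pair and $A_1,A_2$ disjoint $\Q$HH's, null in $M\setminus K$; writing $P=(M,K,\xi)$ for brevity,
$$\widetilde{\phi^{\bullet}}\bigl([P;(\tfrac{B_1}{A_1}),(\tfrac{B_2}{A_2})]\bigr)=\Bigl(\phi^{P(\frac{B_1}{A_1})(\frac{B_2}{A_2})}-\phi^{P(\frac{B_1}{A_1})}\Bigr)-\Bigl(\phi^{P(\frac{B_2}{A_2})}-\phi^{P}\Bigr),$$
so the statement amounts to showing that the variation of $\phi$ under the surgery $(\tfrac{B_2}{A_2})$ is the same before and after the disjoint surgery $(\tfrac{B_1}{A_1})$, once the Blanchfield modules are identified via the canonical isomorphism attached to $(\tfrac{B_1}{A_1})$ by Theorem~\ref{thM3}.

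Next I would apply Theorem~\ref{thdecsur} to realize $(\tfrac{B_2}{A_2})$ by a finite sequence of null borromean surgeries along Y-graphs $\Gamma_1,\dots,\Gamma_r$ supported in a regular neighborhood of $A_2$ in $M\setminus K$, hence disjoint from $A_1$. Telescoping along $\Gamma_1,\dots,\Gamma_r$ rewrites each of the two parenthesized variations above as a sum over $s$ of elementary variations $\phi^{Q(\Gamma_s)}-\phi^{Q}$, whose base pairs are $Q_{s-1}=P(\Gamma_1)\cdots(\Gamma_{s-1})$, respectively $Q_{s-1}(\tfrac{B_1}{A_1})=P(\tfrac{B_1}{A_1})(\Gamma_1)\cdots(\Gamma_{s-1})$ (using that $(\tfrac{B_1}{A_1})$ commutes with the $\Gamma_i$, being disjoint from them). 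By Proposition~\ref{propdiff}, the elementary variation $\phi^{Q(\Gamma_s)}-\phi^{Q}$ depends only on $\delta$, on $\bl$, and on the classes in $\Al$ of the leaves of $\Gamma_s$. Since $\Gamma_s$ is disjoint from $A_1$, Lemma~\ref{lemmainv} identifies those leaf classes across the canonical isomorphism induced by $(\tfrac{B_1}{A_1})$; hence the $s$-th terms of the two telescopes coincide, the purely borromean instance of this being exactly Corollary~\ref{corinv}. Summing over $s$, the two parenthesized variations are equal, so $\widetilde{\phi^{\bullet}}$ vanishes on each generating bracket, whence $\widetilde{\phi^{\bullet}}(\F^m_2(\Al,\bl))=0$ and $\hs$ is a degree at most one invariant of marked $\Q$SK-pairs with respect to null LP-surgeries.

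The main obstacle I anticipate is the geometric bookkeeping behind the reduction. One must produce the Y-graph decomposition of $(\tfrac{B_2}{A_2})$ from Theorem~\ref{thdecsur} with enough control of position so that each $\Gamma_s$ is disjoint from $A_1$ and stays null in the knot complement after the preceding surgeries, and so that $A_1$ remains a null $\Q$HH all along the telescope; and one must know that the canonical Blanchfield-module isomorphisms of Theorem~\ref{thM3} are natural with respect to the inclusions of the relevant submanifolds and of their infinite cyclic coverings, which is what makes Lemma~\ref{lemmainv} applicable at each stage. Granting these compatibilities, the remainder of the argument is the formal telescoping sketched above.
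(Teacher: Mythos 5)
Your telescoping strategy and your central observation — that Proposition~\ref{propdiff} makes the variation of $\phi$ under a null borromean surgery a function of Blanchfield data alone, which is preserved by any disjoint null LP-surgery, so that the bracket of a Y-graph against an arbitrary disjoint LP-surgery vanishes (extending Corollary~\ref{corinv}) — are correct and match the spirit of the paper. However, there is a genuine gap in how you invoke Theorem~\ref{thdecsur}. That theorem decomposes an LP-surgery into \emph{elementary} surgeries, which by definition are connected sums (genus $0$), $d$-surgeries (genus $1$), \emph{and} borromean surgeries (genus $3$) together with their inverses; it does not give a decomposition into borromean surgeries alone. The purely-borromean statement you use is Proposition~\ref{propAL}, which only applies to $\Z$HH's, not to the $\Q$HH's $A_2,B_2$ appearing here. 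So the Y-graphs $\Gamma_1,\dots,\Gamma_r$ you telescope over do not exhaust the decomposition: there will in general also be $d$-surgeries and connected sums, and your argument says nothing about those contributions.

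To close the gap one must separately observe that for the genus $1$ pieces Lemma~\ref{lemmainv} says a null $d$-surgery preserves $\phi$ outright, so the corresponding terms of both telescopes are identically zero; and for the genus $0$ pieces one needs the analogous (and elementary, though unstated) fact that a null connected sum preserves $\phi$. Only then does the summand-by-summand comparison go through. Note also that your invocation of Lemma~\ref{lemmainv} is misplaced: you cite it as ``identifying leaf classes across the canonical isomorphism induced by $(\frac{B_1}{A_1})$,'' but the lemma says nothing of the sort — it is precisely the statement that $d$-surgeries preserve the equivariant triple intersection map, which is the ingredient needed to kill the genus $1$ terms, not to match leaf classes. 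The matching of leaf classes is instead a naturality property of the canonical isomorphism of Theorem~\ref{thM3}, coming from the fact that it is induced by the identity on the complement of the surgery region. The paper's own argument is the same in outline — decompose via Theorem~\ref{thdecsur}, handle genus $3$ via Corollary~\ref{corinv} and genus $1$ via Lemma~\ref{lemmainv} — but it does account for the non-borromean elementary pieces, which your proposal silently drops.
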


For a prime integer $p$, define a map $\nu_p : \F^m_0\to\Q$ by $\nu_p(M,K,\xi)=v_p(|H_1(M;\Z)|)$, where $v_p$ is the $p$-adic valuation, 
and $|.|$ denotes the cardinality. By \cite[Proposition~0.8]{M2}, the $\nu_p$ are degree 1 invariants of $\Q$HS's, hence they are 
also degree 1 invariants of $\Q$SK-pairs. The following result is obtained in Section \ref{secG1} as a consequence of Propositions~\ref{propborro} 
and~\ref{propG1}. 
\begin{theorem} \label{thG1}
Fix a Blanchfield module $(\Al,\bl)$. 
Set $\displaystyle \HA\hspace{-1.5pt}=\hspace{-1.5pt}\frac{\Lambda^3\Al}
{(\beta_1\wedge\beta_2\wedge\beta_3\hspace{-2pt}=\hspace{-2pt}t\beta_1\wedge t\beta_2\wedge t\beta_3)}$. 
Let $(M,K,\xi)\in\Ens^m(\Al,\bl)$. For $p$ prime, let $B_p$ be a rational homology ball such that $H_1(B_p;\Z)=\Z/p\Z$. Then:
$$\G^m_1(\Al,\bl)\cong\left(\bigoplus_{p\textrm{ prime}} \Q [(M,K,\xi);\frac{B_p}{B^3}]\right)\bigoplus\HA.$$
\end{theorem}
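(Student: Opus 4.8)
The plan is to prove Theorem \ref{thG1} by identifying $\G^m_1(\Al,\bl) = \F^m_1(\Al,\bl)/\F^m_2(\Al,\bl)$ with the claimed direct sum, using the two ingredients already at our disposal: the family of $p$-adic valuation invariants $\nu_p$ and the equivariant triple intersection map, packaged as $\hs : \Ens^m(\Al,\bl) \to \HA$. First I would set up the map in the ``easy'' direction. The degree one invariants $\nu_p$ (for $p$ prime) and $\hs$ (by Proposition \ref{propdegree1}) together define a $\Q$-linear map on $\F^m_1(\Al,\bl)$ that vanishes on $\F^m_2(\Al,\bl)$, hence a well-defined map out of $\G^m_1(\Al,\bl)$ to $\left(\bigoplus_{p} \Q\right) \oplus \HA$; the point is to show this is an isomorphism. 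The normalization choices are fixed by the right-hand side: on the generator $[(M,K,\xi); \frac{B_p}{B^3}]$, the invariant $\nu_q$ should detect exactly the component indexed by $q = p$, and $\hs$ should vanish (a rational homology ball surgery that does not touch the knot exterior in a way that changes $\tilde{X}$ leaves $\phi$ unchanged, by Proposition \ref{propdiff} since the Blanchfield form is unchanged), while on a pure Y-graph surgery $\nu_p$ vanishes (borromean surgery preserves $|H_1|$) and $\hs$ realizes a prescribed element of $\HA$.

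Second, I would prove surjectivity. For the $\HA$ summand, the construction of $\h$ in Subsection \ref{subphi} together with Proposition \ref{propdiff} shows that the variation of $\hs$ under a null borromean surgery along a Y-graph $\Gamma$ with leaves $\gamma_1,\gamma_2,\gamma_3$ is, up to the quotient defining $\HA$, the class of $\sum_\sigma \sg\, \delta(t_{?})\cdots$, which under $\h^{-1}$ corresponds to $[\gamma_1]\wedge[\gamma_2]\wedge[\gamma_3]$ (or a fixed nonzero multiple thereof). Since any triple of elements of $\Al$ that can be represented by knots in $\tilde{X}$ arises as the leaves of a null Y-graph — this is where I expect to invoke a realization lemma from Section \ref{secvar} or \ref{seccar}, the same one underlying Theorem \ref{thcar} — the image of these differences spans all of $\HA$. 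For the $\bigoplus_p \Q$ summand, surjectivity is immediate since $[(M,K,\xi);\frac{B_p}{B^3}]$ maps to a standard basis vector of $\bigoplus_p\Q$ under $(\nu_q)_q$ and to $0$ in $\HA$. Combining, the map $\G^m_1(\Al,\bl) \to (\bigoplus_p \Q)\oplus\HA$ is onto, and conversely the elements $[(M,K,\xi);\frac{B_p}{B^3}]$ and the Y-graph differences span $\G^m_1(\Al,\bl)$ — this last spanning statement is exactly Proposition \ref{propborro}, which presumably says that modulo $\F^m_2$ every degree one generator is a combination of rational-homology-ball surgeries and null borromean surgeries.

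Third, I would prove injectivity, i.e.\ that the relations among the generators $[(M,K,\xi);\frac{B_p}{B^3}]$ and the Y-graph differences in $\G^m_1(\Al,\bl)$ are exactly the relations in $(\bigoplus_p\Q)\oplus\HA$. One inclusion is automatic; for the other I would argue that the only relations forced are (a) the symmetry/skew-symmetry relation (\ref{relperm}) and the $t$-invariance relation, both of which are built into $\HA$, and (b) that the $B_p$-generators are independent of the Y-graph generators, which follows because $\nu_p$ separates them from the $\hs$-detected part and $\hs$ separates the Y-graph part from the ball part. More precisely, since we already have the well-defined map $\G^m_1 \to (\bigoplus_p\Q)\oplus\HA$ which is surjective, and the target has the ``expected'' dimension coming from the generating set, a dimension/rank count (generators mapping onto a basis, no extra relations in the target) closes the argument; this is the content of Proposition \ref{propG1}, which I would cite. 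I expect the main obstacle to be establishing that there are no further relations in $\G^m_1(\Al,\bl)$ beyond those visible in the target — equivalently, that the combined invariant $(\nu_p)_p \oplus \hs$ is injective on $\G^m_1(\Al,\bl)$. This requires showing that any degree one invariant is determined by these data, which is really the heart of Section \ref{secG1} (Proposition \ref{propG1}): one must control the effect of an arbitrary null LP-surgery $(\frac{B}{A})$ modulo degree two, reducing it via Theorem \ref{thM3} and the LP-calculus to a combination of a homology-ball piece (captured by the $\nu_p$, using \cite{M2}) and a borromean piece (captured by $\hs$, using Proposition \ref{propdiff} and the structural results on $\HA$). Once Propositions \ref{propborro} and \ref{propG1} are in hand, the theorem follows by assembling the two direct summands.
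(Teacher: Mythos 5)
Your overall architecture is right: the theorem is, as you say, the direct assembly of Propositions \ref{propborro} and \ref{propG1}, and that is exactly how the paper handles it. But several of your attributions and the sketch of the crucial step are off, in ways that would leave the argument with a genuine hole if you tried to fill in the details.

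First, you mischaracterize Proposition \ref{propborro}: you describe it as the ``spanning statement'' that modulo $\F^m_2$ every degree one generator is a combination of rational homology ball surgeries and borromean surgeries. In fact \ref{propborro} only concerns the \emph{borromean subquotient} $\G^{m,b}_1(\Al,\bl)$, defined as the image in $\G^m_1$ of the brackets coming from Y-graph surgeries, and asserts $\G^{m,b}_1(\Al,\bl)\cong\HA$ via $\hs$. The spanning statement $\K\oplus\G^{m,b}_1(\Al,\bl)=\G^m_1(\Al,\bl)$ is the content of Proposition \ref{propG1}, and it is by far the harder half.

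Second, and more importantly, your sketch of that spanning step cites Theorem \ref{thM3} (realizability of Blanchfield isomorphisms by null LP-surgeries), which is the wrong tool: the mechanism is Theorem \ref{thdecsur}, the decomposition of any LP-surgery into \emph{elementary} surgeries of genus $0$ (connected sum), genus $1$ ($d$-surgery), and genus $3$ (borromean). The genus $0$ case reduces to the prime balls $B_p$ by Lemma \ref{lemmagenus0}, and the genus $3$ case is the borromean quotient. What your proposal misses entirely is the genus $1$ case: an arbitrary null LP-surgery can contribute a $d$-surgery piece, which is neither a ball surgery nor a borromean surgery, and showing that such a contribution lies in $\K\oplus\G^{m,b}_1(\Al,\bl)$ modulo $\F^m_2$ requires a separate argument on degree one invariants of framed rational homology tori (Lemma \ref{lemmainvtori}, imported from \cite{M2}). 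Your ``dimension/rank count'' also cannot close this gap, since $\bigoplus_p\Q$ is infinite dimensional; the paper instead exhibits separating invariants and shows directly that the generators coming from elementary surgeries fall into the claimed sum.
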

Moreover, Propositions~\ref{propborro} and~\ref{propG1} show that the invariants $\nu_p$ together with the map $\hs$, obtained from the 
equivariant triple intersection map, form a universal rational valued finite type invariant of degree 1 of $(\Al,\bl)$-marked $\Q$SK-pairs 
with respect to null LP-surgeries, in the following sense. 
If $\lambda:\Ens^m(\Al,\bl)\to \Q$ is a degree 1 invariant with respect to null LP-surgeries, then there are maps $f:\HA\to\Q$ 
and $g_p:\Q\to\Q$ for all prime integer~$p$, such that $\lambda-(f\circ\hs+\sum_{p\textrm{ prime}}g_p\circ\nu_p)$ is a degree $0$ 
invariant, {\em i.e.} a constant. 

\paragraph{The case of $\Z$SK-pairs}
A {\em $\Z$SK-pair} $(M,K)$ is a $\Q$SK-pair such that $M$ is an {\em integral homology 3-sphere}, 
{\em i.e.} an oriented compact 3-manifold which has the same homology with integral coefficients as the standard 
3-sphere $S^3$. The {\em integral Alexander module} of a $\Z$SK-pair $(M,K)$ is the $\Zt$-module $\Al_\Z(M,K)=H_1(\tilde{X};\Z)$, where 
$\tilde{X}$ is the infinite cyclic covering associated with $(M,K)$. The {\em integral Blanchfield module} of $(M,K)$ 
is the integral Alexander module $\Al_\Z(M,K)$ equipped with the Blanchfield form. Fix an integral Blanchfield module $(\Al_\Z,\bl)$. 
If $\xi$ is a fixed isomorphism from $(\Al_\Z,\bl)$ to the Blanchfield module of a $\Z$SK-pair $(M,K)$, then $(M,K,\xi)$ 
is an {\em $(\Al_\Z,\bl)$-marked $\Z$SK-pair}. As for $\Q$SK-pairs, this isomorphism $\xi$ is defined up to multiplication by a power of $t$. 
Let $\Ens^m_\Z(\Al_\Z,\bl)$ be the set of all such $(\Al_\Z,\bl)$-marked $\Z$SK-pairs up to orientation-preserving and marking-preserving homeomorphism, 
called marked $\Z$SK-pairs when it does not seem to cause confusion. 

Borromean surgeries are well defined on the set of marked $\Z$SK-pairs, since they preserve the homology 
of the manifold. The equivariant triple intersection map is again a degree one invariant of marked $\Z$SK-pairs 
with respect to null borromean surgeries. We will see that this invariant contains all the rational valued degree one invariants 
of marked $\Z$SK-pairs with respect to null borromean surgeries. 

Replacing $\Q$ by $\Z$ in the definitions at the beginning of the subsection, define {\em integral homology handlebodies} ($\Z$HH), 
{\em integral Lagrangians}, {\em integral LP-surgeries}, and {\em integral null LP-surgeries}, similarly. 
Integral LP-surgeries (in particular borromean surgeries) preserve the homology with integral coefficients of the manifold. 
Hence they provide a move on the set of integral homology 3-spheres. Integral null LP-surgeries define a move on the set of 
$\Z$SK-pairs. Moreover, they induce canonical isomorphisms 
beetween the integral Blanchfield modules of the involved pairs (see Theorem \ref{thM3Z} below), hence they provide a move 
on the set of marked $\Z$SK-pairs.

Let $\F^{m,\Z}_0$ be the rational vector space generated by all marked $\Z$SK-pairs up to orientation-preserving homeomorphism. 
Let $(\F^{m,\Z}_n)_{n\in\N}$ be the filtration of $\F^{m,\Z}_0$ defined by integral null LP-surgeries. The following result implies 
that borromean surgeries define the same filtration.
\begin{proposition}[\cite{AL} Lemma 4.11] \label{propAL}
 Let $A$ and $B$ be $\Z$HH's whose boundaries are LP-identified. Then $A$ and $B$ can be obtained from one another by a finite 
sequence of borromean surgeries in the interior of the $\Z$HH's.
\end{proposition}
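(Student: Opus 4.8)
The plan is to deduce the statement from Matveev's theorem that any two integral homology spheres are related by a finite sequence of borromean surgeries, the actual work being to localize these surgeries inside the handlebodies. First I would close up $A$ and $B$ using a common exterior piece. Since $A$ is a $\Z$HH, $H_1(A;\Z)\cong\Z^g$ is free, so the long exact sequence of $(A,\partial A)$ together with Lefschetz duality shows that $\mathcal{L}_A\subset H_1(\partial A;\Z)$ is a rank $g$ direct summand on which the intersection form vanishes. Pick a complementary Lagrangian summand $\mathcal{L}'$ (a Lagrangian summand always completes to a symplectic $\Z$-basis), and let $N$ be a standard genus $g$ handlebody glued to $\partial A$ so that $\mathcal{L}_N$ is carried onto $\mathcal{L}'$; a Mayer--Vietoris computation, using $\mathcal{L}_A\oplus\mathcal{L}'=H_1(\partial A;\Z)$, shows that $M_A:=A\cup_\partial N$ is an integral homology sphere. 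Since $h_*\mathcal{L}_A=\mathcal{L}_B$, gluing the same $N$ through $h$ produces an integral homology sphere $M_B:=B\cup_\partial N$ with the same exterior. It therefore suffices to prove that $A$ and $B$ are related by borromean surgeries supported in their interiors, equivalently that every $\Z$HH carrying a fixed boundary parametrization $(\partial,\mathcal{L})$ is related, by borromean surgeries in its interior, to one fixed model $\Z$HH with that parametrization.

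To obtain such a normal form, fix a model $S^3=A_{\mathrm{std}}\cup_\partial N$ with $A_{\mathrm{std}}$ a genuine handlebody realizing the parametrization $(\partial,\mathcal{L})$ (obtained from the standard genus $g$ Heegaard splitting of $S^3$ after composing the identification on $\partial N$ with a suitable mapping class). Comparing Heegaard-type descriptions of $A$ and $A_{\mathrm{std}}$, I would exhibit $A$ as the result of surgery on $A_{\mathrm{std}}$ along a framed link $L$ lying in the interior, null-homologous there, and with unimodular linking matrix. Then I would trivialize this surgery: a borromean (Y-graph) surgery on an integral homology sphere is realized by a $\Delta$-move in a ball and conversely (Matveev), and by the Murakami--Nakanishi classification of links up to $\Delta$-moves, together with handle slides among the components of $L$, the surgery on $L$ can be undone by a finite sequence of $\Delta$-moves performed inside the interior of $A_{\mathrm{std}}$. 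This shows that $A$ is related to $A_{\mathrm{std}}$ by borromean surgeries in its interior; applying the same argument to $B$, whose parametrization is identified with that of $A$ by $h$ so that the model is the same, and concatenating the two sequences yields the desired relation between $A$ and $B$.

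The main obstacle is exactly this localization. Matveev's theorem and the $\Delta$-move calculus are naturally stated for closed manifolds and for links in $S^3$, so the delicate points are, first, to produce a surgery presentation of $M_A$ whose link lies entirely in the interior of the $A_{\mathrm{std}}$-region, disjoint from the fixed exterior $N$ and from $\partial A$, and second, to route every intermediate $\Delta$-move and isotopy so that it too stays in that region. A naive attempt to push arbitrary Y-graphs off $N$ by general position does not work, since a Y-graph together with its associated surface is two-dimensional and therefore meets a spine of $N$ in isolated points which cannot in general be removed by an ambient isotopy. A secondary technical issue is the integral, rather than merely rational, complementarity of $\mathcal{L}_A$ and $\mathcal{L}'$: this is what guarantees that $M_A$ is a genuine integral homology sphere, and hence keeps the argument within Matveev's framework rather than only the rational one.
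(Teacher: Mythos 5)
The paper does not prove this proposition; it is cited from Auclair and Lescop (\cite{AL}, Lemma~4.11), so there is no argument in the paper to compare against. Judged on its own, your sketch is a correct reduction but not yet a proof. The preliminary homological work is sound: that $\mathcal{L}_A$ is a Lagrangian direct summand of $H_1(\partial A;\Z)$ (using that $H_1(A;\Z)$ is free and that $H_1(\partial A;\Z)\to H_1(A;\Z)$ is onto for a $\Z$HH), that a complementary Lagrangian summand exists, and that the closed-up manifold $M_A=A\cup_\partial N$ is an integral homology sphere all hold. However, as you yourself acknowledge, the two central steps are left unestablished, and they are exactly the substance of the Auclair--Lescop lemma.

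First, you assert without argument that $A$ is obtained from $A_{\mathrm{std}}$ by surgery on a framed link lying in the interior of $A_{\mathrm{std}}$, null-homologous there, with unimodular linking matrix; a surgery presentation of $M_A$ relative to $S^3=A_{\mathrm{std}}\cup N$ will in general have its link running through $N$, and you correctly observe that one cannot push Y-graphs together with their surfaces off a spine of $N$ by general position alone, so this localization is genuinely nontrivial. Second, you invoke the Murakami--Nakanishi classification, a theorem about links in $S^3$, to undo the residual surgery by $\Delta$-moves performed inside a handlebody, without specifying how every intermediate move and isotopy is kept away from $\partial A_{\mathrm{std}}$ and from $N$. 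Until both localization problems are solved, the argument does not close; what you have is a correct scaffold that restates the difficulty rather than a proof.
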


The following result is the equivalent of Theorem \ref{thM3} in the setting of $\Z$SK-pairs.
\begin{theorem}[\cite{M3} Theorem 1.14] \label{thM3Z}
 An integral null LP-surgery induces a canonical isomorphism between the integral Blanchfield modules of the involved $\Z$SK-pairs. 
Conversely, any isomorphism between the integral Blanchfield modules of two $\Z$SK-pairs 
can be realized by a finite sequence of integral null LP-surgeries, up to multiplication by a power of~$t$.
\end{theorem}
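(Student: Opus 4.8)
The plan is to establish the two assertions separately, transcribing the proof of the rational statement Theorem \ref{thM3} but keeping integral homology under control at every step. For the direct statement, let $(M,K)$ be a $\Z$SK-pair, let $A\subset M\setminus K$ be a $\Z$HH null in $M\setminus K$, let $B$ be a $\Z$HH with $\partial B$ LP-identified with $\partial A$ via a homeomorphism $h$, and write $(M',K')=(M,K)(\frac{B}{A})$. Since an integral LP-surgery leaves $H_*(\,\cdot\,;\Z)$ unchanged, $M'$ is a $\Z$HS and $K'$ is still null-homologous, so $(M',K')$ is a $\Z$SK-pair. Nullity of $A$ in $X=M\setminus Int(T(K))$ forces its preimage in the infinite cyclic covering $\tilde{X}$ to be a disjoint union $\bigsqcup_{k\in\Z}\tau^k\tilde{A}$ of copies of $A$, and likewise for $B$ in $\tilde{X}'$; the complementary piece $\tilde{Y}=\tilde{X}\setminus\bigsqcup_k\tau^k Int(\tilde{A})$ is common to $\tilde{X}$ and $\tilde{X}'$. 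First I would run the Mayer--Vietoris sequence with $\Z$ coefficients for $\tilde{X}=\tilde{Y}\cup\bigsqcup_k\tau^k\tilde{A}$ and for $\tilde{X}'=\tilde{Y}\cup\bigsqcup_k\tau^k\tilde{B}$: since $A$ and $B$ are $\Z$HH's and $h_*$ carries $\mathcal{L}_A$ onto $\mathcal{L}_B$, the boundary maps $H_1(\bigsqcup_k\tau^k\partial\tilde{A};\Z)\to H_1(\bigsqcup_k\tau^k\tilde{A};\Z)$ and $H_1(\bigsqcup_k\tau^k\partial\tilde{B};\Z)\to H_1(\bigsqcup_k\tau^k\tilde{B};\Z)$ have, under $h_*$, the same kernel and image; the two sequences then identify $\Al_\Z(M,K)$ with $\Al_\Z(M',K')$ as $\Zt$-modules, and this identification is canonical, being induced by the identity on $\tilde{Y}$.

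Next I would check that this identification carries the Blanchfield form to the Blanchfield form. Represent classes of $\Al_\Z(M,K)$ by knots $\mu_1,\mu_2$ in $\tilde{Y}$ with $\mu_1\cap\tau^k\mu_2=\emptyset$ for all $k$, and pick $P\in\Zt$ and an integral $2$-chain $S$ in $\tilde{X}$ with $P(\tau)\mu_1=\partial S$. Each intersection $S\cap\tau^k\tilde{A}$ is a relative $2$-cycle whose boundary lies in $\tau^k\partial\tilde{A}$ and bounds in $\tau^k\tilde{A}$, hence represents a class of $\mathcal{L}_{\tau^k\tilde{A}}$; transporting this class by $h_*$ and filling it by a $2$-chain in $\tau^k\tilde{B}$ produces a $2$-chain $S'$ in $\tilde{X}'$ with $\partial S'=P(\tau)\mu_1$ that agrees with $S$ outside the $\tau^k\tilde{A}$. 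Since every $\tau^k\mu_2$ lies in $\tilde{Y}$, we get $<S',\tau^k\mu_2>=<S,\tau^k\mu_2>$ for all $k$, so $lk_e(\mu_1,\mu_2)$, and therefore $\bl$, is unchanged by the surgery. This yields the canonical isomorphism of integral Blanchfield modules induced by an integral null LP-surgery.

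For the converse, I would first realize a prescribed integral Blanchfield module $(\Al_\Z,\bl)$ by some $\Z$SK-pair, for instance by surgery on a framed link presenting a chosen integral Seifert form (see \cite{M1}), and then connect any two $(\Al_\Z,\bl)$-marked $\Z$SK-pairs by a finite sequence of integral null LP-surgeries realizing the prescribed marking change up to a power of $t$. This is obtained by adapting the proof of Theorem \ref{thM3} with $\Q$ replaced by $\Z$ throughout: the geometric ingredients remain valid integrally, namely that borromean surgeries along null Y-graphs are integral null LP-surgeries, that $\Z$HH's with LP-identified boundaries are related by borromean surgeries in their interiors (Proposition \ref{propAL}), and that surgery on a null Y-graph with prescribed leaf classes in $\Al_\Z$ effects a controlled modification of the pair; one further checks that each move preserves integral homology, so all intermediate pairs are genuine $\Z$SK-pairs. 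The main obstacle is precisely this converse, and within it the bookkeeping of markings: one must not merely connect $(M_0,K_0)$ and $(M_1,K_1)$ by null LP-surgeries, but arrange the composite canonical isomorphism accumulated along the sequence to equal the prescribed isomorphism $\xi_1\circ\xi_0^{-1}$ up to multiplication by $t^k$. This forces one to determine which automorphisms of $(\Al_\Z,\bl)$ are induced by loops of null LP-surgeries based at a fixed pair and to show that they exhaust $Aut(\Al_\Z,\bl)$; the realization of module isomorphisms by Y-graph surgeries with prescribed leaves is exactly what makes this step go through.
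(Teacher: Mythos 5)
The statement is quoted in the paper from an external reference (Theorem~1.14 of \cite{M3}); no proof is given here, so I evaluate your attempt on its own terms. Your treatment of the direct implication is essentially correct: nullity of $A$ in $M\setminus K$ lifts the surgery to a $\Z$-indexed family of disjoint copies, Mayer--Vietoris over $\Z$ with common piece $\tilde{Y}$ identifies the two Alexander modules canonically, and your argument for preservation of $\bl$ is sound --- the key point, which you invoke, is that the LP identification preserves the integral Lagrangian, so the boundary of $S\cap\tau^k\tilde{A}$, transported by $h$, bounds integrally in $\tau^k\tilde{B}$, letting you construct $S'$.

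The converse is where the proposal fails to be a proof. You state that it is \emph{obtained by adapting the proof of Theorem \ref{thM3} with $\Q$ replaced by $\Z$ throughout}; but Theorem \ref{thM3} is Theorem~1.13 of the very same reference \cite{M3}, so this is not an argument but a pointer to the source from which the statement you are asked to prove is itself drawn. Moreover the substitution is not mechanical: $\Zt$ is not a PID, so the cyclic-orthogonal normal form of $(\Al,\bl)$ over $\Qt$ recalled in Subsection~\ref{subsecH} has no direct integral analogue; and null $d$-surgeries, which in the rational setting let one adjust realizability (cf.\ Proposition~\ref{propreal}), change $|H_1(M;\Z)|$ and are therefore forbidden on $\Z$SK-pairs. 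You correctly identify Proposition~\ref{propAL} and borromean surgeries as the available integral moves, and you correctly identify the crux --- arranging the accumulated canonical isomorphism to equal the prescribed $\xi_1\circ\xi_0^{-1}$ up to a power of $t$ --- but your last sentence merely names the needed realizability of prescribed leaf classes without establishing it. That realizability, together with an integral $S$-equivalence argument at the level of Seifert surfaces, is the actual content of the cited theorem, and the gap between naming it and proving it is exactly the gap in your converse.
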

This implies that the filtration $(\F^{m,\Z}_n)_{n\in\N}$ splits along the isomorphism classes of integral Blanchfield modules. 
For a given integral Blanchfield module $(\Al_\Z,\bl)$, let $\F^{m,\Z}_0(\Al_\Z,\bl)$ be the subspace of $\F^{m,\Z}_0$ generated 
by the $(\Al_\Z,\bl)$-marked $\Z$SK-pairs. Let $(\F^{m,\Z}_n(\Al_\Z,\bl))_{n\in\N}$ be the filtration defined 
on $\F^{m,\Z}_0(\Al_\Z,\bl)$ by integral null LP-surgeries. Then, for $n\in\N$, $\F^{m,\Z}_n$ is the direct sum over all isomorphism 
classes of integral Blanchfield modules of the $\F^{m,\Z}_n(\Al_\Z,\bl)$. Set 
$\G^{m,\Z}_n(\Al_\Z,\bl)=\F^{m,\Z}_n(\Al_\Z,\bl) / \F^{m,\Z}_{n+1}(\Al_\Z,\bl)$. 
Theorem~\ref{thM3Z} implies $\G^{m,\Z}_0(\Al_\Z,\bl)\cong\Q$. 

An invariant of $(\Al_\Z,\bl)$-marked $\Z$SK-pairs is a map defined on $\Ens^{m,\Z}(\Al_\Z,\bl)$. Given such an invariant 
$\lambda$ valued in an abelian torsion free group $Z$, one can extend it into a $\Q$-linear map 
$\tilde{\lambda}:\F^{m,\Z}_0(\Al_\Z,\bl)\to\Q\otimes_\Z Z$. 
The invariant $\lambda$ is a \emph{finite type invariant of degree at most $n$ of $(\Al_\Z,\bl)$-marked $\Z$SK-pairs 
with respect to integral null LP-surgeries} if $\tilde{\lambda}(\F^m_{n+1}(\Al_\Z,\bl))=0$. 

Consider the map $\hs$ of Proposition \ref{propdegree1} and its restriction $\hs:\Ens^{m,\Z}(\Al_\Z,\bl)\to\HA$. Corollary \ref{corinv} 
implies:
\begin{proposition}
 The map $\hs:\Ens^{m,\Z}(\Al_\Z,\bl)\to\HA$ is a degree at most one invariant of $(\Al_\Z,\bl)$-marked $\Z$SK-pairs 
with respect to integral null LP-surgeries.
\end{proposition}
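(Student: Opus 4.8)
The plan is to reduce this directly to the already-established properties of $\hs$ on $\Q$SK-pairs, since the statement for $\Z$SK-pairs is a restriction of the statement for $\Q$SK-pairs to a subclass of pairs and a subclass of surgeries. Concretely, I would argue as follows. An $(\Al_\Z,\bl)$-marked $\Z$SK-pair $(M,K,\xi)$ gives, after tensoring the integral Alexander module with $\Q$, an $(\Al_\Z\otimes_\Z\Q,\bl)$-marked $\Q$SK-pair, so $\hs$ is defined on it via Proposition~\ref{propdegree1}. An integral null LP-surgery $(\frac{B}{A})$ on $(M,K)$ is in particular a (rational) null LP-surgery: the $\Z$HH $A$ is a $\Q$HH null in $M\setminus K$, the boundaries are LP-identified over $\Z$ hence over $\Q$, and the canonical isomorphism of integral Blanchfield modules induced by the surgery (Theorem~\ref{thM3Z}) rationalizes to the canonical isomorphism of rational Blanchfield modules (Theorem~\ref{thM3}), so the marked pair obtained by the integral surgery coincides with the one obtained by the corresponding rational surgery.

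Next I would translate the degree-one condition. To say $\hs$ has degree at most one with respect to integral null LP-surgeries is to say that for any marked $\Z$SK-pair $(M,K,\xi)$ and any two disjoint $\Z$HH's $A_1,A_2$, null in $M\setminus K$, with $\partial B_i$ LP-identified with $\partial A_i$, the alternating sum
$$\hs(M,K,\xi)-\hs\big((M,K,\xi)(\tfrac{B_1}{A_1})\big)-\hs\big((M,K,\xi)(\tfrac{B_2}{A_2})\big)+\hs\big((M,K,\xi)(\tfrac{B_1}{A_1})(\tfrac{B_2}{A_2})\big)$$
vanishes in $\HA$. By the identification of the previous paragraph, each of the four terms equals the value of $\hs$ on the corresponding \emph{rational} null LP-surgery data; and since $\hs$ is $\HA$-valued and $\HA$ does not change under any of these moves (the Blanchfield module, hence $\HA$, is preserved), the four terms live in a common space and their alternating sum is exactly the degree-one bracket for $\hs$ as a map on $\Q$SK-pairs. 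By Proposition~\ref{propdegree1} that bracket vanishes. This is precisely the content asserted; note that no new geometric input is needed here, only the compatibility of the rational and integral frameworks.

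The one genuinely delicate point — the ``main obstacle'' — is checking that the rationalization map on marked pairs is well-behaved, i.e.\ that it sends an $(\Al_\Z,\bl)$-marked $\Z$SK-pair to a well-defined marked $\Q$SK-pair and that it commutes with the surgery operation on both sides. This amounts to verifying that the canonical isomorphism of Theorem~\ref{thM3Z} becomes, after $-\otimes_\Z\Q$, the canonical isomorphism of Theorem~\ref{thM3}; this is built into the constructions in \cite{M3} (the isomorphisms are both induced by the same geometric identification of exteriors away from the surgery loci), but it should be stated explicitly. Once that naturality is in hand, the proposition is immediate: it is Corollary~\ref{corinv} read through the inclusion $\Ens^{m,\Z}(\Al_\Z,\bl)\hookrightarrow\Ens^m(\Al_\Z\otimes_\Z\Q,\bl)$, restricted to surgeries coming from $\Z$HH pairs. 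I would therefore write the proof as a short paragraph: recall that integral null LP-surgeries are rational null LP-surgeries and that the induced Blanchfield isomorphisms are compatible, then invoke Corollary~\ref{corinv} (equivalently Proposition~\ref{propdegree1}) for the vanishing of the degree-one bracket.
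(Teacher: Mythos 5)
Your proof is correct, but it takes a genuinely different route from the paper. The paper stays entirely in the integral world: since Proposition~\ref{propAL} (Auclair--Lescop) shows that any integral null LP-surgery factors into a finite sequence of null borromean surgeries, the filtration $(\F^{m,\Z}_n)_{n\in\N}$ coincides with the one generated by borromean brackets, and the proposition then follows directly from Corollary~\ref{corinv}. You instead observe that an integral null LP-surgery is in particular a rational one and reduce the statement to Proposition~\ref{propdegree1}, viewing $\Ens^{m,\Z}(\Al_\Z,\bl)$ as a subset of $\Ens^m(\Al_\Z\otimes_\Z\Q,\bl)$ closed under the integral surgeries. This is a clean formal reduction, but it runs through heavier machinery, since Proposition~\ref{propdegree1} itself rests on the $\Q$HH decomposition Theorem~\ref{thdecsur} and the $d$-surgery invariance Lemma~\ref{lemmainv}, neither of which is needed for the integral case; and it requires the compatibility you correctly flag as the delicate point, namely that the canonical isomorphism of Theorem~\ref{thM3Z} rationalizes to that of Theorem~\ref{thM3}. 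That compatibility does hold (both isomorphisms are induced by the same geometric identification of exteriors away from the surgery locus), so your argument is sound; the paper's route is just shorter because Proposition~\ref{propAL} lets it bypass both the compatibility check and the rational decomposition theorem.
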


In Section \ref{secG1}, we prove:
\begin{theorem} \label{thG1Z}
Fix an integral Blanchfield module $(\Al_\Z,\bl)$. Set $\Al=\Al_\Z\otimes_\Z\Q$. 
Set $\displaystyle \HA=\frac{\Lambda^3_\Q\Al}{(\beta_1\wedge\beta_2\wedge\beta_3=t\beta_1\wedge t\beta_2\wedge t\beta_3)}$. 
Then the map $\hs:\Ens^{m,\Z}(\Al_\Z,\bl)\to\HA$ induces an isomorphism: $$\G^{m,\Z}_1(\Al_\Z,\bl)\cong\HA.$$
\end{theorem}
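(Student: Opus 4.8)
\textbf{Proof proposal for Theorem \ref{thG1Z}.}

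The plan is to prove Theorem \ref{thG1Z} by exhibiting $\hs$ as inducing both a surjection and an injection on $\G^{m,\Z}_1(\Al_\Z,\bl)$, mirroring the strategy used for $\Q$SK-pairs in Theorem \ref{thG1} but exploiting the fact that the extra invariants $\nu_p$ disappear in the integral setting. First I would recall, via Proposition \ref{propAL} and Theorem \ref{thM3Z}, that in the $\Z$-setting integral null LP-surgeries and null borromean surgeries define the same filtration on $\F^{m,\Z}_0(\Al_\Z,\bl)$; this reduces everything to understanding degree one brackets $[(M,K,\xi);(\frac{B_i}{A_i})]$ where each $A_i$ is (after the borromean realization) a regular neighborhood of a $Y$-graph $\Gamma_i$ null in $M\setminus K$. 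The key computational input is Proposition \ref{propdiff}, which gives the variation of $\phi^{(M,K,\xi)}$ — equivalently, after applying $\h$, of $\hs$ — under a single null borromean surgery in terms of the Blanchfield pairings $\bl(\beta_j,[\gamma_{\sigma(j)}])$ of the leaves. Since these pairings and the resulting element of $\HA$ depend only on the Blanchfield-module data and not on $(M,K,\xi)$ itself, Corollary \ref{corinv} shows $\hs$ is degree at most one, and Proposition \ref{propdegree1} (restricted to the integral case) already records this; so $\hs$ descends to a linear map $\G^{m,\Z}_1(\Al_\Z,\bl)\to\HA$.

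For surjectivity, I would argue that every element of $\HA$ is realized as $\hs([(M,K,\xi);\Gamma])$ for a suitable $Y$-graph $\Gamma$: pick a marked $\Z$SK-pair, and using the non-degeneracy of $\bl$ together with the fact (from Section \ref{secH}, in the spirit of Theorems \ref{thstructureh} and \ref{thcar}) that $\HA$ is spanned by the wedges $t^{k_1}\eta_{i_1}\wedge t^{k_2}\eta_{i_2}\wedge t^{k_3}\eta_{i_3}$, choose leaves $\gamma_1,\gamma_2,\gamma_3$ in $\tilde X$ whose Blanchfield duals produce, through the antisymmetrized formula of Proposition \ref{propdiff}, the desired wedge. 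Realizing a triple of prescribed homology classes by the leaves of an embedded null $Y$-graph in $M\setminus K$ is a standard geometric construction (as in \cite{GR}, \cite{Mt}); this is where I must be careful that the $Y$-graph is genuinely null, not merely homologically controlled in the covering — but nullity of a single $Y$-graph with leaves lying over null classes is arranged exactly as in the $\Q$-case.

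For injectivity, the point is that a degree one bracket in $\F^{m,\Z}_1(\Al_\Z,\bl)$ lying in the kernel of $\hs$ must lie in $\F^{m,\Z}_2(\Al_\Z,\bl)$. Here I would use the structure of $\G^{m,\Z}_1$: by the general machinery adapted from \cite[Chap. 6]{Mt} (and the analysis feeding into Propositions \ref{propborro} and \ref{propG1}), $\G^{m,\Z}_1(\Al_\Z,\bl)$ is generated by classes of single null borromean surgeries, parametrized up to degree two by the Blanchfield-theoretic data of the leaves, and two such surgeries with the same leaf data (in the appropriate sense) differ by an element of $\F^{m,\Z}_2$. Since $\hs$ on such a generator returns precisely that leaf data packaged in $\HA$ — and in the integral setting there is no room for a cardinality-of-$H_1$ correction because $|H_1(M;\Z)|=1$ always, so $\nu_p\equiv 0$ on $\F^{m,\Z}_0$ — the map from the generators of $\G^{m,\Z}_1$ to $\HA$ is injective. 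Combining with surjectivity gives the isomorphism $\G^{m,\Z}_1(\Al_\Z,\bl)\cong\HA$.

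The main obstacle I anticipate is the injectivity step: one needs a clean description of $\G^{m,\Z}_1(\Al_\Z,\bl)$ showing that it is \emph{no bigger} than $\HA$, i.e. that every degree one relation among $\Z$SK-pairs is detected by the Blanchfield data of the leaves modulo the antisymmetrization and the $t$-shift relation defining $\HA$. This requires the diagrammatic/graded-space argument of Section \ref{secG1} (Propositions \ref{propborro} and \ref{propG1}) to produce a surjection from a space of diagrams onto $\G^{m,\Z}_1$ that matches $\HA$ on the nose once the $\nu_p$ summand is removed; verifying that the $\Q$-to-$\Z$ comparison kills exactly the $\bigoplus_p \Q[(M,K,\xi);B_p/B^3]$ factor of Theorem \ref{thG1}, and nothing more, is the delicate bookkeeping. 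The rest — degree-at-most-one from Corollary \ref{corinv}, surjectivity from Proposition \ref{propdiff} plus non-degeneracy of $\bl$ — is routine given the results already established.
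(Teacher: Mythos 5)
Your overall strategy coincides with the paper's: establish that $\hs$ descends to a degree-one map $\G^{m,\Z}_1(\Al_\Z,\bl)\to\HA$, show it is surjective, and obtain injectivity indirectly by comparing against a surjection $\HA\twoheadrightarrow\G^{m,\Z}_1(\Al_\Z,\bl)$ (the map $\varphi^\Z$ built, as in the $\Q$ case, from realizations of $Y$-diagrams via Lemmas \ref{lemmaindreal}--\ref{lemmacan}), so that finite dimensionality forces both surjections to be isomorphisms. You correctly identify Proposition \ref{propAL} and Theorem \ref{thM3Z} as the inputs allowing the borromean and integral null LP-filtrations to be conflated, and you are right that the $\nu_p$ summand of Theorem \ref{thG1} disappears because $|H_1(M;\Z)|=1$.

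However, your surjectivity step contains a genuine gap, and you flag the wrong danger. You write that ``every element of $\HA$ is realized as $\hs([(M,K,\xi);\Gamma])$'' and worry about nullity of the $Y$-graph. Nullity is not the issue. The issue is realizability of the prescribed leaf classes by knots in the infinite cyclic cover: in Lemma \ref{lemmasurj}, to hit an arbitrary generator of $\Rd(\mi)^a$ one must set $[\gamma_1]=r\beta_1$ with $r\in\Q$ and $\beta_j=t^{-k_j}a_{i_j}^{-1}(t^{-1})d(\eta_{i_j})$, and these classes need not be realizable for the given pair. In the $\Q$-setting this is fixed by Proposition \ref{propreal}, which uses a $d$-surgery to pass to another marked $\Q$SK-pair with the same $\phi$ where the class becomes realizable. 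But $d$-surgeries change $H_1(M;\Z)$, so they are \emph{not} available on $\Ens^{m,\Z}_\Z(\Al_\Z,\bl)$. Consequently $\hs(\Ens^{m,\Z}_\Z(\Al_\Z,\bl))$ need not be all of $\HA$, and your claim as written fails. What does hold — and what the paper records — is that the realizable classes include sufficiently many integer multiples, so the image of $\hs$ on marked $\Z$SK-pairs still \emph{generates} $\HA$ over $\Q$; since the extension $\tilde{\hs}:\F^{m,\Z}_0(\Al_\Z,\bl)\to\HA$ is $\Q$-linear, this is enough to conclude that $\tilde{\hs}$ restricted to $\F^{m,\Z}_1(\Al_\Z,\bl)$ is onto $\HA$, and hence that the induced map $\G^{m,\Z}_1(\Al_\Z,\bl)\to\HA$ is surjective. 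Without this replacement of ``every element'' by ``a $\Q$-spanning set,'' together with an explicit acknowledgment that Proposition \ref{propreal} is off limits, your surjectivity argument does not go through in the integral setting.
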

This result shows that the map $\hs$, obtained from the equivariant triple intersection map, is a universal rational valued 
finite type invariant of degree 1 of $(\Al_\Z,\bl)$-marked $\Z$SK-pairs with respect to integral null LP-surgeries, 
in the following sense. 
If $\lambda:\Ens^{m,\Z}(\Al_\Z,\bl)\to \Q$ is a degree 1 invariant with respect to integral null LP-surgeries, then there is a map $f:\HA\to\Q$ 
such that $\lambda-f\circ\hs$ is a degree $0$ invariant, {\em i.e.} a constant.

    \section{Equivariant triple intersections} \label{seceqint}

In this section, we prove Lemma \ref{lemmaindsurf}. 

\begin{lemma} \label{subh2}
Let $(M,K)$ be a $\Q$SK-pair. Let $\tilde{X}$ be the associated infinite cyclic covering. Then $H_2(\tilde{X};\Q)=0$.
\end{lemma}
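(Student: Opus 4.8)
The plan is to compute $H_2(\tilde X;\Q)$ using the standard infinite cyclic covering machinery, exploiting that $X$ is the exterior of a null-homologous knot in a $\Q$HS. First I would record the homology of $X$ itself: since $K$ is null-homologous in the $\Q$HS $M$, a Mayer--Vietoris argument for $M = X \cup T(K)$ gives $H_*(X;\Q) \cong H_*(S^1;\Q)$, so $H_1(X;\Q)\cong\Q$, $H_2(X;\Q)=0$, and $X$ has the rational homology of a circle. In particular the infinite cyclic covering $p:\tilde X\to X$ is classified by the map $\pi_1(X)\to\Z$ that is, rationally, an isomorphism on $H_1$.

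The main tool is the Milnor exact sequence (Wang sequence) associated to the infinite cyclic cover. Writing $\Lambda = \Q[t^{\pm1}]$ and using that $\tilde X\to X$ has deck group $\Z$, there is a long exact sequence
$$\cdots \to H_i(\tilde X;\Q) \xrightarrow{t-1} H_i(\tilde X;\Q) \to H_i(X;\Q) \to H_{i-1}(\tilde X;\Q) \to \cdots$$
In degree $2$ this reads
$$H_2(X;\Q) \to H_1(\tilde X;\Q) \xrightarrow{t-1} H_1(\tilde X;\Q) \to H_1(X;\Q) \to H_0(\tilde X;\Q) \xrightarrow{t-1} H_0(\tilde X;\Q),$$
and one step up,
$$H_3(X;\Q)\to H_2(\tilde X;\Q)\xrightarrow{t-1} H_2(\tilde X;\Q)\to H_2(X;\Q).$$
Now $H_2(X;\Q)=0$ from the first step, and $H_3(X;\Q)=0$ because $X$ is a compact $3$-manifold with nonempty boundary (so it has the homotopy type of a $2$-complex, or alternatively $H_3(X;\Q)\cong H^0(X,\partial X;\Q)=0$ by Lefschetz duality since $X$ is connected with boundary). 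Hence the sequence forces $(t-1):H_2(\tilde X;\Q)\to H_2(\tilde X;\Q)$ to be an isomorphism: it is surjective because the term on its right ($H_2(X;\Q)$) vanishes, and injective because the term on its left ($H_3(X;\Q)$) vanishes.

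The final step is to conclude that an $\Q[t^{\pm1}]$-module on which $(t-1)$ acts invertibly, and which is finitely generated and torsion, must be zero. Here $H_2(\tilde X;\Q)$ is a finitely generated $\Lambda$-module (since $\tilde X$ has a finite CW structure equivariantly, $X$ being compact) and it is $\Lambda$-torsion (the Alexander module is torsion, and the same Milnor-sequence reasoning, or the fact that $\tilde X$ has finite-dimensional rational homology only in the torsion range, shows $H_2(\tilde X;\Q)$ is torsion — indeed if $(t-1)$ is invertible on it then it is annihilated by a polynomial coprime to $t-1$ only if it vanishes, but more directly: the characteristic polynomial of $(t-1)$ acting on a nonzero finitely generated torsion module over the PID $\Lambda$ has a root, contradicting invertibility unless the module is $0$). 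So $H_2(\tilde X;\Q)=0$. I expect the only slightly delicate point to be justifying that $H_2(\tilde X;\Q)$ is a finitely generated torsion $\Lambda$-module so that ``$(t-1)$ invertible $\Rightarrow$ module is zero'' applies; this follows from compactness of $X$ together with $H_2(X;\Q)=0$, and is the place I would be most careful.
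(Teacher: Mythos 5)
Your Wang-sequence approach is genuinely different from the paper's. The paper cuts $\tilde X$ along the preimage of a Seifert surface $\Sigma$ into two unions $\tilde V_e$, $\tilde V_o$ of rational homology handlebodies, runs Mayer--Vietoris, and concludes from the injectivity of the map $H_1(\tilde\Sigma;\Q)\to H_1(\tilde V_e;\Q)\oplus H_1(\tilde V_o;\Q)$, which encodes the nondegeneracy of the Seifert presentation of the Alexander module. Your preliminary computations are fine: $H_2(X;\Q)=0$, $H_3(X;\Q)=0$, and the Milnor--Wang sequence then correctly shows that $t-1$ acts bijectively on $H_2(\tilde X;\Q)$; the surjectivity also correctly forces $H_2(\tilde X;\Q)$ to be a torsion $\Qt$-module, hence finite-dimensional over $\Q$.

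However, the final step contains a genuine error. It is \emph{not} true that a nonzero finitely generated torsion $\Qt$-module on which $t-1$ acts invertibly must vanish. For instance, $\Qt/(t-2)\cong\Q$ is a nonzero torsion module on which $t-1$ acts by multiplication by $1$. More tellingly, the Alexander module $H_1(\tilde X;\Q)$ is itself a counterexample: since $\delta(1)\neq0$, the element $t-1$ is coprime to the annihilator of $H_1(\tilde X;\Q)$, so $t-1$ acts invertibly on it, yet $H_1(\tilde X;\Q)$ is nonzero in general. Your parenthetical justifications are also confused: being annihilated by a polynomial coprime to $t-1$ is \emph{equivalent} to $t-1$ acting invertibly, not in contradiction with being nonzero; and the characteristic polynomial of $t-1$ always has a root over $\C$, which contradicts invertibility only when that root is $0$.

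To close this route you need one extra input beyond the Wang sequence, playing the role that the injectivity of the Mayer--Vietoris map plays in the paper's proof. The natural choice is Milnor's duality for infinite cyclic coverings: once all $H_*(\tilde X;\Q)$ are known to be finite-dimensional over $\Q$ (which your surjectivity argument provides), Milnor duality pairs $H_2(\tilde X;\Q)$ nondegenerately with $H_0(\tilde X,\partial\tilde X;\Q)$, and the latter vanishes because $\tilde X$ is connected with nonempty boundary. Without such an ingredient, the Wang sequence alone does not determine $H_2(\tilde X;\Q)$.
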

\begin{proof}
Let $\Sigma$ be a compact connected oriented surface embedded in $M$ such that $\partial \Sigma=K$. Set $V=M\setminus(\Sigma\times[-1,1])$. 
Note that $V$ is a rational homology handlebody (see \cite[Lemma 3.1]{M3}). In particular, $H_2(V;\Q)=0$. 
The boundary of $V$ is the union of $\Sigma^+=\Sigma\times\{1\}$, $\Sigma^-=\Sigma\times\{-1\}$, and $\partial\Sigma\times[-1,1]$. 
Consider $\Z$ copies $V_i$ of $V$, and let $\Sigma_i^+$, $\Sigma_i^-$ be the copies of $\Sigma^+$ and $\Sigma^-$ in $V_i$. 
The covering $\tilde{X}$ can be constructed by connecting all the $V_i$, gluing $\Sigma_i^-$ and $\Sigma_{i+1}^+$ for all $i\in\Z$. 
Set $\tilde{V}_e=\cup_{i\in\Z}V_{2i}$ and $\tilde{V}_o=\cup_{i\in\Z} V_{2i+1}$. Let $\tilde{\Sigma}$ be the preimage 
of $\Sigma$ in $\tilde{X}$, made of $\Z$ disjoint copies of $\Sigma$. We have $\tilde{\Sigma}=\tilde{V}_e\cap \tilde{V}_o$. 
The Mayer-Vietoris sequence associated with $\tilde{X}=\tilde{V}_e\cup \tilde{V}_o$ yields the exact sequence:
$$H_2(\tilde{V}_e;\Q)\oplus H_2(\tilde{V}_o;\Q)\longrightarrow H_2(\tilde{X};\Q)\longrightarrow H_1(\tilde{\Sigma};\Q)
\fl{\iota} H_1(\tilde{V}_e;\Q)\oplus H_1(\tilde{V}_o;\Q).$$
The module $H_2(\tilde{V}_e;\Q)\oplus H_2(\tilde{V}_o;\Q)$ is a direct sum of $\Z$ copies of $H_2(V;\Q)$, which is trivial. Hence 
$H_2(\tilde{V}_e;\Q)\oplus H_2(\tilde{V}_o;\Q)=0$. It is well-known that the map $\iota$ provides a square, non degenerate presentation of 
the Alexander module (see \cite[Theorem 6.5]{Lick} for details). In particular, $\iota$ is known to be injective. Finally $H_2(\tilde{X};\Q)=0$.
\end{proof}

\begin{lemma} \label{lemmaint}
 Let $N$ be an oriented 3-manifold. 
Let $C$ be a rational 3-chain and let $\Sigma_2$ and $\Sigma_3$ be rational 2-chains, pairwise transverse in $N$. Then:
$$<\partial C,\Sigma_2,\Sigma_3>=<C,\partial \Sigma_2,\Sigma_3>-<C,\Sigma_2,\partial \Sigma_3>.$$ 
\end{lemma}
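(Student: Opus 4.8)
The plan is to reduce the statement to the classical Leibniz rule for the boundary operator on intersections of chains, which one may assume as a known fact from algebraic topology (it follows from transversality and the product formula for the boundary of an intersection of two chains). First I would recall that for a rational $k$-chain $D$ and a rational $\ell$-chain $E$ in an oriented $n$-manifold with $k+\ell \geq n$ and $D \pitchfork E$, the geometric intersection $D \cap E$ is a well-defined $(k+\ell-n)$-chain, and its boundary satisfies $\partial(D\cap E) = (\partial D)\cap E + (-1)^{n-k} D\cap(\partial E)$ (with an appropriate sign convention fixed by the outward-normal-first orientation of boundaries used in the paper). Applying this here with $N$ of dimension $3$, $C$ of dimension $3$ (codimension $0$), and $\Sigma_2$ of dimension $2$ (codimension $1$), the chain $C\cap\Sigma_2$ is a $2$-chain, and $\partial(C\cap\Sigma_2) = (\partial C)\cap\Sigma_2 - C\cap(\partial\Sigma_2)$, the sign $-$ coming from $(-1)^{3-3}=+1$ on the first term but the convention for $\partial C$ on a codimension-zero chain — I would pin this down carefully, since the whole content of the lemma is getting the two signs on the right-hand side to come out as $+$ and $-$.

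Next I would take the algebraic intersection number of each side of this chain identity with $\Sigma_3$. Since all chains are assumed pairwise transverse (and, per the paper's convention, boundaries are disjoint from everything relevant), the quantity $\langle D, \Sigma_3\rangle$ for a $2$-chain $D$ transverse to the $2$-chain $\Sigma_3$ in the $3$-manifold $N$ makes sense, and one has $\langle \partial(C\cap\Sigma_2), \Sigma_3\rangle = 0$ because $\partial(C\cap\Sigma_2)$ is a boundary of a $2$-chain intersected against a cycle-like object — more precisely, because the algebraic intersection of a boundary $\partial W$ with a chain $\Sigma_3$ whose boundary is disjoint from $W$ vanishes (Stokes/degree argument: $\langle \partial W, \Sigma_3\rangle = \pm\langle W, \partial\Sigma_3\rangle$ and here $W = C\cap\Sigma_2$ is $3$-dimensional... ) — actually the cleanest route is: $\langle\partial W,\Sigma_3\rangle_N = \langle W,\partial\Sigma_3\rangle_N$ up to sign for $W$ a $2$-chain, but $W$ is $2$-dimensional and $\partial\Sigma_3$ is $1$-dimensional in a $3$-manifold, so this pairing is the honest linking-type identity, and $\langle\partial W, \Sigma_3\rangle=0$ is NOT automatic. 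So instead I would directly expand: $\langle\partial C,\Sigma_2,\Sigma_3\rangle = \langle (\partial C)\cap\Sigma_2, \Sigma_3\rangle$ by definition of the triple intersection number as an iterated intersection, then rewrite $(\partial C)\cap\Sigma_2 = \partial(C\cap\Sigma_2) + C\cap(\partial\Sigma_2)$, giving $\langle\partial C,\Sigma_2,\Sigma_3\rangle = \langle\partial(C\cap\Sigma_2),\Sigma_3\rangle + \langle C,\partial\Sigma_2,\Sigma_3\rangle$.

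The main obstacle, then, is showing $\langle\partial(C\cap\Sigma_2),\Sigma_3\rangle = -\langle C,\Sigma_2,\partial\Sigma_3\rangle$. Here $W := C\cap\Sigma_2$ is a $2$-chain in the oriented $3$-manifold $N$, and $\Sigma_3$ is a $2$-chain with $\partial W$ and $\partial\Sigma_3$ disjoint; the identity $\langle\partial W,\Sigma_3\rangle = \pm\langle W,\partial\Sigma_3\rangle$ is exactly the statement that the $1$-dimensional intersection $W\cap\partial(\text{collar})$... — concretely it is the standard fact that for a $2$-chain $W$ and $2$-chain $\Sigma_3$ in an oriented $3$-manifold with disjoint boundaries, $\langle\partial W,\Sigma_3\rangle + \langle W,\partial\Sigma_3\rangle$ equals the boundary of the $1$-chain $W\cap\Sigma_3$, which is $0$ since that is a closed $0$-chain whose total multiplicity is zero, OR one pushes $\partial W$ slightly and counts. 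I would handle this by the collaring/pushing argument: perturb $\Sigma_3$ to $\Sigma_3'$ agreeing with $\Sigma_3$ near $\partial\Sigma_3$; then $W\cap\Sigma_3$ is a compact $1$-manifold-with-boundary whose boundary points lie on $\partial W$ (contributing $\langle\partial W,\Sigma_3\rangle$) and on $\partial\Sigma_3$ (contributing $\pm\langle W,\partial\Sigma_3\rangle$), and since a compact $1$-chain has zero boundary count, the two contributions cancel up to the sign dictated by the orientation conventions. Getting that sign to be exactly the one asserted — so that the final formula reads $\langle\partial C,\Sigma_2,\Sigma_3\rangle = \langle C,\partial\Sigma_2,\Sigma_3\rangle - \langle C,\Sigma_2,\partial\Sigma_3\rangle$ — is the only delicate point, and I would resolve it by checking the model local case of transverse coordinate planes in $\R^3$ with the outward-normal-first convention, then invoke linearity and transversality to conclude in general.
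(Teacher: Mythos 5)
Your plan is essentially the paper's proof: both reduce the identity to the Leibniz rule for $\partial$ of a transverse intersection of chains, combined with the fact that the compact $1$-chain $C\cap\Sigma_2\cap\Sigma_3$ has zero total algebraic boundary count, differing only in whether one groups the triple product as $C\cap(\Sigma_2\cap\Sigma_3)$ and applies the Leibniz rule twice in succession (the paper) or as $(C\cap\Sigma_2)\cap\Sigma_3$ and defers the $\Sigma_3$-step to a separate boundary-count argument (you), which is the same computation reorganized. The paper also begins by reducing to pairwise transverse \emph{integral} chains (the rational case following by multilinearity) before carrying out exactly the sign-verification against the outward-normal-first convention that you defer to a local model.
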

\begin{proof}
 It suffices to prove the result for pairwise transverse integral chains. 
Since $$\partial (C\cap \Sigma_2\cap \Sigma_3)=\lbp\partial C\cap \Sigma_2\cap \Sigma_3\rbp\cup
\lbp C\cap\partial (\Sigma_2\cap \Sigma_3)\rbp,$$ 
we have $<\partial C,\Sigma_2,\Sigma_3>=-<C,\partial (\Sigma_2\cap \Sigma_3)>$. 
Now, $$\partial (\Sigma_2\cap \Sigma_3)=\lbp -\partial (\Sigma_2)\cap \Sigma_3\rbp \cup 
\lbp \Sigma_2\cap \partial (\Sigma_3)\rbp.$$ The announced equality follows.
\end{proof}

\begin{corollary} \label{subinteq}
 Let $(M,K)$ be a $\Q$SK-pair. Let $\tilde{X}$ be the associated infinite cyclic covering. 
Let $C$ be a rational 3-chain and let $\Sigma_2$ and $\Sigma_3$ be rational 2-chains, pairwise $\tau$-transverse in $\tilde{X}$. Then:
$$<\partial C,\Sigma_2,\Sigma_3>_e=<C,\partial \Sigma_2,\Sigma_3>_e-<C,\Sigma_2,\partial \Sigma_3>_e.$$ 
\end{corollary}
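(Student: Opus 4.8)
The plan is to reduce the equivariant identity to the non-equivariant one from Lemma \ref{lemmaint}, applied to all the translates simultaneously, and then reorganize the resulting sums of monomials. Concretely, I would start from the definition
$$<\partial C,\Sigma_2,\Sigma_3>_e=\sum_{k_2,k_3\in\Z}<\partial C,\tau^{-k_2}\Sigma_2,\tau^{-k_3}\Sigma_3>\,t_2^{k_2}t_3^{k_3},$$
and for each fixed pair $(k_2,k_3)$ apply Lemma \ref{lemmaint} in the oriented $3$-manifold $\tilde X$ to the $\tau$-transverse chains $C$, $\tau^{-k_2}\Sigma_2$, $\tau^{-k_3}\Sigma_3$ (their boundaries are $\partial C$, $\tau^{-k_2}\partial\Sigma_2$, $\tau^{-k_3}\partial\Sigma_3$ since $\tau$ is a homeomorphism and hence commutes with $\partial$). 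This gives
$$<\partial C,\tau^{-k_2}\Sigma_2,\tau^{-k_3}\Sigma_3>=<C,\tau^{-k_2}\partial\Sigma_2,\tau^{-k_3}\Sigma_3>-<C,\tau^{-k_2}\Sigma_2,\tau^{-k_3}\partial\Sigma_3>.$$

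Next I would multiply by $t_2^{k_2}t_3^{k_3}$ and sum over $k_2,k_3\in\Z$. The first term on the right-hand side is exactly $<C,\partial\Sigma_2,\Sigma_3>_e$ by definition (here $\partial C$ has codimension $0$, $\partial\Sigma_2$ has codimension $2$, $\Sigma_3$ has codimension $1$, so the codimensions sum to $3$ and the equivariant triple intersection number is defined, living in $\Rd/(t_1t_2t_3-1)$ — note the first slot carries no summation variable, which matches the convention in the definition where $C_1$ sits in position $1$). Similarly the second term is $<C,\Sigma_2,\partial\Sigma_3>_e$. I should double-check that the summation conventions line up: in all three equivariant triple intersection numbers appearing, the first chain ($\partial C$ or $C$) occupies slot $1$ and carries the trivial translate, while slots $2$ and $3$ carry the $\tau^{-k_2}$, $\tau^{-k_3}$ translates weighted by $t_2^{k_2}$, $t_3^{k_3}$; since the only thing that changed between the three expressions is replacing $\Sigma_j$ by $\partial\Sigma_j$ in one slot, the indexing is identical and the sums match term by term.

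The only genuine subtlety — and the thing I'd want to state carefully rather than the "main obstacle," since it is not really hard — is convergence/finiteness of the doubly-infinite sums, i.e. why these expressions actually define elements of $\Rd/(t_1t_2t_3-1)$ rather than formal power series. This is inherited from the setup: $C$, $\Sigma_2$, $\Sigma_3$ are (finite) rational chains in $\tilde X$, so for all but finitely many pairs $(k_2,k_3)$ the chains $C$, $\tau^{-k_2}\Sigma_2$, $\tau^{-k_3}\Sigma_3$ have disjoint supports and the intersection number vanishes; hence each of the three sums is finite, and the linear identity between finite sums is legitimate. One can also invoke Lemma \ref{lemmaformulae} implicitly to see that $\partial C$ genuinely behaves as a codimension-$0$ chain here. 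Passing to the quotient $\Rd/(t_1t_2t_3-1)$ at the end is harmless since the identity already holds in $\Rd$ before quotienting. So the proof is essentially: apply Lemma \ref{lemmaint} pointwise in $(k_2,k_3)$, weight by $t_2^{k_2}t_3^{k_3}$, sum, and recognize the three resulting sums as the three equivariant triple intersection numbers in the statement.
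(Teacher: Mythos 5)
Your proof is correct and matches the paper's argument exactly: the paper's one-line proof is to apply Lemma~\ref{lemmaint} to $C$, $\tau^{k_2}\Sigma_2$, $\tau^{k_3}\Sigma_3$ for all integers $k_2,k_3$ and sum, which is precisely what you do (your extra remarks on finiteness and on matching the indexing conventions are sound; note the small slip where you wrote that ``$\partial C$ has codimension $0$'' when you meant $C$).
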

\begin{proof}
 Apply Lemma \ref{lemmaint} to $C$, $\tau^{k_2}\Sigma_2$ and $\tau^{k_3}\Sigma_3$ for all integers $k_2$, $k_3$.
\end{proof}

\proofof{Lemma \ref{lemmaindsurf}}
Replace $\Sigma_1$ by a chain $\Sigma_1'$ satisfying the same conditions. Lemma \ref{subh2} shows that there is a rational 
3-chain $C$ such that $\partial C=\Sigma_1'-\Sigma_1$. Compute the difference 
$$<\Sigma_1',\Sigma_2,\Sigma_3>_e-<\Sigma_1,\Sigma_2,\Sigma_3>_e=<\partial C,\Sigma_2,\Sigma_3>_e.$$ By Corollary \ref{subinteq}, 
$$<\partial C,\Sigma_2,\Sigma_3>_e=<C,\partial \Sigma_2,\Sigma_3>_e-<C,\Sigma_2,\partial \Sigma_3>_e.$$
Hence, by Lemma \ref{lemmaformulae}:
$$<\partial C,\Sigma_2,\Sigma_3>_e=P_2(t_2)<C,\mu_2,\Sigma_3>-P_3(t_3)<C,\Sigma_2,\mu_3>,$$
and this is trivial in $\displaystyle \frac{\Qti}{(t_1t_2t_3-1,P_1(t_1),P_2(t_2),P_3(t_3))}$.

Let $\mu_1'$ be a knot in $\tilde{X}$, homologous to $\mu_1$, whose image in M is disjoint from the images of $\mu_2$ and $\mu_3$. 
The difference $\mu_1'-\mu_1$ is trivial in $H_1(\tilde{X};\Q)$, hence there is a rational 2-chain $S$ 
such that $\partial S=\mu_1'-\mu_1$. Choose $S$ $\tau$-transverse to $\Sigma_1$, $\Sigma_2$, and $\Sigma_3$. 
Set $\hat{S}=P_1(\tau)S$, and $\Sigma_1'=\hat{S}+\Sigma_1$. 
We have $\partial \Sigma_1' = P_1(\tau)\partial S+\partial \Sigma_1=P_1(\tau)\mu_1'$. Since: 
$$<\hat{S},\Sigma_2,\Sigma_3>_e= P_1(t_1) <S,\Sigma_2,\Sigma_3>_e = 0\quad \textrm{in}\quad \frac{\Qti}{(t_1t_2t_3-1,P_1(t_1),P_2(t_2),P_3(t_3))},$$
we have $<\Sigma_1',\Sigma_2,\Sigma_3>_e=<\Sigma_1,\Sigma_2,\Sigma_3>_e$.

Conclude by using the symmetry properties of the equivariant triple intersections.
\fin

  \section{Variation under null borromean surgeries} \label{secvar}

In this section, we prove Proposition \ref{propdiff}.

The following lemma describes the effect of a borromean surgery on the triple intersection numbers.
\begin{lemma} \label{lemmabd}
 Let $N$ be a 3-manifold. Let $\Gamma$ be a Y-graph in $N$, with leaves $\ell_1$, $\ell_2$, $\ell_3$. Let $\Sigma_1$, $\Sigma_2$, $\Sigma_3$, 
be transverse compact surfaces in $N$. Assume $\Gamma\cap\Sigma_i\cap\Sigma_j=\emptyset$ for $i\neq j$.
Then there are surfaces $\Sigma'_1$, $\Sigma'_2$, $\Sigma'_3$, in $N(\Gamma)$ such that $\partial \Sigma_i'=\partial \Sigma_i$, and:
$$<\hspace{-1pt}\Sigma'_1,\Sigma'_2,\Sigma'_3\hspace{-1pt}>_{\scriptscriptstyle N(\Gamma)}\hspace{-2pt}-\hspace{-2pt}<\hspace{-1pt}\Sigma_1,\Sigma_2,\Sigma_3\hspace{-1pt}>_{\scriptscriptstyle N} 
  =\sum_{\sigma\in\perm}\hspace{-2pt}\sg\hspace{-2pt}<\hspace{-1pt}\Sigma_1,\ell_{\sigma(1)}\hspace{-1pt}>_{\scriptscriptstyle N}
  <\hspace{-1pt}\Sigma_2,\ell_{\sigma(2)}\hspace{-1pt}>_{\scriptscriptstyle N}<\hspace{-1pt}\Sigma_3,\ell_{\sigma(3)}\hspace{-1pt}>_{\scriptscriptstyle N}\hspace{-3pt}.$$
\end{lemma}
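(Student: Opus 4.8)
\textbf{Plan of proof for Lemma \ref{lemmabd}.}

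The plan is to analyze a borromean surgery on the single Y-graph $\Gamma$ locally, inside a regular neighborhood $\Sigma(\Gamma)\times[-1,1]$, and to track how the algebraic triple intersection number changes. First I would recall that the surgery on $\Gamma$ is the surgery on the six-component link $L$ of Figure \ref{figborro}, and more usefully that it can be described à la Matveev/Goussarov--Habiro as cutting out the genus 3 handlebody $H$ (a regular neighborhood of $\Gamma$) and regluing by a homeomorphism of $\partial H$ that acts on $H_1(\partial H;\Z)$ by the "borromean" commutator map. Since $\Sigma(\Gamma)\times[-1,1]$ may be taken disjoint from all the boundaries $\partial\Sigma_i$ (the $\Sigma_i$ are surfaces in $N$, and after a small isotopy the surgery region meets only their interiors), the surfaces $\Sigma_i\setminus H$ survive into $N(\Gamma)$, and the whole computation localizes to understanding how to cap off the curves $\Sigma_i\cap\partial H$ inside the new handlebody.

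Next I would set up the bookkeeping. Inside $H=N_\Gamma$, each surface piece $\Sigma_i\cap H$ is a collection of disks and annuli whose boundary on $\partial H$ is homologous, in $H_1(\partial H;\Z)$, to a combination of the three "meridian-type" curves dual to the leaves; because $\Gamma\cap\Sigma_i\cap\Sigma_j=\emptyset$ for $i\ne j$, at most one surface touches any given leaf region at a time, so the relevant homology class of $\partial(\Sigma_i\cap H)$ is determined by the intersection numbers $n_{ij}=\langle\Sigma_i,\ell_j\rangle_N$ — intersecting $\Sigma_i$ with a leaf $\ell_j$ counts how many times $\Sigma_i$ runs through the corresponding handle of $H$. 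Then I would choose the new surfaces $\Sigma_i'$ by regluing the same outside part $\Sigma_i\setminus H$ and capping the boundary curves inside the new copy $H'$ of the handlebody. The difference $\langle\Sigma_1',\Sigma_2',\Sigma_3'\rangle_{N(\Gamma)}-\langle\Sigma_1,\Sigma_2,\Sigma_3\rangle_N$ is then a purely local intersection number, computed inside $H'$ versus $H$, of three surfaces whose boundary classes are governed by the matrix $(n_{ij})$. The borromean gluing is precisely the one for which the triple intersection of the three cores picks up the determinant-like expression; unwinding it gives the signed sum $\sum_{\sigma\in\perm}\varepsilon(\sigma)\,n_{1\sigma(1)}n_{2\sigma(2)}n_{3\sigma(3)}$, which is exactly the claimed right-hand side.

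Concretely, I would reduce to the model case: $N=\Sigma(\Gamma)\times[-1,1]$ (or its double) and $\Sigma_i$ a union of parallel copies of the three "leaf-dual" disks, $n_{ij}$ of them through handle $j$; then both sides are multilinear in the columns of $(n_{ij})$, so it suffices to check it on the basis vectors, i.e. when each $\Sigma_i$ is a single disk through a single handle, and there the statement is a direct computation with the six-component surgery link — the only nonzero contribution comes from the configuration where $\{$the handle hit by $\Sigma_1$, by $\Sigma_2$, by $\Sigma_3\}=\{1,2,3\}$, and the borromean clasping contributes $\pm1$ with the sign $\varepsilon(\sigma)$ read off from the cyclic orientation of the internal vertex. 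The general case follows by gluing these local models into $N$ along the surgery region and adding the outside contribution, which is unchanged.

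\textbf{Main obstacle.} The hard part will be the sign and orientation bookkeeping: one must verify that the cyclic order of the three edges at the internal vertex (the "orientation of the internal vertex" fixed by $\Sigma(\Gamma_0)$) matches the sign $\varepsilon(\sigma)$ in the symmetric-group sum, and that the co-orientation conventions (outward normal first) for $\langle\cdot,\cdot,\cdot\rangle$ and for $\langle\Sigma_i,\ell_j\rangle$ are consistent across the cut-and-reglue. This is where a careful choice of the model $N=\Sigma(\Gamma)\times[-1,1]$ and an explicit picture of $L$ pay off: once the single-handle-per-surface case is pinned down with correct signs, multilinearity and locality do the rest with no further sign subtleties. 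I do not expect the topology to be difficult — it is the standard "clasper calculus computes intersection numbers" phenomenon — but making every sign explicit is the delicate point.
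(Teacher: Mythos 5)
Your proposal follows essentially the same route as the paper's proof: both cut out the genus-3 tubular neighborhood of $\Gamma$ (the Matveev description), reglue, cap the boundary curves of $\Sigma_i\setminus H$ by explicit surfaces inside the new handlebody, and reduce the triple-intersection difference to a local count — one unit per configuration of three surface pieces in distinct handles, with sign $\varepsilon(\sigma)$, which by multilinearity in the intersection numbers $\langle\Sigma_i,\ell_j\rangle$ gives the permutation sum. The paper presents the base-case computation $\langle F_1,F_2,F_3\rangle=1$ via an explicit picture (its Figure~\ref{figsurface}) rather than phrasing it as a multilinearity reduction, but the underlying argument and the localization idea are identical to yours.
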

\begin{proof}
The surgery replaces a tubular neighborhood $T(\Gamma)$ of $\Gamma$ by another standard handlebody of genus 3 (see Matveev \cite{Mat}). 
\begin{figure}[htb] 
\begin{center}
\begin{tikzpicture} [scale=0.3,fill opacity=0.5]

\newcommand{\creux}[2]{
\draw[xshift=#1,yshift=#2,thick] (-1.5,0.2) ..controls +(0.5,-0.7) and +(-0.5,-0.7) .. (1.5,0.2);
\draw[xshift=#1,yshift=#2,thick] (-1.2,0) ..controls +(0.6,0.4) and +(-0.6,0.4) .. (1.2,0);}
\creux{0}{-11cm}
\creux{9.5cm}{5.5cm}
\creux{-9.5cm}{5.5cm}

\newcommand{\bras}[1]{
\draw[rotate=#1,thick] (-2.5,-6.67) arc (-240:60:5);
\draw[rotate=#1] (0,-1.5) circle (2.5);
\draw [rotate=#1,white,line width=8pt] (-0.95,-4) -- (0.95,-4);
\draw[rotate=#1] {(0,-11) circle (3) (1,-3.85) -- (1,-7.6)};
\draw[rotate=#1,white,line width=6pt] (-1,-5) -- (-1,-8.7);
\draw[rotate=#1] {(-1,-3.85) -- (-1,-8.7) (-1,-8.7) arc (-180:0:1)};
\draw[rotate=#1,thick] (2.5,-6.67) ..controls +(-1.73,1) and +(-1.73,1) .. (7.03,1.17);}
\bras{0}
\draw [white,line width=10pt,rotate=120] (0,-1.5) circle (2.5);
\bras{120}
\draw [rotate=-120,white,line width=10pt] (-1.77,0.27) arc (135:190:2.5);
\draw [rotate=-120,white,line width=10pt] (1.77,0.27) arc (45:90:2.5);
\bras{-120}
\draw [white,line width=10pt] (-1.77,0.27) arc (135:190:2.5);
\draw [white,line width=10pt] (1.77,0.27) arc (45:90:2.5);
\draw (-1.77,0.27) arc (135:190:2.5);
\draw (1.77,0.27) arc (45:90:2.5);

\draw[gray] (0,-11.3) ..controls +(-1,-0.8) and +(-1,0.8) .. (0,-16);
\draw[gray] (0,-11.3) ..controls +(0.5,-0.3) and +(0,0.4) .. (0.75,-13.3) (0.7,-14.4) .. controls +(0,-0.3) and +(0.5,0.3) .. (0,-16);
\draw[gray] (0,-13.6) ..controls +(-0.17,-0.13) and +(-0.17,0.13) .. (0,-14.4);
\draw[gray] (0,-13.6) ..controls +(0.17,-0.13) and +(0.17,0.13) .. (0,-14.4);
\draw[gray] (0,-7.6) ..controls +(-0.17,-0.13) and +(-0.17,0.13) .. (0,-8.4);
\draw[gray] (0,-7.6) ..controls +(0.17,-0.13) and +(0.17,0.13) .. (0,-8.4);
\draw[gray] (0,-13.6) arc (-90:90:2.6);
\draw[gray] (0,-14.4) arc (-90:90:3.4);
\draw[gray] (0,-1.5) circle (2.3);
\draw [rotate=120,white,line width=10pt] (-1.77,0.27) arc (135:190:2.5);
\draw [rotate=120,white,line width=10pt] (1.77,0.27) arc (45:90:2.5);
\draw [rotate=120] (-1.77,0.27) arc (135:190:2.5);
\draw [rotate=120] (1.77,0.27) arc (45:90:2.5);
\draw [white,line width=8pt] (-0.75,-3.8) -- (0.75,-3.8);
\draw[gray] (0.8,-3.65) -- (0.8,-7.6) (-0.8,-3.65) -- (-0.8,-8.7) (-0.8,-8.7) arc (-180:0:0.8);
\fill[gray!40] (0,-1.5) circle (2.3);
\fill[gray!40] (-0.8,-3.65) -- (-0.8,-8.7) arc (-180:0:0.8) -- (0.8,-3.65) -- (-0.8,-3.65);
\draw[gray] (-1.7,-1.7) circle (0.3);
\draw[gray] (1,-0.3) circle (0.3);
\draw[gray] (-1.7,-1.4) arc (-100:-30:2.3);
\draw[gray] (-1.7,-2) arc (-100:-25:2.7);
\fill[gray!80] (-1.7,-2) arc (-100:-25:2.7) (1.26,-0.45) ..controls +(0.15,0.26) and +(0.15,0.26) .. (0.8,-0.1) 
  ..controls +(-0.6,-1.02) and +(1.2,0) .. (-1.7,-1.4) ..controls +(-0.35,0) and +(-0.35,0) .. (-1.7,-2);
\fill[gray!40] (0,-11.3) ..controls +(-1,-0.8) and +(-1,0.8) .. (0,-16) .. controls +(1,0.8) and +(1,-0.8) .. (0,-11.3);
\fill[gray!80] (0,-13.6) ..controls +(-0.17,-0.13) and +(-0.17,0.13) .. (0,-14.4) (0,-14.4) arc (-90:90:3.4) 
  (0,-7.6) ..controls +(-0.17,-0.13) and +(-0.17,0.13) .. (0,-8.4) ..controls +(3.4,0) and +(3.4,0) .. (0,-13.6);

\draw[->,gray,thick] (-0.7,-14.4) -- (-0.68,-14.5);
\draw[->,thick] (-0.78,-1.36) -- (-0.88,-1.39);
\draw (-0.48,-1.27) -- (-0.88,-1.39);

\draw[very thin] (3.8,0.75) arc (0:-105:2.5);
\draw[very thin] (-1.2,0.75) arc (-180:-138:2.5);

\draw[dashed] (-1.7,-1.4) arc (-100:-30:2.3);
\draw[dashed] (-1.7,-1.4) .. controls +(0.5,1) and +(-1.2,0.2) .. (0.8,-0.1);

\end{tikzpicture}
\end{center}
\caption{Surface in the reglued handlebody} \label{figsurface}
\end{figure}
To each intersection point of a leaf $\ell_i$ with a surface $\Sigma_j$ corresponds 
a disk on $\Sigma _j$ which is removed by the surgery. It can be replaced, after surgery, with the surface inside the reglued handlebody 
drawn in Figure \ref{figsurface}, where the apparent boundary inside the handlebody bounds a disk in the corresponding reglued torus. 
Let $F_2$ denote the surface drawn in Figure \ref{figsurface}, and let $F_1$ (resp. $F_3$) be the similar surface corresponding 
to the left (resp. right) handle. Then the dashed curve represents the intersection $F_1\cap F_2$, and we have $<F_1,F_2,F_3>=1$. 
We obtain the result by counting the intersection points inside the reglued handlebody.
\end{proof}

\proofof{Proposition \ref{propdiff}}
Thanks to $\Q$-linearity, it suffices to prove the result for integral homology classes $\beta_j$. 
Consider representatives $\mu_j$ of the $\beta_j$ whose images in $M\setminus K$ are pairwise disjoint and disjoint from $\Gamma$. 
Consider $\tau$-transverse rational 2-chains $\Sigma_j$, $\tau$-transverse to $\tilde{\Gamma}$, such that $\partial \Sigma_j=\delta(\tau)\mu_j$, 
and $\tilde{\Gamma}\cap\tau^{k_i}\Sigma_i\cap\tau^{k_j}\Sigma_j=\emptyset$ for $i\neq j$ and $k_i,k_j\in\Z$. 
The surgery on $\Gamma$ gives rise to simultaneous surgeries on all the $\tau^k\tilde{\Gamma}$ in $\tilde{X}$. 
Hence, by Lemma \ref{lemmabd}:
\begin{eqnarray*}
&& \hspace{-1cm} \phi^{(M,K,\xi)(\Gamma)}([\mu_1]\otimes[\mu_2]\otimes[\mu_3])-\phi^{(M,K,\xi)}([\mu_1]\otimes[\mu_2]\otimes[\mu_3]) \\
 &=& \sum_{k_2,k_3\in\Z} \sum_{k\in\Z} \sum_{\sigma\in\perm} \sg <\Sigma_1,\tau^k\gamma_{\sigma(1)}>
  <\tau^{-k_2}\Sigma_2,\tau^k\gamma_{\sigma(2)}><\tau^{-k_3}\Sigma_3,\tau^k\gamma_{\sigma(3)}> t_2^{k_2}t_3^{k_3} \\
 &=& \sum_{\sigma\in\perm}\sg\sum_{k\in\Z} <\Sigma_1,\tau^k\gamma_{\sigma(1)}>lk_e(\delta(\tau)\mu_2,\tau^k\gamma_{\sigma(2)})(t_2)
   \,lk_e(\delta(\tau)\mu_3,\tau^k\gamma_{\sigma(3)})(t_3) \\
 &=& \sum_{\sigma\in\perm}\sg\lbp\sum_{k\in\Z} <\Sigma_1,\tau^k\gamma_{\sigma(1)}>t_1^k\rbp 
  \delta(t_2)lk_e(\mu_2,\gamma_{\sigma(2)})(t_2) \,\delta(t_3)lk_e(\mu_3,\gamma_{\sigma(3)})(t_3) \\
 &=& \sum_{\sigma\in\perm}\sg\prod_{j=1}^3 \delta(t_j) lk_e(\mu_j,\gamma_{\sigma(j)})(t_j)
\end{eqnarray*}
\fin

    \section{Structure of $\HA$} \label{secH}

In this section, we study the structure of $\Ah$ and $\HA$, and we prove Theorems \ref{thstructureAh} and \ref{thstructureh}. 

There is a natural surjective map $\displaystyle \Ah\twoheadrightarrow\HA$, 
which splits into surjective maps $\Al(\mi)\twoheadrightarrow\HA(\mi)$ for $\mi\in\{1,..,q\}^3$. Note that the map 
$\Al(\mi)\twoheadrightarrow\HA(\mi)$ is an isomorphism if and only if the $i_j$ are all distinct. 

Theorem \ref{thstructureAh} will follow from Lemma \ref{lemmaA} below. 

For $1\leq i\leq q$, $\C\otimes\Al_i$ can be written:
$$\C\otimes\Al_i=\bigoplus_{\ell=1}^{q_i} \frac{\Ct}{((t-z_{i\ell})^{m_i})}\eta_{i\ell},$$
where the $z_{i\ell}$ are complex roots of $\delta_i$, different from $0$ and $1$. 
Set: $$J_{\mi}=\{1,..,q_{i_1}\}\times\{1,..,q_{i_2}\}\times\{1,..,q_{i_3}\}.$$ Let $\ml=(\ell_j)_{1\leq j\leq 3}\in J_{\mi}$. 
Let $\Al(\mi,\ml)$ be the quotient of $\displaystyle \bigotimes_{1\leq j\leq 3} \frac{\Ct}{((t-z_{i_j\ell_j})^{m_{i_j}})}\eta_{i_j\ell_j}$ 
by the vector subspace generated by the holonomy relations, namely the relations 
$\otimes_{1\leq j\leq 3} \beta_j =\otimes_{1\leq j\leq 3} t\beta_j$.
Then $\C\otimes\Al(\mi)=\bigoplus_{\ml\in J_{\mi}} \Al(\mi,\ml).$

\begin{lemma} \label{lemmaA}
 The complex vector space $\Al(\mi,\ml)$ is non trivial if and only if $\prod_{j=1}^3 z_{i_j\ell_j}=1$. 
\end{lemma}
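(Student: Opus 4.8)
The plan is to analyze the complex vector space $\Al(\mi,\ml)$ directly as a quotient of the tensor product $W = \bigotimes_{1\leq j\leq 3} \frac{\Ct}{((t-z_{i_j\ell_j})^{m_{i_j}})}\eta_{i_j\ell_j}$ by the subspace generated by the holonomy relations $v = (t\otimes t\otimes t)v$. Write $z_j = z_{i_j\ell_j}$ and $m_j = m_{i_j}$ for brevity, and let $T$ denote the operator $t\otimes t\otimes t$ acting on $W$. Then $\Al(\mi,\ml) = W/(\mathrm{id}-T)W$, so it is non-trivial if and only if $\mathrm{id}-T$ is not surjective, equivalently (as $W$ is finite-dimensional) if and only if $1$ is an eigenvalue of $T$.

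First I would identify the spectrum of $T$. On each factor $\frac{\Ct}{((t-z_j)^{m_j})}$, multiplication by $t$ is a single Jordan block with eigenvalue $z_j$ (size $m_j$). Hence $t\otimes t\otimes t$ on $W$ has a single eigenvalue, namely the product $z_1z_2z_3$, with the whole space $W$ being a generalized eigenspace: $T = z_1z_2z_3\cdot(\mathrm{id} + N)$ where $N$ is nilpotent. Therefore $\mathrm{id}-T$ is invertible precisely when $z_1z_2z_3\neq 1$: in that case $T - \mathrm{id}$ has the nonzero eigenvalue $z_1z_2z_3 - 1$ with algebraic multiplicity $\dim W$, so it is invertible and the quotient vanishes. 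Conversely, if $z_1z_2z_3 = 1$, then $T = \mathrm{id}+z_1z_2z_3 N = \mathrm{id} + N$ is unipotent, so $\mathrm{id}-T = -N$ is nilpotent and non-invertible (as $W\neq 0$, since each factor is nonzero because $m_j\geq 1$), hence not surjective, and the quotient $\Al(\mi,\ml)$ is non-trivial.

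The two directions are thus both short once the eigenvalue computation is set up. The only mildly delicate point — and the step I'd expect to need the most care — is justifying that $t\otimes t\otimes t$ acts on $W$ with the single eigenvalue $z_1z_2z_3$ and that $\mathrm{id}-T$ is nilpotent when this product is $1$; this is a standard fact about tensor products of operators (the spectrum of $A\otimes B$ is the set of products of eigenvalues, and a tensor product of unipotents is unipotent), but one should phrase it carefully over $\C$, perhaps by choosing a basis adapted to the Jordan form of each $t$-action and observing that $t\otimes t\otimes t$ is then upper-triangular with all diagonal entries equal to $z_1z_2z_3$. With that in hand, the equivalence ``$\Al(\mi,\ml)\neq 0 \iff z_1z_2z_3 = 1$'' follows immediately, since a linear endomorphism of a finite-dimensional space is surjective if and only if it is injective, and $\mathrm{id}-T$ is injective exactly when $1$ is not an eigenvalue of $T$.
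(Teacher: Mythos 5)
Your proof is correct, and it takes a genuinely more abstract route than the paper's. You observe that $\Al(\mi,\ml)=W/(\mathrm{id}-T)W$ for $T=t\otimes t\otimes t$, and you reduce the whole statement to the single spectral fact that $T=z_1z_2z_3\cdot\mathrm{id}+(\textrm{nilpotent})$, which you justify by taking a Jordan-type basis on each factor so that $T$ becomes upper triangular with all diagonal entries $z_1z_2z_3$. The paper instead works with the explicit monomial basis $[\mk]=\bigotimes_j(t-z_j)^{k_j}\eta_{i_j\ell_j}$ and the rewritten holonomy relations $hol(\mk)$: when $z_1z_2z_3\neq 1$ it runs a decreasing induction on $s(\mk)=k_1+k_2+k_3$, using that modulo higher-degree terms $hol(\mk)$ reads $[\mk]=z_1z_2z_3[\mk]$; when $z_1z_2z_3=1$ it observes that $[(0,0,0)]$ appears in no relation. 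These are two presentations of the same phenomenon -- the paper's induction on $s(\mk)$ is a hands-on way of saying that $T-z_1z_2z_3\cdot\mathrm{id}$ strictly raises $s$, hence is nilpotent, which is exactly your eigenvalue claim. Your version is shorter and more conceptual; the paper's concrete setup pays off later, since the basis $[\mk]$, the relations $hol(\mk)$, and the grading $s(\mk)$ are reused in the proofs of Theorem~\ref{thstructureh} and Lemma~\ref{lemmadima}, where one needs explicit generators and dimension counts rather than a mere non-vanishing criterion.
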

The following sublemma will be useful for rewriting the holonomy relations. 
\begin{sublemma} \label{sublemmahol}
 For all $(\beta_j)_{1\leq j\leq 3}\subset\Al$, for all $(w_j)_{1\leq j\leq 3}\subset\C$: 
$$\otimes_{1\leq j\leq 3} t\beta_j=\sum_{I\subset \{1,2,3\}} \otimes_{1\leq j\leq 3} p_I(w_j)\beta_j,$$
where $p_I(w_j)=\left\lbrace \begin{array}{ll} (t-w_j) & \textrm{ if } j\in I \\ w_j & \textrm{ if } j\notin I \end{array} \right.$.
\end{sublemma}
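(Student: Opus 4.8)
The plan is to prove Sublemma~\ref{sublemmahol} by a direct expansion, since the identity is purely formal and holds for each coordinate $j$ independently before tensoring. First I would observe that for a single index $j$, the decomposition $t\beta_j = (t-w_j)\beta_j + w_j\beta_j$ is a trivial rewriting in the $\Qt$-module $\Al$ (extended to $\Ct$ if $w_j\in\C$). The point is then to plug this into the tensor product $\otimes_{1\leq j\leq 3} t\beta_j$ and expand by multilinearity over the three factors.

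Concretely, I would write
\[
\otimes_{1\leq j\leq 3} t\beta_j = \bigotimes_{1\leq j\leq 3}\bigl((t-w_j)\beta_j + w_j\beta_j\bigr),
\]
and expand the right-hand side using trilinearity of $\otimes$. Each of the three tensor slots contributes either the term $(t-w_j)\beta_j$ or the term $w_j\beta_j$; a choice of which slots take the ``$(t-w_j)$'' branch is exactly a choice of a subset $I\subseteq\{1,2,3\}$, namely $I=\{j : \text{slot } j \text{ takes the } (t-w_j) \text{ branch}\}$. The corresponding summand is $\otimes_{1\leq j\leq 3} p_I(w_j)\beta_j$ with $p_I(w_j) = (t-w_j)$ for $j\in I$ and $p_I(w_j)=w_j$ for $j\notin I$, which is precisely the claimed formula. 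Summing over all $2^3$ subsets $I$ gives the stated identity.

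There is essentially no obstacle here: the only thing to be slightly careful about is that the coefficients $w_j$ are scalars (in $\C$) and $(t-w_j)$ is an element of $\Ct$ acting on the module, so the rearrangement ``pull the scalar/operator out of the tensor slot'' is legitimate because $\otimes$ is taken over $\Q$ (or $\C$) and the module structure is $\Qt$-linear (resp.\ $\Ct$-linear) in each slot separately. I would just note that the expansion is performed in $\bigotimes_{1\leq j\leq 3}\Al$ before passing to any quotient, so no holonomy relation is used in the proof of the sublemma itself — the sublemma is the tool that will later be used to \emph{rewrite} those relations. This completes the argument; it is a one-line formal expansion once the binary branching is set up.
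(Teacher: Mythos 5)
Your proof is correct and is exactly the paper's argument: the paper's entire proof is the one line ``For $1\leq j\leq 3$, write $t=(t-w_j)+w_j$,'' and you have simply spelled out the multilinear expansion and the bijection between the $2^3$ summands and subsets $I\subseteq\{1,2,3\}$. No discrepancy.
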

\begin{proof}
For $1\leq j\leq 3$, write $t=(t-w_j)+w_j$.
\end{proof}
\proofof{Lemma \ref{lemmaA}}
Fix $(\mi,\ml)$, and simplify the notation by setting $z_j=z_{i_j\ell_j}$, $n_j=m_{i_j}$, and for $\mk=(k_j)_{1\leq j\leq 3}\in\N^3$, 
$[\mk]=\bigotimes_{1\leq j\leq 3}(t-z_j)^{k_j}\eta_{i_j\ell_j}$. 
Thanks to Sublemma \ref{sublemmahol}, the holonomy relations can be written in terms of these generators, as follows:
$$hol(\mk) : \quad [\mk]=
\sum_{I\subset\{1,2,3\}} (\prod_{j\notin I} z_j) [\mk+\md],$$
where $(\delta_I)_j=\left\lbrace \begin{array}{ll} 1 & \textrm{ if } j\in I \\ 0 & \textrm{ if } j\notin I \end{array} \right.$. 
We have: $$\Al(\mi,\ml)=\frac{\C<[\mk]; 0\leq k_j< n_j\ \forall j>}{\C< hol(\mk); 0\leq k_j< n_j\ \forall j>}.$$

First assume $z_1z_2z_3\neq1$. For $\mk=(k_1,k_2,k_3)$, let $s(\mk)=k_1+k_2+k_3$. 
By decreasing induction on $s(\mk)$, we will prove that all the $[\mk]$ vanish in $\Al(\mi,\ml)$. 
It is true if $s(\mk)>n_1+n_2+n_3-2$. Fix $s\geq 0$, and assume $[\mk]=0$ if $s(\mk)>s$. 
Then, if $s(\mk)=s$, the relation $hol(\mk)$ becomes $[\mk]=(z_1z_2z_3)[\mk]$, hence $[\mk]=0$.

Now assume $z_1z_2z_3=1$. In this case, the holonomy relations get simplified:
$$hol(\mk) : \quad 
\sum_{\emptyset\neq I\subset\{1,2,3\}} (\prod_{j\notin I} z_j) [\mk+\md]=0.$$
The generator $[(0,0,0)]$ does not appear in any of these relations. Hence $\Al(\mi,\ml)\neq 0$.
\fin

\paragraph{Examples}
\begin{enumerate}
 \item Let $\displaystyle \Al=\frac{\Qt}{(t^4+1)}\eta_1\oplus\frac{\Qt}{(t^2+1)}\eta_2$. Let $\zeta=e^{i\frac{\pi}{4}}$. Then:
$$\C\otimes\Al=\frac{\Ct}{(t-\zeta)}\eta_{11}\oplus\frac{\Ct}{(t-\zeta^3)}\eta_{12}\oplus\frac{\Ct}{(t+\zeta)}\eta_{13}\oplus
\frac{\Ct}{(t+\zeta^3)}\eta_{14}\oplus\frac{\Ct}{(t-i)}\eta_{21}\oplus\frac{\Ct}{(t+i)}\eta_{22}$$
The space $\Al(\mi,\ml)$ is non trivial if and only if the set $\{(i_1,l_1),(i_2,l_2),(i_3,l_3)\}$ is, up to permutation, 
one of the following ones: $\{(1,1),(1,3),(2,1)\}$, $\{(1,2),(1,2),(2,1)\}$, $\{(1,4),(1,4),(2,1)\}$, $\{(1,1),(1,1),(2,2)\}$, 
$\{(1,2),(1,4),(2,2)\}$, $\{(1,3),(1,3),(2,2)\}$. There are 24 different non trivial $\Al(\mi,\ml)$, and each has complex dimension 1, 
hence $\dim_\Q(\Ah)=24$.
 \item Let $\displaystyle \Al=\frac{\Qt}{((t+1+t^{-1})^m)}$, $m>0$. In this case, $q=1$ and $\Ah=\Al(1,1,1)$. 
Over the complex numbers, we have $\displaystyle \C\otimes\Al=\frac{\Ct}{((t-j)^m)}\eta_{11}\oplus\frac{\Ct}{((t-j^2)^m)}\eta_{12}$, 
and $\displaystyle \C\otimes\Ah=\Al((1,1,1),(1,1,1))\oplus\Al((1,1,1),(2,2,2))$, where both the two components of this direct sum 
are non trivial. In particular, $\Ah$ has dimension at least 2.
\end{enumerate}

\proofof{Theorem \ref{thstructureh}}
Fix $\mi\in\{1,..,q\}^3$ such that $i_1\leq i_2\leq i_3$ and $\ml\in J_{\mi}$. Set $z_j=z_{i_j\ell_j}$, $n_j=m_{i_j}$, 
and $\eta_j=\eta_{i_j\ell_j}$. For $\mk=(k_j)_{1\leq j\leq 3}\in\N^3$, set $[\mk]_{\HA}=(t-z_1)^{k_1}\eta_1\wedge(t-z_2)^{k_2}\eta_2\wedge(t-z_3)^{k_3}\eta_3$. 
Let $\HA(\mi,\ml)$ denote the complex vector subspace of $\C\otimes_\Q\HA$ generated by the $[\mk]_{\HA}$. 
Note that:
$$\C\otimes_\Q\HA(\mi)=\bigoplus_{\ml\in J^{o}_{\mi}} \HA(\mi,\ml),$$
where $J^{o}_{\mi}$ is the set of all $\ml$ in $J_{\mi}$ such that, for $j=1,2$, if $i_j=i_{j+1}$, then $\ell_j\leq\ell_{j+1}$. 
Assume $\ml\in J^{o}_{\mi}$. We shall prove that $\HA(\mi,\ml)\neq 0$ if and only if $z_1z_2z_3=1$ and for $1\leq j\leq 3$, 
$n_j$ is at least the number of occurrences of $(i_j,\ell_j)$ in $((i_1,\ell_1),(i_2,\ell_2),(i_3,\ell_3))$. 
If $z_1z_2z_3\neq 1$, $\Al(\mi,\ml)=0$ implies $\HA(\mi,\ml)=0$. For the end of the proof, assume $z_1z_2z_3=1$. 
In this case, note that the holonomy relation $hol(\mk)$ relates generators $[\mk']_{\HA}$ such that $s(\mk')>s(\mk)$. 

If the $(i_j,l_j)$ are all distinct, then $\HA(\mi,\ml)\cong\Al(\mi,\ml)\neq 0$. 

Assume $(i_1,\ell_1)=(i_2,\ell_2)\neq(i_3,\ell_3)$. If $n_1=n_2=1$, the anti-symmetry implies $\HA(\mi,\ml)=0$. 
Otherwise $n_1=n_2\geq 2$. In this case, the space $\HA(\mi,\ml)$ is defined by the generators $[\mk]_{\HA}$ 
with $k_1<k_2$ and the holonomy relations $hol(\mk)$ with $k_1<k_2$, rewritten in terms of these generators. Indeed, 
a relation $hol(k_1,k_1,k_3)$ is trivial, and a relation $hol(k_2,k_1,k_3)$ is equivalent to $hol(k_1,k_2,k_3)$. 
The generator $[0,1,0]_{\HA}$ is non trivial since it does not appear in any relation $hol(\mk)$ with $k_1<k_2$.
The proof is the same whenever there are exactly two different $(i_j,\ell_j)$.

Assume $(i_1,\ell_1)=(i_2,\ell_2)=(i_3,\ell_3)$. If $n_1=n_2=n_3\leq2$, then $\HA(\mi,\ml)=0$. 
Otherwise $n_1=n_2=n_3\geq 3$. In this case, the space $\HA(\mi,\ml)$ is defined by the generators $[\mk]_{\HA}$ 
with $k_1<k_2<k_3$ and the holonomy relations $hol(\mk)$ with $k_1<k_2<k_3$, rewritten in terms of these generators. 
The generator $[0,1,2]_{\HA}$ does not appears in any of these relations. Hence $\HA(\mi,\ml)\neq0$. 
\fin

\paragraph{Examples}
\begin{enumerate}
 \item For $\displaystyle \Al=\frac{\Qt}{(t^4+1)}\eta_1\oplus\frac{\Qt}{(t^2+1)}\eta_2$, we have: 
$$\C\otimes\HA=\HA((1,1,2),(1,3,1))\oplus\HA((1,1,2),(2,4,2)),$$ and $\dim(\HA)=2$.
 \item For $\displaystyle \Al=\frac{\Qt}{((t+1+t^{-1})^m)}$, $\HA$ is trivial if $m\leq 2$. If $m\geq 3$, 
$$\C\otimes\HA=\HA((1,1,1),(1,1,1))\oplus\HA((1,1,1),(2,2,2)),$$ with both components non trivial. Hence $\HA$ has dimension 
at least 2.
\end{enumerate}

In the remaining of the section, we further study the structure of $\Al(\mi,\ml)$, and we provide bounds for the dimension of $\HA$.
\begin{lemma} \label{lemmadima}
Fix $(\mi,\ml)$, and simplify the notation by setting $z_j=z_{i_j\ell_j}$, $n_j=m_{i_j}$, and for $\mk=(k_j)_{1\leq j\leq 3}\in\N^3$, 
$[\mk]=\bigotimes_{1\leq j\leq 3}(t-z_j)^{k_j}\eta_{i_j\ell_j}$. 
Assume $z_1z_2z_3=1$. Assume $n_1\geq n_2\geq n_3$. Then the vector space $\Al(\mi,\ml)$ is generated by the family 
$([0,k_2,k_3])_{0\leq k_j<n_j}$. If $n_2+n_3\leq n_1+1$, this family is a basis of $\Al(\mi,\ml)$, and hence 
$\dim_\C \Al(\mi,\ml)=n_2n_3$. If $n_2+n_3>n_1+1$, then 
$n_2n_3-\frac{1}{2}(n_2+n_3-n_1)(n_2+n_3-n_1-1)\leq\dim_\C \Al(\mi,\ml)\leq n_2n_3$.
\end{lemma}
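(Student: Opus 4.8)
The plan is to work directly with the presentation of $\Al(\mi,\ml)$ by the generators $[\mk]$ for $0\leq k_j<n_j$ and the holonomy relations $hol(\mk)$ for $0\leq k_j<n_j$, as established in the proof of Lemma \ref{lemmaA}. Recall that, under the assumption $z_1z_2z_3=1$, the relation $hol(\mk)$ reads $\sum_{\emptyset\neq I\subset\{1,2,3\}} (\prod_{j\notin I} z_j)\,[\mk+\md]=0$, and that the term with $|I|=1$ and $I=\{1\}$ contributes $(z_2z_3)[k_1+1,k_2,k_3]=z_1^{-1}[k_1+1,k_2,k_3]$, which involves a generator with first index raised by one. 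The key observation is therefore that $hol(\mk)$ can be solved for $[k_1+1,k_2,k_3]$ in terms of generators $[\mk']$ with $k_1'\leq k_1$ (and total degree strictly larger in the remaining coordinates). First I would make this precise: by induction on $k_1$, every generator $[k_1,k_2,k_3]$ with $k_1\geq 1$ is expressible in terms of the family $([0,k_2,k_3])_{0\leq k_j<n_j}$, which gives the spanning statement and the upper bound $\dim_\C\Al(\mi,\ml)\leq n_2n_3$.

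For the lower bound and the exact value when $n_2+n_3\leq n_1+1$, I would exhibit an explicit linear functional, or rather a ``weight''/degree argument showing the images of the $[0,k_2,k_3]$ are independent. Concretely, one can define on $\C<[\mk];\,0\le k_j<n_j>$ a filtration by the quantity $s(\mk)=k_1+k_2+k_3$ (as in the proof of Lemma \ref{lemmaA}) and observe that each $hol(\mk)$ is homogeneous of total degree $s(\mk)+1$ in the sense that all its terms $[\mk+\md]$ have $s=s(\mk)+1$. Hence $\Al(\mi,\ml)$ is graded by $s$, and in each degree $s$ the relations coming from $hol(\mk)$ with $s(\mk)=s-1$ express each generator of degree $s$ with $k_1\geq 1$ in terms of generators of degree $s$ with smaller $k_1$; so a basis of the degree-$s$ part is given by those $[0,k_2,k_3]$ with $k_2+k_3=s$, $0\le k_2<n_2$, $0\le k_3<n_3$ — provided no further relations intervene. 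A further relation can only arise when a generator $[0,k_2,k_3]$ itself lies outside the allowed range after the reduction forces a coordinate to exceed its bound $n_j-1$; this is exactly the mechanism that makes the count drop when $n_2+n_3>n_1+1$. When $n_2+n_3\leq n_1+1$, I would check that the reduction of any $[k_1,k_2,k_3]$ with $0\le k_j<n_j$ never requires raising $k_2$ or $k_3$ beyond $n_j-1$ before $k_1$ reaches $0$ — here the length of the reduction chain, bounded by $k_1\le n_1-1$, stays within the room $n_2-1-k_2$ plus $n_3-1-k_3$ available in the other two coordinates precisely because $n_1-1\ge (n_2-1)+(n_3-1)$ — so the $[0,k_2,k_3]$ are genuinely free, giving $\dim_\C\Al(\mi,\ml)=n_2n_3$.

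For the remaining case $n_2+n_3>n_1+1$, the lower bound $n_2n_3-\frac12(n_2+n_3-n_1)(n_2+n_3-n_1-1)$ should come from counting how many of the $n_2n_3$ candidate generators $[0,k_2,k_3]$ can possibly be killed by ``overflow'' relations. The overflow happens only when the reduction chain starting from some $[\mk]$ is forced to push $k_2$ to $n_2$ or $k_3$ to $n_3$; tracking indices, the set of $(k_2,k_3)$ for which this can occur is contained in a triangular region of size $\binom{n_2+n_3-n_1}{2}=\frac12(n_2+n_3-n_1)(n_2+n_3-n_1-1)$, namely those pairs with $k_2+k_3\ge n_1$ (roughly), and each such pair can contribute at most one extra relation. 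Subtracting this from $n_2n_3$ yields the stated lower bound.

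The main obstacle I anticipate is the bookkeeping in this last case: making the ``overflow'' count rigorous requires carefully identifying, among the degree-$s$ pieces, exactly which reductions terminate at an out-of-range generator and showing these yield at most one independent relation per affected $(k_2,k_3)$-pair, rather than accidentally producing either too many relations (which would break the lower bound) or dependencies among them. I would handle this by fixing, in each total degree $s$ with $n_1\le s\le n_2+n_3-2$, an ordering of the generators and performing Gaussian elimination abstractly: the relations $hol(\mk)$ with $s(\mk)=s-1$ form a matrix whose rank is what we must bound below, and the triangular structure in $k_1$ makes all but the ``overflow'' relations obviously independent, so only the overflow rows need the delicate estimate. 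The symmetry relations in $\HA$ (anti-symmetry) are not needed here since the statement is about $\Al(\mi,\ml)$, not $\HA(\mi,\ml)$, which simplifies matters.
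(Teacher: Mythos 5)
Your spanning argument and the resulting upper bound $\dim_\C\Al(\mi,\ml)\le n_2n_3$ match the paper's: express $[k_1,k_2,k_3]$ via $hol(k_1-1,k_2,k_3)$ in terms of generators with smaller $k_1$, iterate down to $k_1=0$. That part is fine. But the rest of the plan rests on two claims that are false as stated.

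First, the relations $hol(\mk)$ are \emph{not} homogeneous of degree $s(\mk)+1$. Under $z_1z_2z_3=1$ the relation reads $\sum_{\emptyset\neq I\subset\{1,2,3\}}(\prod_{j\notin I}z_j)[\mk+\md]=0$, and the term indexed by $I$ lives in degree $s(\mk)+|I|$, so a single relation mixes degrees $s(\mk)+1$, $s(\mk)+2$, and $s(\mk)+3$. Thus $\Al(\mi,\ml)$ is only \emph{filtered} by $s$, not graded, and your ``basis of the degree-$s$ part'' picture does not literally make sense. The paper handles this by combining a decreasing induction on $s$ with the subspaces $V(s)$: the higher-degree terms are absorbed into $V(s+1)$, which is already controlled by the inductive hypothesis. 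A corrected version of your argument must do something equivalent (e.g.\ pass to the associated graded with respect to this filtration).

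Second, and more importantly, the ``overflow'' picture misidentifies where the extra relations come from, so it neither establishes freeness when $n_2+n_3\le n_1+1$ nor produces the lower bound in the other case. When the reduction chain pushes $k_2$ past $n_2-1$ or $k_3$ past $n_3-1$, the corresponding generator simply vanishes; that is harmless and does not create a relation among the $[0,h_2,h_3]$. The genuine source of possible constraints is different: the reduction only uses $hol(k_1,k_2,k_3)$ for $k_1\le n_1-2$, so the relations $hol(n_1-1,k_2,k_3)$ are never consumed. Each such relation has its would-be leading term $[n_1,k_2,k_3]$ already out of range (overflow in the \emph{first} coordinate, not the second or third), so it descends to an honest linear relation among the $[0,h_2,h_3]$. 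Modulo the high-degree vanishings supplied by Sublemma~\ref{sublemmadim}, this relation is trivial as soon as $n_1-1+k_2+k_3\ge n_2+n_3-2$; when $n_2+n_3\le n_1+1$ this holds for every $(k_2,k_3)$, so there are no extra relations and the $[0,k_2,k_3]$ form a basis, and when $n_2+n_3>n_1+1$ the number of pairs $(k_2,k_3)$ escaping that inequality is exactly $\tfrac12(n_2+n_3-n_1)(n_2+n_3-n_1-1)$, giving the lower bound. Your proposal reaches the right binomial count numerically, but the mechanism it attributes it to (overflow of $k_2,k_3$ during the reduction chain, ``pairs with $k_2+k_3\ge n_1$'') is not what is happening, and in particular does not give you a proof of freeness in the equality case, where spanning alone is not enough.
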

Note that if the $(i_j,l_j)$ are all distinct, then $\HA(\mi,\ml)\cong\Al(\mi,\ml)$, and the above statements hold for $\HA(\mi,\ml)$. 
\begin{sublemma} \label{sublemmadim}
 If $s\geq n_2+n_3-1$, the following equivalence holds:
$$\lbp hol(\mk) \textrm{ for all } \mk \textrm{ such that } s(\mk)\geq s-1\rbp \Leftrightarrow 
\lbp [\mk]=0 \textrm{ for all } \mk \textrm{ such that } s(\mk)\geq s\rbp.$$
\end{sublemma}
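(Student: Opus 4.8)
The plan is to prove Sublemma \ref{sublemmadim} by establishing both implications, the nontrivial one being that the holonomy relations in the ``high-$s$'' range force all the top generators to vanish. First I would set up notation: recall from the proof of Lemma \ref{lemmaA} that, since $z_1z_2z_3=1$, the holonomy relation has the simplified form
$$hol(\mk):\quad \sum_{\emptyset\neq I\subset\{1,2,3\}}\Big(\prod_{j\notin I}z_j\Big)[\mk+\md]=0,$$
and observe that every generator appearing in $hol(\mk)$ has $s$-value at least $s(\mk)+1$, with the unique term of $s$-value exactly $s(\mk)+1$ being $\bigl(\prod_{j}z_j\bigr)^{?}$-weighted copies coming from the singletons $I=\{j\}$. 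More precisely, the terms with $|I|=1$ are $z_2z_3[\mk+\delta_{\{1\}}]+z_1z_3[\mk+\delta_{\{2\}}]+z_1z_2[\mk+\delta_{\{3\}}]$, all of $s$-value $s(\mk)+1$; the terms with $|I|=2$ have $s$-value $s(\mk)+2$; and the $|I|=3$ term $[\mk+(1,1,1)]$ has $s$-value $s(\mk)+3$.

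The $(\Leftarrow)$ direction is immediate: if $[\mk]=0$ for all $\mk$ with $s(\mk)\geq s$, then in any $hol(\mk)$ with $s(\mk)\geq s-1$ every generator on the left has $s$-value $\geq s$, hence vanishes, so the relation holds. For $(\Rightarrow)$ I would argue by decreasing induction on $s(\mk)$, exactly as in the proof of Lemma \ref{lemmaA}, but now being careful about the range. The base case: if $s(\mk)\geq n_1+n_2+n_3-2$ then at least one $k_j\geq n_j$ (by pigeonhole, since otherwise $s(\mk)\leq (n_1-1)+(n_2-1)+(n_3-1)$), so $[\mk]=0$ by the defining relations $(t-z_j)^{n_j}\eta_{i_j\ell_j}=0$. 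For the inductive step, suppose $[\mk']=0$ for all $\mk'$ with $s(\mk')>s(\mk)$ where $s(\mk)\geq s$; I want to conclude $[\mk]=0$. If some $k_j\geq n_j$ we are done as above, so assume $0\leq k_j<n_j$ for all $j$. The point is that we may then apply the relation $hol(\mk-\delta_{\{j_0\}})$ for a suitable $j_0$ — I need an index $j_0$ with $k_{j_0}\geq 1$ so that $\mk-\delta_{\{j_0\}}\in\N^3$; such $j_0$ exists because $s(\mk)\geq s\geq n_2+n_3-1\geq 1$, so $\mk\neq(0,0,0)$. The relation $hol(\mk-\delta_{\{j_0\}})$ has $s$-value of its argument equal to $s(\mk)-1\geq s-1$, so it is among the assumed relations. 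Expanding it, every term except the one contributing $[\mk]$ (namely the singleton term $I=\{j_0\}$ applied to $\mk-\delta_{\{j_0\}}$, which has coefficient $\prod_{j\neq j_0}z_j\neq 0$) has $s$-value $>s(\mk)$, hence vanishes by the inductive hypothesis — wait, I need to double-check: the singleton terms $I=\{j\}$ with $j\neq j_0$ give $[\mk-\delta_{\{j_0\}}+\delta_{\{j\}}]$ which also has $s$-value $s(\mk)$, not $>s(\mk)$, so these are potentially nonzero. So the naive single-relation argument does not immediately work; this is where the hypothesis $s\geq n_2+n_3-1$ must enter more delicately.

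The fix, and what I expect to be the main obstacle, is to strengthen the induction: instead of killing one $[\mk]$ at a time, I would prove simultaneously for each level $s'\geq s$ that \emph{all} generators $[\mk]$ with $s(\mk)=s'$ and $0\leq k_j<n_j$ vanish, again by decreasing induction on $s'$. Fix such a level $s'$. The relations $hol(\mk)$ with $s(\mk)=s'-1$ and $\mk\in\N^3$, after discarding all terms of $s$-value $\geq s'+1$ (which vanish by the induction on levels), become purely linear relations among the $\{[\ml]:s(\ml)=s'\}$: namely $z_2z_3[\mk+\delta_{\{1\}}]+z_1z_3[\mk+\delta_{\{2\}}]+z_1z_2[\mk+\delta_{\{3\}}]=0$ for all valid $\mk$, where additionally $[\ml]=0$ whenever some $\ell_j\geq n_j$. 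The claim reduces to: this system of linear equations forces every $[\ml]$ with $s(\ml)=s'$, $0\leq\ell_j<n_j$, to be zero, provided $s'\geq s\geq n_2+n_3-1$. I would analyze this by viewing it as a weighted discrete Laplacian / transport system on the triangular lattice slice $\{\ml\in\N^3:s(\ml)=s',\ell_j\geq 0\}$ with the ``boundary'' generators (those with some $\ell_j\geq n_j$) set to zero: one propagates the vanishing inward from the three ``sides'' of the triangle. The condition $s'\geq n_2+n_3-1$ (with $n_1\geq n_2\geq n_3$) is exactly what guarantees that the admissible region $\{0\leq\ell_j<n_j,\ s(\ml)=s'\}$ is small enough — essentially a ``corner'' cut off by the hyperplane — that this propagation reaches every lattice point. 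Concretely I would pick an extreme point of the admissible triangle slice, say the one maximizing $\ell_1$, show it is determined (equals a $z$-multiple of a boundary-zero generator) by one of the three-term relations, then induct on $\ell_1$ or on distance to the boundary. Carrying out this combinatorial propagation carefully, and checking that the inequality $s'\geq n_2+n_3-1$ is precisely the hypothesis that makes it go through, is the technical heart; the rest is the bookkeeping of the two nested decreasing inductions (on $s'$, and within a level, on the lattice structure) that I have outlined.
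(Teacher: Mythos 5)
You have correctly identified the outer structure (decreasing induction on the level $s'=s(\mk)$), correctly reduced the problem to the system
$$z_2z_3[\mk+\delta_{\{1\}}]+z_1z_3[\mk+\delta_{\{2\}}]+z_1z_3[\mk+\delta_{\{3\}}]=0$$
among the generators on one level slice, and correctly diagnosed why the naive single-relation argument of Lemma~\ref{lemmaA} does not immediately apply. But what you label ``the technical heart'' is precisely the step you never carry out, and the direction you sketch for it (start from the admissible point maximizing $\ell_1$ and propagate inward) is not the one that actually closes the argument. The clean resolution --- and the one used in the paper --- is an \emph{increasing} induction on $k_1$ within the fixed level $s'$. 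The base case $k_1=0$ is exactly where the hypothesis $s'\geq n_2+n_3-1$ is consumed: if $k_1=0$ then $k_2+k_3=s'\geq n_2+n_3-1$, which forces $k_2\geq n_2$ or $k_3\geq n_3$, hence $[\mk]=0$ outright. For $k_1\geq 1$, the relation $hol(k_1-1,k_2,k_3)$ (whose argument has $s$-value $s'-1\geq s-1$, so it is among the assumed relations) reduces, after discarding the higher-level terms killed by the outer induction, to $z_2z_3[k_1,k_2,k_3]=-z_1z_3[k_1-1,k_2+1,k_3]-z_1z_2[k_1-1,k_2,k_3+1]$, and the right-hand side vanishes by the inner induction since both terms have first coordinate $k_1-1<k_1$ and $s$-value $s'$.

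Your ``maximize $\ell_1$ first'' proposal runs into trouble: on the slice $\ell_1=n_1-1$ the only usable relations, $hol(n_1-1,k_2,k_3)$, collapse (because $[n_1,\cdot,\cdot]=0$) to a two-term recurrence $z_1z_3[\,n_1-1,k_2+1,k_3]+z_1z_2[\,n_1-1,k_2,k_3+1]=0$ within that slice, which by itself propagates values along the edge rather than forcing them to zero, and you still eventually need the $k_1=0$ base case to anchor everything. Since the proposal defers this decisive step and the sketched propagation direction is the less convenient one, the argument as written is incomplete.
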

\begin{proof}
We proceed by decreasing induction on $s$. For $s>n_1+n_2+n_3-2$, the result is trivial. 
Fix $s$ such that $n_2+n_3-1\leq s\leq n_1+n_2+n_3-2$. Let $\mk=(k_1,k_2,k_3)$ satisfy $s(\mk)=s$. 
If $k_1=0$, the condition on $s$ implies $[\mk]=0$. Assume $k_1>0$. Consider the relation:
$$hol(k_1-1,k_2,k_3) : \quad z_2z_3[k_1,k_2,k_3]+z_1z_3[k_1-1,k_2+1,k_3]+z_1z_2[k_1-1,k_2,k_3+1]=0.$$
By increasing induction on $k_1$, we can replace this relation by $[k_1,k_2,k_3]=0$. This uses all the relations $hol(\mk)$ for $s(\mk)=s-1$, 
except the relations $hol(n_1-1,k_2,k_3)$, but those are trivial.
\end{proof}
\proofof{Lemma \ref{lemmadima}}
Let $V(s)$ be the complex vector subspace of $\Al(\mi,\ml)$ generated by the $[0,h_2,h_3]$ such that $h_2+h_3\geq s$. 
Again by decreasing induction on $s$, we prove that for $s\leq n_2+n_3-2$, $[\mk]\in V(s)$. 
Fix $s$ such that $0< s\leq n_2+n_3-2$. Consider $\mk=(k_1,k_2,k_3)$ such that $s(\mk)=s$ and $k_1>0$. 
By the induction hypothesis, the relation $hol(k_1-1,k_2,k_3)$ implies:
$$z_2z_3[k_1,k_2,k_3]+z_1z_3[k_1-1,k_2+1,k_3]+z_1z_2[k_1-1,k_2,k_3+1]\in V(s+1).$$
Conclude by increasing induction on $k_1$. 

We have seen that the relation $hol(k_1-1,k_2,k_3)$ expresses $[k_1,k_2,k_3]$ in terms of the $[0,h_2,h_3]$. These generators 
$[0,h_2,h_3]$ may be related by the relations $hol(n_1-1,k_2,k_3)$. If $n_2+n_3\leq n_1+1$, there are no relations 
$hol(n_1-1,k_2,k_3)$ such that $n_1-1+k_2+k_3<n_2+n_3-2$. If $n_2+n_3>n_1+1$, an easy computation shows that there are 
$\frac{1}{2}(n_2+n_3-n_1)(n_2+n_3-n_1-1)$ pairs $(k_2,k_3)$ of integers such that $0\leq k_i<n_i$ and $n_1-1+k_2+k_3<n_2+n_3-2$ 
(note that this last condition implies $k_i<n_i$ for $i=2,3$).
\fin

Let $\NT$ be the set of all $(\mi,\ml)$ such that $1\leq i_1\leq i_2\leq i_3\leq q$, $\ml\in J_{\mi}^{o}$, 
$z_{i_1\ell_1}z_{i_2\ell_2}z_{i_3\ell_3}=1$, and for $j=1,2,3$, the multiplicity $m_{i_j}$ is a least the number of occurences 
of $(i_j,\ell_j)$ in $((i_1,\ell_1),(i_2,\ell_2),(i_3,\ell_3))$. 
By Theorem \ref{thstructureh}:
$$\C\otimes\HA=\sum_{(\mi,\ml)\in\NT}\HA(\mi,\ml).$$
Recall that if $i\leq i'$, $m_i\geq m_{i'}$.
\begin{theorem} \label{thdimH}
 For $(\mi,\ml)=((i_1,i_2,i_3),(\ell_1,\ell_2,\ell_3))\in\NT$, set 
$b(\mi,\ml)=m_{i_2}m_{i_3}-\frac{1}{2}(m_{i_2}+m_{i_3}-m_{i_1})(m_{i_2}+m_{i_3}-m_{i_1}-1)$ if the $(i_j,\ell_j)$ are all distinct 
and $m_{i_2}+m_{i_3}\leq m_{i_1}+1$, $b(\mi,\ml)=m_{i_2}m_{i_3}$ if the $(i_j,\ell_j)$ are all distinct and $m_{i_2}+m_{i_3}>m_{i_1}+1$, 
$b(\mi,\ml)=1$ otherwise. Set:
$$B(\mi,\ml)=\left\{ \begin{array}{ll} m_{i_2}m_{i_3} & if\ the\ (i_j,\ell_j)\ are\ all\ distinct, \\
m_{i_3}(m_{i_1}-1) & if\ (i_1,\ell_1)=(i_2,\ell_2)\neq (i_3,\ell_3), \\
\frac{1}{2}m_{i_2}(m_{i_2}-1) & if\ (i_1,\ell_1)\neq(i_2,\ell_2)=(i_3,\ell_3), \\
\frac{1}{2}(m_{i_1}-1)(m_{i_1}-2) & if\ (i_1,\ell_1)=(i_2,\ell_2)=(i_3,\ell_3).\end{array}\right.$$
Then: 
$$\sum_{(\mi,\ml)\in\NT}b(\mi,\ml)\leq\dim_\Q(\HA)\leq\sum_{(\mi,\ml)\in\NT}B(\mi,\ml).$$
\end{theorem}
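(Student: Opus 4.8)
The plan is to split the computation along the direct-sum decomposition of $\HA$ and to bound each piece $\dim_\C\HA(\mi,\ml)$ separately. Combining $\HA=\bigoplus_{1\leq i_1\leq i_2\leq i_3\leq q}\HA(\mi)$ with $\C\otimes_\Q\HA(\mi)=\bigoplus_{\ml\in J^{o}_{\mi}}\HA(\mi,\ml)$ and with Theorem \ref{thstructureh} (which shows $\HA(\mi,\ml)\neq 0$ precisely when $(\mi,\ml)\in\NT$) gives the direct-sum decomposition $\C\otimes_\Q\HA=\bigoplus_{(\mi,\ml)\in\NT}\HA(\mi,\ml)$, so that $\dim_\Q(\HA)=\sum_{(\mi,\ml)\in\NT}\dim_\C\HA(\mi,\ml)$. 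It therefore suffices to prove $b(\mi,\ml)\leq\dim_\C\HA(\mi,\ml)\leq B(\mi,\ml)$ for each $(\mi,\ml)\in\NT$ and then to sum over $\NT$.

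When the three pairs $(i_j,\ell_j)$ are pairwise distinct, $\HA(\mi,\ml)\cong\Al(\mi,\ml)$; since $i_1\leq i_2\leq i_3$ forces $m_{i_1}\geq m_{i_2}\geq m_{i_3}$ and $z_{i_1\ell_1}z_{i_2\ell_2}z_{i_3\ell_3}=1$, Lemma \ref{lemmadima} applies and delivers both inequalities $b(\mi,\ml)\leq\dim_\C\HA(\mi,\ml)\leq m_{i_2}m_{i_3}=B(\mi,\ml)$. In the three remaining cases $b(\mi,\ml)=1$, and the bound $\dim_\C\HA(\mi,\ml)\geq 1$ is merely the non-triviality of $\HA(\mi,\ml)$ given by Theorem \ref{thstructureh}. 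So the only genuine work is the upper bound $\dim_\C\HA(\mi,\ml)\leq B(\mi,\ml)$ in the cases where two of the $(i_j,\ell_j)$ coincide.

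For those cases, write $z_j=z_{i_j\ell_j}$, $n_j=m_{i_j}$, $\eta_j=\eta_{i_j\ell_j}$, and for $\mk=(k_1,k_2,k_3)$ set $[\mk]_\HA=(t-z_1)^{k_1}\eta_1\wedge(t-z_2)^{k_2}\eta_2\wedge(t-z_3)^{k_3}\eta_3$ and $s(\mk)=k_1+k_2+k_3$; recall $z_1z_2z_3=1$. I would show that $\HA(\mi,\ml)$ is spanned by those $[0,k_2,k_3]_\HA$ whose exponents are pairwise distinct and increasing along each block of positions lying over a common summand $\frac{\Ct}{((t-z_j)^{n_j})}\eta_j$, namely: the $[0,k_2,k_3]_\HA$ with $1\leq k_2<n_1$ and $0\leq k_3<n_3$ when $(i_1,\ell_1)=(i_2,\ell_2)\neq(i_3,\ell_3)$; with $0\leq k_2<k_3<n_2$ when $(i_1,\ell_1)\neq(i_2,\ell_2)=(i_3,\ell_3)$; and with $1\leq k_2<k_3<n_1$ when $(i_1,\ell_1)=(i_2,\ell_2)=(i_3,\ell_3)$. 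A direct count shows that the cardinality of each of these families is exactly the corresponding value of $B(\mi,\ml)$. That they span is proved by the double induction already used for Lemma \ref{lemmadima}: when $k_1=0$, the generator $[0,k_2,k_3]_\HA$ is, up to sign, one of the listed ones or it vanishes (a repeated exponent on a common block kills it by antisymmetry of the wedge); when $k_1\geq 1$, one writes out the relation $hol(k_1-1,k_2,k_3)$ in the form valid when $z_1z_2z_3=1$ and, using $z_2z_3\neq 0$, solves for $[\mk]_\HA$ as a combination of generators $[\mk']_\HA$ with $s(\mk')>s(\mk)$---handled by decreasing induction on $s$---and of $[k_1-1,k_2+1,k_3]_\HA$ and $[k_1-1,k_2,k_3+1]_\HA$, which have the same value of $s$ but a strictly smaller first entry, and so are handled by increasing induction on $k_1$ at fixed $s$. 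Along the way one discards any term in which an exponent reaches $n_j$ (using $(t-z_j)^{n_j}\eta_j=0$) and any term with a repeated exponent on a common block (antisymmetry).

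Summing $b(\mi,\ml)\leq\dim_\C\HA(\mi,\ml)\leq B(\mi,\ml)$ over $(\mi,\ml)\in\NT$ yields the theorem. The step I expect to be the main obstacle is the last one: running the double induction through the three coincident cases while keeping the order- and distinctness-constraints on the admissible generators intact, {\em i.e.} checking case by case that every auxiliary generator produced by $hol(k_1-1,k_2,k_3)$ is itself admissible, or vanishes by antisymmetry, or vanishes because some exponent has reached the annihilation threshold $n_j$.
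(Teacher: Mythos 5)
Your proposal follows essentially the same route as the paper: split over the decomposition $\C\otimes_\Q\HA=\bigoplus_{(\mi,\ml)\in\NT}\HA(\mi,\ml)$, invoke Lemma~\ref{lemmadima} in the all-distinct case, and in the coincident cases obtain the upper bound by counting the generators $[0,k_2,k_3]_{\HA}$ that survive antisymmetry. The one place you take on unnecessary labour is re-running the double induction inside $\HA(\mi,\ml)$: the paper instead observes that $\HA(\mi,\ml)$ is a quotient of $\Al(\mi,\ml)$, so the spanning family $([0,k_2,k_3])_{0\leq k_j<n_j}$ already supplied by Lemma~\ref{lemmadima} projects to a spanning family of $\HA(\mi,\ml)$, which antisymmetry then prunes with no further induction needed.
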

\begin{proof}
We want to bound the dimension of $\HA(\mi,\ml)$. If the $(i_j,\ell_j)$ are all distinct, 
this is done in Lemma \ref{lemmadima}. In the other cases, the non-triviality is given by Theorem \ref{thstructureh}, and it remains 
to compute the upper bound.

First assume that $(i_1,l_1)=(i_2,l_2)\neq(i_3,l_3)$. In this case, Lemma \ref{lemmadima} and the anti-symmetry imply 
that $\HA(\mi,\ml)$ is generated by the $[0,k_2,k_3]_{\HA}$ such that $0<k_2<m_{i_2}$ and $0\leq k_3<m_{i_3}$. 
Hence $\dim(\HA(\mi,\ml))\leq m_{i_3}(m_{i_2}-1)$.

Now assume that $(i_1,l_1)\neq(i_2,l_2)=(i_3,l_3)$. Then $\HA(\mi,\ml)$ is generated by the $[0,k_2,k_3]_{\HA}$ such that $0\leq k_2<k_3<m_{i_2}$. 
Hence $\dim(\HA(\mi,\ml))\leq \frac{1}{2}m_{i_2}(m_{i_2}-1)$.

Finally assume that $(i_1,l_1)=(i_2,l_2)=(i_3,l_3)$. Then $\HA(\mi,\ml)$ is generated by the $[0,k_2,k_3]_{\HA}$ such that $0<k_2<k_3<m_{i_1}$. 
Hence $\dim(\HA(\mi,\ml))\leq \frac{1}{2}(m_{i_1}-1)(m_{i_1}-2)$.
\end{proof}

\paragraph{Examples}
\begin{enumerate}
 \item Let $\displaystyle \Al=\frac{\Qt}{((t^2+1)^3)}\eta_1\oplus\frac{\Qt}{((t+1)^2)}\eta_2$. Then:
$$\C\otimes\Al=\frac{\Ct}{((t-i)^3)}\eta_{11}\oplus\frac{\Ct}{((t+i)^3)}\eta_{12}\oplus\frac{\Ct}{((t+1)^2)}\eta_{21}.$$
The space $\Al(\mi,\ml)$ is non trivial for $\mi=(1,1,2)$ and $\ml=(1,1,1)$ or $(2,2,1)$. The treatement of both cases is the same. 
Lemma \ref{lemmadima} gives $5\leq \dim(\Al(\mi,\ml))\leq 6$. Moreover, the proof provides the following presentation:
$$\Al(\mi,\ml)=\frac{\C<[0,0,0],[0,0,1],[0,1,0],[0,1,1],[0,2,0],[0,2,1]>}{\C<hol(2,0,0)>}.$$
Writing down all the relations $hol(\mk)$ for $0\leq k_j<n_j$ and $s(\mk)=2$, we see that $hol(2,0,0)$ implies $[0,2,1]=0$. 
Finally $\dim_\C(\Al(\mi,\ml))=5$, and $\dim_\Q(\Ah)=10$. 

By Theorem \ref{thdimH}, $1\leq\dim(\HA(\mi,\ml))\leq 4$. Since $\HA(\mi,\ml)$ it is a quotient of $\Al(\mi,\ml)$, 
$[\mk]_{\HA}=0$ if $s(\mk)\geq 3$. Using the anti-symmetry, we obtain:
$$\HA(\mi,\ml)=\frac{\C<[0,1,0],[0,1,1],[0,2,0]>}{\C<hol(0,1,0)>}.$$
The relation $hol(0,1,0)$ implies $[0,1,1]_{\HA}=\pm i\,[0,2,0]_{\HA}$, hence $\dim_\C(\HA(\mi,\ml))=2$, and $\dim_\Q(\HA)=4$. 

 \item For $\displaystyle \Al=\frac{\Qt}{((t+1+t^{-1})^m)}$, we consider $\Al(\mi,\ml)$ for $\mi=(1,1,1)$ and $\ml=(1,1,1)$ or $(2,2,2)$. 
By Lemma \ref{lemmadima}: $$\frac{1}{2}m(m+1)\leq \dim(\Al(\mi,\ml))\leq m^2.$$ For $m>1$, this does not give the exact dimension. 
For low values of $m$, it can be computed by hand following the method of Lemma \ref{lemmadima}. The space $\Al(\mi,\ml)$ is generated 
by the $[0,k_2,k_3]$ up to the relations $hol(m-1,k_2,k_3)$ for $k_2+k_3\leq m-2$. We obtain:
$$\dim(\Al(\mi,\ml))=\left\{ \begin{array}{ll} 3 & if\ m=2 \\ 7 & if\ m=3 \\ 12 & if\ m=4 \end{array}\right. .$$
Now consider $\HA(\mi,\ml)$ for $m\geq3$. It is non trivial and of dimension at most $\frac{1}{2}(m-1)(m-2)$. 
Once again, the dimension can be computed by hand for low values of $m$. The same argument as in Sublemma \ref{sublemmadim} shows that 
$[\mk]_{\HA}=0$ if $s(\mk)\geq2m-2$. Hence $\HA(\mi,\ml)$ is generated by the $[\mk]_{\HA}$ with $0\leq k_1<k_2<k_3<m$ and $s(\mk)\leq2m-3$, 
up to the relations $hol(k_1,k_2,k_3)$ with $0\leq k_1<k_2<k_3<m$ and $s(\mk)\leq2m-4$.
\begin{center}
 $\begin{array}{|l|c|c|c|c|c|}
  \hline
  m & 3 & 4 & 5 & 6 & 7 \\
  \hline
  \dim(\HA(\mi,\ml)) & 1 & 1 & 2 & 3 & 4 \\
  \hline
 \end{array}$
\end{center}
\end{enumerate}

  \section{Decomposition and characterization of $\phi$} \label{seccar}

  \subsection{Realization of rational homology classes by knots} \label{subsecreal}

In this subsection, we prove Proposition \ref{propreal}, which will allow us to apply Proposition \ref{propdiff} with more efficiency in 
Subsection \ref{subphi}, in order to prove Theorem \ref{thcar}. 

Fix a Blanchfield module $(\Al,\bl)$.

\begin{definition} \label{defreal}
 Let $(M,K,\xi)\in\Ens^m(\Al,\bl)$. Let $\tilde{X}$ be the infinite cyclic covering associated with $(M,K)$. 
A homology class $\eta\in\Al$ is {\em realizable for $(M,K,\xi)$} if there is a knot $J$ in $\tilde{X}$ such that $[J]=\eta$.
\end{definition}

\begin{proposition} \label{propreal}
 Let $(M,K,\xi)\in\Ens^m(\Al,\bl)$. For all $\eta\in\Al$, there is a marked $\Q$SK-pair $(M',K',\xi')\in\Ens^m(\Al,\bl)$ 
such that $\phi^{(M',K',\xi')}=\phi^{(M,K,\xi)}$ and $\eta$ is realizable for $(M',K',\xi')$.
\end{proposition}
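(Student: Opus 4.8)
The idea is to modify $(M,K,\xi)$ by a null borromean surgery, or a sequence of them, so that the given class $\eta$ becomes represented by an embedded knot in the infinite cyclic covering, without changing the Blanchfield module (so the marking persists) and without changing $\phi$. Since the equivariant triple intersection map is a degree one invariant with respect to null borromean surgeries (Corollary \ref{corinv}), it is not automatic that a single surgery preserves $\phi$; the point will be to choose the surgery so that its effect on $\phi$, computed by Proposition \ref{propdiff}, vanishes.

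First I would recall that every element $\eta\in\Al$ is the image $i_*[\gamma]$ of the class of some loop $\gamma$ in the knot exterior $X$ that lifts to a loop $\tilde\gamma$ in $\tilde X$ (lifting is possible precisely because $\eta$ lies in the Alexander module, i.e. in the part killed by the covering projection on $\pi_1$). Such a $\tilde\gamma$ can be taken to be an immersed circle with transverse double points; to turn it into an embedded knot one must remove the self-intersections. The standard device is to push the crossings apart using a sequence of crossing-change-type moves realized by borromean surgeries along small null Y-graphs supported near each double point. Concretely, a borromean surgery along a Y-graph whose leaves are small meridian-type circles (hence null-homotopic, in particular null in $M\setminus K$) realizes a local modification of the ambient manifold that can be used to resolve a double point, at the cost of replacing $M$ by $M(\Gamma)$. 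Because the leaves of such a $\Gamma$ are null-homotopic in $\tilde X$, they represent $0$ in $\Al$, so by Proposition \ref{propdiff} the variation of $\phi$ under this surgery is $\sum_{\sigma}\sg\prod_j\delta(t_j)\bl(\beta_j,[\gamma_{\sigma(j)}])(t_j)=0$. Hence $\phi$ is unchanged, and the canonical isomorphism of Blanchfield modules lets us carry the marking along, so after finitely many such surgeries we reach a marked pair $(M',K',\xi')$ with $\phi^{(M',K',\xi')}=\phi^{(M,K,\xi)}$ in which $\eta$ is realized by an embedded knot.

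The main obstacle is making the resolution-of-double-points step precise: one must exhibit, near each transverse self-intersection of $\tilde\gamma$, a Y-graph $\Gamma$ in $M\setminus K$ which is null there, whose leaves lift to null-homotopic curves in $\tilde X$ (so that the hypothesis ``null in $M\setminus K$'' holds and Proposition \ref{propdiff} gives a zero variation), and such that the surgery $(M,K)(\Gamma)$ indeed changes the isotopy class of the projected curve in the prescribed way while fixing it away from a ball. This is essentially the observation (going back to Matveev and used in the Goussarov--Habiro calculus) that a crossing change is obtained by surgery on a clasp-type Y-graph with null-homotopic leaves; I would invoke this, check that the relevant Y-graph, sitting in a ball disjoint from $K$, is null in $M\setminus K$ and lifts leafwise to null-homotopic curves in $\tilde X$, and conclude. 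The remaining bookkeeping — that finitely many such moves suffice, that the homology class of the resulting embedded knot is still $\eta$ (the local moves do not change $[\tilde\gamma]\in H_1(\tilde X;\Q)$), and that the marking $\xi'$ is the composite of $\xi$ with the canonical Blanchfield isomorphisms — is routine.
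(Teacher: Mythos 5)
There is a genuine gap: your starting claim, that every $\eta\in\Al$ is the class of a loop in $\tilde X$, is false, and the obstacle you go on to address (self-intersections of a representing curve) is not actually an obstacle. The module $\Al = H_1(\tilde X;\Q)$ is a rational vector space, and the image of $H_1(\tilde X;\Z)\to H_1(\tilde X;\Q)$ is only a lattice inside it; a general $\eta\in\Al$ need not lie in this lattice, so there is no loop, immersed or otherwise, representing it. Conversely, once a class \emph{does} lie in the integral lattice it is automatically represented by an \emph{embedded} knot, because a generic map $S^1\to \tilde X^3$ has no double points ($2\cdot 1 - 3 < 0$); resolving self-intersections by clasp-type borromean surgeries is a device one needs for surfaces, not for curves in a 3-manifold. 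So the real content of the proposition is arithmetic (dividing a realizable class by an integer $d$), not isotopy-theoretic, and borromean surgeries are the wrong tool: they preserve the integral structure canonically and hence cannot turn a non-realizable rational class into a realizable one.

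The paper's proof uses exactly the move your proposal omits: take $d>0$ with $d\eta$ realizable by a knot $J$, and perform a null $d$-surgery $(\frac{T_d}{T(J)})$, replacing a tubular neighborhood of $J$ by a $d$-torus $T_d$ containing a curve $\gamma$ with $d[\gamma]=[\ell(J)]$; in the resulting pair $(M',K',\xi')$ the lifted curve $\tilde\gamma$ has class $\eta$, and Lemma~\ref{lemmainv} (proved via Lemma~\ref{lemmadtorus}, a triple-intersection computation inside the $d$-torus) shows that null $d$-surgeries preserve the equivariant triple intersection map $\phi$. That lemma plays the structural role you intended for Proposition~\ref{propdiff} with null-homotopic leaves, but for a different and strictly more powerful surgery move. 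Your observation that borromean surgeries with leaves null-homotopic in $\tilde X$ leave $\phi$ unchanged is correct as far as it goes, but it cannot produce the class $\eta$; to repair the argument you would need to replace the crossing-change step entirely by the $d$-surgery step and prove the analogue of Lemma~\ref{lemmainv}.
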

In order to prove this proposition, we introduce a specific kind of LP-surgeries. Recall LP-surgeries were defined in Subsection \ref{subsecgrad}.

\begin{definition}
 For $d\in\N\setminus\{0\}$, a {\em $d$-torus} is a rational homology torus $T_d$ such that 
there are simple closed curves $\alpha$, $\beta$ in $\partial T_d$, and $\gamma$ in $T_d$ which satisfy:
\begin{itemize}
 \item $<\alpha,\beta>_{\partial T_d}=1$,
 \item $H_1(\partial T_d;\Z)=\Z[\alpha]\oplus\Z[\beta]$,
 \item $H_1(T_d;\Z)=\frac{\Z}{d\Z}[\alpha]\oplus\Z[\gamma]$,
 \item $[\beta]=d[\gamma]$.
\end{itemize}
A {\em meridian of $T_d$} is a simple closed curve on $\partial T_d$ homologous to $\alpha$.\\
A {\em (null) $d$-surgery} is a (null) LP-surgery $(\frac{T_d}{T})$ where $T$ is a standard solid torus and $T_d$ is a $d$-torus.
\end{definition}
For any $d\in\N\setminus\{0\}$, there exists a $d$-torus (see \cite[lemma 2.5]{M2}).

\begin{lemma} \label{lemmadtorus}
Let $T_d$ be a $d$-torus. Let $m_1$, $m_2$, $m_3$ be disjoint meridians of $T_d$. There are rational 2-chains $S_1$, $S_2$, $S_3$ in $T_d$ such 
that $\partial S_j=d m_j$ for $j=1,2,3$. For any such chains, pairwise transverse, the triple intersection number $<S_1,S_2,S_3>$ is trivial.
\end{lemma}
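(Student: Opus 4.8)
My plan is to exhibit one admissible triple of chains for which the triple intersection number is visibly zero --- because the chains can be taken pairwise disjoint --- and then to show that $<S_1,S_2,S_3>$ does not depend on the choice of admissible chains, so that it vanishes in general. Existence of admissible chains is immediate: a meridian $m_j$ satisfies $[m_j]=[\alpha]$ in $H_1(T_d;\Z)$, and since $[\alpha]$ has order $d$ in $H_1(T_d;\Z)=\frac{\Z}{d\Z}[\alpha]\oplus\Z[\gamma]$, the integral $1$-cycle $d m_j$ is null-homologous and bounds an integral (hence rational) $2$-chain $S_j$; as the $m_j$ are pairwise disjoint, the $S_j$ can be taken pairwise transverse, and we assume, as we may after a perturbation supported near $\partial T_d$, that each $S_j$ meets $\partial T_d$ exactly along $\partial S_j=dm_j$.

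For the independence, note that $T_d$, having the rational homology of a solid torus, satisfies $H_2(T_d;\Q)=0$. Hence if $S_1'$ is another rational $2$-chain with $\partial S_1'=dm_1$, then $S_1'-S_1=\partial C$ for a rational $3$-chain $C$, and Lemma \ref{lemmaint} gives
$$<S_1',S_2,S_3>-<S_1,S_2,S_3>=<\partial C,S_2,S_3>=<C,\partial S_2,S_3>-<C,S_2,\partial S_3>=d<C,m_2,S_3>-d<C,S_2,m_3>.$$
Both terms on the right vanish: $m_2$ and $m_3$ lie on $\partial T_d$, while $S_3\cap\partial T_d=dm_3$ and $S_2\cap\partial T_d=dm_2$, and the $m_j$ are pairwise disjoint, so $m_2\cap S_3=\emptyset$ and $S_2\cap m_3=\emptyset$. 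Using the elementary permutation symmetry of $<\cdot,\cdot,\cdot>$ (up to sign) together with Lemma \ref{lemmaint}, the same computation shows that replacing $S_2$, or $S_3$, does not change $<S_1,S_2,S_3>$ either: the correction terms are always, up to sign and a permutation of the entries, of the form $<C,m_i,S_j>$ with $i\neq j$, hence zero. Thus $<S_1,S_2,S_3>$ depends only on $m_1,m_2,m_3$, and in fact only on their isotopy class in $\partial T_d$, since an orientation-preserving homeomorphism preserves algebraic intersection numbers and an ambient isotopy of $\partial T_d$ extends to one of $T_d$.

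It remains to evaluate the number for one convenient choice. By the classification of disjoint essential simple closed curves on a torus, a suitable ambient isotopy of $\partial T_d$ (extended to $T_d$) carries $m_1,m_2,m_3$ to parallel copies of a fixed essential curve, so by the previous paragraph we may assume the $m_j$ are parallel. Take a properly embedded oriented surface $\hat S_1$ in $T_d$ with $\partial\hat S_1=dm_1$, and let $\hat S_2,\hat S_3$ be two push-offs of it into a bicollar of $\hat S_1$, with their boundaries slid across annuli in $\partial T_d$ so that $\partial\hat S_j=dm_j$; the $\hat S_j$ are admissible and pairwise disjoint, so $\hat S_1\cap\hat S_2=\emptyset$ and $<\hat S_1,\hat S_2,\hat S_3>=0$. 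By the independence established above, $<S_1,S_2,S_3>=0$ for every admissible triple. The only real content is the independence step --- specifically the bookkeeping that every correction term produced by Lemma \ref{lemmaint} is supported on $\partial T_d$ and therefore vanishes, which is exactly where the hypotheses that the $m_j$ are disjoint curves on $\partial T_d$ enter; the reduction to the disjoint model is then a routine general-position argument on the boundary torus.
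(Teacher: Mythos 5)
Your existence and independence steps coincide with the paper's argument and are correct. The final vanishing step is where you diverge from the paper, and it contains a gap.

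The paper does not attempt to realize the triple by pairwise disjoint chains. It works inside a collar $N=[0,1]\times S^1\times S^1$ of $\partial T_d$, places the $S_j$ in the shrunk copy $\overline{T_d\setminus N}$ with $\partial S_j=dm_j^0$ on $\partial N$, and adds three annuli $A_1,A_2,A_3\subset N$ so that the chains $S_j'=S_j+dA_j$ have boundaries $dm_2,dm_1,dm_3$ respectively: $A_1$ and $A_2$ swap $m_1$ with $m_2$ (and necessarily cross each other), while $A_3$ goes straight and is disjoint from both. No new \emph{triple} intersection point is created, so $<S_1',S_2',S_3'>=<S_1,S_2,S_3>$; on the other hand the independence step applied to the triple $(S_2',S_1',S_3')$, whose boundaries are $dm_1,dm_2,dm_3$ in that order, gives $<S_2',S_1',S_3'>=<S_1,S_2,S_3>$. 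Antisymmetry in the first two slots then yields $<S_1,S_2,S_3>=-<S_1,S_2,S_3>$, hence $0$. This is a symmetry argument, not a disjointness argument.

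You instead claim to produce a pairwise disjoint admissible triple by pushing off a single properly embedded surface $\hat{S}_1$ and sliding the boundaries, and this is where the gap is. For $d>1$ no simple closed curve on $\partial T_d$ represents $d[\alpha]$, so a properly embedded surface with $[\partial\hat{S}_1]=d[\alpha]$ necessarily has $d$ boundary components $c_1,\dots,c_d$, all parallel to $\alpha$. Cutting $\partial T_d$ along $c_1\cup\dots\cup c_d$ yields $d$ annuli, and $m_2$ lies in exactly one of them. Because the normal direction used for the push-off is globally consistent along $\hat{S}_1$, the push-off boundary has one curve $c_i^+$ in \emph{each} of these $d$ annuli. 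Hence at most one of the $d$ components of $\partial\hat{S}_2$ can be slid to $m_2$ inside $\partial T_d\setminus\partial\hat{S}_1$; the others are blocked by $\partial\hat{S}_1$, and any isotopy carrying them across $\partial\hat{S}_1$ introduces intersections with $\hat{S}_1$. The same obstruction applies to $\hat{S}_3$ and $m_3$. So the assertion that the slid push-offs remain pairwise disjoint is unjustified, and the ``routine general-position argument on the boundary torus'' you defer to at the end is precisely where the argument breaks. (For $d=1$ there is no such obstruction, but that is the trivial case of a rational homology solid torus.) It is not even clear that pairwise disjoint admissible chains exist; the paper's swap-and-antisymmetry trick is designed so that one never has to decide this.
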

\begin{proof}
 The existence of the $S_j$ is clear since $d[m_j]=0$ in $H_1(T_d;\Z)$. Let us check that $<S_1,S_2,S_3>$ does not depend on 
the choice of the $S_j$. Replace $S_1$ by a chain $S_1'$ satisfying the same conditions. Since $H_2(T_d;\Q)=0$, there is a rational 
3-chain $C$ such that $\partial C=S_1'-S_1$. We have: 
$$<S_1',S_2,S_3>-<S_1,S_2,S_3>=<\partial C,S_2,S_3>.$$ By Lemma \ref{lemmaint}:
$$<\partial C,S_2,S_3>=<C,\partial S_2,S_3>-<C,S_2,\partial S_3>.$$ Since $m_2\cap S_3=\emptyset$ and $S_2\cap m_3=\emptyset$, 
we obtain $<S_1',S_2,S_3>=<S_1,S_2,S_3>$. 

Let $N=[0,1]\times S^1\times S^1$ be a collar neighborhood of $\partial T_d$ in $T_d$, parametrized so that:
\begin{itemize}
 \item $\{1\}\times S^1 \times S^1=\partial T_d$,
 \item for $j=1,2,3$, $m_j=\{1\}\times S^1 \times \{z_j\}$ with $z_j\in S^1$.
\end{itemize}
Consider the 2-chains $S_j$ in the copy $\overline{T_d\setminus N}$ of $T_d$, so that:
\begin{itemize}
 \item for $j=1,2,3$, $\partial S_j=dm_j^0$, where $m^0_j=\{0\}\times S^1 \times \{z_j\}$.
\end{itemize}
For $j=1,2,3$, let $A_j$ be the annulus in $N$ whose slice is represented Figure \ref{figslice}. Set $S_j'=S_j+dA_j$. 
\begin{figure} [hbt]
\begin{center}
\begin{tikzpicture}
\foreach \y in {0,2} {
\draw (0,\y) -- (7,\y);
\draw[dashed] (-1,\y) -- (0,\y) (7,\y) -- (8,\y);}
\foreach \k in {1,2,3} {\foreach \y in {0,2} {
\draw (1.5*\k +0.5,\y) node {$\scriptstyle{\bullet}$};}
\draw (1.5*\k +0.5,0) node[below] {$m^0_{\k}$};
\draw (1.5*\k +0.5,2) node[above] {$m_{\k}$};}
\draw (2,0) -- (3.5,2);
\draw (2,0.5) node {$A_1$};
\draw (3.5,0) -- (2,2);
\draw (3.5,0.5) node {$A_2$};
\draw (5,0) -- (5,2);
\draw (5.3,0.5) node {$A_3$};
\draw (6.7,2) node[above] {$\partial T_d$};
\draw (6.7,0) node[above] {$\partial T'_d$};
\end{tikzpicture} \caption{Slices of the annuli $A_j$ in $N$.} \label{figslice}
\end{center}
\end{figure}
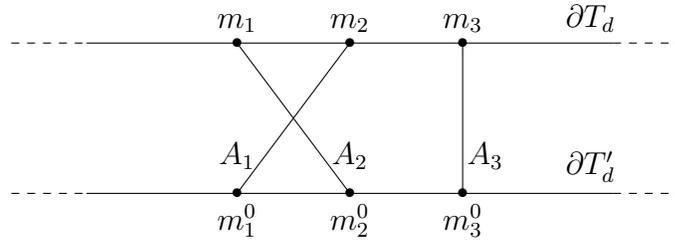
Since $\partial S_1'=dm_2$, $\partial S_2'=dm_1$, and $\partial S_3'=dm_3$, the independance with respect to the surfaces implies 
$<S'_2,S'_1,S'_3>=<S_1,S_2,S_3>$. But by construction, $<S'_1,S'_2,S'_3>=<S_1,S_2,S_3>$. Finally, $<S_1,S_2,S_3>=0$.
\end{proof}

\begin{lemma} \label{lemmainv}
 Null $d$-surgeries on marked $\Q$SK-pairs preserve the equivariant triple intersection map.
\end{lemma}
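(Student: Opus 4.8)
The plan is to show that a null $d$-surgery $(\frac{T_d}{T})$ on a marked $\Q$SK-pair $(M,K,\xi)$ does not change the value of $\phi^{(M,K,\xi)}$ on any generator $\beta_1\otimes\beta_2\otimes\beta_3$ with $\beta_j=[\mu_j]$. Since a null $d$-surgery preserves the Blanchfield module (by Theorem \ref{thM3}) and the marking, the pair $(M',K',\xi')=(M,K,\xi)(\frac{T_d}{T})$ lies in $\Ens^m(\Al,\bl)$ and $\phi^{(M',K',\xi')}$ is defined on the same $\Ah$; we must compare the two maps. As in the proof of Proposition \ref{propdiff}, the surgery lifts to simultaneous surgeries on all the translates $\tau^k\tilde{T}$ of a lift $\tilde{T}\subset\tilde{X}$ of the solid torus $T$, since $T$ is null in $M\setminus K$.

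First I would choose representatives $\mu_1,\mu_2,\mu_3$ of the $\beta_j$ in $\tilde{X}$ whose images in $M\setminus K$ are pairwise disjoint and disjoint from $T$, and $\tau$-transverse rational $2$-chains $\Sigma_j$ with $\partial\Sigma_j=\delta(\tau)\mu_j$, chosen $\tau$-transverse to $\tilde{T}$ and to each other, and with $\tilde{T}\cap\tau^{k_i}\Sigma_i\cap\tau^{k_j}\Sigma_j=\emptyset$ for $i\neq j$. The effect of the $d$-surgery on the triple intersection number is local, concentrated in the reglued $d$-torus $\tilde{T}_d$ replacing each $\tau^k\tilde{T}$: each intersection point of $\Sigma_j$ with the core disk of $\tilde{T}$ forces one to remove a disk from $\Sigma_j$ and patch it back inside $\tilde{T}_d$ by a $2$-chain whose boundary is $d$ times a meridian, after rescaling so boundaries still equal $\delta(\tau)\mu_j$ — more precisely, one writes $\Sigma_j'=\Sigma_j+\sum_k\tau^k(\text{patch})$ where the patches live inside the copies of $T_d$. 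The key input is Lemma \ref{lemmadtorus}: any triple of rational $2$-chains in a $d$-torus bounding $d$ times disjoint meridians has trivial triple intersection number. Hence every contribution to $<\Sigma_1',\Sigma_2',\Sigma_3'>_e - <\Sigma_1,\Sigma_2,\Sigma_3>_e$ that involves all three patches sitting in the same reglued $d$-torus vanishes, and contributions in which not all three chains enter a given reglued handlebody also vanish because at most two of the $\Sigma_j$ meet any given $\tau^k\tilde{T}$ (by the disjointness arrangement), so there is no room for a nonzero triple intersection point localized there. Therefore $<\Sigma_1',\Sigma_2',\Sigma_3'>_e = <\Sigma_1,\Sigma_2,\Sigma_3>_e$ in $\Rd_\delta$, which gives $\phi^{(M',K',\xi')}(\beta_1\otimes\beta_2\otimes\beta_3) = \phi^{(M,K,\xi)}(\beta_1\otimes\beta_2\otimes\beta_3)$.

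The main obstacle I anticipate is bookkeeping rather than conceptual: one must be careful that the patched chains $\Sigma_j'$ genuinely compute $\phi^{(M',K',\xi')}$, i.e. that their boundaries are $\delta(\tau)\mu_j'$ for valid representatives $\mu_j'$ of $\beta_j$ in the new infinite cyclic covering (using that the surgery isomorphism on the Blanchfield module is realized geometrically, so $\mu_j$ can be taken to persist outside $\tilde{T}_d$), and that $\tau$-transversality is maintained after patching. I would also need to argue that the only new intersection points created by the surgery lie inside the reglued handlebodies $\tau^k\tilde{T}_d$, so that the entire difference is accounted for by Lemma \ref{lemmadtorus} applied in each of them (or vanishes for dimension/arrangement reasons). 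Once these routine transversality and localization points are checked, the statement follows. I would conclude by noting that, since both $\phi^{(M,K,\xi)}$ and $\phi^{(M',K',\xi')}$ are $\Q$-linear and agree on all generators of $\Ah$, they coincide.
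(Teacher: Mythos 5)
Your proof follows essentially the same route as the paper: reduce to curves representing the $\beta_j$ outside the $\tau$-translates of $\tilde{T}$, keep the parts of the bounding chains outside the solid tori, observe that the extra boundary components are (rational combinations of) meridians, cap them inside the copies of $T_d$, and invoke Lemma \ref{lemmadtorus} to see that no triple intersection is created. The paper's version also records explicitly that it suffices to treat generators of $\Al$ realizable by simple closed curves disjoint from $\sqcup_{k\in\Z}\tau^k(\tilde{T})$, and it arranges that no $\tau$-translate of $\tilde{T}$ meets any pairwise intersection of the $\tau$-translates of the $\Sigma_i$, which is exactly what makes the restriction step harmless.

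One sentence in your argument is off: the claim that ``at most two of the $\Sigma_j$ meet any given $\tau^k\tilde{T}$ (by the disjointness arrangement)'' does not follow — the arrangement only forces the pairwise intersections $\tau^{k_i}\Sigma_i\cap\tau^{k_j}\Sigma_j$ to avoid the solid tori, not the chains themselves; all three chains may perfectly well pass through a given $\tau^k\tilde{T}$. This does not break the proof, because that situation is already covered by Lemma \ref{lemmadtorus} (your first case), and the correct reason the ``not all three enter'' case contributes nothing is simply that a triple intersection point inside $T_d^{(k)}$ requires all three chains to have a cap there. So the logic is recoverable, but the parenthetical justification is wrong and should be dropped.
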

\begin{proof} 
 Let $(M,K,\xi)\in\Ens^m(\Al,\bl)$. Let $(\frac{T_d}{T})$ be a null $d$-surgery defined on $(M,K,\xi)$. Let $\tilde{T}$ be a lift 
of $T$ in the infinite cyclic covering $\tilde{X}$ associated with $(M,K)$. The infinite cyclic covering $\tilde{X}'$ associated with 
$(M,K,\xi)(\frac{T_d}{T})$ is obtained from $\tilde{X}$ by the surgeries $(\frac{T_d^{(k)}}{\tau^k(\tilde{T})})$ for all $k\in\Z$, 
where the $T_d^{(k)}$ are copies of $T_d$. Note that 
$\Al$ is generated over $\Q$ by the homology classes which are realizable by simple closed curves in 
$\tilde{X}\setminus\sqcup_{k\in\Z}\tau^k(\tilde{T})$. 
Hence it suffices to prove that the triple equivariant intersection is preserved for the homology classes of disjoint knots 
$\mu_1$, $\mu_2$, $\mu_3$ in $\tilde{X}\setminus\sqcup_{k\in\Z}\tau^k(\tilde{T})$. Let $\Sigma_1$, $\Sigma_2$, $\Sigma_3$, 
be $\tau$-transverse rational 2-chains, $\tau$-transverse to $\tilde{T}$, such that $\partial \Sigma_i=\delta(\tau)\mu_i$. 
Assume no $\tau$-translate of $\tilde{T}$ meets any of the pairwise intersections of the $\tau$-translates of the $\Sigma_i$. 
The 2-chains $\Sigma_i'=\Sigma_i\cap(\tilde{X}\setminus\sqcup_{k\in\Z}\tau^k(\tilde{T}))$ are preserved by the surgery. 
The boundary of $\Sigma_i'$ in $\tilde{X}'$ 
is the sum of $\delta(\tau)\mu_i$ and of a $\Q$-linear combination of meridians of the $T_d^{(k)}$. Use Lemma \ref{lemmadtorus} 
to add to the $\Sigma_i'$ rational 2-chains in the $T_d^{(k)}$ so that their boundaries reduce to $\delta(\tau)\mu_i$, without 
adding triple intersection points. 
\end{proof}

\proofof{Proposition \ref{propreal}}
Let $\eta\in\Al$. Let $d$ be a positive integer such that $d\eta$ is realizable for $(M,K,\xi)$. Let $\tilde{J}$ be a knot in the infinite 
cyclic covering $\tilde{X}$ associated with $(M,K)$, whose image $J$ in $M\setminus K$ is also a knot, and such that $[\tilde{J}]=d\eta$. 
Let $T(J)$ be a tubular neighborhood of $J$ which lifts into a tubular neighborhood $T(\tilde{J})$ of $\tilde{J}$. Let $T_d$ be a 
$d$-torus. Fix an LP-identification of $\partial T_d$ and $\partial T(J)$. Set $(M',K',\xi')=(M,K,\xi)(\frac{T_d}{T(J)})$. 
The covering $\tilde{X}'$ can be obtained from $\tilde{X}$ by simultaneous surgeries $(\frac{T_d^{(k)}}{\tau^k(T(\tilde{J}))})$, for all $k\in\Z$, 
where the $T_d^{(k)}$ are copies of $T_d$. 
Let $\gamma\subset T_d$ be a knot such that $d[\gamma]=[\ell(J)]$, where $\ell(J)$ is a parallel of $J$ in $\partial T(J)$ (which is preserved 
by the surgery). Note that all the parallels of $J$ have the same rational homology class, in $(M',K')$ as well as in $(M,K)$. 
Let $\tilde{\gamma}$ be the lift of $\gamma$ in $T_d^{(0)}$, so that $d[\tilde{\gamma}]=[\ell(\tilde{J})]$, where $\ell(\tilde{J})$ is the 
lift of $\ell(J)$ in $\partial T_d^{(0)}$. We have $[\tilde{\gamma}]=\eta$. Conclude with Lemma \ref{lemmainv}.
\fin

  \subsection{Study of the map $\phi$} \label{subphi}

In this subsection, we decompose the equivariant triple intersection map and we study the target spaces in order to prove Theorem \ref{thcar}. 
Fix a Blanchfield module $(\Al,\bl)$. Let $\delta$ be the normalized annihilator of $\Al$. Define 
a decomposition of $\Al$ and associated notation as in Subsection \ref{subsecH}. 

For $\mi\in\{1,..,q\}^3$, set:
$$\Rd(\mi)=\frac{\Qti}{(t_1t_2t_3-1,\delta_{i_1}(t_1),\delta_{i_2}(t_2),\delta_{i_3}(t_3))}.$$
Define a structure of $\Rd(\mi)$-module on $\Al(\mi)$ by: 
$$t_1^{k_1}t_2^{k_2}t_3^{k_3}.\otimes_{1\leq j\leq 3} \beta_j =\otimes_{1\leq j\leq 3} t^{k_j}\beta_j.$$ 
Then $\Al(\mi)$ is a free cyclic $\Rd(\mi)$-module generated by $\eta_{\mi}:=\eta_1\otimes\eta_2\otimes\eta_3$.

Lemmas \ref{lemmaformulae} and \ref{lemmaindsurf} imply:
\begin{proposition} \label{propinttriple}
 Let $(M,K,\xi)\in\Ens^m(\Al,\bl)$. Let $\tilde{X}$ 
be the infinite cyclic covering associated with $(M,K)$. Let $\mi=(i_1,i_2,i_3)\in\{1,..,q\}^3$. 
Define a $\Q$-linear map $\phi_{\mi}^{(M,K,\xi)} : \Al(\mi) \to \Rd(\mi)$ as follows. If $\mu_1$, $\mu_2$, $\mu_3$ are knots 
in $\tilde{X}$ whose images in $M\setminus K$ are pairwise disjoint, and such that $[\mu_j]\in\Al_{i_j}$ for $j=1,2,3$, 
let $\Sigma_1$, $\Sigma_2$, $\Sigma_3$ be $\tau$-transverse rational 2-chains such that $\partial \Sigma_j=\delta_{i_j}(\tau)\mu_j$, 
and set $$\phi_{\mi}^{(M,K,\xi)}([\mu_1]\otimes[\mu_2]\otimes[\mu_3])=<\Sigma_1,\Sigma_2,\Sigma_3>_e.$$
Then the map $\phi_{\mi}^{(M,K,\xi)}$ is well-defined and $\Rd(\mi)$-linear. 
\end{proposition}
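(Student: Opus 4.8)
The plan is to prove Proposition \ref{propinttriple} by reducing it to the already-established global statements of Lemmas \ref{lemmaformulae} and \ref{lemmaindsurf}, which is exactly how the parenthetical ``Lemmas \ref{lemmaformulae} and \ref{lemmaindsurf} imply'' is meant to be read. First I would note that since each $\delta_{i_j}$ divides the global annihilator $\delta=\delta_{(M,K)}$, the ring $\Rd(\mi)$ is a quotient of $\Rd_\delta$, so there is a natural ring surjection $\Rd_\delta\twoheadrightarrow\Rd(\mi)$, and composing $\phi^{(M,K,\xi)}$ restricted to the summand-building generators with this projection is a candidate for $\phi_{\mi}^{(M,K,\xi)}$. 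The subtlety is that in Theorem \ref{thinttriple} one uses surfaces with $\partial\Sigma_j=\delta(\tau)\mu_j$, whereas here one uses the smaller polynomials $\delta_{i_j}(\tau)\mu_j$; I would handle this by invoking Lemma \ref{lemmaindsurf} directly with $P_j=\delta_{i_j}$, which is legitimate precisely because $[\mu_j]\in\Al_{i_j}$ forces $\delta_{i_j}(\tau)\mu_j=0$ in $\Al(M,K)$ so a suitable rational $2$-chain $\Sigma_j$ exists.

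The key steps, in order, would be the following. (1) Existence: given knots $\mu_j$ with $[\mu_j]\in\Al_{i_j}$ and pairwise disjoint images in $M\setminus K$, each $\delta_{i_j}(\tau)\mu_j$ bounds a rational $2$-chain in $\tilde X$ by Lemma \ref{subh2} ($H_2(\tilde X;\Q)=0$), and after a small perturbation the $\Sigma_j$ can be made $\tau$-transverse; so $<\Sigma_1,\Sigma_2,\Sigma_3>_e$ is defined as an element of $\Qti/(t_1t_2t_3-1)$. (2) Well-definedness of the value in $\Rd(\mi)$: by Lemma \ref{lemmaindsurf} applied with $P_j=\delta_{i_j}$, the class of $<\Sigma_1,\Sigma_2,\Sigma_3>_e$ in $\Qti/(t_1t_2t_3-1,\delta_{i_1}(t_1),\delta_{i_2}(t_2),\delta_{i_3}(t_3))=\Rd(\mi)$ is independent of the choice of the $\Sigma_j$ and of the representatives $\mu_j$ of the classes $\beta_j=[\mu_j]$. (3) $\Q$-linearity in each $\beta_j$: extend by multilinearity as in the definition of $<\cdot,\cdot,\cdot>_e$, noting that a sum of two knots representing $\beta_j+\beta_j'$ can be realized geometrically (or one simply uses the multilinear extension on chains and checks it descends to $\Al(\mi)$ using step (2)); this also gives that $\phi_{\mi}^{(M,K,\xi)}$ factors through the holonomy relations defining $\Al(\mi)$ once (4) below is known. (4) $\Rd(\mi)$-linearity: by the second formula of Lemma \ref{lemmaformulae}, replacing $\mu_j$ by $t^{k_j}\mu_j=\tau^{k_j}\mu_j$ multiplies $<\Sigma_1,\Sigma_2,\Sigma_3>_e$ by $t_j^{k_j}$ (after correspondingly translating $\Sigma_j$ by $\tau^{k_j}$, which still bounds $\delta_{i_j}(\tau)\tau^{k_j}\mu_j$), so $\phi_{\mi}^{(M,K,\xi)}(t_1^{k_1}t_2^{k_2}t_3^{k_3}.\,\beta_1\otimes\beta_2\otimes\beta_3)=t_1^{k_1}t_2^{k_2}t_3^{k_3}\,\phi_{\mi}^{(M,K,\xi)}(\beta_1\otimes\beta_2\otimes\beta_3)$; combined with $\Q$-linearity this yields $\Rd(\mi)$-linearity. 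In particular the relation $\otimes_j\beta_j=\otimes_j t\beta_j$ defining $\Al(\mi)$ is respected because in $\Rd(\mi)$ we have $t_1t_2t_3=1$.

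I would spell out step (2) a little: Lemma \ref{lemmaindsurf} is stated for a triple of polynomials $P_1,P_2,P_3$ with $P_j(\tau)\mu_j=0$ in $\Al(M,K)$, which is satisfied here with $P_j=\delta_{i_j}$ since $[\mu_j]$ lies in the summand $\Al_{i_j}\cong\Qt/(\delta_{i_j})\eta_{i_j}$, annihilated by $\delta_{i_j}$. Thus the lemma directly gives that $<\Sigma_1,\Sigma_2,\Sigma_3>_e\in\Qti/(t_1t_2t_3-1,\delta_{i_1}(t_1),\delta_{i_2}(t_2),\delta_{i_3}(t_3))=\Rd(\mi)$ is independent of all choices, so $\phi_{\mi}^{(M,K,\xi)}$ is a well-defined $\Q$-linear map $\Al(\mi)\to\Rd(\mi)$. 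The compatibility $\phi^{(M,K,\xi)}=\bigoplus_{\mi}\phi_{\mi}^{(M,K,\xi)}$ under the decomposition $\Ah=\bigoplus_{\mi}\Al(\mi)$ then follows by comparing definitions and using that $\delta(\tau)\mu_j$ and $\delta_{i_j}(\tau)\mu_j$ give the same equivariant intersection class once projected to $\Rd(\mi)$ — indeed if $\Sigma_j$ bounds $\delta_{i_j}(\tau)\mu_j$ then $(\delta/\delta_{i_j})(\tau)\Sigma_j$ bounds $\delta(\tau)\mu_j$, and the extra polynomial factor $(\delta/\delta_{i_j})(t_j)$ becomes the relevant unit-or-consistent factor modulo $\delta_{i_j}(t_j)$.

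The main obstacle, I expect, is purely bookkeeping rather than conceptual: one must be careful that the ``obvious'' reduction of Theorem \ref{thinttriple} to each summand genuinely lands in $\Rd(\mi)$ and not merely in $\Rd_\delta$, and that the multilinear extension used to define $\phi_{\mi}^{(M,K,\xi)}$ on all of $\Al(\mi)$ (not just on decomposable tensors of realizable classes) is consistent — here one uses that every element of $\Al_{i_j}$ is a $\Q$-linear combination of classes realizable by knots with images that can be pushed off each other in $M\setminus K$, which is implicit in the setup of Lemma \ref{lemmaindsurf} and in Proposition \ref{propreal}. Once these points are checked, everything else is a direct citation of Lemmas \ref{lemmaformulae} and \ref{lemmaindsurf}.
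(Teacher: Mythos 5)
Your proposal is correct and takes exactly the route the paper intends: the paper states Proposition \ref{propinttriple} as a direct consequence of Lemmas \ref{lemmaformulae} and \ref{lemmaindsurf} with no further proof, and your steps (2)--(4) spell out that implication faithfully, applying Lemma \ref{lemmaindsurf} with $P_j=\delta_{i_j}$ for well-definedness and the second formula of Lemma \ref{lemmaformulae} for $\Rd(\mi)$-linearity and compatibility with the holonomy relation. One small misattribution in step (1): the existence of the $\Sigma_j$ does not come from $H_2(\tilde X;\Q)=0$, but from $\delta_{i_j}(\tau)[\mu_j]=0$ in $H_1(\tilde X;\Q)$ (exactly as you correctly note in step (2)); the vanishing of $H_2$ is used inside the proof of Lemma \ref{lemmaindsurf} to show the resulting class is independent of the choice of $\Sigma_j$, not to produce the $\Sigma_j$ in the first place.
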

When it does not seem to cause confusion, the map $\phi_{\mi}^{(M,K,\xi)}$ (resp. $\phi^{(M,K,\xi)}$) is denoted by $\phi_{\mi}$ 
(resp. $\phi$). Note that the maps $\phi_{\mi}$ depend on the decomposition of $\Al$ and on the normalisation of the $\delta_i$'s. 

It is easy to see that the maps $\phi_{\mi}$ and $\phi$ are related by:
$$\phi(\beta_1\otimes\beta_2\otimes\beta_3)=
\frac{\delta(t_1)\delta(t_2)\delta(t_3)}{\delta_{i_1}(t_1)\delta_{i_2}(t_2)\delta_{i_3}(t_3)}\phi_{\mi}(\beta_1\otimes\beta_2\otimes\beta_3),$$
for $\beta_1\otimes\beta_2\otimes\beta_3\in\Al(\mi)$.
This implies in particular that $\phi(\Al(\mi))$ is contained in the ideal of $\Rd_\delta$ generated by 
$\frac{\delta(t_1)\delta(t_2)\delta(t_3)}{\delta_{i_1}(t_1)\delta_{i_2}(t_2)\delta_{i_3}(t_3)}$. 
Let $\hat{\maps}$ be the set of all $\phi\in\maps$ which satisfy this condition. 
For any $\phi\in\hat{\maps}$, the above relation defines associated maps $\phi_{\mi}: \Al(\mi) \to \Rd(\mi)$. 

Note that the linearity implies that the map $\phi$ is encoded in the datum of the family of the $\phi(\eta_{\mi})$, or equivalently 
of the $\phi_{\mi}(\eta_{\mi})$. For $\mi$ fixed, the map $\phi_{\mi}$ is encoded in $\phi_{\mi}(\eta_{\mi})$. 

For $\mi$ such that the $i_j$ are all distinct, we will see below that any element of $\Rd(\mi)$ is a $\phi_{\mi}^{(M,K,\xi)}(\eta_{\mi})$ 
for some marked $\Q$SK-pair $(M,K,\xi)$. In general, the image may be restricted in the following sense. 
There is a surjective map $p_{\mi} : \Rd(\mi)\twoheadrightarrow\HA(\mi)$ given by 
$p_{\mi}(t_1^{k_1}t_2^{k_2}t_3^{k_3})=t^{k_1}\eta_{i_1}\wedge t^{k_2}\eta_{i_2}\wedge t^{k_3}\eta_{i_3}$. 
It corresponds to the natural projection $\Al(\mi)\twoheadrightarrow\HA(\mi)$ via the isomorphism $\Rd(\mi)\cong \Al(\mi)$ given by 
$t_1^{k_1}t_2^{k_2}t_3^{k_3}\mapsto t^{k_1}\eta_{i_1}\otimes t^{k_2}\eta_{i_2}\otimes t^{k_3}\eta_{i_3}$. Note that $\ker(p_{\mi})$ 
is not an ideal of $\Rd(\mi)$, and that we cannot define an $\Rd(\mi)$-module structure on $\HA(\mi)$ as we did on $\Al(\mi)$. 
The following lemma implies that we do not lose information when composing the map $\phi_{\mi}$ by the surjection $p_{\mi}$.

\begin{lemma}
 Let $\mi=(i_1,i_2,i_3)\in\{1,..,q\}^3$. There is rational vector subspace $\Rd(\mi)^a$ of $\Rd(\mi)$, which contains $\phi_{\mi}(\eta_{\mi})$, 
such that $p_{\mi}$ induces an isomorphism $\Rd(\mi)^a\cong\HA(\mi)$. 
\end{lemma}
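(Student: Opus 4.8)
The plan is to exploit the symmetry relation $(\star)$ directly on $\Rd(\mi)$, distinguishing cases according to how many of the indices $i_1,i_2,i_3$ coincide. Recall that $\Rd(\mi)$ is a free cyclic $\Rd(\mi)$-module and that the map $\phi_{\mi}$ is entirely encoded by the single element $\phi_{\mi}(\eta_{\mi})$, so I really only need to understand which elements of $\Rd(\mi)$ can arise. When the $i_j$ are pairwise distinct, $p_{\mi}$ is already an isomorphism $\Al(\mi)\cong\HA(\mi)$, hence so is the composite with $\Rd(\mi)\cong\Al(\mi)$; in that case $\Rd(\mi)^a=\Rd(\mi)$ works and there is nothing more to prove. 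The content is in the cases where two or three indices agree.

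First I would treat the case $i_1=i_2=i_3=:i$. Then $(\star)$ applied to transpositions forces $\phi_{\mi}(\eta_{\mi})$ to lie in the subspace of $\Rd(\mi)$ on which the $\mathcal{S}_3$-action by permutation of the variables $t_1,t_2,t_3$ acts by the sign character; call this $\Rd(\mi)^a$ (the "antisymmetric part" of $\Rd(\mi)$ as a representation of $\mathcal{S}_3$). This is manifestly a rational vector subspace, and it contains $\phi_{\mi}(\eta_{\mi})$ precisely by the permutation relation of Theorem \ref{thinttriple} (equivalently of Lemma \ref{lemmaformulae}, since here all codimensions are $1$). It then remains to check that $p_{\mi}$ restricted to $\Rd(\mi)^a$ is an isomorphism onto $\HA(\mi)$. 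Surjectivity: given a wedge $t^{k_1}\eta_i\wedge t^{k_2}\eta_i\wedge t^{k_3}\eta_i$, its antisymmetrization $\frac{1}{6}\sum_{\sigma}\sg\, t_{\sigma(1)}^{k_1}t_{\sigma(2)}^{k_2}t_{\sigma(3)}^{k_3}\in\Rd(\mi)^a$ maps to it under $p_{\mi}$, because the defining relations of $\HA(\mi)$ (the wedge antisymmetry together with the single holonomy relation coming from $t_1t_2t_3=1$) are exactly the images of the antisymmetry and of the relation $t_1t_2t_3=1$ in $\Rd(\mi)$. Injectivity: $\ker p_{\mi}$, pulled back to $\Al(\mi)$, is the kernel of $\Al(\mi)\twoheadrightarrow\HA(\mi)$, which is generated by the "non-antisymmetric" tensors $\beta_1\otimes\beta_2\otimes\beta_3-\sg\,\beta_{\sigma^{-1}(1)}\otimes\beta_{\sigma^{-1}(2)}\otimes\beta_{\sigma^{-1}(3)}$; under $\Rd(\mi)\cong\Al(\mi)$ this corresponds to $x-\sg\,\sigma\cdot x$, and such elements meet the sign-isotypic subspace $\Rd(\mi)^a$ only in $0$ (this is the standard fact that $\mathrm{image}(1-\mathrm{proj}_{\mathrm{sgn}})\cap\mathrm{image}(\mathrm{proj}_{\mathrm{sgn}})=0$, valid over $\Q$ since $6$ is invertible).

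The case $(i_1,\ell)$ --- say $i_1=i_2=:i\ne i_3$ --- is handled the same way but with the smaller symmetry group: $(\star)$ for the transposition $(1\,2)$ forces $\phi_{\mi}(\eta_{\mi})$ into the $\mathbb{Z}/2$-antisymmetric part of $\Rd(\mi)$ under swapping $t_1\leftrightarrow t_2$, and I take $\Rd(\mi)^a$ to be that subspace. Again $p_{\mi}$ restricted to it surjects onto $\HA(\mi)$ by antisymmetrizing in the first two variables, and is injective because the kernel of $\Al(\mi)\twoheadrightarrow\HA(\mi)$ is generated by symmetric tensors in the first two slots, which intersect the antisymmetric part trivially over $\Q$. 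The case $i_1\ne i_2=i_3$ is symmetric to this one (antisymmetrize in $t_2,t_3$), and $i_1=i_3\ne i_2$ is handled by first reordering via $(\star)$. In every case $\Rd(\mi)^a$ is a vector subspace, contains $\phi_{\mi}(\eta_{\mi})$ by the permutation relation, and maps isomorphically to $\HA(\mi)$.

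I expect the main obstacle to be purely bookkeeping: writing $\HA(\mi)$ and $\Al(\mi)$ as $\mathcal{S}_3$-representations (or $\mathbb{Z}/2$-representations) compatibly with the identification $\Rd(\mi)\cong\Al(\mi)$, and verifying that the holonomy relation $t_1t_2t_3=1$ is $\mathcal{S}_3$-equivariant so that the sign-isotypic decomposition descends to the quotient $\Rd(\mi)$ --- it does, since permuting the $t_j$ carries the ideal $(t_1t_2t_3-1,\delta_{i_1}(t_1),\delta_{i_2}(t_2),\delta_{i_3}(t_3))$ to itself when the corresponding $i_j$ agree. Once this equivariance is in place, the existence of $\Rd(\mi)^a$ and the isomorphism with $\HA(\mi)$ follow formally from Maschke-type semisimplicity over $\Q$ (orders $2$ and $6$ being units), so there is no genuine analytic or geometric difficulty, only the need to state the representation-theoretic setup cleanly.
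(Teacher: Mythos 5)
Your proposal is correct and is essentially the same argument as the paper's. You identify $\Rd(\mi)^a$ with the sign-isotypic component for the stabilizer $\Sym=\{\sigma\in\perm : i_{\sigma(j)}=i_j\}$ acting by permutation of the variables $t_j$, and argue that $p_{\mi}$ restricts to an isomorphism on it; the paper does exactly this, merely carrying out the semisimplicity by hand (it exhibits an explicit complement $\Rd(\mi)^s$ spanned by classes of polynomials fixed by some transposition in $\Sym$, checks the defining ideal $\I$ decomposes compatibly as $\I^s\oplus\I^a$, and observes that under $\Rd(\mi)\cong\Al(\mi)$ the subspace $\Rd(\mi)^s$ is precisely the span of the antisymmetry relations, hence $\ker p_{\mi}$) rather than invoking Maschke, but the equivariance of $\I$ and the final identification are the same observations you make.
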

\begin{proof}
If the $i_j$ are all distinct, the map $p_{\mi}$ is an isomorphism, and $\Rd(\mi)^a=\Rd(\mi)$. 
Assume the $i_j$ are not all distinct.

Set: $$\Sym=\{\sigma\in\perm\textrm{ such that } i_{\sigma(j)}=i_j \textrm{ for } j=1,2,3\}\subset\perm,$$
$$\Qti^a=\{P\in\Qti\,|\,P(t_{\sigma(1)},t_{\sigma(2)},t_{\sigma(3)})=\sg P(t_1,t_2,t_3)\ \forall\,\sigma\in\Sym\},$$
and let $\Qti^s$ be the rational vector subspace of $\Qti$ generated by the polynomials $P\in\Qti$ such that 
$P(t_{\tau(1)},t_{\tau(2)},t_{\tau(3)})=P(t_1,t_2,t_3)$ for some transposition $\tau\in\Sym$. 
\begin{sublemma}
 $\Qti=\Qti^s\oplus\Qti^a$
\end{sublemma}
\begin{proof}
Let $P\in\Qti$. Set: 
$$P^a(t_1,t_2,t_3)=\frac{1}{|\Sym|}\sum_{\sigma\in\Sym}\sg P(t_{\sigma(1)},t_{\sigma(2)},t_{\sigma(3)}),$$ where $|.|$ stands for the 
cardinality. We have $P^a\in\Qti^a$.

 We shall check that $\Qti^s\cap\Qti^a=0$, and that for $P\in\Qti$, $P-P^a$ is in $\Qti^s$. It is clear if $\Sym\neq\perm$. 
Assume $\Sym=\perm$. 

Let $P\in\Qti^s\cap\Qti^a$. Since $P\in\Qti^a$, $P=P^a$, and since $P\in\Qti^s$, $P=P_{12}+P_{13}+P_{23}$, where each $P_{ij}$ 
is invariant under the transposition $(ij)$. We have $P^a=P^a_{12}+P^a_{13}+P^a_{23}$, and each term in this sum is trivial. 
Hence $P=0$.

For $P(t_1,t_2,t_3)=t_1^{k_1}t_2^{k_2}t_3^{k_3}$, with $(k_1,k_2,k_3)\in\Z^3$, we have:
$$P(t_1,t_2,t_3)-P^a(t_1,t_2,t_3)=\frac{1}{6}(t_1^{k_1}t_2^{k_3}t_3^{k_2}+t_1^{k_3}t_2^{k_1}t_3^{k_2})
+\frac{1}{3}(t_1^{k_1}t_2^{k_2}t_3^{k_3}+t_1^{k_2}t_2^{k_1}t_3^{k_3})$$ 
$$\hspace{2cm} -\frac{1}{6}(t_1^{k_2}t_2^{k_3}t_3^{k_1}+t_1^{k_2}t_2^{k_1}t_3^{k_3})
-\frac{1}{3}(t_1^{k_3}t_2^{k_1}t_3^{k_2}+t_1^{k_3}t_2^{k_2}t_3^{k_1})+\frac{1}{2}(t_1^{k_1}t_2^{k_2}t_3^{k_3}+t_1^{k_3}t_2^{k_2}t_3^{k_1}).$$
In this expression, each parenthesized term is invariant under some transposition. 
Finally $\Qti=\Qti^s\oplus\Qti^a$.
\end{proof}

Let $\I$ be the ideal $(t_1t_2t_3-1,\delta_{i_1}(t_1),\delta_{i_2}(t_2),\delta_{i_3}(t_3))\subset\Qti$. We have:
$$\Rd(\mi)=\frac{\Qti}{\I}.$$ 
Set $\I^s=\I\cap\Qti^s$ and $\I^a=\I\cap\Qti^a$.
\begin{sublemma}
 $\I=\I^s\oplus\I^a$
\end{sublemma}
\begin{proof}
 It is clear that $\I^s\cap\I^a=0$. Let $P\in\I$. Writing $P$ as a combination of the generators of $\I$, we see that 
$P(t_{\sigma(1)},t_{\sigma(2)},t_{\sigma(3)})\in\I$ for all $\sigma\in\Sym$. Hence $P^a\in\I^a$, and it follows that $P-P^a\in\I^s$.
\end{proof}
We finally have the decomposition:
$$\Rd(\mi)=\Rd(\mi)^s\oplus \Rd(\mi)^a,$$
where $\displaystyle \Rd(\mi)^s=\frac{\Qti^s}{\I^s}$ and $\displaystyle \Rd(\mi)^a=\frac{\Qti^a}{\I^a}$. 
Since $\displaystyle \Rd(\mi)^a\cong\frac{\Rd(\mi)}{\Rd(\mi)^s}$ and $\Rd(\mi)^s\subset \ker(p_{\mi})$, we have the following 
commutative diagram of rational vector spaces, 
\begin{figure} [htb]
\begin{center}
\begin{tikzpicture} [yscale=0.6]
\draw (0,2) node {$\Rd(\mi)$};
\draw (2,2) node {$\Rd(\mi)^a$};
\draw (0,0) node {$\Al(\mi)$};
\draw (2,0) node {$\HA(\mi)$};
\draw[->>] (0.5,2) -- (1.4,2);
\draw[->>] (0.5,0) -- (1.5,0);
\draw[->] (0,1.5) -- (0,0.5);
\draw[->] (2,1.5) -- (2,0.5);
\draw (0,1) node[right] {$\cong$} (2,1) node[right] {$\cong$};
\draw[->>] (0.5,1.5) -- (1.5,0.5);
\draw (1.3,1.1) node {$p_{\mi}$};
\end{tikzpicture}
\end{center}
\end{figure}
where the isomorphism $\Rd(\mi)\fl{\scriptstyle{\cong}}\Al(\mi)$ is given by 
$t_1^{k_1}t_2^{k_2}t_3^{k_3}\mapsto t_1^{k_1}t_2^{k_2}t_3^{k_3}.\eta_{\mi}$. This isomorphism identifies $\Rd(\mi)^s$ with 
the subspace of $\Al(\mi)$ generated by the anti-symmetry relations. Hence $\displaystyle {p_{\mi}}_{|\Rd(\mi)^a}$ also is 
an isomorphism.

Relation (\ref{relperm}) implies $\phi_{\mi}(\eta_{\mi})\in \Rd(\mi)^a$. 
\end{proof}

The map $\phi$ is completely determined by the $\phi_{\mi}(\eta_{\mi})$ for $\mi=(i_1,i_2,i_3)$ such that $i_1\leq i_2\leq i_3$. 
Since $\HA$ is the direct sum of the $\HA(\mi)$ for these $\mi$, the above lemma implies that the datum of $\phi$ is finally encoded 
in the element $\h(\phi):=\sum_{\mi\in\NT} p_{\mi}\circ\phi_{\mi}(\eta_{\mi})$. This holds for any $\phi\in\hat{\maps}$, hence we obtain 
an injective map $\h : \hat{\maps}\hookrightarrow\HA$. Note that this map depends on the choice of a decomposition 
$\Al=\bigoplus_{1\leq i\leq q}\Al_i$. To obtain Theorem \ref{thcar}, it remains to prove the following lemma. 

\begin{lemma} \label{lemmasurj}
 The map $\h\circ\phi^{\bullet} : \Ens^m(\Al,\bl) \to \HA$ is surjective.
\end{lemma}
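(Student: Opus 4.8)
The plan is to show that every element of $\HA$ is realized as $\h(\phi^{(M,K,\xi)})$ for a suitable marked $\Q$SK-pair by starting from any base point in $\Ens^m(\Al,\bl)$ and modifying it by a single null borromean surgery, then using the variation formula of Proposition \ref{propdiff} together with the identification of $\HA$ provided by the map $\h$. Since $\HA=\bigoplus_{(\mi,\ml)\in\NT}\HA(\mi,\ml)$ after complexification (and $\HA=\bigoplus_{1\leq i_1\leq i_2\leq i_3\leq q}\HA(\mi)$ rationally), and $\h$ identifies $\h(\phi)$ with $\sum_{\mi} p_{\mi}\circ\phi_{\mi}(\eta_{\mi})$, it is enough to produce, for each generating wedge $t^{k_1}\eta_{i_1}\wedge t^{k_2}\eta_{i_2}\wedge t^{k_3}\eta_{i_3}$, a null borromean surgery on $(M,K,\xi)$ whose effect on $\phi$ is precisely (a nonzero multiple of) this element plus controlled contributions in other summands.

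First I would fix a base marked pair $(M_0,K_0,\xi_0)\in\Ens^m(\Al,\bl)$ (which exists by \cite{M1}), and recall from Proposition \ref{propdiff} that attaching a Y-graph $\Gamma$ null in $M_0\setminus K_0$, with leaves representing classes $[\gamma_1],[\gamma_2],[\gamma_3]\in\Al$, changes $\phi$ by $\sum_{\sigma\in\perm}\sg\prod_{j=1}^3\delta(t_j)\,\bl(\beta_j,[\gamma_{\sigma(j)}])(t_j)$. Using non-degeneracy of the Blanchfield form and the explicit dual basis structure recalled in Subsection \ref{subsecH} (the duals $d(\eta_i)$ and the units $a_i$ prime to $\delta_i$), the map $\beta\mapsto\delta(t)\bl(\beta,\cdot)(t)$ is, up to multiplication by the unit $a_i$ and a power of $t$, the identity on each cyclic summand; concretely, choosing the leaf classes to be $t^{k_1}d(\eta_{i_1})$, $t^{k_2}d(\eta_{i_2})$, $t^{k_3}d(\eta_{i_3})$ (rescaled by the inverse units) makes the variation of $\phi_{\mi}(\eta_{\mi})$ equal to $\sum_{\sigma}\sg\,t_1^{k_{\sigma(1)}}t_2^{k_{\sigma(2)}}t_3^{k_{\sigma(3)}}$ in $\Rd(\mi)$, whose image under $p_{\mi}$ is exactly $t^{k_1}\eta_{i_1}\wedge t^{k_2}\eta_{i_2}\wedge t^{k_3}\eta_{i_3}$ in $\HA(\mi)$. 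Two technical points must be handled here: the leaves of a Y-graph must actually be realized by knots in $\tilde X$ in the prescribed homology classes and with the graph null in $M\setminus K$ — this is exactly what Proposition \ref{propreal} (via the $d$-surgery construction) provides, after possibly replacing $(M,K,\xi)$ by another pair with the same $\phi$; and one must verify that distinct wedges are hit independently, i.e. the resulting family of variations spans $\HA$, which follows because the variations corresponding to the generators $t^{k_1}\eta_{i_1}\wedge t^{k_2}\eta_{i_2}\wedge t^{k_3}\eta_{i_3}$ are, by construction, a spanning set of $\HA$ and null borromean surgeries can be superposed (disjoint Y-graphs, additivity of Proposition \ref{propdiff}).

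The remaining step is to assemble these: given an arbitrary $v\in\HA$, write $v$ as a finite rational combination of the generating wedges, clear denominators, realize each generator by a null borromean surgery on a pair with $\phi$ equal to $\phi^{(M_0,K_0,\xi_0)}$ (using Proposition \ref{propreal} to guarantee the needed leaf classes are realizable, and Lemma \ref{lemmainv} to see the $d$-surgeries do not change $\phi$), perform all these surgeries along disjoint Y-graphs in one pair, and invoke linearity of $\h$ together with the additive variation formula to conclude that the resulting marked pair $(M,K,\xi)$ satisfies $\h(\phi^{(M,K,\xi)})=\h(\phi^{(M_0,K_0,\xi_0)})+v$. Since $\h(\phi^{(M_0,K_0,\xi_0)})$ is some fixed element of $\HA$ and $v$ ranges over all of $\HA$, the image $\h\circ\phi^{\bullet}(\Ens^m(\Al,\bl))$ is a coset of $\HA$ equal to $\HA$ itself; in fact the base pair can be chosen (or further modified) so that $\h(\phi^{(M_0,K_0,\xi_0)})=0$, giving surjectivity directly. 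The main obstacle I expect is the bookkeeping in the previous paragraph: ensuring that the leaf classes $t^{k_j}d(\eta_{i_j})$ (suitably unit-rescaled) can simultaneously be realized by disjoint knots in $\tilde X$ whose Y-graph is null, and checking that the "off-diagonal" contributions — the summands $\HA(\mi')$ with $\mi'\neq\mi$ picked up because $\bl$ couples $d(\eta_{i_j})$ only to $\eta_{i_j}$ but the variation formula involves all of $\Al$ — genuinely vanish by the orthogonality $\bl(\eta_i,d(\eta_j))=0$ for $i\neq j$, so that each surgery hits a single $\HA(\mi)$ cleanly.
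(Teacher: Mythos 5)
Your proposal follows the paper's argument essentially step by step: you fix a base marked pair, apply the variation formula of Proposition~\ref{propdiff} to Y-graphs whose leaves are chosen as unit-rescaled translates of the Blanchfield duals $d(\eta_{i_j})$, invoke Proposition~\ref{propreal} to make those homology classes realizable, and use the orthogonality $\bl(\eta_i,d(\eta_j))=0$ for $i\neq j$ to see that a surgery aimed at $\HA(\mi)$ does not leak into other summands. That is exactly the content of the paper's proof, organized there as showing every element of $\Rd(\mi)^a$ occurs as $\phi_{\mi}^{(M,K,\xi)}(\eta_{\mi})$, which is the same statement transported through the isomorphism $p_{\mi}\colon\Rd(\mi)^a\cong\HA(\mi)$.

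Two small imprecisions worth noting, neither of which is a genuine gap. First, the leaf class producing the term $t_j^{k_{\sigma(j)}}$ is $t^{-k_j}a_{i_j}^{-1}(t^{-1})d(\eta_{i_j})$, not $t^{k_j}d(\eta_{i_j})$: the hermitian rule $\bl(\beta,Q(t)\beta')(t)=Q(t^{-1})\bl(\beta,\beta')(t)$ flips the exponent, so an extra sign must be tracked (the paper takes care of this). Second, ``clear denominators'' is not the right move to obtain arbitrary $\Q$-coefficients; the $\Z$-span of your generating variations is only a lattice in $\HA$. The paper instead allows one leaf class to be an arbitrary rational multiple $r\beta_1$ (legal since $\Al$ is a $\Q$-vector space and Proposition~\ref{propreal} realizes arbitrary elements of $\Al$), which gives the full rational span directly. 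With that adjustment your assembly step goes through; note also that the paper avoids the ``simultaneous realizability of many disjoint leaves'' bookkeeping you flag by doing the surgeries one at a time, replacing the pair after each step.
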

\begin{proof}
We prove that, for $\mi=(i_1,i_2,i_3)\in\{1,..,q\}^3$, any element of $\Rd(\mi)^a$ is equal to $\phi_{\mi}^{(M,K,\xi)}(\eta_{\mi})$ 
for some $(M,K,\xi)\in\Ens^m(\Al,\bl)$.

Let $(M,K,\xi)\in\Ens^m(\Al,\bl)$. We shall prove that, for any $r\in\Q$ and $(k_1,k_2,k_3)\in\Z^3$, there is a Y-graph $\Gamma$, 
null in $M\setminus K$, such that :
\begin{equation} \label{eq} 
\phi_{\mi}^{(M,K,\xi)(\Gamma)}(\eta_{\mi})-\phi^{(M,K,\xi)}_{\mi}(\eta_{\mi})=r\sum_{\sigma\in\Sym}\sg\prod_{1\leq j\leq 3}t_j^{k_{\sigma(j)}},
\end{equation}
where $\Sym=\{\sigma\in\perm\textrm{ such that } i_{\sigma(j)}=i_j \textrm{ for } j=1,2,3\}\subset\perm$. 
Since these differences generate $\Rd(\mi)^a$ as an additive group, this will prove that we can obtain any element of $\Rd(\mi)^a$. 

Fix $r\in\Q$ and $(k_1,k_2,k_3)\in\Z^3$. Let $\Gamma$ be a Y-graph, null in $M\setminus K$. Let $\tilde{\Gamma}$ be a lift of $\Gamma$ 
in the infinite cyclic covering $\tilde{X}$ associated with $(M,K)$. Let $\gamma_1$, $\gamma_2$, $\gamma_3$ be the homology classes in $\Al$ 
of the leaves of $\tilde{\Gamma}$, given in an order induced by the orientation of the internal vertex of $\Gamma$. 
By Proposition \ref{propdiff}:
$$\phi^{(M,K,\xi)(\Gamma)}_{\mi}(\eta_{\mi})-\phi^{(M,K,\xi)}_{\mi}(\eta_{\mi})=
\sum_{\sigma\in\perm}\sg\prod_{j=1}^3 \delta_{i_j}(t_j) \bl(\eta_{i_j},[\gamma_{\sigma(j)}])(t_j).$$
Set $\beta_j=t^{-k_j}a_{i_j}^{-1}(t^{-1})d(\eta_{i_j})$ for $j=1,2,3$, where the inverse 
of $a_{i_j}$ is defined modulo $\delta_{i_j}$. We want to choose $\Gamma$ such that $[\gamma_1]=r\beta_1$ and $[\gamma_j]=\beta_j$ 
for $j=2,3$. These homology classes may not be realizable for $(M,K,\xi)$. Use Proposition \ref{propreal} to replace $(M,K,\xi)$ with 
another marked $\Q$SK-pair (still denoted by $(M,K,\xi)$), so that the map $\phi$ remains unchanged, and the required homology classes 
are realizable. Then we can define $\Gamma$ as desired, and we obtain Equality (\ref{eq}).

This concludes since Proposition \ref{propdiff} implies that $\phi_{\mi'}(\eta_{\mi'})$ is modified by the surgery on $\Gamma$ if and only if 
$\mi'$ is a permutation of $\mi$.
\end{proof}

\section{Degree one invariants of marked $\Q$SK-pairs} \label{secG1}

\subsection{The borromean subquotient}

Fix a Blanchfield module $(\Al,\bl)$. 
Let $\F^{m,b}_1(\Al,\bl)$ be the rational vector subspace of $\F^m_1(\Al,\bl)$ generated by the brackets $[(M,K,\xi);\frac{B}{A}]$ 
where $(\frac{B}{A})$ is a borromean surgery. Let $\G^{m,b}_1(\Al,\bl)$ be the image of $\F^{m,b}_1(\Al,\bl)$ in the quotient 
$\G^m_1(\Al,\bl)$. In this subsection, we study $\G^{m,b}_1(\Al,\bl)$, and we prove:
\begin{proposition} \label{propborro}
 Set $\displaystyle \HA=\frac{\Lambda^3\Al}{(\beta_1\wedge\beta_2\wedge\beta_3=t\beta_1\wedge t\beta_2\wedge t\beta_3)}$. 
The map $\hs : \Ens^m(\Al,\bl) \to \HA$ of Proposition \ref{propdegree1} induces an isomorphism $\displaystyle \G^{m,b}_1(\Al,\bl)\cong\HA$.
\end{proposition}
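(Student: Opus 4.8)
\textbf{Proof proposal for Proposition \ref{propborro}.}

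The plan is to show that $\hs$, restricted to the borromean part of the filtration, descends to a well-defined \emph{injective} map on $\G^{m,b}_1(\Al,\bl)$, and then invoke the surjectivity already established to conclude it is an isomorphism. First I would recall that $\G^{m,b}_1(\Al,\bl)$ is by definition the image of the subspace $\F^{m,b}_1(\Al,\bl)$ generated by brackets $[(M,K,\xi);\frac{B}{A}]$ with $(\frac{B}{A})$ a borromean surgery, i.e.\ (by Matveev, as recalled after Definition \ref{deflag}) by brackets $[(M,K,\xi);\Gamma]$ for $\Gamma$ a Y-graph null in $M\setminus K$. Proposition \ref{propdiff} computes $\phi^{(M,K,\xi)(\Gamma)}-\phi^{(M,K,\xi)}$ explicitly in terms of the Blanchfield form and the homology classes $[\gamma_j]\in\Al$ of the leaves of a lift $\tilde\Gamma$; composing with the isomorphism $\h$ of Subsection \ref{subphi} turns this into an explicit element of $\HA$, depending only on $(\bl,[\gamma_1],[\gamma_2],[\gamma_3])$ and symmetric in a way compatible with the wedge. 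Since $\hs(\F^m_2(\Al,\bl))=0$ by Proposition \ref{propdegree1} (degree-one invariance), the map $\hs$ indeed induces a well-defined linear map $\bar\hs:\G^{m,b}_1(\Al,\bl)\to\HA$; and it is onto by Lemma \ref{lemmasurj} (the Y-graphs constructed there are null, so they live in $\F^{m,b}_1$).

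The heart of the argument is therefore \emph{injectivity} of $\bar\hs$ on $\G^{m,b}_1(\Al,\bl)$, equivalently: if a linear combination $\sum_\alpha c_\alpha[(M_\alpha,K_\alpha,\xi_\alpha);\Gamma_\alpha]$ has $\hs$-image zero in $\HA$, then it already lies in $\F^m_2(\Al,\bl)$. Here I would use that, modulo $\F^m_2$, a bracket $[(M,K,\xi);\Gamma]$ depends only on the class in $\HA$ produced by Proposition \ref{propdiff}: indeed, given two null Y-graphs $\Gamma,\Gamma'$ (in possibly different marked pairs with the same Blanchfield module) producing the same element of $\HA$, Theorem \ref{thM3} lets one connect the ambient pairs by null LP-surgeries, and Proposition \ref{propreal}/Lemma \ref{lemmainv} (the $d$-torus trick) plus Proposition \ref{propreal}'s realizability let one arrange the leaves of the relevant lifts to have prescribed homology classes; then one checks that changing the leaf classes of a null Y-graph by an element that maps to $0$ in $\HA$, or replacing the ambient pair as above, changes the bracket only by elements of $\F^m_2$. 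This is exactly parallel to how Lemma \ref{lemmasurj} is proved, run in reverse. Concretely, the formula of Proposition \ref{propdiff} exhibits $\hs([(M,K,\xi);\Gamma])$ as (up to the identification $\h$) the image of $[\gamma_1]\wedge[\gamma_2]\wedge[\gamma_3]$ under the Blanchfield-duality isomorphism $\Al\cong\Al$; since that map is a linear isomorphism, vanishing of $\hs$ on the combination forces $\sum_\alpha c_\alpha\,[\gamma_1^\alpha]\wedge[\gamma_2^\alpha]\wedge[\gamma_3^\alpha]=0$ in $\HA$, which by multilinearity of the Y-graph bracket modulo $\F^m_2$ (a standard Goussarov--Habiro calculus fact: a Y-graph whose one leaf bounds is trivial, and leaf-sliding is additive modulo higher degree, cf.\ the computations in \cite{Mt}) means the combination of brackets is itself in $\F^m_2$.

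The step I expect to be the main obstacle is precisely making rigorous the claim that ``a null Y-graph bracket depends, modulo $\F^m_2(\Al,\bl)$, only on the wedge $[\gamma_1]\wedge[\gamma_2]\wedge[\gamma_3]\in\HA$ of its leaf classes.'' This requires a careful Goussarov--Habiro-type calculus argument in the null setting: additivity of the bracket under connected sum of leaves within a fixed null Y-graph (modulo degree-two brackets), invariance under the holonomy relation $[\gamma_j]\mapsto t[\gamma_j]$ simultaneously (which is why the target is $\HA$ and not $\Lambda^3\Al$ — it reflects the freedom in choosing the lift $\tilde\Gamma$), the antisymmetry under cyclic-order reversal of the internal vertex, and the fact that a Y-graph with a leaf that is null-homotopic (not merely null-homologous) in $M\setminus K$ gives a trivial surgery. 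One must also track that all these moves preserve the nullity condition and the marking. Once this dependence statement is in hand, injectivity of $\bar\hs$ is immediate from the linear-isomorphism observation above, and combined with the already-established surjectivity we conclude $\G^{m,b}_1(\Al,\bl)\cong\HA$ via $\hs$. I would organize the calculus lemmas by adapting \cite[Chap.~6]{Mt} to the marked setting, as the paper indicates is possible, rather than redeveloping them from scratch.
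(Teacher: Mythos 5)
Your overall strategy is essentially the one the paper uses, and the hard work you correctly identify as the ``main obstacle'' --- showing that a null Y-graph bracket modulo $\F^m_2(\Al,\bl)$ depends only on $[\gamma_1]\wedge[\gamma_2]\wedge[\gamma_3]\in\HA$ --- is precisely what the paper spends the bulk of Subsection~\ref{subsecG1} establishing, through the notion of realizable Y-diagrams (Lemma~\ref{lemmaindreal}), the relations (Hol), (AS), (LV) (Lemma~\ref{lemmaYrel}), the vanishing Lemma~\ref{lemmatriv}, the independence of the $f_{ij}$, and the independence of the ambient marked pair (Lemma~\ref{lemmacan}). The paper packages this output as a well-defined map $\varphi:\HA\to\G^{m,b}_1(\Al,\bl)$, which is obviously surjective since $\G^{m,b}_1$ is generated by Y-graph brackets.

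Where you diverge, slightly, is at the very end. You want to conclude by directly proving \emph{injectivity} of $\bar\hs$, arguing that the composite map sending a leaf-wedge in $\HA$ to its $\hs$-value is ``the Blanchfield-duality isomorphism $\Al\cong\Al$.'' This phrasing is loose (there is no canonical isomorphism $\Al\cong\Al$ from $\bl$; the non-degeneracy only gives a duality into a $\Hom$-space) and, as stated, would require an extra argument. The paper instead avoids any need to prove injectivity directly: once $\varphi:\HA\twoheadrightarrow\G^{m,b}_1(\Al,\bl)$ is well-defined and surjective, and $\tilde\hs:\G^{m,b}_1(\Al,\bl)\twoheadrightarrow\HA$ is surjective (by rerunning the proof of Lemma~\ref{lemmasurj} with the null Y-graphs viewed inside $\F^{m,b}_1$, exactly as you observe), the finite-dimensionality of $\HA$ forces both surjections to be isomorphisms. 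This is the clean way to close; your ``Blanchfield-duality isomorphism'' step should be replaced by exactly this dimension count, and then your argument becomes the paper's. Also note a small imprecision: Lemma~\ref{lemmatriv} handles a leaf whose class in $\Al$ is zero (null-homologous in $\tilde X$) via the (Hol) and (LV) relations, not merely the null-homotopic case as you suggest.
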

The main point of the proof is the construction of a well-defined map $\varphi : \HA\to\G^{m,b}_1(\Al,\bl)$. 

Let $(M,K,\xi)\in\Ens^m(\Al,\bl)$. For a Y-graph $\Gamma$ null in $M\setminus K$, the bracket in $\F^{m,b}_1(\Al,\bl)$ 
associated with the surgery along $\Gamma$ is denoted by $[(M,K,\xi);\Gamma]$.

\newcommand{\diag}[7]{
\begin{tikzpicture} [scale=#7]
\draw (-1.4,-1) -- (0,0) -- (1.4,-1) (0,0) -- (0,2);
\draw (0,2.7) node {#1} (-2,-1.4) node {#2} (2,-1.4) node {#3};
\draw (-1.4,1) node {#4} (1.4,1) node {#5} (0,-2) node {#6};
\end{tikzpicture}}
Define a {\em Y-diagram} as a unitrivalent graph with one oriented trivalent vertex and three univalent vertices, equipped with the following 
labellings: \raisebox{-5ex}{\diag{$\beta_1$}{$\beta_2$}{$\beta_3$}{$f_{12}$}{$f_{13}$}{$f_{23}$}{0.4}}, 
where $\beta_i\in\Al$, and the $f_{ij}\in\Q(t)$ satisfy $f_{ij}\,mod\,\Qt=\bl(\beta_i,\beta_j)$.
In the pictures, the orientation of the trivalent vertex is given by the cyclic order  
\raisebox{-1.5ex}{\begin{tikzpicture} [scale=0.2]
\newcommand{\tiers}[1]{
\draw[rotate=#1,color=white,line width=4pt] (0,0) -- (0,-2);
\draw[rotate=#1] (0,0) -- (0,-2);}
\draw (0,0) circle (1);
\draw[<-] (-0.05,1) -- (0.05,1);
\tiers{0}
\tiers{120}
\tiers{-120}
\end{tikzpicture}}. 

We wish to realize Y-diagrams by Y-graphs in $M\setminus K$. Fix a ball $B\subset M\setminus K$ and a lift $\tilde{B}\subset\tilde{X}$ of this ball. 
Let $D$ be the above Y-diagram, and denote by $v_i$ the univalent vertex colored by $\beta_i$. Embed the diagram $D$ in the ball $B$, and equip it 
with the framing induced by an immersion in the plane which induces the fixed orientation of the internal vertex. At each univalent vertex $v_i$, 
glue a leaf $\ell_i$, trivial in $H_1(M\setminus K;\Q)$, in order to obtain a null Y-graph $\Gamma$. For all $i$, let $\hat{\ell}_i$ be the extension 
of $\ell_i$ in $\Gamma$ (see Figure \ref{figext}), 
\begin{figure}[htb]
\begin{center}
\begin{tikzpicture} [scale=0.4]
\begin{scope} [xshift=-5cm]
\draw (0,0) -- (0,3);
\draw (0,4) circle (1);
\draw (0,0) node {$\scriptstyle{\bullet}$};
\end{scope}
\draw[->,very thick] (-0.5,2.5) -- (0.5,2.5);
\begin{scope} [xshift=5cm]
\draw (0,0) .. controls +(0.2,0.05) and +(0,-3) .. (0.2,3);
\draw (0,0) .. controls +(-0.2,0.05) and +(0,-3) .. (-0.2,3);
\draw (0,4) circle (1);
\draw[white,very thick] (-0.18,3) -- (0.18,3);
\draw (0,0) node {$\scriptstyle{\bullet}$};
\end{scope}
\end{tikzpicture} 
\end{center}
\caption{Extension of a leaf in a Y-graph} \label{figext}
\end{figure}
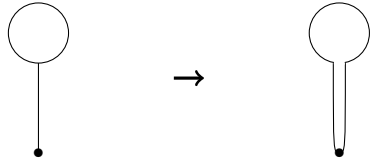
and let $\tilde{\ell}_i$ be the lift of $\hat{\ell}_i$ defined by lifting the basepoint in the ball $\tilde{B}$. 
The null Y-graph $\Gamma$ is a {\em realization of $D$ in $(M,K,\xi)$} if the following conditions are satisfied:
\begin{itemize}
 \item for all $i$, $[\tilde{\ell}_i]=\beta_i$, 
 \item for all $i<j$, $lk_e(\tilde{\ell}_i,\tilde{\ell}_j)=f_{ij}$. 
\end{itemize}
If such a realization exists, the Y-diagram $D$ is {\em realizable in $(M,K,\xi)$}. 
Note that the Y-diagram $D$ is realizable if and only if each $\beta_i$ is realizable for $(M,K,\xi)$ (see Definition \ref{defreal}). 

Let $(M,K,\xi)\in\Ens^m(\Al,\bl)$. Let $\F^{m,b}_2(M,K,\xi)$ be the subset of $\F^m_2(\Al,\bl)$ generated by the $[(M,K,\xi);\Gamma_1,\Gamma_2]$ 
for all Y-graphs $\Gamma_1$ and $\Gamma_2$ null in $M\setminus K$. 

\begin{lemma}[\cite{Mt} Chap. 6, Lemma 2.11] \label{lemmaindreal} 
 Let $(M,K,\xi)\in\Ens^m(\Al,\bl)$. Let $D$ be a Y-diagram. Let $\Gamma$ be a realization of $D$ in $(M,K,\xi)$. 
Then the class of $[(M,K,\xi);\Gamma]$ $mod\ \F_2^{m,b}(M,K,\xi)$ does not depend on the realization of $D$. 
\end{lemma}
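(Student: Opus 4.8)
The plan is to connect any two realizations $\Gamma,\Gamma'$ of the same Y-diagram $D$ by a finite sequence of elementary Y-graph (clasper) moves and to verify that each move changes $[(M,K,\xi);\Gamma]$ only by an element of $\F_2^{m,b}(M,K,\xi)$. First I would put a realization into a normal form: up to ambient isotopy in $M\setminus K$, a realization of $D$ is determined by the isotopy class of its three framed leaves $\ell_1,\ell_2,\ell_3$, since the internal vertex, the edges and their framing can be isotoped into a standard position inside a fixed ball $B$, with each leaf leaving $B$ through a fixed arc. An ambient isotopy of $\Gamma$ in $M\setminus K$ induces an orientation- and marking-preserving homeomorphism of the surgered pairs, so it does not change $[(M,K,\xi);\Gamma]$ at all. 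Hence it suffices to treat two realizations $\Gamma,\Gamma'$ that agree outside regular neighborhoods of the leaves, and to move from $\ell_i$ to $\ell_i'$ one index at a time.

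Next I would invoke the standard calculus of Y-graphs modulo second-order brackets (Habiro; Garoufalidis--Goussarov--Polyak; Garoufalidis--Rozansky \cite{GR}), transcribed to the complement of $K$, in the form of three moves. (a) A crossing change between a leaf of $\Gamma$ and another strand (another leaf, an edge, or $K$ --- which can be avoided since $\Gamma$ is null) can be realized by surgery on an auxiliary Y-graph $\Gamma''$ disjoint from $\Gamma$ and null in $M\setminus K$; hence the two resulting brackets differ by $[(M,K,\xi);\Gamma,\Gamma'']\in\F_2^{m,b}(M,K,\xi)$. (b) If the $i$-th leaf of a Y-graph is a band sum $\ell\#\ell'$, then modulo $\F_2^{m,b}(M,K,\xi)$ its bracket equals the sum of the brackets of the two Y-graphs obtained by replacing that leaf with $\ell$ and with $\ell'$. (c) A Y-graph one of whose leaves bounds an embedded disk disjoint from the rest of the graph and from $K$ is trivial. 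These are proved as in the absolute case; the only additional point is that every surgery here is a null borromean surgery, so the Blanchfield module and the marking are carried along canonically (Theorem~\ref{thM3}, \cite[Lemma~2.1]{M3}), and all the brackets genuinely lie in $\F^m_1(\Al,\bl)$, resp.\ $\F^m_2(\Al,\bl)$.

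Then I would use the hypotheses on $D$: $[\tilde{\ell}_i]=[\tilde{\ell}_i']=\beta_i$ in $\Al$ and $lk_e(\tilde{\ell}_i,\tilde{\ell}_j)=lk_e(\tilde{\ell}_i',\tilde{\ell}_j)=f_{ij}$ for the chosen lifts. Fixing $i$, we have $[\tilde{\ell}_i'-\tilde{\ell}_i]=0$ in $H_1(\tilde{X};\Q)$, so $N(\tilde{\ell}_i'-\tilde{\ell}_i)=\partial S$ for some integer $N>0$ and some integral $2$-chain $S$ in $\tilde{X}$; projecting $S$ to $M\setminus K$ exhibits $N$ parallel copies of $\ell_i'$ as obtained from $N$ parallel copies of $\ell_i$ by finitely many band sums with curves $c$ that are null-homologous in $\tilde{X}$, and the equality of the equivariant linking numbers forces $lk_e(\tilde{c},\tilde{\ell}_j)=0$ for $j\neq i$ together with vanishing of the relevant self-framing contribution. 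By the band-sum additivity (b) (and linearity, to absorb the $N$-fold stabilization, itself handled by (b)), $[(M,K,\xi);\Gamma']$ equals $[(M,K,\xi);\Gamma]$ plus brackets of Y-graphs whose $i$-th leaf $c$ is null-homologous in $\tilde{X}$ and equivariantly unlinked from the other two leaves, modulo $\F_2^{m,b}(M,K,\xi)$. Finally, since $H_2(\tilde{X};\Q)=0$ (Lemma~\ref{subh2}), such a curve $c$ bounds a surface in $\tilde{X}$; pushing this surface off the rest of the lifted Y-graph by crossing changes of type (a), each paid for by an element of $\F_2^{m,b}(M,K,\xi)$, makes the leaf $c$ null-homotopic, so by (c) its bracket lies in $\F_2^{m,b}(M,K,\xi)$. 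Repeating for $i=1,2,3$ yields $[(M,K,\xi);\Gamma]\equiv[(M,K,\xi);\Gamma']\pmod{\F_2^{m,b}(M,K,\xi)}$. (Proposition~\ref{propdiff}, which expresses the variation of $\phi$ under $\Gamma$ purely through $\bl$ and the homology classes of the leaves, is a useful consistency check on this dependence.)

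The main obstacle is the last step: making rigorous that a leaf which is only \emph{rationally} null-homologous in $\tilde{X}$ and equivariantly unlinked from the other leaves can be trivialized modulo $\F_2^{m,b}(M,K,\xi)$. Rationality forces one to pass to multiples and argue by linearity through move (b), and one must check carefully that the surface bounding $c$ can be isotoped off the other leaves (and off $K$) using only crossing changes whose cost is a bracket $[(M,K,\xi);\Gamma,\Gamma'']$ with $\Gamma''$ null --- this is precisely where the vanishing of $lk_e(\tilde{c},\tilde{\ell}_j)$ and of the framing is used to prevent a surviving order-one discrepancy. Everything else is a routine adaptation of the clasper calculus to the exterior of a null-homologous knot, the tracking of the marking being automatic because all moves use null borromean surgeries.
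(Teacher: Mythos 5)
The paper cites this lemma from [Mt, Chap.~6, Lemma~2.11] without reproducing the argument, so there is nothing in the text to compare your proof against directly. Your outline is the expected clover-calculus strategy (isotopy, crossing-change relation, band-sum linearity, trivial-leaf relation) and its overall shape is right, but it has genuine gaps.

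First, a Y-diagram carries only the labels $\beta_i$ and the off-diagonal $f_{ij}$; there is no label $f_{ii}$, and the definition of a realization places no constraint on the framing of the leaves (the number of twists in the annulus that $\Sigma(\Gamma)$ places around $\ell_i$). Two realizations of $D$ can therefore differ in a leaf framing alone, and the surgery genuinely depends on that framing. Your difference leaf $c$ then inherits an uncontrolled framing, yet at the crucial point you invoke ``vanishing of the \dots framing'' --- nothing in the hypotheses gives that vanishing. What is needed is a separate clover-calculus input showing that a change of leaf framing is absorbed modulo $\F_2^{m,b}(M,K,\xi)$ (a relation in the spirit of \cite{GGP}), not something that falls out of the linking and homology constraints.

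Second, you yourself flag the trivialization of $c$ as the main obstacle, and you are right, but the sketch you give does not close it. After multiplying by $N$ and tubing, $c$ bounds an embedded integral surface in $\tilde{X}$ disjoint from the lifted graph; that surface is in general of positive genus, so $c$ is \emph{not} null-homotopic, crossing changes cannot change that, and move (c) does not apply. The statement actually needed is ``a Y-graph one of whose leaves bounds a surface in the complement of the rest of the graph has bracket in $\F_2^{m,b}$'', a slide/zip lemma of clover calculus strictly stronger than (c). Relatedly, your intermediate claim that each band summand $c$ satisfies $lk_e(\tilde{c},\tilde{\ell}_j)=0$ individually is too strong --- only the total vanishes --- and should be reorganized into a single band sum with one error curve before invoking the constraints coming from $D$.
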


This result allows us to define the bracket $[(M,K,\xi);D]$, for a realizable Y-diagram $D$, as the class of $[(M,K,\xi);\Gamma]$ modulo $\F_2^{m,b}(M,K,\xi)$ 
for any realization $\Gamma$ of $D$. 

\begin{lemma} \label{lemmaYrel}
  Let $(M,K,\xi)\in\Ens^m(\Al,\bl)$. Let $k,k'\in\Z$. Assume the Y-diagrams $D$, $D_1$ and $D_2$ represented in Figure \ref{figYrel} 
are realizable in $(M,K,\xi)$. Then $D_0$, $D_h$ and $D_a$ are realizable in $(M,K,\xi)$, 
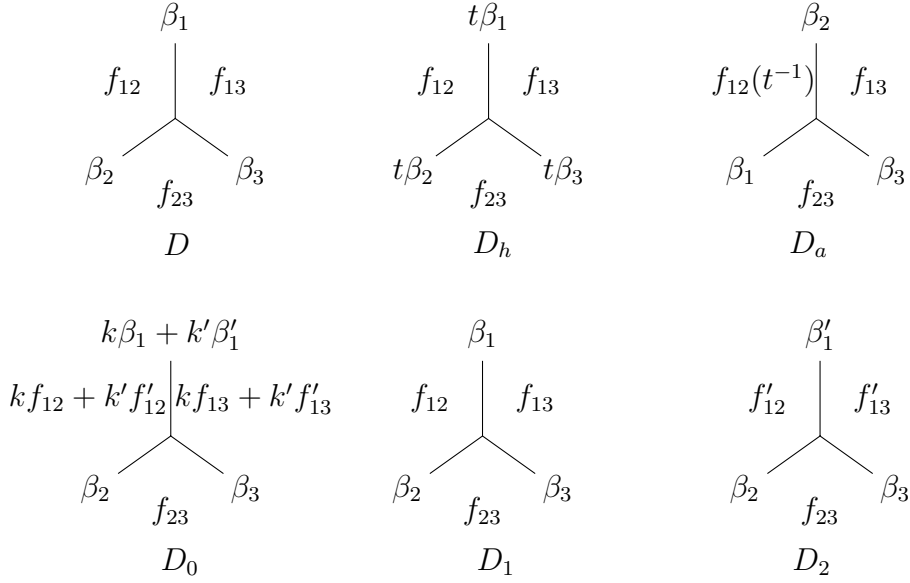
\begin{figure}[htb]
\begin{center}
 \diag{$\beta_1$}{$\beta_2$}{$\beta_3$}{$f_{12}$}{$f_{13}$}{$f_{23}$}{0.5} \hspace{1cm}
 \diag{$t\beta_1$}{$t\beta_2$}{$t\beta_3$}{$f_{12}$}{$f_{13}$}{$f_{23}$}{0.5} \hspace{1cm}
 \diag{$\beta_2$}{$\beta_1$}{$\beta_3$}{$f_{12}(t^{-1})$}{$f_{13}$}{$f_{23}$}{0.5}
\begin{tikzpicture}
 \draw (-4.2,-1) node{$D$} (0,-1) node{$D_h$} (4.2,-1) node{$D_a$};
\end{tikzpicture} \\ \ \\ 
 \hspace{-0.3cm}\diag{$k\beta_1+k'\beta_1'$}{$\beta_2$}{$\beta_3$}
{$\hspace{-0.8cm}kf_{12}+k'f_{12}'$}{$\hspace{0.8cm}kf_{13}+k'f_{13}'$}{$f_{23}$}{0.5} \hspace{0.1cm}
 \diag{$\beta_1$}{$\beta_2$}{$\beta_3$}{$f_{12}$}{$f_{13}$}{$f_{23}$}{0.5} \hspace{1.4cm}
 \diag{$\beta_1'$}{$\beta_2$}{$\beta_3$}{$f_{12}'$}{$f_{13}'$}{$f_{23}$}{0.5}
\begin{tikzpicture}
 \draw (-4.2,-1) node{$D_0$} (0,-1) node{$D_1$} (4.2,-1) node{$D_2$};
\end{tikzpicture} \caption{Y-diagrams} \label{figYrel}
\end{center}
\end{figure}
and the following relations hold:
\begin{equation}
 [(M,K,\xi);D]=[(M,K,\xi);D_h] \tag{Hol} 
\end{equation}
\begin{equation}
 \tag{AS} [(M,K,\xi);D]+[(M,K,\xi);D_a]=0
\end{equation}
\begin{equation}
 \tag{LV} [(M,K,\xi);D_0]=k\,[(M,K,\xi);D_1]+k'\,[(M,K,\xi);D_2]
\end{equation}
\end{lemma}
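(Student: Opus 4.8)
The plan is to obtain the three identities, just as in the unmarked setting of \cite[Chap. 6]{Mt}, from local relations of clover calculus combined with the independence statement of Lemma \ref{lemmaindreal}. For each relation the scheme is the same: starting from a realization $\Gamma$ of $D$ (and, for (LV), also of $D_2$), I would produce a realization of the target diagram that differs from $\Gamma$ by an explicit local modification of the Y-graph, and then quote the relation this modification induces in $\F^m_1/\F^m_2$, checking that the correction terms lie in $\F^{m,b}_2(M,K,\xi)$ so that Lemma \ref{lemmaindreal} makes the brackets well defined. Realizability of the target diagrams will come for free from these constructions: $D_h$ and $D_a$ are realized by the same Y-graph as $D$, and $D_0$ by a band-sum construction.

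\emph{Relations (Hol) and (AS).} Relation (Hol) records that the lift $\tilde{B}\subset\tilde{X}$ of the base ball, equivalently the marking $\xi$, is defined only up to the deck transformation $\tau$. If $\Gamma$ realizes $D$, with lifted leaves $\tilde{\ell}_i$ obtained by lifting the basepoint in $\tilde{B}$, then $\Gamma$ realizes $D_h$ when one lifts the basepoint in $\tau(\tilde{B})$ instead: the lifted leaves become $\tau(\tilde{\ell}_i)$, with classes $t\beta_i$, and $lk_e(\tau\tilde{\ell}_i,\tau\tilde{\ell}_j)=lk_e(\tilde{\ell}_i,\tilde{\ell}_j)=f_{ij}$ by deck-invariance of the equivariant linking number. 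Since the surgered pair $(M,K,\xi)(\Gamma)$, hence the bracket $[(M,K,\xi);\Gamma]$, does not depend on this choice, $[(M,K,\xi);D]=[(M,K,\xi);D_h]$. For (AS), exchanging the legs carrying $\beta_1$ and $\beta_2$ and relabeling the pairings accordingly (in particular $f_{12}(t)$ becomes $f_{12}(t^{-1})$, since $lk_e(\mu_2,\mu_1)(t)=lk_e(\mu_1,\mu_2)(t^{-1})$) amounts to reversing the cyclic order of the edges at the internal vertex of $\Gamma$; the resulting Y-graph is still null in $M\setminus K$ and realizes $D_a$. By the antisymmetry relation of clover calculus, the bracket of a Y-graph with reversed vertex orientation is the opposite of the original bracket up to a correction given by brackets of pairs of null Y-graphs, which lies in $\F^{m,b}_2(M,K,\xi)$; with Lemma \ref{lemmaindreal} this gives $[(M,K,\xi);D]+[(M,K,\xi);D_a]=0$.

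\emph{Relation (LV).} Fix a realization $\Gamma$ of $D=D_1$, with leaves $\ell_1,\ell_2,\ell_3$. Using the realizability of $D_2$ together with the flexibility of Lemma \ref{lemmaindreal} (and, if needed, the realization technique of Proposition \ref{propreal}), I would find a loop $\ell_1'$ near $\Gamma$ in $M\setminus K$, disjoint from $\ell_1$, whose lift has class $\beta_1'$ and satisfies $lk_e(\tilde{\ell}_1',\tilde{\ell}_j)=f_{1j}'$ for $j=2,3$; replacing $\ell_1$ by $\ell_1'$ in $\Gamma$ then gives a realization $\Gamma_2$ of $D_2$ sharing the legs $\ell_2$ and $\ell_3$ with $\Gamma$. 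Let $\Gamma_0$ be obtained from $\Gamma$ by replacing $\ell_1$ with the band-sum of $|k|$ parallel push-offs of $\ell_1$ and $|k'|$ parallel push-offs of $\ell_1'$, with orientations reversed where the coefficient is negative: its leaf has class $k\beta_1+k'\beta_1'$ and equivariant linkings $kf_{1j}+k'f_{1j}'$ with $\ell_j$, so $\Gamma_0$ realizes $D_0$. The multilinearity of the bracket in the leaves (the leaf band-sum relation of clover calculus) then expresses $[(M,K,\xi);\Gamma_0]$ modulo $\F^{m,b}_2(M,K,\xi)$ as $k[(M,K,\xi);\Gamma]+k'[(M,K,\xi);\Gamma_2]$, which is (LV).

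\emph{Main obstacle.} The delicate point is to transport the clover-calculus relations — antisymmetry for (AS), leaf band-sum for (LV) — to the present relative, null and marked context, and in particular to check that the correction terms they produce are brackets of Y-graphs that are again null in $M\setminus K$, hence lie in $\F^{m,b}_2(M,K,\xi)$ and not merely in $\F^m_2$. A secondary subtlety, in (LV), is to position the auxiliary loop $\ell_1'$ so that it realizes simultaneously the class $\beta_1'$ and the prescribed equivariant linkings with the already-fixed legs $\ell_2,\ell_3$.
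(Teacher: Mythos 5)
Your approach is essentially that of the paper: the paper proves (AS) by citing \cite[Corollary 4.6]{GGP} (antisymmetry in clover calculus) and (LV) by citing \cite[Chap.~6, Lemma 2.10]{Mt} (leaf multilinearity), and your expanded descriptions of these match. Your careful flagging of the two delicate points (transporting clover-calculus relations to the null setting, and constructing the auxiliary loop $\ell_1'$ with prescribed class and linkings) shows you understand what the cited results are doing.

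There is, however, a genuine gap in your treatment of (Hol). You argue that the same Y-graph $\Gamma$ realizes $D$ when one lifts through the fixed $\tilde{B}$ and realizes $D_h$ when one lifts through $\tau(\tilde{B})$, and that since $[(M,K,\xi);\Gamma]$ is independent of any lift, the two brackets agree. But the bracket $[(M,K,\xi);D_h]$ is by definition the class of $[(M,K,\xi);\Gamma']$ for a realization $\Gamma'$ of $D_h$ \emph{with respect to the fixed lift} $\tilde{B}$; what you have produced is a realization of $D_h$ with respect to $\tau(\tilde{B})$. The step ``therefore $[(M,K,\xi);D]=[(M,K,\xi);D_h]$'' requires the assertion that the bracket $[(M,K,\xi);D_h]$ is unchanged when $\tilde{B}$ is replaced by $\tau(\tilde{B})$ --- but that assertion \emph{is} relation (Hol), so the argument is circular. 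The paper avoids this by instead isotoping $\Gamma$ within $M\setminus K$, letting the internal vertex travel once around $K$, to produce a Y-graph $\Gamma'$ isotopic to $\Gamma$ (hence with the same bracket, exactly) which \emph{does} realize $D_h$ with respect to the \emph{fixed} $\tilde{B}$. Your ``change the lift'' picture is the dual description of this same move, but to turn it into a proof you must reformulate it as an isotopy of the graph (or, equivalently, separately establish invariance of the bracket under replacing $\tilde{B}$ by $\tau(\tilde{B})$, which amounts to the same thing). Aside from this, the construction of realizations of $D_0$, $D_h$, $D_a$ and the two remaining relations are in line with the paper.
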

\begin{proof}
 Relation (Hol) is obtained by letting the internal vertex of a realization $\Gamma$ of $D$ turn once around the knot $K$. 
Relation (AS) follows from \cite[Corollary 4.6]{GGP}.
Relation (LV) follows from \cite[Chap 6, Lemma 2.10]{Mt}. 
\end{proof}

\begin{lemma} \label{lemmatriv}
 If $D=$\raisebox{-5ex}{\diag{$\beta_1$}{$0$}{$0$}{$0$}{$0$}{$f_{23}$}{0.4}} is a Y-diagram realizable in $(M,K,\xi)$, 
then $$[(M,K,\xi);D]=0.$$
\end{lemma}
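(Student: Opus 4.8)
\noindent The plan is to repackage $[(M,K,\xi);D]$ as the value of a map which is linear in the leaf label $\beta_1$ and invariant under the holonomy relation, and then to exploit that multiplication by $t-1$ is invertible on the torsion $\Qt$-module $\Al$.

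First I would use Lemma~\ref{lemmaindreal} to work with realizations freely: for each class $\eta\in\Al$ realizable in $(M,K,\xi)$, let $\Psi(\eta)\in\G^{m,b}_1(\Al,\bl)$ denote the bracket of the Y-diagram whose three leaves are labelled $\eta,0,0$ in cyclic order and whose three edges are labelled $0,0,f_{23}$; this diagram is realizable since $\eta$ is, and $\Psi(\beta_1)=[(M,K,\xi);D]$ is exactly the quantity to be shown to vanish. I will only evaluate $\Psi$ on $\beta_1$, on its translates $\tau^k\beta_1$, and on integral combinations $\sum_k n_k\tau^k\beta_1$ of these; all of these are realizable in $(M,K,\xi)$ because $\beta_1$ is — translate, respectively band-sum suitably oriented parallel copies of, a knot representing $\beta_1$ — so Proposition~\ref{propreal} is not needed here.

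Next I would read off two properties of $\Psi$ from Lemma~\ref{lemmaYrel}. Relation~(LV), applied at the leaf carrying $\eta$ while keeping the two incident edges labelled $0$ (which is legitimate because $\bl(\eta,0)=0$, so those labels are free elements of $\Qt$ and can be taken to add to $0$), yields additivity $\Psi(\eta+\eta')=\Psi(\eta)+\Psi(\eta')$ for realizable $\eta,\eta',\eta+\eta'$; hence $\Psi(0)=0$ and $\Psi(n\eta)=n\Psi(\eta)$ for $n\in\Z$. Relation~(Hol) does not alter the labels $0$ nor the edge label $f_{23}$, so $\Psi(t\eta)=\Psi(\eta)$, and therefore $\Psi(\tau^k\beta_1)=\Psi(\beta_1)$ for all $k$. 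To conclude, I would invoke the normalization $\delta(1)=1$ of the annihilator $\delta$ of $\Al$: pick $N\in\Z_{>0}$ with $N\delta\in\Zt$ and write $N\delta(t)=\sum_k\tilde a_kt^k$, so that $\sum_k\tilde a_k=N\delta(1)=N$ and $\sum_k\tilde a_k\tau^k\beta_1=N\delta(\tau)\beta_1=0$ in $\Al$; then
$$0=\Psi\Big(\sum_k\tilde a_k\tau^k\beta_1\Big)=\sum_k\tilde a_k\,\Psi(\tau^k\beta_1)=N\,\Psi(\beta_1),$$
and $\Psi(\beta_1)=0$ because $\G^{m,b}_1(\Al,\bl)$ is a rational vector space. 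Conceptually, $\Psi$ is $\Q$-linear with $\Psi\circ\tau=\Psi$, hence vanishes on $(\tau-1)\Al$, which is all of $\Al$ since $\gcd(\delta(t),t-1)=1$.

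The step I expect to be the main obstacle is dealing with the edge label $f_{23}$: it cannot be removed by the moves (Hol), (AS), (LV) (splitting the zero leaves merely propagates $f_{23}$ into the summands), so the argument must not try to reduce to $f_{23}=0$, but instead treat $f_{23}$ as a spectator and argue linearity and holonomy in the single variable $\beta_1$. The remaining bookkeeping — realizability of the auxiliary classes $\tau^k\beta_1$ and their partial sums, and keeping the coefficients integral via the factor $N$ — is routine.
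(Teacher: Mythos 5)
Your proof is correct and takes essentially the same route as the paper: both apply (Hol) to conclude that the bracket is invariant under replacing $\beta_1$ by $t^k\beta_1$, and then expand $\delta(\tau)\beta_1=0$ by (LV) to get $\delta(1)\cdot[(M,K,\xi);D]=0$. The only cosmetic difference is that you package the bracket as a function $\Psi$ (and are slightly more explicit about clearing denominators and about realizability of $\tau^k\beta_1$ and its integral combinations), whereas the paper works directly with the diagrams $D_k$ and $D_{\mathrm{triv}}$; the mathematical content is identical.
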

\begin{proof}
 For $k\in\Z$, set $D_k=$\raisebox{-5ex}{\diag{$t^k\beta_1$}{$0$}{$0$}{$0$}{$0$}{$f_{23}$}{0.35}}. 
Set $D_{triv}=$\raisebox{-5ex}{\diag{$0$}{$0$}{$0$}{$0$}{$0$}{$f_{23}$}{0.35}}. 
Let $\delta(t)=\sum_{k\in\Z}a_kt^k$ be the annihilator of $\Al$, normalised with integral coefficients. 
By Relation (LV), $$\sum_{k\in\Z}a_k[(M,K,\xi);D_k]=[(M,K,\xi);D_{triv}]=0.$$ 

Moreover, Relation (Hol) implies $[(M,K,\xi);D]=[(M,K,\xi);D_k]$ for all $k\in\Z$. Finally:
$$\delta(1)[(M,K,\xi);D]=\sum_{k\in\Z}a_k[(M,K,\xi);D_k]=0.$$
This concludes since $\delta(1)\neq0$.
\end{proof}

\begin{lemma}
 Let $D=$\raisebox{-5ex}{\diag{$\beta_1$}{$\beta_2$}{$\beta_3$}{$f_{12}$}{$f_{13}$}{$f_{23}$}{0.4}} be a Y-diagram realizable in $(M,K,\xi)$. 
 Then the bracket $[(M,K,\xi);D]$ does not depend on the equivariant linking numbers $f_{ij}$. 
\end{lemma}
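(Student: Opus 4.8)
The plan is to reduce, via the relations of Lemma~\ref{lemmaYrel}, to a single statement about Y-graphs whose effect on $(M,K)$ is transparent, and then to read that statement off from the calculus of claspers. First, by the antisymmetry relation (AS), a permutation of the three vertices together with the induced relabelling of the $\beta_i$ and $f_{ij}$ changes the bracket only by a sign; so it is enough to control a change of $f_{12}$ alone (the cases of $f_{13}$ and $f_{23}$ then follow after permuting so that the modified edge becomes the $(1,2)$-edge). Moreover a change of $f_{12}$ by $g_1+g_2\in\Qt$ is the composition of the changes by $g_1$ and by $g_2$, so it suffices to treat a change by an arbitrary $g\in\Qt$; write $D'$ for the Y-diagram obtained from $D$ by replacing $f_{12}$ with $f_{12}+g$. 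Now apply the linearity relation (LV) with $k=k'=1$ to $D_1=D=[\beta_1,\beta_2,\beta_3;f_{12},f_{13},f_{23}]$ and $D_2=[0,\beta_2,\beta_3;g,0,f_{23}]$: the second one is a legitimate Y-diagram, since $\bl(0,\cdot)=0$ forces its two linking labels at the first vertex into $\Qt$, and it is realizable in $(M,K,\xi)$ because $0,\beta_2,\beta_3$ all are. One obtains
$$[(M,K,\xi);D']-[(M,K,\xi);D]=[(M,K,\xi);D_g],\qquad D_g:=[0,\beta_2,\beta_3;g,0,f_{23}],$$
so the lemma reduces to showing that $[(M,K,\xi);D_g]=0$ for every $g\in\Qt$.

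The next step is to establish $[(M,K,\xi);D_g]=0$. Taking $k'=0$ in (LV) gives $[(M,K,\xi);D_{cg}]=c\,[(M,K,\xi);D_g]$ for $c\in\Q$, so after clearing denominators one may assume $g=\sum_k a_kt^k$ with $a_k\in\Z$. By Lemma~\ref{lemmaindreal} the bracket $[(M,K,\xi);D_g]$ does not depend on the chosen realization, so I would pick a realization of a convenient shape: take leaves $\ell_2,\ell_3$ in $M\setminus K$ realizing $\beta_2,\beta_3$ with $lk_e(\tilde\ell_2,\tilde\ell_3)=f_{23}$, and let the first leaf $\ell_1$ be a null-homologous curve, disjoint from $\ell_3$ and all its translates, whose lift $\tilde\ell_1$ is a band sum of small meridians of the translates of $\tilde\ell_2$, the $a_k$ prescribing the multiplicity of the meridian of $\tau^{-k}\tilde\ell_2$ (the sign of $a_k$ recorded by the orientation). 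Then $lk_e(\tilde\ell_1,\tilde\ell_2)=\sum_k a_kt^k=g$ and $lk_e(\tilde\ell_1,\tilde\ell_3)=0$, so the resulting null Y-graph $\Gamma$ realizes $D_g$. Iterating (LV) along this band-sum decomposition of $\ell_1$ (keeping $\ell_2,\ell_3$ fixed) yields
$$[(M,K,\xi);D_g]=\sum_k a_k\,[(M,K,\xi);\Gamma_k],$$
where $\Gamma_k$ is the null Y-graph obtained from $\Gamma$ by replacing $\ell_1$ with a single meridian of $\tau^{-k}\tilde\ell_2$; downstairs in $M$ this is simply a Y-graph one of whose leaves is a meridian of another of its leaves.

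It then remains to see that $[(M,K,\xi);\Gamma_k]=0$, and this is where the real content sits. The meridian leaf of $\Gamma_k$ bounds a disc meeting $\Gamma_k$ in a single transverse point, lying on a neighbouring leaf; by the calculus of claspers (slide the meridian leaf along that leaf and absorb it, using the basic moves of \cite{GGP}) surgery along such a Y-graph is trivial, or at worst differs from the identity by surgeries of higher degree, so that $[(M,K,\xi);\Gamma_k]=(M,K,\xi)-(M,K,\xi)(\Gamma_k)$ vanishes modulo $\F^{m,b}_2(M,K,\xi)$. Hence $[(M,K,\xi);D_g]=0$, which completes the argument. The only genuinely delicate point is this last step: pinning down and justifying the clasper-calculus fact that a leaf which is a meridian of another leaf of the same Y-graph may be removed; everything else is bookkeeping with the relations (Hol), (AS), (LV) and the realization-independence of Lemma~\ref{lemmaindreal}.
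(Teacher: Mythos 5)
Your reduction to showing that $[(M,K,\xi);D_g]=0$ for a diagram $D_g=[0,\beta_2,\beta_3;g,0,f_{23}]$ with $g\in\Zt$ is correct and is essentially the paper's first step (the paper writes $P$ for your $g$). The gap is in your final step, which you yourself flag as ``genuinely delicate'': the assertion that a Y-graph one of whose leaves is a small meridian of another of its own leaves has trivial surgery modulo $\F^{m,b}_2$. This is not one of the standard clasper/clover moves. The trivial-leaf move in \cite{GGP} requires the leaf to bound a disk disjoint from the rest of the graph; the ``absorb'' moves require the disk to meet an \emph{edge}, not a leaf. Neither applies here, and ``slide the meridian leaf along that leaf and absorb it'' does not produce a closed argument -- sliding $\ell_1$ all the way around $\ell_2$ returns you to the starting position, and partial slides change the picture in ways you would still need to control modulo degree two. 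So as written this is an unproved claim, not a citation.

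The claim is in fact true, but its proof is precisely the algebraic step you bypassed, which is what the paper does instead. Apply (LV) at the second vertex, splitting $\beta_2=\beta_2+0$, $f_{12}=0+g$, $f_{23}=f_{23}+0$; this writes $[(M,K,\xi);D_g]$ as the sum of $[(M,K,\xi);D_1]$ with $D_1=[0,\beta_2,\beta_3;0,0,f_{23}]$ and $[(M,K,\xi);D_2]$ with $D_2=[0,0,\beta_3;g,0,0]$. The first vanishes by (LV) with $k=k'=0$, and the second vanishes by Lemma~\ref{lemmatriv} combined with (AS). Observe that this argument makes the detour through a specific geometric realization (meridians of $\tau^{-k}\tilde\ell_2$, band sums, and the appeal to GGP) entirely superfluous: once you have reduced to $D_g$, the remaining relations (LV), (AS) and Lemma~\ref{lemmatriv} already close the proof. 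You should replace your step 5 by this algebraic application of (LV) and Lemma~\ref{lemmatriv}, and then steps 3--4 (choosing a special realization and splitting it into meridians) can be dropped as well.
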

\begin{proof}
Set $D'=$\raisebox{-5ex}{\diag{$\beta_1$}{$\beta_2$}{$\beta_3$}{$f'_{12}$}{$f_{13}$}{$f_{23}$}{0.4}} and  
$P(t)=f'_{12}-f_{12}\in\Qt$. Thanks to Relation (LV), we can assume $P(t)\in\Zt$. 

Let $D_0$, $D_1$, $D_2$ be the Y-diagrams represented in Figure \ref{figdiag}.
\begin{figure}[htb]
\begin{center}
 \diag{$0$}{$\beta_2$}{$\beta_3$}{$P$}{$0$}{$f_{23}$}{0.4} \hspace{1cm}
 \diag{$0$}{$\beta_2$}{$\beta_3$}{$0$}{$0$}{$f_{23}$}{0.4} \hspace{1cm}
 \diag{$0$}{$0$}{$\beta_3$}{$P$}{$0$}{$0$}{0.4}
\begin{tikzpicture}
 \draw (-3.7,-1) node{$D_0$} (0,-1) node{$D_1$} (3.7,-1) node{$D_2$};
\end{tikzpicture} \caption{Y-diagrams} \label{figdiag}
\end{center}
\end{figure}

Relation (LV) implies 
$$[(M,K,\xi);D']=[(M,K,\xi);D]+[(M,K,\xi);D_0]$$ and $$[(M,K,\xi);D_0]=[(M,K,\xi);D_1]+[(M,K,\xi);D_2].$$ 
Now, by Lemma \ref{lemmatriv}, $[(M,K,\xi);D_2]=0$, 
and by (LV), $[(M,K,\xi);D_1]=0$. 
Hence $[(M,K,\xi);D']=[(M,K,\xi);D]$, as desired.
\end{proof}

Finally, for a Y-diagram $D=$\raisebox{-5ex}{\diag{$\beta_1$}{$\beta_2$}{$\beta_3$}{$f_{12}$}{$f_{13}$}{$f_{23}$}{0.35}}, the bracket $[(M,K,\xi);D]$ 
only depends on the $\beta_i$. Hence the relation (AS) implies that this bracket only depends on 
$\beta_1\wedge\beta_2\wedge\beta_3\in\HA$. 

Now, for $\beta_1\wedge\beta_2\wedge\beta_3\in\HA$ such that each $\beta_i$ is realizable for $(M,K,\xi)$, 
we can define $[(M,K,\xi);\beta_1\wedge\beta_2\wedge\beta_3]\in\G^m_1(\Al,\bl)$ as the class of $[(M,K,\xi);D]$ in $\G^m_1(\Al,\bl)$ for any Y-diagram 
$D$ whose univalent vertices are colored by $\beta_i$ for $i=1,2,3$, with the right cyclic order. 

If $\beta_1\wedge\beta_2\wedge\beta_3$ is any tensor in $\HA$, 
there are non trivial integers $n_1$, $n_2$, $n_3$ such that $n_i\beta_i$ is realizable for $(M,K,\xi)$ for $i=1,2,3$. Set:
$$[(M,K,\xi);\beta_1\wedge\beta_2\wedge\beta_3]=\frac{1}{n_1n_2n_3}[(M,K,\xi);n_1\beta_1\wedge n_2\beta_2\wedge n_3\beta_3].$$
By (LV), this definition does not depend on the triple of integers $(n_1,n_2,n_3)$.

We finally obtain a well-defined $\Q$-linear map $\varphi : \HA \to \G^m_1(\Al,\bl)$ defined by 
$\varphi(\beta_1\wedge\beta_2\wedge\beta_3)=[(M,K,\xi);\beta_1\wedge\beta_2\wedge\beta_3]$. 
The next lemma shows that this map is canonical. 

\begin{lemma} \label{lemmacan}
 Let $(M,K,\xi)$ and $(M',K',\xi')$ be marked $\Q$SK-pairs in $\Ens^m(\Al,\bl)$. Let $\beta_1\wedge\beta_2\wedge\beta_3\in\HA$.  
Then $[(M',K',\xi');\beta_1\wedge\beta_2\wedge\beta_3]=[(M,K,\xi);\beta_1\wedge\beta_2\wedge\beta_3]$.
\end{lemma}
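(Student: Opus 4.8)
The goal is to show that the bracket $[(M,K,\xi);\beta_1\wedge\beta_2\wedge\beta_3]\in\G^m_1(\Al,\bl)$ does not depend on the chosen marked $\Q$SK-pair $(M,K,\xi)$. The plan is to reduce the comparison of two arbitrary marked pairs to a chain of elementary moves, each of which preserves the bracket modulo $\F^m_2(\Al,\bl)$. First I would invoke Theorem \ref{thM3}: since $(M,K,\xi)$ and $(M',K',\xi')$ carry the same Blanchfield module $(\Al,\bl)$ with compatible markings, there is a finite sequence of null LP-surgeries carrying one to the other (up to multiplication by a power of $t$, which is absorbed into the marking and changes nothing here). So it suffices to prove the statement when $(M',K',\xi')$ is obtained from $(M,K,\xi)$ by a single null LP-surgery $(\frac{B}{A})$.

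For that single step, the key observation is a telescoping identity in $\F^m_1(\Al,\bl)$. Fix a Y-diagram $D$ with univalent vertices colored by $\beta_1,\beta_2,\beta_3$, realizable in both pairs by a null Y-graph $\Gamma$ chosen disjoint from $A$ (we may always push $\Gamma$ off $A$, enlarging the ambient pair via Proposition \ref{propreal} if necessary to keep the $\beta_i$ realizable; this does not change $\phi$, hence does not change the class in $\G^m_1$). Then
$$[(M',K',\xi');\Gamma]-[(M,K,\xi);\Gamma]=[(M,K,\xi);\Gamma,(\tfrac{B}{A})],$$
which by definition lies in $\F^m_2(\Al,\bl)$, hence vanishes in $\G^m_1(\Al,\bl)$. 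Combined with Lemma \ref{lemmaindreal}, which says $[(M,K,\xi);\Gamma]$ modulo $\F_2^{m,b}(M,K,\xi)$ depends only on $D$, this gives $[(M',K',\xi');D]=[(M,K,\xi);D]$ in $\G^m_1(\Al,\bl)$. Finally, passing from integer-colored vertices to arbitrary $\beta_i\in\Al$ is handled exactly as in the definition of $\varphi$: choose nonzero integers $n_i$ making each $n_i\beta_i$ realizable in the (possibly enlarged) common pair, apply the integral case, and divide by $n_1n_2n_3$ using Relation (LV).

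The main obstacle I anticipate is bookkeeping around realizability: when we move from $(M,K,\xi)$ to $(M',K',\xi')$ by a null LP-surgery, a Y-graph realizing $D$ in the source need not be disjoint from the surgery locus $A$, and the classes $\beta_i$ realizable in the source need not be realizable in the target. The way around this is to first apply Proposition \ref{propreal} repeatedly to replace $(M,K,\xi)$ by a marked pair with the same $\phi$ (hence the same image in $\G^m_1(\Al,\bl)$, by Corollary \ref{corinv} and Proposition \ref{propdegree1}) in which all the relevant $\beta_i$ — and then all the intermediate classes appearing along the LP-surgery sequence — are simultaneously realizable by knots lying in the complement of every surgery region. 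Once everything is realizable in a single ambient pair disjoint from all the surgeries, the telescoping identity above applies verbatim along the whole sequence, and the lemma follows.
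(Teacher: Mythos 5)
Your overall strategy matches the paper's: reduce via Theorem \ref{thM3} to a single null LP-surgery $(\frac{A'}{A})$, choose a realization $\Gamma$ of the Y-diagram disjoint from $A$, and conclude from the telescoping identity $[(M,K,\xi);\Gamma,\frac{A'}{A}]=[(M,K,\xi);\Gamma]-[(M',K',\xi');\Gamma]\in\F^m_2(\Al,\bl)$ together with the observation that $\Gamma$ still realizes the same diagram after the surgery. That is exactly how the paper proceeds, and the passage to arbitrary $\beta_i$ via integers $n_i$ and Relation (LV) is also as in the paper.

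Where your argument has a gap is in the handling of realizability. You anticipate that the $\beta_i$ might not be simultaneously realizable by curves disjoint from $A$, and propose to fix this by applying Proposition \ref{propreal} (a $d$-surgery) to pass to a pair with the same $\phi$, then assert that this ``does not change the class in $\G^m_1$'' by citing Corollary \ref{corinv} and Proposition \ref{propdegree1}. Those results say that $\phi$ and $\hs$ are degree-one invariants, i.e. they vanish on $\F^m_2(\Al,\bl)$; they do \emph{not} say that equality of $\phi$ forces equality of the bracket $[\,\cdot\,;\beta_1\wedge\beta_2\wedge\beta_3]$ in $\G^m_1(\Al,\bl)$. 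Indeed, that latter independence is precisely what Lemma \ref{lemmacan} is asserting, so invoking it (even for $d$-surgeries, which are a special case of null LP-surgeries) is circular. The detour is also unnecessary: as in the paper, one simply chooses integers $n_i$ so that each $n_i\beta_i$ is realizable by a simple closed curve in $\tilde X$ avoiding the preimage of $A$ --- this is always possible because the Alexander module is generated by classes of curves in the complement of the preimage of a null handlebody, as used in the proof of Lemma \ref{lemmainv}. With that choice the telescoping identity applies directly, without any change of ambient pair, and the lemma follows.
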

\begin{proof}
  Set $\zeta=\xi'\circ\xi^{-1}$. By Theorem \ref{thM3}, $(M',K',\xi')$ can be obtained from $(M,K,\xi)$ by a finite sequence of null LP-surgeries 
which induces the isomorphism $\zeta$ (up to multiplication by a power of $t$). Assume the sequence contains a single surgery $(\frac{A'}{A})$. 
Let $\tilde{X}$ be the infinite cyclic covering associated with $(M,K)$. 
Let $n_1$, $n_2$, $n_3$ be non trivial integers such that each $n_i\beta_i$ 
is realizable by a simple closed curve in $\tilde{X}$ which does not meet the preimage of $A$. 
Let $\Gamma\subset (M\setminus K)\setminus A$ be a Y-graph null in $M\setminus K$ which realizes the Y-diagram 
$D=$\raisebox{-5ex}{\diag{$n_1\beta_1$}{$n_2\beta_2$}{$n_3\beta_3$}{$f_{12}$}{$f_{13}$}{$f_{23}$}{0.42}} for any coherent values of the $f_{ij}$. 
Then:
$$[(M,K,\xi);\Gamma,\frac{A'}{A}]=[(M,K,\xi);\Gamma]-[(M',K',\xi');\Gamma].$$
In $(M',K',\xi')$, $\Gamma$ still realizes $D$. 

The case of several surgeries easily follows.
\end{proof}

\proofof{Proposition \ref{propborro}}
It is easy to see that $\varphi(\HA)=\G^{m,b}_1(\Al,\bl)$. 
So we have a surjective $\Q$-linear map $\HA\twoheadrightarrow\G^{m,b}_1(\Al,\bl)$. Now, the map $\hs:\Ens^m(\Al,\bl)\to\HA$ defines 
a $\Q$-linear map $\tilde{\hs}:\F^m_0(\Al,\bl)\to\HA$. The restriction of $\tilde{\hs}$ to $\F^{m,b}_1(\Al,\bl)$ 
is surjective. The proof of this claim is exactly the proof of Lemma \ref{lemmasurj} without the two first lines. 
By Proposition \ref{propdegree1}, $\tilde{\hs}$ induces a surjective map $\G^{m,b}_1(\Al,\bl)\twoheadrightarrow\HA$, still 
denoted by $\tilde{\hs}$. Since $\HA$ has a finite dimension, $\varphi:\HA\to\G^{m,b}_1(\Al,\bl)$ and $\tilde{\hs}:\G^{m,b}_1(\Al,\bl)\to\HA$ 
are isomorphisms. 
\fin

\subsection{Degree one invariants of marked $\Z$SK-pairs}

In this subsection, we prove Theorem \ref{thG1Z}, following the proof of Proposition \ref{propborro}. 

Fix an integral Blanchfield module $(\Al_\Z,\bl)$. Let $(M,K,\xi)\in\Ens^m_\Z(\Al_\Z,\bl)$. Since the space $\F^{m,b}_2(M,K,\xi)$ 
is a subspace of $\F^{m,\Z}_2(\Al_\Z,\bl)$, one can define 
${[(M,K,\xi);\beta_1\wedge\beta_2\wedge\beta_3]}\in\G^{m,\Z}_1(\Al_\Z,\bl)$ 
for $\beta_1\wedge\beta_2\wedge\beta_3\in\HA$ as in the previous subsection. Once again, this does not depend on the chosen marked 
$\Z$SK-pair. The only difference in the proof of Lemma~\ref{lemmacan} is that we apply Theorem \ref{thM3Z} and we use integral null 
LP-surgeries. Hence we have a well-defined, canonical and surjective map $\varphi^\Z : \HA \twoheadrightarrow \G^{m,\Z}_1(\Al_\Z,\bl)$ 
defined by $\varphi^\Z(\beta_1\wedge\beta_2\wedge\beta_3)=[(M,K,\xi);\beta_1\wedge\beta_2\wedge\beta_3]$ for any 
$(M,K,\xi)\in\Ens^m_\Z(\Al_\Z,\bl)$. 

\vspace{2ex}

\proofof{Theorem \ref{thG1Z}}
We have a surjective map $\varphi^\Z : \HA \twoheadrightarrow \G^{m,\Z}_1(\Al_\Z,\bl)$ between finite dimensional vector spaces. 
As in the proof of Proposition \ref{propborro}, we want to prove that the map $\hs$ of Proposition \ref{propdegree1} 
provides a surjective map from $\G^{m,\Z}_1(\Al_\Z,\bl)$ onto $\HA$. We must be more careful in this case, since the proof of 
the surjectivity of $\hs$ in Lemma \ref{lemmasurj} makes use of $d$-surgeries in the application of Proposition \ref{propreal}. 
These $d$-surgeries do not preserve the homology with integral coefficients of the manifold $M$, hence they do not define a move 
on the set of marked $\Z$SK-pairs. So $\hs(\Ens^m_\Z(\Al_\Z,\bl))$ may not be the whole $\HA$, but, rereading the proof of 
Lemma \ref{lemmasurj}, one easily sees that $\hs(\Ens^m_\Z(\Al_\Z,\bl))\subset\HA$ generates $\HA$ as a $\Q$-vector space. 
Hence $\hs$ induces a surjective $\Q$-linear map $\F^{m,\Z}_0(\Al_\Z,\bl)\twoheadrightarrow\HA$, which 
provides a surjective $\Q$-linear map $\tilde{\hs}:\G^{m,\Z}_1(\Al_\Z,\bl)\twoheadrightarrow\HA$. Finally, $\varphi^\Z$ and $\tilde{\hs}$ are 
isomorphisms.
\fin

\subsection{Description of $\G_1^m(\Al,\bl)$} \label{subsecG1}

In this subsection, we prove the following result, which, together with Proposition \ref{propborro}, implies Theorem \ref{thG1}. 
Fix a Blanchfield module $(\Al,\bl)$. 
\begin{proposition} \label{propG1}
 Let $(M,K,\xi)\in\Ens^m(\Al,\bl)$. For $p$ prime, let $B_p$ be a rational homology ball such that $H_1(B_p;\Z)\cong\Z/p\Z$. Then:
$$\G^m_1(\Al,\bl)\cong\left(\bigoplus_{p\textrm{ prime}} \Q\,[(M,K,\xi);\frac{B_p}{B^3}]\right)\bigoplus\G^{m,b}_1(\Al,\bl).$$
\end{proposition}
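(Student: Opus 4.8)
The plan is to decompose the degree-one filtration quotient $\G^m_1(\Al,\bl)$ according to the two basic ``types'' of LP-surgeries used to generate $\F^m_1(\Al,\bl)$: borromean surgeries, whose contribution is the subquotient $\G^{m,b}_1(\Al,\bl)$ already analyzed in Proposition~\ref{propborro}, and $d$-surgeries (more precisely, the $B_p$-surgeries $\frac{B_p}{B^3}$), which account for the arithmetic of $|H_1(M;\Z)|$ detected by the invariants $\nu_p$. The key structural input is that \emph{every} null LP-surgery can, up to higher-order terms, be written as a combination of these two elementary kinds, so that these brackets span $\G^m_1(\Al,\bl)$; this is where we invoke Theorem~\ref{thdecsur} (the decomposition-of-surgeries result referenced in Proposition~\ref{propdegree1}). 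Once spanning is established, the proof reduces to separating the two summands.

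First I would show that $\G^m_1(\Al,\bl)$ is spanned by the classes $[(M,K,\xi);\Gamma]$ for null Y-graphs $\Gamma$ together with the classes $[(M,K,\xi);\frac{B_p}{B^3}]$ for $p$ prime. This uses Theorem~\ref{thdecsur} to express the bracket of an arbitrary null LP-surgery $\frac{B}{A}$, modulo $\F^m_2$, as a $\Q$-linear combination of borromean brackets and $d$-torus brackets; and then the $d$-torus brackets are in turn reduced, modulo $\F^m_2$ and using the multiplicativity of the $\nu_p$ and the fact that a $d$-surgery with $d=\prod p_i^{a_i}$ decomposes (up to degree-two terms) into $B_{p_i}$-surgeries, to combinations of the $[(M,K,\xi);\frac{B_p}{B^3}]$. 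Here I would also note that the bracket $[(M,K,\xi);\frac{B_p}{B^3}]$ does not depend on the chosen marked pair up to the image in $\G^m_1(\Al,\bl)$, by the same argument as in Lemma~\ref{lemmacan} (a null LP-surgery relating two marked pairs changes the bracket by an element of $\F^m_2$). Thus $\G^m_1(\Al,\bl)$ is a quotient of $\left(\bigoplus_{p}\Q[(M,K,\xi);\frac{B_p}{B^3}]\right)\oplus\G^{m,b}_1(\Al,\bl)$.

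Next I would prove that this surjection is injective, i.e.\ that the two families of brackets are linearly independent and span complementary subspaces. For this I use the degree-one invariants: the $\nu_p$ (Proposition~0.8 of \cite{M2}) and the map $\hs$. On a borromean bracket $[(M,K,\xi);\Gamma]$, every $\nu_p$ vanishes because borromean surgery preserves $H_1(M;\Z)$; on $[(M,K,\xi);\frac{B_p}{B^3}]$ one computes $\nu_q\big([(M,K,\xi);\frac{B_p}{B^3}]\big)=\delta_{pq}$ directly from $|H_1(M(\frac{B_p}{B^3});\Z)|=p\cdot|H_1(M;\Z)|$. Conversely, $\hs$ realizes $\G^{m,b}_1(\Al,\bl)\cong\HA$ by Proposition~\ref{propborro}, and $\hs$ vanishes on the $B_p$-brackets since a $B_p$-surgery is a $d$-surgery and Lemma~\ref{lemmainv} shows null $d$-surgeries preserve $\phi$ (hence $\hs$). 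Pairing against the family of functionals $\{\nu_p\}_p\cup\{f\circ\hs : f\in\HA^*\}$ therefore separates all the generators, giving the direct-sum decomposition and identifying $\G^m_1(\Al,\bl)$ with $\left(\bigoplus_p \Q[(M,K,\xi);\frac{B_p}{B^3}]\right)\oplus\G^{m,b}_1(\Al,\bl)$.

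The main obstacle I expect is the spanning step --- specifically, controlling arbitrary null LP-surgeries modulo $\F^m_2$ via Theorem~\ref{thdecsur} and checking that the ``arithmetic part'' genuinely reduces to the single-prime balls $B_p$ rather than producing extra classes. This requires care about: the independence of $[(M,K,\xi);\frac{B_p}{B^3}]$ from auxiliary choices (position of $B^3$, the LP-identification), which follows as in Lemma~\ref{lemmacan}; and the additivity $[(M,K,\xi);\frac{B_{p}B_{p'}}{B^3}]\equiv[(M,K,\xi);\frac{B_p}{B^3}]+[(M,K,\xi);\frac{B_{p'}}{B^3}]\pmod{\F^m_2}$, which one extracts by realizing a $pp'$-surgery as two disjoint null surgeries on nested $\Q$HH's and expanding the degree-two bracket. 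Once these bookkeeping points are in place, the separation via $\nu_p$ and $\hs$ is routine, and Proposition~\ref{propG1} follows; combining with Proposition~\ref{propborro} then yields Theorem~\ref{thG1}.
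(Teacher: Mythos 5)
The spanning step is where your proposal has a genuine gap, precisely at the point you yourself flag as the ``main obstacle.'' You claim that ``a $d$-surgery with $d=\prod p_i^{a_i}$ decomposes (up to degree-two terms) into $B_{p_i}$-surgeries,'' but this is not a bookkeeping fact: a $d$-surgery is a \emph{genus 1} elementary surgery (replacing a standard solid torus by a $d$-torus), whereas $\frac{B_p}{B^3}$ is a \emph{genus 0} surgery (replacing a ball by a rational homology ball). There is no a priori reason, nor any lemma in the paper, that lets you geometrically or algebraically reduce a genus 1 bracket to a combination of genus 0 and genus 3 brackets modulo $\F^m_2$. Your ``nested $\Q$HH'' remark addresses additivity of ball-brackets under products of primes (this is Lemma~\ref{lemmagenus0} in the paper and is fine), but it does not touch the genus 1 case, which is exactly the hard part.

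The paper handles the genus 1 contribution by a \emph{dual} argument rather than by reducing $d$-surgery brackets to other brackets. After showing $\K\cap\G^{m,b}_1(\Al,\bl)=0$ with the $\nu_p$'s and reducing to elementary surgeries (Corollary~\ref{cordecel}, then Lemma~\ref{lemmagenus0} for genus 0), it takes an arbitrary functional $\lambda$ on $\F^m_1(\Al,\bl)$ vanishing on $\F^m_2(\Al,\bl)$ and on $\K\oplus\G^{m,b}_1(\Al,\bl)$, packages the $d$-surgery data into a degree 1 invariant $\bar\lambda$ of \emph{framed rational homology tori}, checks that $\bar\lambda$ kills $T_0$ and $T_0\sharp M_p$, and then invokes Lemma~\ref{lemmainvtori} (Corollary~5.10 of \cite{M2}): such an invariant must be identically zero. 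That cited result is the nontrivial input absorbing all the genus 1 surgeries; your proposal neither reproduces it nor replaces it, so the spanning direction is not established. Your separation step (pairing against $\{\nu_p\}$ and $\{f\circ\hs\}$) is correct and matches the paper, but it alone does not yield the proposition.
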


The invariants $\nu_p$ defined in Subsection \ref{subsecgrad} satisfy $\nu_p([(M,K,\xi);\frac{B_q}{B^3}])=\delta_{pq}$, 
where $\delta_{pq}$ is the Kronecker symbol. Hence $\bigoplus_{p\textrm{ prime}} \Q\,[(M,K,\xi);\frac{B_p}{B^3}]$ is indeed 
a direct sum. Note that $[(M,K,\xi);\frac{B_p}{B^3}]\in\G^m_1(\Al,\bl)$ does not depend on the marked $\Q$SK-pair $(M,K,\xi)$, thanks to 
Theorem \ref{thM3} and:
$$[(M,K,\xi);\frac{B_p}{B^3}]-[(M,K,\xi)(\frac{A'}{A});\frac{B_p}{B^3}]=[(M,K,\xi);\frac{B_p}{B^3},\frac{A'}{A}].$$
Set $\K=\bigoplus_{p\textrm{ prime}} \Q [(M,K,\xi);\frac{B_p}{B^3}])\subset\G^m_1(\Al,\bl)$. 

\begin{lemma}
 $\K\cap\G^{m,b}_1(\Al,\bl)=0$
\end{lemma}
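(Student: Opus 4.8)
The plan is to show that the subspace $\K$ of invariants coming from the cardinality of $H_1(M;\Z)$ meets the borromean subquotient $\G^{m,b}_1(\Al,\bl)$ only in $0$, by exhibiting enough degree one invariants that separate the two pieces. The key observation is that the invariants $\nu_p$ are degree one invariants of $\Q$SK-pairs (hence of marked $\Q$SK-pairs) which vanish on brackets coming from borromean surgeries: a borromean surgery is an LP-surgery between genus 3 handlebodies, and in particular it preserves $H_1(M;\Z)$ (indeed it preserves $|H_1(M;\Z)|$), so for any marked $\Q$SK-pair $(M,K,\xi)$ and any Y-graph $\Gamma$ null in $M\setminus K$ one has $\nu_p([(M,K,\xi);\Gamma])=\nu_p(M,K,\xi)-\nu_p(M,K,\xi)(\Gamma)=v_p(|H_1(M;\Z)|)-v_p(|H_1(M(\Gamma);\Z)|)=0$. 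Hence the $\Q$-linear extension $\tilde\nu_p$ of $\nu_p$ to $\F^m_1(\Al,\bl)$ vanishes on $\F^{m,b}_1(\Al,\bl)$, so $\nu_p$ descends to a linear functional on $\G^m_1(\Al,\bl)$ that is identically zero on $\G^{m,b}_1(\Al,\bl)$.

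First I would spell out the above vanishing statement carefully, using that $\nu_p$ is a degree one invariant (so $\tilde\nu_p(\F^m_2(\Al,\bl))=0$, which is what makes $\nu_p$ well-defined on the quotient $\G^m_1(\Al,\bl)$) together with the fact that borromean surgeries preserve $H_1(\,\cdot\,;\Z)$ up to isomorphism. Then I would take an arbitrary element $x\in\K\cap\G^{m,b}_1(\Al,\bl)$. Writing $x=\sum_{p}c_p\,[(M,K,\xi);\tfrac{B_p}{B^3}]$ with only finitely many $c_p$ nonzero, I apply $\nu_q$ for each prime $q$: since $x\in\G^{m,b}_1(\Al,\bl)$ we get $\nu_q(x)=0$, while the explicit formula $\nu_p([(M,K,\xi);\tfrac{B_q}{B^3}])=\delta_{pq}$ (already noted in the excerpt, because $|H_1(B_q;\Z)|=q$ and $|H_1(B^3;\Z)|=1$) gives $\nu_q(x)=c_q$. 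Hence $c_q=0$ for all $q$, so $x=0$.

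The only point that needs a little care is the claim $\nu_p([(M,K,\xi);\Gamma])=0$ for a \emph{single} null borromean surgery, since a priori $\F^{m,b}_1(\Al,\bl)$ is only generated by such brackets; but linearity of $\tilde\nu_p$ then gives vanishing on all of $\F^{m,b}_1(\Al,\bl)$, and passing to the quotient (legitimate because $\nu_p$ is degree one, so kills $\F^m_2(\Al,\bl)\supset\F^{m,b}_2(\Al,\bl)$) yields a well-defined functional on $\G^{m,b}_1(\Al,\bl)$. I expect the main (very mild) obstacle to be simply bookkeeping: making sure $\nu_p$ is being applied consistently as a functional on the quotient $\G^m_1(\Al,\bl)$, and that the evaluation $\nu_p([(M,K,\xi);\tfrac{B_q}{B^3}])=\delta_{pq}$ is used as already recorded just above the lemma. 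No new geometric input is required beyond the homological invariance of borromean surgery.

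\begin{proof}
Recall that for each prime $p$ the map $\nu_p:\F^m_0\to\Q$ is a degree one invariant of $\Q$SK-pairs with respect to null LP-surgeries; in particular $\tilde\nu_p(\F^m_2(\Al,\bl))=0$, so $\nu_p$ induces a $\Q$-linear functional on $\G^m_1(\Al,\bl)$, still denoted $\nu_p$. Moreover $\nu_p$ vanishes on $\G^{m,b}_1(\Al,\bl)$. Indeed, a borromean surgery along a Y-graph $\Gamma$ null in $M\setminus K$ is an LP-surgery which preserves $H_1(\,\cdot\,;\Z)$, hence $|H_1(M(\Gamma);\Z)|=|H_1(M;\Z)|$, so
$$\nu_p([(M,K,\xi);\Gamma])=\nu_p(M,K,\xi)-\nu_p((M,K,\xi)(\Gamma))=v_p(|H_1(M;\Z)|)-v_p(|H_1(M(\Gamma);\Z)|)=0.$$
By $\Q$-linearity of $\tilde\nu_p$, it follows that $\tilde\nu_p$ vanishes on $\F^{m,b}_1(\Al,\bl)$, hence $\nu_p$ vanishes on its image $\G^{m,b}_1(\Al,\bl)$ in $\G^m_1(\Al,\bl)$.

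Now let $x\in\K\cap\G^{m,b}_1(\Al,\bl)$. Write $x=\sum_{p\textrm{ prime}} c_p\,[(M,K,\xi);\tfrac{B_p}{B^3}]$ with $c_p\in\Q$, only finitely many of them nonzero. For each prime $q$, using $\nu_q([(M,K,\xi);\tfrac{B_p}{B^3}])=\delta_{pq}$ and the fact that $x\in\G^{m,b}_1(\Al,\bl)$, we obtain
$$c_q=\sum_{p\textrm{ prime}} c_p\,\delta_{pq}=\nu_q(x)=0.$$
Hence $c_q=0$ for all primes $q$, so $x=0$. This proves $\K\cap\G^{m,b}_1(\Al,\bl)=0$.
\end{proof}
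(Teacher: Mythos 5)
Your argument is correct and is essentially the paper's own proof, just written out in more detail: both observe that borromean surgeries preserve $H_1(\,\cdot\,;\Z)$, so the degree-one invariants $\nu_p$ vanish on $\G^{m,b}_1(\Al,\bl)$, and then evaluate $\nu_q$ on an element of the intersection, using $\nu_q([(M,K,\xi);\tfrac{B_p}{B^3}])=\delta_{pq}$ to force all coefficients to vanish. No gap, no difference in approach.
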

\begin{proof}
 Since borromean surgeries preserve the homology, the invariants $\nu_p$ are trivial on $\G^{m,b}_1(\Al,\bl)$. 
Let $G\in\K\cap\G^{m,b}_1(\Al,\bl)$. On the one hand, $G$ is a linear combination of the $[(M,K,\xi);\frac{B_p}{B^3}]$, and on the other 
hand, $\nu_p(G)=0$ for all prime integer $p$. Hence $G=0$.
\end{proof}

It remains to prove that $\K\oplus\G^{m,b}_1(\Al,\bl)$ is the whole $\G^m_1(\Al,\bl)$. We first reduce the set of generators 
of $\G^m_1(\Al,\bl)$, using results from \cite{M2}. Recall $d$-surgeries were defined in Subsection \ref{subsecreal}.

\begin{definition}
 An \emph{elementary surgery} is an LP-surgery among the following ones:
\begin{enumerate}
 \item connected sum (genus 0),
 \item $d$-surgery (genus 1),
 \item borromean surgery (genus 3).
\end{enumerate}
\end{definition}
\begin{theorem}[\cite{M2} Theorem 1.15] \label{thdecsur}
 If $A$ and $B$ are two $\Q$HH's with LP-identified boundaries, then $B$ can be obtained from $A$ 
by a finite sequence of elementary surgeries and their inverses in the interior of the $\Q$HH's.
\end{theorem}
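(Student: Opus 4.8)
This statement is the rational-coefficient analogue of Proposition~\ref{propAL} (Auclair--Lescop \cite{AL}), which handles the integral case using borromean surgeries alone; accordingly the plan is to reduce the $\Q$HH case to that integral case. The two additional elementary surgeries available here, connected sums (genus $0$) and $d$-surgeries (genus $1$), are precisely what is needed to bridge the gap between rational and integral homology, namely the torsion of $H_1$ together with its linking form.

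The first and main step would be a normalization: every $\Q$HH $A$, of some genus $g$, can be transformed by a finite sequence of connected sums, $d$-surgeries, and their inverses performed in the interior into an integral homology handlebody $A_0$ of the same genus, with $\partial A_0=\partial A$. One kills the torsion of $H_1(A;\Z)$ one cyclic summand at a time by inverses of $d$-surgeries, using the $d$-tori of \cite[Lemma~2.5]{M2} as local models exchanging a $\Z/d\Z$-summand of $H_1(A;\Z)$ for a free one, while connected sums with lens-space-type $\Q$HS's adjust the linking form on the torsion into a shape for which these inverse $d$-surgeries can be arranged. Throughout, the Lagrangian is unchanged: the moves are performed in the interior, so they fix $\partial A$ pointwise, and since an LP-surgery preserves homology with rational coefficients, the map $H_1(\partial A;\Q)\to H_1(A;\Q)$, hence its kernel $\mathcal{L}_A$, is unaffected.

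Applying this normalization to both $A$ and $B$ yields integral homology handlebodies $A_0$ and $B_0$ with $\partial A_0=\partial A$, $\partial B_0=\partial B$, and $\mathcal{L}_{A_0}=\mathcal{L}_A$, $\mathcal{L}_{B_0}=\mathcal{L}_B$. The homeomorphism $h\colon\partial A\to\partial B$ of the hypothesis, which identifies $\mathcal{L}_A$ with $\mathcal{L}_B$, is therefore an LP-identification of $\partial A_0$ with $\partial B_0$; Proposition~\ref{propAL} then provides a finite sequence of borromean surgeries in the interior taking $A_0$ to $B_0$. Concatenating the interior connected sums and $d$-surgeries (and their inverses) from $A$ to $A_0$, then these borromean surgeries from $A_0$ to $B_0$, then the inverses of the connected sums and $d$-surgeries used to pass from $B$ to $B_0$, exhibits $B$ as obtained from $A$ by elementary surgeries and their inverses, all in the interior, which is the assertion.

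The hard part is the normalization step: it requires a precise surgical description of the way a $\Q$HH differs from a $\Z$HH, i.e., that the torsion of $H_1$, together with its linking form, can always be undone by connected sums and $d$-surgeries in the interior. This rests on the local geometry of the $d$-tori and of lens-space summands together with an induction on the torsion subgroup, and one must check at each stage that the relevant solid tori, $d$-tori and balls embed in the interior with the required homological behaviour and that the Lagrangian is preserved; once this is in place, the rest of the argument is formal given Proposition~\ref{propAL}.
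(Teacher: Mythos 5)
The paper does not prove this theorem; it is imported verbatim from \cite{M2} (Theorem~1.15), so there is no in-paper argument to compare against. Judged on its own terms, your proposal has the right high-level architecture --- a two-stage reduction: first normalize each $\Q$HH to a $\Z$HH by interior moves that do not touch the boundary or the Lagrangian, then invoke Proposition~\ref{propAL} (Auclair--Lescop) for the integral case --- and you correctly observe that the boundary and Lagrangian survive the normalization and that the concatenation of moves then does the job.

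The genuine gap is exactly where you flag it, and flagging it does not fill it. Your normalization lemma --- that every $\Q$HH can be carried to a $\Z$HH of the same genus by interior connected sums, $d$-surgeries, and their inverses alone --- is stated with no more than a heuristic (``kill the torsion one cyclic summand at a time by inverse $d$-surgeries, with connected sums adjusting the linking form''). The difficulty this elides is that the torsion of $H_1(A;\Z)$ is a global algebraic invariant, and nothing in the sketch produces an \emph{embedded} $d$-torus (or lens-space rational ball) inside $A$ whose removal kills a prescribed cyclic summand; the torsion of a $\Q$HH need not be localized in any a priori sub-handlebody, and in general one expects to need handle-sliding moves --- that is, borromean surgeries, which you have explicitly excluded from the normalization step --- to first rearrange the handle structure so that a summand becomes supported in a small embedded piece. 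Without that, the claim that inverses of $d$-surgeries ``can be arranged'' is an assertion, not an argument. There is also a logical warning sign: your normalization statement is itself an instance of the theorem you are proving (take $B$ to be a $\Z$HH with the same boundary and Lagrangian as $A$), so unless you prove it by an independent, constructive handle-theoretic argument you are reasoning in a circle. In short, the reduction-to-Auclair--Lescop scaffolding is fine, but the load-bearing normalization lemma is missing a proof and, as currently phrased (no borromean moves allowed), is plausibly false.
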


\begin{corollary} \label{cordecel}
 The space $\F^m_1(\Al,\bl)$ is generated by the $[(M,K,\xi);\frac{E'}{E}]$ where $(M,K,\xi)\in\Ens^m(\Al,\bl)$ 
and $(\frac{E'}{E})$ is an elementary null LP-surgery.
\end{corollary}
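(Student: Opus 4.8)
The plan is to obtain the corollary as a formal consequence of Theorem \ref{thdecsur} (the decomposition of LP-surgeries into elementary ones) by a telescoping argument; the only real work is bookkeeping, namely keeping all the intermediate handlebodies null and checking that the canonical markings compose correctly.

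First I would fix a generator $[(M,K,\xi);\frac{B}{A}]$ of $\F^m_1(\Al,\bl)$, so that $A\subset M\setminus K$ is a $\Q$HH null in $M\setminus K$ and $\partial B$ is LP-identified with $\partial A$. Applying Theorem \ref{thdecsur} to $(A,B)$ gives a finite sequence of $\Q$HH's $A=A_0,A_1,\dots,A_n=B$, all carrying the boundary $\partial A$ (the elementary surgeries being performed in the interior), pairwise LP-identified, such that for each $j$ one of $A_j,A_{j+1}$ is obtained from the other by an elementary LP-surgery $(\frac{E'_j}{E_j})$ done in the interior. Regluing each $A_j$ in place of $A$ inside $M\setminus K$ produces $\Q$SK-pairs $(M_j,K_j)$ with $(M_0,K_0)=(M,K)$ and $(M_n,K_n)=(M,K)(\frac{B}{A})$; write $Y$ for the exterior $M\setminus K$ with the interior of $A$ removed, so that $M_j\setminus K_j=Y\cup_{\partial A}A_j$.

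Next I would check that $(\frac{A_j}{A})$ is a null LP-surgery on $(M,K)$ for every $j$. Since an LP-surgery in the interior of a $\Q$HH preserves its rational homology relative to the boundary, all the $A_j$ share the same Lagrangian $\mathcal{L}\subset H_1(\partial A;\Q)$; and a Mayer--Vietoris computation for $M_j\setminus K_j=Y\cup_{\partial A}A_j$ shows that $A_j$ is null in $M_j\setminus K_j$ if and only if $\ker\bigl(H_1(\partial A;\Q)\to H_1(Y;\Q)\bigr)+\mathcal{L}=H_1(\partial A;\Q)$, a condition depending only on $Y$ and $\mathcal{L}$. As it holds for $j=0$ by hypothesis, it holds for all $j$, so $(M_j,K_j,\xi_j):=(M,K,\xi)(\frac{A_j}{A})$ is a well-defined marked $\Q$SK-pair in $\Ens^m(\Al,\bl)$, its marking transported by the canonical Blanchfield isomorphism of Theorem \ref{thM3}. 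Moreover, for each $j$ the relevant one of $E_j,E'_j$ lies in the interior of one of $A_j,A_{j+1}$, and (via the factorisation $H_1(E_j;\Q)\to H_1(A_\bullet;\Q)\to H_1(M_\bullet\setminus K_\bullet;\Q)=0$) is null there, so that $(\frac{E'_j}{E_j})$ realises the passage between $(M_j,K_j,\xi_j)$ and $(M_{j+1},K_{j+1},\xi_{j+1})$ — in one direction or the other — as an elementary null LP-surgery; here functoriality of the canonical markings ensures the marking on the target is the expected $\xi_{j+1}$.

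Finally I would conclude with the telescoping identity
$$[(M,K,\xi);\frac{B}{A}]=(M_0,K_0,\xi_0)-(M_n,K_n,\xi_n)=\sum_{j=0}^{n-1}\bigl((M_j,K_j,\xi_j)-(M_{j+1},K_{j+1},\xi_{j+1})\bigr),$$
in which the $j$-th summand equals $[(M_j,K_j,\xi_j);\frac{E'_j}{E_j}]$ or $-[(M_{j+1},K_{j+1},\xi_{j+1});\frac{E'_j}{E_j}]$ according to the direction of the $j$-th elementary surgery, hence in either case $\pm$ a bracket of an elementary null LP-surgery on a marked $\Q$SK-pair of $\Ens^m(\Al,\bl)$. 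Thus every generator of $\F^m_1(\Al,\bl)$ lies in the span of such brackets, which is the assertion. The geometric substance is entirely contained in Theorem \ref{thdecsur}; I expect the only mildly delicate step to be the nullity bookkeeping together with the verification that the canonical markings compose correctly along the sequence.
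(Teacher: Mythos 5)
Your proof is correct and takes essentially the same route as the paper: apply Theorem \ref{thdecsur} to decompose the surgery $(\frac{B}{A})$ into elementary steps and then telescope, handling inverse steps by $[(M,K,\xi);\frac{E'}{E}]=-[(M,K,\xi)(\frac{E'}{E});\frac{E}{E'}]$. The Mayer--Vietoris argument showing that all intermediate handlebodies $A_j$ remain null (a condition depending only on $Y$ and $\mathcal{L}$), and the check that the markings transport coherently, are correct but are left implicit in the paper's more compact proof.
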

\begin{proof}
Let $[(M,K,\xi);\frac{A'}{A}]\in\F^m_1(\Al,\bl)$. By Theorem \ref{thdecsur}, $A$ and $A'$ 
can be obtained from one another by a finite sequence of elementary surgeries and their inverses. 
Write $A'=A(\frac{E_1'}{E_1})\dots(\frac{E_k'}{E_k})$. For $0\leq j\leq k$, set $A_j=A(\frac{E_1'}{E_1})\dots(\frac{E_j'}{E_j})$. 
Then:
$$[(M,K);\frac{A'}{A})]=\sum_{j=1}^k [(M,K)(\frac{A_{j-1}}{A_0});\frac{E_j'}{E_j}].$$
Conclude with $\displaystyle [(M,K);\frac{E'}{E}]=-[(M,K)(\frac{E'}{E});\frac{E}{E'}].$
\end{proof}

Let $\F_0^{\textrm{$\Q$HS}}$ be the rational vector space generated by all $\Q$HS's up to orientation-pre\-ser\-ving homeomorphism. 
Let $(\F_n^{\textrm{$\Q$HS}})_{n\in\N}$ be the filtration of $\F_0^{\textrm{$\Q$HS}}$ defined by LP-surgeries. 
Let $\displaystyle \G_n^{\textrm{$\Q$HS}}=\frac{\F_n^{\textrm{$\Q$HS}}}{\F_{n+1}^{\textrm{$\Q$HS}}}$ be the associated quotients. 
\begin{lemma}[\cite{M2} Proposition 1.8] \label{lemma07}
 For each prime integer $p$, let $B_p$ be a rational homology ball such that $H_1(B_p;\Z)\cong\frac{\Z}{p\Z}$. 
Then $\displaystyle \G_1^{\textrm{$\Q$HS}}=\bigoplus_{p\textrm{ prime}} \Q [S^3;\frac{B_p}{B^3}]$.
\end{lemma}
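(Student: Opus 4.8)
The plan is to show that the $p$-adic valuations $\nu_p$ of $|H_1(\cdot;\Z)|$ form a complete system of degree one invariants of $\Q$HS's, and then to read off the stated decomposition of $\G_1^{\textrm{$\Q$HS}}$. First I would record the easy half. Each $\nu_p$ is a degree one invariant with respect to LP-surgeries, \emph{i.e.} $\nu_p(\F_2^{\textrm{$\Q$HS}})=0$: for two disjoint LP-surgeries on a $\Q$HS $M$ the cardinalities of the first homology groups of the four resulting manifolds are linked by a Mayer--Vietoris computation that makes $\nu_p([M;\frac{B_1}{A_1},\frac{B_2}{A_2}])$ vanish (this is \cite[Proposition 0.8]{M2}). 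Since the $\nu_p$ pairwise detect the classes $[S^3;\frac{B_p}{B^3}]$ (as noted in this subsection after Proposition \ref{propG1}), these classes are linearly independent in $\G_1^{\textrm{$\Q$HS}}$ and $G\mapsto(\nu_p(G))_p$ restricts to an isomorphism from their span onto $\bigoplus_p\Q$. So everything reduces to proving that the $[S^3;\frac{B_p}{B^3}]$ span $\G_1^{\textrm{$\Q$HS}}$, equivalently that $(\nu_p)_p$ is injective on $\G_1^{\textrm{$\Q$HS}}$.

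Next I would run the argument of Corollary \ref{cordecel} in the closed setting — telescoping along a sequence of elementary surgeries and their inverses furnished by Theorem \ref{thdecsur}, and using $[M;\frac{E'}{E}]=-[M(\frac{E'}{E});\frac{E}{E'}]$ — to see that $\F_1^{\textrm{$\Q$HS}}$ is generated modulo $\F_2^{\textrm{$\Q$HS}}$ by brackets of connected sums, $d$-surgeries, and borromean surgeries; moreover Proposition \ref{propAL} turns every integral LP-surgery, in particular every borromean surgery and every $1$-surgery, into a composition of borromean surgeries. The key claim of this step is that $[M;\Gamma]\in\F_2^{\textrm{$\Q$HS}}$ for any Y-graph $\Gamma$ in a $\Q$HS $M$, the rational analogue of the Goussarov--Habiro vanishing $\G_1^{\textrm{$\Z$HS}}=0$. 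I would prove it by clasper calculus: a crossing change or a leaf slide modifies $[M;\Gamma]$ only by brackets of pairs of Y-graphs, hence by an element of $\F_2^{\textrm{$\Q$HS}}$; a Y-graph one of whose leaves is null-homologous in $M$ has vanishing bracket, since crossing changes shrink that leaf to a disk-bounding circle and the surgery then has no effect; and for a general $\Gamma$ with a leaf $\ell$ of torsion order $d$, the leaf-value additivity of \cite[Chap. 6]{Mt} gives $d\,[M;\Gamma]\equiv[M;\Gamma_{d\ell}]$ modulo $\F_2^{\textrm{$\Q$HS}}$ with $\Gamma_{d\ell}$ null-homologous, whence $d\,[M;\Gamma]\in\F_2^{\textrm{$\Q$HS}}$, and since $\F_1^{\textrm{$\Q$HS}}/\F_2^{\textrm{$\Q$HS}}$ is a $\Q$-vector space, $[M;\Gamma]\in\F_2^{\textrm{$\Q$HS}}$. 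Together with Proposition \ref{propAL} this disposes of all borromean and all integral-LP brackets.

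Finally I would handle the remaining generators. For a connected sum $(\frac{A'}{B^3})$ on a $\Q$HS $M$, write $N=A'\cup_{S^2}B^3$ and $A_M=M\setminus\mathring{B^3}$, $A_N=N\setminus\mathring{B^3}$. Since $[S^3;\frac{A_M}{B^3},\frac{A_N}{B^3}]=S^3-M-N+M\#N$ lies in $\F_2^{\textrm{$\Q$HS}}$, one gets $M\#N\equiv M+N-S^3$, hence $[M;\frac{A'}{B^3}]=M-M\#N\equiv S^3-N$ and $S^3-N_1\#N_2\equiv(S^3-N_1)+(S^3-N_2)$ modulo $\F_2^{\textrm{$\Q$HS}}$; and, by the previous step, the class of $S^3-N$ modulo $\F_2^{\textrm{$\Q$HS}}$ is unchanged when $N$ is altered by an integral LP-surgery. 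Writing $N$ as a $\Q$HH surgered over $B^3$ and decomposing that surgery into elementary ones via Theorem \ref{thdecsur}, one discards the borromean pieces, peels the connected-sum pieces off with the additivity above (an induction on $|H_1(N;\Z)|$, which is multiplicative under connected sum, together with the length of the decomposition), and is left with $d$-surgery pieces, which produce lens spaces; a direct computation — stacking $p$-surgeries and using the Chinese remainder theorem — then yields $S^3-L(p^a,1)\equiv a\,(S^3-L(p,1))$ and hence $S^3-N\equiv\sum_p v_p(|H_1(N;\Z)|)\,(S^3-N_p)$, where $N_p=B_p\cup_{S^2}B^3$ so that $S^3-N_p=[S^3;\frac{B_p}{B^3}]$. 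The same analysis applies to the $d$-surgery brackets themselves. Thus $\F_1^{\textrm{$\Q$HS}}$ is generated modulo $\F_2^{\textrm{$\Q$HS}}$ by the $[S^3;\frac{B_p}{B^3}]$, completing the proof. The main obstacle is this last step: keeping the genus $\geq 1$ elementary surgeries from contributing to $\G_1^{\textrm{$\Q$HS}}$ anything beyond the classes $[S^3;\frac{B_p}{B^3}]$ is what forces the delicate induction and the explicit lens-space computation, while the clasper-calculus input of the middle step is the other substantive ingredient.
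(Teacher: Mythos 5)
This lemma is not proved in the paper at all: it is imported verbatim from \cite[Proposition 1.8]{M2}, so there is no internal proof to compare your argument against. The closest relative inside the paper is the proof of Proposition \ref{propG1}, which runs the dual, invariant-theoretic version of the same scheme in the relative setting. Your overall strategy does match the one of the cited source: linear independence of the $[S^3;\frac{B_p}{B^3}]$ via the valuations $\nu_p$; reduction of $\F_1^{\textrm{$\Q$HS}}$ to elementary surgeries via Theorem \ref{thdecsur} and the telescoping of Corollary \ref{cordecel}; the clasper-calculus vanishing of borromean brackets, where your $d$-fold leaf trick is the standard and correct way to obtain the rational analogue of the Goussarov--Habiro degree-one vanishing; and the identity $[M;\frac{A'}{B^3}]\equiv S^3-N$ modulo $\F_2^{\textrm{$\Q$HS}}$ for connected sums.

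The gap is in your last step. The decomposition of Theorem \ref{thdecsur} produces $d$-surgery brackets $[M_j;\frac{T_d}{T}]$ in which $T$ is an arbitrarily knotted and framed solid torus inside an arbitrary intermediate $\Q$HS $M_j$, and $T_d$ is an arbitrary $d$-torus (a rational homology torus, not a solid torus). Such a surgery is not a connected sum with a lens space, and crossing changes only let you modify the core of $T$ within its free homotopy class, so ``the same analysis applies to the $d$-surgery brackets themselves'' does not go through. This is precisely the point where \cite{M2} has to develop the classification of degree one invariants of framed rational homology tori (quoted in this paper as Lemma \ref{lemmainvtori}), which is a substantial separate ingredient rather than a direct computation. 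Relatedly, your induction is not visibly well-founded: rewriting a $d$-surgery bracket as $(S^3-M_j(\frac{T_d}{T}))-(S^3-M_j)$, or peeling off the connected-sum pieces produced by the decomposition, yields new $\Q$HS's whose order of $H_1$ and decomposition length are not controlled by those of $N$, so ``induction on $|H_1(N;\Z)|$ together with the length of the decomposition'' needs an explicitly decreasing complexity that you have not exhibited. Some external input --- the framed-torus lemma, or an equivalent --- is needed to close the loop.
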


\begin{lemma} \label{lemmagenus0}
 For each prime integer $p$, let $B_p$ be a rational homology ball such that $H_1(B_p;\Z)\cong\frac{\Z}{p\Z}$. 
Let $(M,K,\xi)\in\Ens^m(\Al,\bl)$. Let $B$ be a rational homology ball. 
Then $\displaystyle [(M,K,\xi);\frac{B}{B^3}]$ is a rational linear combination of the 
$\displaystyle [(M,K,\xi);\frac{B_p}{B^3}]$ and elements of $\F^m_2(\Al,\bl)$.
\end{lemma}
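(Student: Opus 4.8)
The goal is to reduce a connected-sum (genus 0) LP-surgery with an arbitrary rational homology ball $B$ to the standard family of balls $B_p$ ($H_1(B_p;\Z)\cong\Z/p\Z$), modulo degree 2. The key observation is that a genus 0 LP-surgery $(\frac{B}{B^3})$ on a $\Q$SK-pair $(M,K,\xi)$ is supported inside a ball disjoint from $K$, so it does not interact with the knot or with the Blanchfield module, and it amounts to a connected sum $M\mapsto M\#(-S^3_B)$ where $S^3_B$ is the $\Q$HS obtained from $S^3$ by the surgery $(\frac{B}{B^3})$. Thus the first step is to record that the map $\F_0^{\textrm{$\Q$HS}}\to\F^m_0(\Al,\bl)$ sending a $\Q$HS $N$ to $(M,K,\xi)\#N$ (connected sum away from $K$) respects the LP-surgery filtrations, so it carries $\F_1^{\textrm{$\Q$HS}}$ into $\F^m_1(\Al,\bl)$ and $\F_2^{\textrm{$\Q$HS}}$ into $\F^m_2(\Al,\bl)$, and it sends the bracket $[S^3;\frac{B}{B^3}]$ to $[(M,K,\xi);\frac{B}{B^3}]$.

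\textbf{Key steps.} First I would make precise the connected-sum construction: choose a ball $B^3\subset M\setminus(K\cup A)$ away from everything relevant, and note that performing $(\frac{B}{B^3})$ there is the same as taking $(M,K)\#(S^3(\frac{B}{B^3}))$; this identification is compatible with disjoint LP-surgeries performed elsewhere, hence with the whole filtration, and in particular with taking alternating sums over families of surgeries. Second, I would invoke Lemma \ref{lemma07}: in $\G_1^{\textrm{$\Q$HS}}$ we have the identity $[S^3;\frac{B}{B^3}]=\sum_{p}\lambda_p[S^3;\frac{B_p}{B^3}]$ for suitable rationals $\lambda_p$ (almost all zero), which means $[S^3;\frac{B}{B^3}]-\sum_p\lambda_p[S^3;\frac{B_p}{B^3}]\in\F_2^{\textrm{$\Q$HS}}$. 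Third, I would apply the connected-sum map to this relation: since it sends $\F_2^{\textrm{$\Q$HS}}$ into $\F^m_2(\Al,\bl)$ and sends each $[S^3;\frac{B_q}{B^3}]$ to $[(M,K,\xi);\frac{B_q}{B^3}]$, we obtain
$$[(M,K,\xi);\tfrac{B}{B^3}]-\sum_{p\textrm{ prime}}\lambda_p\,[(M,K,\xi);\tfrac{B_p}{B^3}]\in\F^m_2(\Al,\bl),$$
which is exactly the claimed statement.

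\textbf{Main obstacle.} The delicate point is justifying that the connected-sum construction is well defined and functorial enough at the level of the brackets, in particular that performing a genus 0 LP-surgery inside a ball really can be chosen disjoint from the knot $K$ and from any finite family of other null $\Q$HH's involved in an alternating sum, and that the resulting identifications of marked $\Q$SK-pairs are compatible (so the alternating sums match). This is essentially a locality argument: a $\Q$HH of genus 0 is a rational homology ball, its null embedding can be isotoped into a small ball in $M\setminus K$ avoiding the $A_i$'s, and then the surgery commutes with everything else. Once locality is set up, the compatibility with the filtrations and the reduction via Lemma \ref{lemma07} are formal. A minor additional check is that the rationals $\lambda_p$ from Lemma \ref{lemma07} are the same on both sides, which is immediate because they come from the single relation in $\G_1^{\textrm{$\Q$HS}}$ pushed forward by a well-defined linear map.
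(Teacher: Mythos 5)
Your proposal is correct and follows essentially the same route as the paper's own proof: invoke Lemma~\ref{lemma07} to write $[S^3;\frac{B}{B^3}]$ as a combination of the $[S^3;\frac{B_p}{B^3}]$ plus an element of $\F_2^{\textrm{$\Q$HS}}$, then connect-sum every $\Q$HS in that relation with $M$ (away from $K$) to land in $\F^m_0(\Al,\bl)$, observing that the degree-2 terms become brackets $[(M\sharp N_j,K,\xi);\frac{C_j'}{C_j},\frac{D_j'}{D_j}]\in\F^m_2(\Al,\bl)$. The paper states the connected-sum step in one line without spelling out the locality/functoriality check you flag as the ``main obstacle''; your elaboration of that point is accurate but not a different argument.
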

\begin{proof}
By Lemma \ref{lemma07}, there is a relation:
$$[S^3;\frac{B}{B^3}]=\sum_{p\textrm{ prime}} a_p [S^3;\frac{B_p}{B^3}] +\sum_{j\in J} b_j [N_j;\frac{C_j'}{C_j},\frac{D_j'}{D_j}],$$
where $J$ is a finite set, the $a_p$ and $b_j$ are rational numbers, the $a_p$ are all trivial except a finite number, 
and for $j\in J$, $[N_j;\frac{C_j'}{C_j},\frac{D_j'}{D_j}]\in\F_2^{\textrm{$\Q$HS}}$. 
Make the connected sum of each $\Q$HS in the relation with $M$. We obtain:
$$[(M,K,\xi);\frac{B}{B^3}]=\sum_{p\textrm{ prime}} a_p [(M,K,\xi);\frac{B_p}{B^3}] 
+\sum_{j\in J} b_j [(M\sharp N_j,K,\xi);\frac{C_j'}{C_j},\frac{D_j'}{D_j}].$$
\end{proof}

To conclude the proof of Proposition \ref{propG1}, we need the following result about degree 1 invariants of framed rational homology tori 
(see \cite[\S 5.1]{M2} for a definition):
\begin{lemma}[\cite{M2} Corollary 5.10] \label{lemmainvtori}
For all prime integer $p$, let $M_p$ be a $\Q$HS such that $H_1(M_p;\Z)\cong\Z/p\Z$. 
Let $T_0$ be a framed standard torus. 
If $\mu$ is a degree 1 invariant of framed rational homology tori, such that $\mu(T_0)=0$ and $\mu(T_0\sharp M_p)=0$ 
for any prime integer $p$, then $\mu=0$.
\end{lemma}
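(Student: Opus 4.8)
The statement is a structural fact, and the plan is to deduce it from a computation of the relevant degree~$1$ graded quotient. Let $\F^{(1)}_0$ be the rational vector space generated by framed rational homology tori (framed genus~$1$ $\Q$HH's) up to orientation- and framing-preserving homeomorphism, and let $(\F^{(1)}_n)_{n\in\N}$ be the filtration it carries from LP-surgeries performed in the interior, exactly as in Subsection~\ref{subsecgrad}; a degree~$1$ invariant $\mu$ extends $\Q$-linearly to $\F^{(1)}_0$ and vanishes on $\F^{(1)}_2$. All framed genus~$1$ $\Q$HH's share the same framed boundary, hence are related by LP-surgeries, so $\F^{(1)}_0/\F^{(1)}_1\cong\Q$ is generated by the class of $T_0$; since $\mu(T_0)=0$, it suffices to prove that $\mu$ vanishes on $\F^{(1)}_1$, i.e. on $\G^{(1)}_1=\F^{(1)}_1/\F^{(1)}_2$. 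As $\mu([T_0;\frac{B_p}{B^3}])=\mu(T_0)-\mu(T_0\sharp M_p)=0$ for every prime $p$ by hypothesis, the Lemma will follow once we show that $\G^{(1)}_1$ is spanned over $\Q$ by the classes of the brackets $[T_0;\frac{B_p}{B^3}]$, $p$ prime.

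To prove this, first apply Theorem~\ref{thdecsur}: $\F^{(1)}_1$ is generated by brackets $[A;\frac{E'}{E}]$ with $A$ a framed genus~$1$ $\Q$HH and $\frac{E'}{E}$ an \emph{elementary} null LP-surgery, i.e. $E$ is a ball, a solid torus, or a genus~$3$ handlebody. Writing $A$ as the result of a finite sequence of elementary surgeries applied to $T_0$ (Theorem~\ref{thdecsur} again), chosen disjoint from a fixed copy of $E$, a routine telescoping identity gives $[A;\frac{E'}{E}]\equiv[T_0;\frac{E'}{E}]\pmod{\F^{(1)}_2}$; so it suffices to treat brackets based at $T_0$. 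If $\frac{E'}{E}=\frac{B}{B^3}$ is a connected sum, inserting the relation of Lemma~\ref{lemma07} inside a ball $B^3\subset T_0$ and invoking the genus~$0$ reduction of Lemma~\ref{lemmagenus0} yields $[T_0;\frac{B}{B^3}]\equiv\sum_p a_p[T_0;\frac{B_p}{B^3}]\pmod{\F^{(1)}_2}$. If $\frac{E'}{E}=\frac{T_d}{T}$ is a $d$-surgery, factor $d$ into primes and telescope $\frac{T_d}{T}$ through a chain of prime surgeries; modulo $\F^{(1)}_2$ this writes $[T_0;\frac{T_d}{T}]$ as a sum of prime-$p$ surgery brackets, and a prime-$p$ surgery on $T_0$ only creates a $\Z/p\Z$ summand in the torsion of $H_1$, so its difference with a suitable multiple of $[T_0;\frac{B_p}{B^3}]$ is homology-preserving and, being detected trivially by every invariant $\nu_q$ of Subsection~\ref{subsecgrad}, lies in $\F^{(1)}_2$ (compare the proof that borromean brackets are degree~$2$ for $\Q$HS's, via Lemma~\ref{lemma07} and the $\nu_p$'s). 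Finally, if $\frac{E'}{E}$ is a borromean surgery on a Y-graph $\Gamma\subset T_0$, standard clasper calculus makes $[T_0;\Gamma]$ multilinear modulo $\F^{(1)}_2$ in the homology classes of the leaves, which lie in $H_1(T_0;\Q)=\Q$: a null-homologous leaf can be made to bound a disk disjoint from the rest of $\Gamma$, trivializing the surgery, and the remaining model cases can be isotoped into a ball and reduced to the $\Q$HS statement. Hence $\G^{(1)}_1$ is spanned by the $[T_0;\frac{B_p}{B^3}]$, which completes the argument.

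The main obstacle is precisely this last computation: showing that, beyond the ``$p$-adic'' brackets $[T_0;\frac{B_p}{B^3}]$, neither $d$-surgeries nor borromean surgeries enlarge $\G^{(1)}_1$ — the framed genus~$1$ analogue of the identity $\G_1^{\textrm{$\Q$HS}}=\bigoplus_p\Q[S^3;\frac{B_p}{B^3}]$. Concretely, the delicate points are making the telescoping of a $d$-torus surgery precise while keeping all brackets inside one filtered vector space, and the clasper manipulations reducing a general Y-graph in $T_0$ to model graphs up to degree-$2$ error (including the care needed when closing $T_0$ up to a $\Q$HS, which is not an injection on the filtered spaces); the subsequent vanishing of the resulting homology-preserving brackets in $\G^{(1)}_1$ then follows formally from the $\nu_p$-detection argument. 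With this structural statement in hand, the bookkeeping of the first paragraph completes the proof of the Lemma.
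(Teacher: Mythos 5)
First, note that the paper does not prove this lemma: it is quoted verbatim from \cite{M2} (Corollary 5.10), so there is no internal proof to compare against and your proposal must stand on its own. Its overall architecture is the natural one and presumably close to that of \cite{M2}: reduce to showing that the degree one quotient $\G^{(1)}_1$ for framed rational homology tori is spanned by the brackets $[T_0;\frac{B_p}{B^3}]$, decompose an arbitrary degree one bracket into elementary surgeries via Theorem \ref{thdecsur}, and treat the genus $0$, genus $1$ and genus $3$ cases separately. The first reduction and the genus $0$ case (implanting the relation of Lemma \ref{lemma07} into a ball of $T_0$, as in Lemma \ref{lemmagenus0}) are fine.

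However, the two remaining cases --- which you yourself identify as ``the main obstacle'' --- are not actually proved, and the argument offered for one of them is circular. For the $d$-surgery case you assert that the difference between a prime-$p$ surgery bracket and a suitable multiple of $[T_0;\frac{B_p}{B^3}]$ lies in $\F^{(1)}_2$ because it is ``detected trivially by every invariant $\nu_q$''. Vanishing of a family of degree one invariants on an element of $\F^{(1)}_1$ does not place that element in $\F^{(1)}_2$ unless one already knows that those invariants span the dual of $\G^{(1)}_1$ --- which is precisely the content of the lemma being proved. What is required is a direct geometric expression of that difference as a combination of double brackets. For the borromean case, ``isotoping the remaining model cases into a ball'' is impossible when the leaves are nontrivial in $H_1(T_0;\Q)$; the mechanism that actually kills these brackets is multilinearity and antisymmetry of $[T_0;\Gamma]$ modulo $\F^{(1)}_2$ in the homology classes of the leaves combined with $\Lambda^3 H_1(T_0;\Q)=0$, together with a separate argument disposing of the linking and framing data when leaves are null-homologous (a leaf bounds a surface, not necessarily a disk, and one must induct on its genus). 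None of this is carried out. So the proposal is a correct formal reduction resting on two unproved, and in one case wrongly justified, core claims.
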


\proofof{Proposition \ref{propG1}}
Let $\lambda\in(\F^m_1(\Al,\bl))^*$ such that $\lambda(\F^m_2(\Al,\bl))=0$. Assume $\lambda(\K\oplus\G^{m,b}_1(\Al,\bl))=0$. 
Let us prove that $\lambda=0$. 
Thanks to Corollary \ref{cordecel}, it suffices to prove that $\lambda$ vanishes on the brackets defined by elementary surgeries. 
It is clear for elementary surgeries of genus 3, and for elementary surgeries of genus 0, it follows from Lemma \ref{lemmagenus0}. 

Consider a bracket $[(M,K,\xi);\frac{T_d}{T_0})]$, where $(M,K,\xi)\in\Ens^m(\Al,\bl)$, 
$T_0$ is a standard torus null in $M\setminus K$, and $T_d$ is a $d$-torus for some positive integer $d$. 
Fix a parallel of $T_0$. If $T$ is a framed rational homology torus, set 
$\displaystyle \bar{\lambda}(T)=\lambda\left([(M,K,\xi);\frac{T}{T_0}]\right),$
where the LP-identification $\partial T\cong\partial T_0$ identifies the prefered parallels. Then $\bar{\lambda}$ is a degree~$1$ 
invariant of framed rational homology tori:
$$\bar{\lambda}\left( [T;\frac{A'_1}{A_1},\frac{A'_2}{A_2}]\right) = 
-\lambda\left([(M,K,\xi)(\frac{T}{T_0});\frac{A'_1}{A_1},\frac{A'_2}{A_2}]\right) =0.$$
Moreover, we have $\bar{\lambda}(T_0)=0$, and $\bar{\lambda}(T_0(\frac{B_p}{B^3}))=0$. 
Thus, by Lemma \ref{lemmainvtori}, $\bar{\lambda}=0$.
\fin

\def\cprime{$'$}
\providecommand{\bysame}{\leavevmode ---\ }
\providecommand{\og}{``}
\providecommand{\fg}{''}
\providecommand{\smfandname}{\&}
\providecommand{\smfedsname}{\'eds.}
\providecommand{\smfedname}{\'ed.}
\providecommand{\smfmastersthesisname}{M\'emoire}
\providecommand{\smfphdthesisname}{Th\`ese}

\end{document}